\title{Faithfulness of generalised Verma modules for Iwasawa algebras}
\author{Stephen Mann}
\theoremstyle{definition}
\newtheorem{mydef}{Definition}[subsection]
\newtheorem{myex}[mydef]{Example}
\theoremstyle{plain}
\newtheorem{mythm}[mydef]{Theorem}
\newtheorem{mylem}[mydef]{Lemma}
\newtheorem{mycor}[mydef]{Corollary}
\newtheorem{myprop}[mydef]{Proposition}
\newtheorem{Theorem}{Theorem}
\theoremstyle{remark}
\DeclareMathOperator{\Ann}{Ann}
\newcommand{\Ug}{\ensuremath{\widehat{U(\mathfrak{g})_{n,K}}}}
\newcommand{\ug}{\ensuremath{U(\mathfrak{g}_{K})}}
\newcommand{\Up}{\ensuremath{\widehat{U(\mathfrak{p})_{n,K}}}}
\newcommand{\up}{\ensuremath{U(\mathfrak{p}_{K})}}
\newcommand{\Ul}{\ensuremath{\widehat{U(\mathfrak{l})_{n,K}}}}
\newcommand{\ul}{\ensuremath{U(\mathfrak{l}_{K})}}
\newcommand{\Upp}{\ensuremath{\widehat{U(\mathfrak{p'})_{n,K}}}}
\newcommand{\upp}{\ensuremath{U(\mathfrak{p}'_{K})}}
\newcommand{\Un}{\ensuremath{\widehat{U(\mathfrak{n})_{n,K}}}}
\newcommand{\un}{\ensuremath{U(\mathfrak{n}_{K})}}
\newcommand{\Uh}{\ensuremath{\widehat{U(\mathfrak{h})_{n,K}}}}
\newcommand{\uh}{\ensuremath{U(\mathfrak{h}_{K})}}
\newcommand{\Of}{\mathcal{O}_{F}}
\begin{document}

\maketitle

\begin{abstract}
\noindent We prove faithfulness of infinite-dimensional generalised Verma modules for Iwasawa algebras corresponding to split simple Lie algebras with a Chevalley basis. We use this to prove faithfulness of all infinite-dimensional highest-weight modules in the case of type $A_{2}$. In this case we also show that all prime ideals of the corresponding Iwasawa algebras are annihilators of finite-dimensional simple modules.

\end{abstract}

\section{Introduction}

\subsection{Motivation and main results}

Suppose $G$ is a profinite group and $K$ is a complete discretely valued field. Then the Iwasawa algebra $KG$ is a completion of the group algebra $K[G]$. Iwasawa algebras are useful for studying the representation theory of compact p-adic Lie groups since there is a relationship between modules over the Iwasawa algebra and representations of the group, see \cite[Theorem 3.5]{Schneider2}. Iwasawa algebras are also being used to study problems in Number Theory, see for instance \cite{Application1} and \cite{Application2}.

When studying any ring, an important problem is to study the prime spectrum. It is an ongoing project to classify the prime ideals of Iwasawa algebras, see for instance \cite{verma}, \cite{controller}, \cite{nilpotent}. Results so far suggest that for noncommutative Iwasawa algebras the prime ideals are fairly sparse. This paper continues in this direction in the case of certain noncommutative Iwasawa algebras in characteristic $0$. 

This paper makes progress towards answering Question B from \cite{verma}, which if answered positively can be used to show that all non-zero prime ideals of the relevant Iwasawa algebras arise as annihilators of finite-dimensional simple modules. Specifically, we aim to study the Iwasawa algebras of uniform pro-p groups corresponding to simple Lie algebras with a Chevalley basis. However, for our main result it is helpful to make a more general statement for reductive groups for the purpose of an inductive proof.

Let $p$ be an odd prime. Let $F$ be a finite field extension of $\mathbb{Q}_{p}$ with ring of integers $\Of$, and $K$ be a discretely valued field extension of $F$ with ring of integers $R$. Suppose $G$ is an $F$-uniform analytic pro-p group with corresponding Lie algebra $L_{G} = p^{n+1} \mathfrak{g}$, where $\mathfrak{g}$ is an $\Of$-lattice of a split simple $F$-Lie algebra and has a Chevalley basis.

We then have a corresponding Iwasawa algebra $KG$, which embeds in a completion $\Ug$ of the universal enveloping algebra of $\ug$. 
We then can consider a class of $KG$-modules corresponding to highest-weight $\ug$-modules. 

Specifically, we define an affinoid highest weight module for $\Ug$ to be a module of the form $\Ug \otimes_{\ug} M$, where $M$ is a highest-weight module for $\ug$ with a highest weight $\lambda$ satisfying the condition $\lambda(p^{n} \mathfrak{h}_{R}) \subseteq R$. (This condition on $\lambda$ is assumed since $\Ug \otimes_{\ug} M$ will in fact be zero otherwise).

The study of affinoid highest-weight modules has applications for studying the prime ideals of $KG$. In \cite{verma}, two questions are asked.

\medskip

\noindent \emph{Question A}: does every primitive ideal of $\Ug$ with $K$-rational central character arise as the annihilator of a simple affinoid highest-weight module?

\medskip

\noindent \emph{Question B}: is every affinoid highest-weight module that is infinite-dimensional over $K$ faithful as a $KG$-module?

\medskip

\noindent Given positive answers to both of these questions, in the case where $F = \mathbb{Q}_{p}$, $R$ has uniformiser $p$ and $p$ is sufficiently large relative to the root system of $\mathfrak{g}$, we can prove that every non-zero prime ideal of $KG$ arises as the annihilator of a simple finite-dimensional module. This is seen in \S \ref{applicationsection}. Question A has been answered positively in \cite{Ioan}.

 This paper makes progress towards a positive answer to question B. Specifically, it was shown in \cite{verma} that affinoid Verma modules are faithful for $KG$. In this paper, we extend this to generalised Verma modules. 
 
\begin{Theorem}
\label{theorem1}
Suppose $\mathfrak{g}$ is an $\Of$-lattice of a split simple $F$-Lie algebra with root system $\Phi$ such that $\mathfrak{g}$ has a Chevalley basis, and $G$ is an $F$-uniform group with $L_{G} = p^{n+1} \mathfrak{g}$. Assume $p > 2$ is large enough that $p$ does not divide the determinant of the  Cartan matrix of any root subsystem of $\Phi$.  Suppose $M$ is an infinite-dimensional generalised Verma module for $\mathfrak{g}$ with highest-weight $\lambda$ satisfying $\lambda (p^{n}\mathfrak{h}_{R}) \subseteq R$ where $\mathfrak{h}$ is the Cartan subalgebra of $\mathfrak{g}$. Then $\widehat{M} = \Ug \otimes_{\ug} M$ is faithful as a $KG$-module.
\end{Theorem}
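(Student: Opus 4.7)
The plan is to extend the faithfulness of affinoid Verma modules, established in \cite{verma}, to all infinite-dimensional affinoid generalised Verma modules by induction on the Lie algebra, working (as flagged in the introduction) in a slightly more general reductive setting so that the inductive hypothesis can be invoked for Levi subalgebras. Recall that an infinite-dimensional generalised Verma module has the form $M = \ug \otimes_{\up} V$, where $\mathfrak{p} = \mathfrak{l} \oplus \mathfrak{n}_{\mathfrak{p}}$ is a proper parabolic of $\mathfrak{g}$ and $V$ is a finite-dimensional simple $\mathfrak{l}$-module on which $\mathfrak{n}_{\mathfrak{p}}$ acts by zero, with highest weight $\lambda|_{\mathfrak{l}}$. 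Combining PBW with flatness of \Ug\ over \ug\ should present $\widehat{M}$ as $\Ug \otimes_{\Up} V$, and as a module over the completed enveloping algebra of the opposite nilradical $\mathfrak{n}_{\mathfrak{p}}^-$ this will be free of rank $\dim V$; in particular, the completed enveloping algebra of $\mathfrak{n}_{\mathfrak{p}}^-$ acts faithfully on $\widehat{M}$.

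The natural inductive parameter is the rank of the Levi $\mathfrak{l}$, or equivalently the cardinality of the root subsystem $\Phi_{\mathfrak{l}}$. The base case $\mathfrak{l} = \mathfrak{h}$ makes $V$ one-dimensional and $M$ an ordinary Verma module, which is handled directly by \cite{verma}. The hypothesis that $p$ avoids the determinant of the Cartan matrix of every root subsystem of $\Phi$ is exactly what is needed to keep the Levi-level representation theory well-behaved at every sub-Levi encountered in the induction. To carry out the inductive step, I would invoke a controller-type result in the spirit of \cite{controller} to show that $\Ann_{KG}(\widehat{M})$ is controlled by the closed subgroup $H$ of $G$ corresponding to $\mathfrak{n}_{\mathfrak{p}}^-$. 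Since the Iwasawa algebra of $H$ embeds into the completed enveloping algebra of $\mathfrak{n}_{\mathfrak{p}}^-$, and the latter acts faithfully on $\widehat{M}$ by the freeness observed above, this would force the annihilator to vanish.

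The principal obstacle is that surjections do not transport faithfulness: even though the affinoid Verma module $\widehat{M(\lambda)}$ surjects onto $\widehat{M}$, this cannot be used directly to inherit faithfulness from the Verma module case. The hardest step is therefore to rigorously identify a controlling subgroup small enough that the freeness of $\widehat{M}$ over $\widehat{U(\mathfrak{n}_{\mathfrak{p}}^-)_{n,K}}$ suffices to conclude, and to verify that the controller technology of \cite{controller} applies in the required generality---in particular for the (potentially non-prime) annihilators of generalised Verma modules, rather than only for the primitive annihilators arising from ordinary Verma modules. This is where the Levi-level induction and the fine structure of $KG$ inside \Ug\ need to be coordinated most carefully.
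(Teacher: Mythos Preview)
Your outline has the right starting point --- freeness of $\widehat{M}$ over $\widehat{U(\mathfrak{n}_{\mathfrak{p}}^-)_{n,K}}$ is indeed the backbone of the argument (it is precisely what underlies \cite[Theorem~5.3]{verma}, used here as Corollary~\ref{5.3cor}) --- but the step that would transport this to faithfulness over $KG$ via controller theory does not go through. The results of \cite{controller} concern controlling ideals by closed \emph{normal} subgroups, and the uniform subgroup $H$ attached to $\mathfrak{n}_{\mathfrak{p}}^-$ is far from normal in $G$; there is no mechanism there that lets you conclude $\Ann_{KG}(\widehat M)$ is generated by its intersection with $KH$ for such an $H$. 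Note also that your argument, as written, never actually invokes the inductive hypothesis: if the controller reduction worked, faithfulness would follow in one step from freeness over the nilradical, with no induction needed. That should be a warning sign.

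What \cite[Theorem~5.3]{verma} actually buys is a reduction in the \emph{other} direction: to show $\widehat{M}$ is faithful over $KG$ it suffices that $M$ be faithful over $RP$ (the parabolic side), and after a duality argument (Theorem~\ref{leviprop}) over $RL$ with $L$ the Levi. This is where the induction lives. The paper then analyses the $\ul$-structure of $M_I(\lambda)$: it contains highest-weight vectors $f_{\alpha_1}^{s_1}\cdots f_{\alpha_r}^{s_r} v_0$ for all $s \in \mathbb{N}_0^r$ (with $\alpha_i \in \Delta \setminus I$), generating an infinite family of finite-dimensional simple $\ul$-modules $L(\lambda - \sum s_i \alpha_i)$. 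A $p$-adic density argument in the parameters $s_i$ (Theorem~\ref{annthm}) shows that anything in $\Ul$ annihilating all of these must annihilate a generalised Verma module $\widehat{M_{I^o,\mathfrak{h}'}(\lambda)}$ for $\mathfrak l$, where $I^o \subsetneq I$ is the interior of $I$ in the Dynkin diagram. Now the inductive hypothesis --- applied to the reductive $\mathfrak l$ with the strictly smaller set $I^o$ --- gives faithfulness of this module over $KL$, forcing $\Ann_{RL} M_I(\lambda) = 0$. The hypothesis on $p$ and Cartan determinants is used exactly in this density step, to produce an $\Of$-basis of $\mathfrak h$ dual to $\Delta$.
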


\noindent Also, we give a positive answer to question B in the case where $\mathfrak{g} = \mathfrak{sl}_{3}(\Of)$, which we do in \S \ref{sl3case}. In particular, this implies that in the case of $\mathfrak{g} = \mathfrak{sl}_{3}(\mathbb{Q}_{p})$, where $R$ has uniformiser $p$ and for $p>3$, all non-zero prime ideals of $KG$ are annihilators of simple finite-dimensional modules.

\begin{Theorem}
\label{theorem2}
Suppose $p>3$ and $\mathfrak{g} = \mathfrak{sl}_{3}(\Of)$ with Cartan subalgebra $\mathfrak{h}$. Suppose $G$ is an $F$-uniform group with $L_{G} = p^{n+1} \mathfrak{g}$. Suppose $\lambda : \mathfrak{h}_{K} \rightarrow K$ is a weight such that $\lambda (p^{n} \mathfrak{h}_{R}) \subseteq R$ and $M$ is an infinite-dimensional highest-weight module for $\ug$ of highest-weight $\lambda$. Then $\hat{M}$ is faithful as a $KG$-module.
\end{Theorem}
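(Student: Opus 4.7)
Reduce Theorem~\ref{theorem2} to Theorem~\ref{theorem1} by exhibiting, for any infinite-dimensional highest-weight module $M = M(\lambda)/N$, a $\ug$-module surjection $M \twoheadrightarrow M'$ onto a Verma module or infinite-dimensional generalised Verma module $M'$. Base change $\Ug \otimes_{\ug} -$ then yields a surjection $\widehat{M} \twoheadrightarrow \widehat{M'}$, and hence $\Ann_{KG}(\widehat{M}) \subseteq \Ann_{KG}(\widehat{M'}) = 0$ by Theorem~\ref{theorem1}.

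\paragraph{Case analysis.} Organise the cases by the integral Weyl group $W_\lambda$, which for $\mathfrak{sl}_3$ is one of $W$, $\langle s_1 \rangle$, $\langle s_2 \rangle$, $\langle w_0 \rangle$, or $\{e\}$. When $M(\lambda)$ is simple, $M = M(\lambda)$ is itself a Verma module. When $W_\lambda = \langle s_i\rangle$ and $\lambda(h_{\alpha_i}) \in \mathbb{Z}_{\geq 0}$, the unique proper submodule of $M(\lambda)$ is $M(s_i \cdot \lambda)$, so $L(\lambda)$ coincides with the generalised Verma module $M_{\mathfrak{p}_i}(\lambda)$ for the maximal parabolic $\mathfrak{p}_i$, and the surjection $M \twoheadrightarrow L(\lambda)$ suffices. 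When $W_\lambda = W$ and $\lambda$ is integral non-dominant, an analysis of the BGG orbit structure shows $L(\lambda)$ is again a generalised Verma module for some maximal parabolic. When $\lambda$ is dominant integral, $L(\lambda)$ is finite-dimensional, so any infinite-dimensional $M = M(\lambda)/N$ has $N$ strictly contained in the maximal submodule $N(\lambda) = M_1 + M_2$ (with $M_i = M(s_i \cdot \lambda)$); upward closure of the ``support'' of $N$ in the Bruhat order of composition factor labels then forces $N \subseteq M_1$ or $N \subseteq M_2$, producing a surjection onto $M_{\mathfrak{p}_1}(\lambda)$ or $M_{\mathfrak{p}_2}(\lambda)$.

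\paragraph{Main obstacle.} The remaining case $W_\lambda = \langle w_0 \rangle$ with $\lambda(h_{\alpha_1+\alpha_2}) \in \mathbb{Z}_{\geq -1}$ (so $\lambda(h_{\alpha_1}), \lambda(h_{\alpha_2}) \notin \mathbb{Z}$ but $\lambda$ is linked via the highest root to the antidominant weight $w_0 \cdot \lambda$) is the unique situation in which the simple quotient $L(\lambda) = M(\lambda)/M(w_0 \cdot \lambda)$ is infinite-dimensional but not isomorphic to any standard generalised Verma module of $\mathfrak{sl}_3$; Theorem~\ref{theorem1} does not apply to it directly. The plan is to use the short exact sequence $0 \to \widehat{M(w_0 \cdot \lambda)} \to \widehat{M(\lambda)} \to \widehat{L(\lambda)} \to 0$ together with an Iwasawa-algebra analogue of Duflo's classical identification $\Ann_{U(\mathfrak{g})}(L(\lambda)) = \Ann_{U(\mathfrak{g})}(M(w_0 \cdot \lambda))$, yielding $\Ann_{KG}(\widehat{L(\lambda)}) = \Ann_{KG}(\widehat{M(w_0 \cdot \lambda)}) = 0$ by Theorem~\ref{theorem1}. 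Lifting this identification of annihilators from $U(\mathfrak{g})$ to the completed Iwasawa setting, or otherwise handling this single exceptional case directly via the explicit embedding $M(w_0 \cdot \lambda) \hookrightarrow M(\lambda)$, is the principal obstacle of the proof.
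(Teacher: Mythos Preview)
Your strategy has a genuine gap in the regular integral block. The claim that for integral non-dominant $\lambda$ the simple module $L(\lambda)$ is itself a generalised Verma module is false. Take $\mu$ dominant integral regular and $\lambda = s_\alpha \cdot \mu$: then $\lambda(h_\alpha) = -\mu(h_\alpha) - 2 < 0$ while $\lambda(h_\beta) = \mu(h_\alpha) + \mu(h_\beta) + 1 \in \mathbb{N}_0$, so the only candidate is $M_{\{\beta\}}(\lambda) = M(\lambda)/M(s_\beta s_\alpha \cdot \mu)$. But $M(\lambda)$ has the four composition factors $L(w \cdot \mu)$ with $w \ge s_\alpha$ in Bruhat order, while $M(s_\beta s_\alpha \cdot \mu)$ accounts for only two of them, so $M_{\{\beta\}}(\lambda)$ has length $2$ and properly contains $L(\lambda)$. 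By symmetry the same happens for $\lambda = s_\beta \cdot \mu$. Thus $W_\lambda = \langle w_0 \rangle$ is not the unique obstruction; the two weights one simple reflection away from dominant are equally problematic, and your outline offers no mechanism for them. (Your dominant-case argument and the cases $w_0\cdot\mu$, $s_\alpha s_\beta\cdot\mu$, $s_\beta s_\alpha\cdot\mu$ are fine: there $L(\lambda)$ really is a Verma or parabolic Verma.)

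The paper proceeds differently. It reduces via \emph{subquotients} rather than quotients: any infinite-dimensional $M$ has an infinite-dimensional composition factor $L(\mu)$ with $\mu(p^n\mathfrak{h}_R)\subseteq R$, and by exactness of completion $\widehat{L(\mu)}$ is a subquotient of $\widehat M$, so it suffices to show $\widehat{L(\mu)}$ faithful for every non-dominant $\mu$ (Theorem~\ref{sl3thm}). Two ingredients replace your proposed Duflo lift. First, a reflection principle (Proposition~\ref{reflectionprop}): whenever $\lambda(h_\alpha) \notin \mathbb{N}_0$ one has $\Ann_{\Ug}\widehat{L(\lambda)} \subseteq \Ann_{\Ug}\widehat{L(s_\alpha \cdot \lambda)}$, proved by a convergent-power-series density argument along the $f_\alpha$-string through the highest-weight vector. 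This already disposes of your $W_\lambda = \langle w_0 \rangle$ case (reflect by $s_\alpha$ to land in a block where $L$ is visibly a generalised Verma module), and reduces the longer elements of a regular integral orbit to $s_\alpha \cdot \mu$ and $s_\beta \cdot \mu$. Second, precisely for those two weights the paper uses that $L(s_\beta \cdot \mu)$ is the maximal submodule of the faithful module $\widehat{M_{\{\alpha\}}(\mu)}$ with finite-dimensional cokernel $L(\mu)$; choosing any $0 \ne y \in \Ann_{KG}\widehat{L(\mu)}$ and using that $KG$ is a domain gives $xy = 0 \Rightarrow x = 0$ for every $x \in \Ann_{KG}\widehat{L(s_\beta \cdot \mu)}$.
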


\begin{Theorem}
\label{theorem3}
Suppose $G$ is a uniform pro-p group with $L_{G} = \mathfrak{sl}_{3}(\mathbb{Z}_{p})$ and that $p > 3$. Assume $R$ has uniformiser $p$. Then all prime ideals of $KG$ are annihilators of finite-dimensional simple modules.
\end{Theorem}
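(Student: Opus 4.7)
The plan is to combine Theorem \ref{theorem2} (Question B for $\mathfrak{sl}_{3}$) with the positive answer to Question A established in \cite{Ioan}, along the reduction sketched in the introduction and developed in \S\ref{applicationsection}. Since $KG$ is a Noetherian domain, the zero ideal is trivially prime and is handled separately (consistent with the paragraph preceding the theorem, which restricts to non-zero primes). So fix a non-zero prime $P$ of $KG$. The goal is to produce a finite-dimensional simple $KG$-module $L$ with $\Ann_{KG}(L) = P$.

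The first step is to lift $P$ to a prime $Q$ of $\Ug$ satisfying $Q \cap KG = P$, and then to check that $Q$ is primitive with $K$-rational central character. Lifting uses the control-type results for Iwasawa algebras (e.g.\ \cite{controller}) together with the embedding $KG \hookrightarrow \Ug$. The primitivity and $K$-rationality of the central character rely on the hypotheses $L_{G} = \mathfrak{sl}_{3}(\mathbb{Z}_{p})$, $R$ having uniformiser $p$, and $p > 3$: the last condition is exactly what is needed so that $p$ is coprime to the determinant of the Cartan matrix of every root subsystem of the $A_{2}$-system, which places us in the setting of Theorem \ref{theorem2}. Once $Q$ is primitive with $K$-rational central character, Question A yields $Q = \Ann_{\Ug}(\widehat{M})$ for some simple affinoid highest-weight module $\widehat{M} = \Ug \otimes_{\ug} M$.

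Now the module $\widehat{M}$ must be finite-dimensional over $K$: otherwise Theorem \ref{theorem2} makes it a faithful $KG$-module, so $P = Q \cap KG = \Ann_{\Ug}(\widehat{M}) \cap KG = \Ann_{KG}(\widehat{M}) = 0$, contradicting $P \neq 0$. With $\widehat{M}$ finite-dimensional it has finite length over $KG$, and since $P = \Ann_{KG}(\widehat{M})$ is prime, a standard composition-series argument (using that a prime ideal annihilating a finite direct sum must annihilate one summand of an associated semisimplification, at least up to refinement) produces a simple $KG$-composition factor $L$ of $\widehat{M}$ with $\Ann_{KG}(L) = P$. Such $L$ is automatically finite-dimensional, completing the proof.

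The main obstacle I anticipate is the lifting step: promoting a given non-zero prime of $KG$ to a primitive ideal of $\Ug$ with $K$-rational central character while keeping track of contraction to $KG$. Everything downstream is essentially bookkeeping, together with the invocations of Theorem \ref{theorem2} and \cite{Ioan}; the substantive content of the theorem is packaged into those two inputs.
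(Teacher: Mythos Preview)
Your outline captures the high-level logic but the ``lifting step'' you flag is a genuine gap, and the paper's argument in \S\ref{applicationsection} shows it cannot be filled the way you suggest. There is no direct mechanism for producing, from a prime $P \lhd KG$, a primitive ideal $Q \lhd \Ug$ with $Q \cap KG = P$ and $K$-rational central character; nothing in \cite{controller} or the embedding $KG \hookrightarrow \Ug$ gives this. In fact the central character produced downstream need not be $K$-rational at all, which is why the paper allows a finite extension $K'/K$.

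The paper proceeds quite differently. First it disposes of primes of finite codimension by an elementary ring-theoretic lemma: if $A/P$ is finite-dimensional and $P$ is prime, then $P$ is the annihilator of some composition factor of $A/P$. The real work is showing that any $P$ with $KG/P$ infinite-dimensional must be zero. For this one does \emph{not} lift $P$ to $\Ug$. Instead one passes to the open subgroup $G^{p^{m}}$ for large $m$, forms the crossed-product module $M = (\widehat{U(\mathfrak{g})_{mn,K}} \ast G/G^{p^{m}}) \otimes_{KG} KG/P$, and uses results of \cite{irredreps} to extract an infinite-dimensional cyclic quotient, then after a further finite field extension $K'/K$ (with new deformation parameter $n' = nme$) an infinite-dimensional module $N$ with a genuine central character. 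Only now does one take an irreducible subquotient $N_{1}$ and invoke \cite{Ioan} to get $\Ann_{\widehat{U(\mathfrak{g})_{n',K'}}} N_{1} = \Ann \widehat{L(\mu)}$ with $L(\mu)$ infinite-dimensional. Theorem~\ref{theorem2} (applied with the new parameters $n'$, $K'$) then forces $P_{me} = P \cap KG^{p^{me}} = 0$, and a separate argument from \cite[\S5.7]{verma} recovers $P = 0$. Note in particular that Question~B must be available for \emph{every} deformation parameter and \emph{every} coefficient field, not just the original $n$ and $K$; this is why Theorem~\ref{application} is stated with that quantification.
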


\subsection{Acknowledgements}

This paper was written with the help of my supervisor Simon Wadsley.

The inductive structure of the proof of the main theorem (Theorem \ref{mainthm}) comes from the work of Oliver Janzer and Harry Giles from a summer research project with Simon Wadsley, in which they proved the theorem for the special case of generalised Verma modules for $\mathfrak{sl}_{m+1}(\mathbb{Q}_{p})$ with highest-weight $0$.

This paper builds on the work of Konstantin Ardakov and Simon Wadsley in \cite{verma} and \cite{irredreps}.

This project was funded by the Engineering and Physical Sciences Research Council through the Doctoral Partnership Training grant allocation.

\subsection{Set-up}

Throughout, we fix a deformation parameter $n \in \mathbb{N}_{0}$.

Let $\Phi$ be a root system with a fixed set of simple roots $\Delta$ and corresponding positive system $\Phi^{+}$. Let $p$ be an odd prime with the property that $p$ does not divide the determinant of the Cartan matrix of any root subsystem of $\Phi$.

Let $F$ be a finite field extension of $\mathbb{Q}_{p}$ with a ring of integers $\Of$. Let $K$ be a complete discretely valued field extension of $F$ with ring of integers $R$ and a uniformiser $\pi$. 

Let $\mathfrak{g}$ be an $\Of$-lattice of a split reductive $F$-Lie algebra $\mathfrak{g}_{F}$, such that $\mathfrak{g}$ has the form $\mathfrak{g} = \mathfrak{g}_{s} \oplus \mathfrak{a}$, where $\mathfrak{a}$ is the centre of $\mathfrak{g}$ and $\mathfrak{g}_{s,F}$ is split semisimple. Assume $\mathfrak{g}_{s}$ has a Chevalley basis $\{e_{\alpha},f_{\alpha} \mid \alpha \in \Phi^{+} \} \cup \{h_{\alpha} \mid \alpha \in \Delta \}$ and Cartan subalgebra $\mathfrak{h}_{s}$ spanned by the $h_{\alpha}$. Write $\mathfrak{h} = \mathfrak{h}_{s} \oplus \mathfrak{a}$.

Let $G$ be an $F$-uniform group with associated $\Of$-Lie algebra $L_{G} = p^{n+1}\mathfrak{g}$, and with corresponding Iwasawa algebras $RG$ and $KG$. Then as discussed in \S \ref{Iwasawa} $KG$ embeds in a completed enveloping algebra $\Ug$.

Now, for any $I \subseteq \Delta$ and any $\mathfrak{h}' \subseteq \mathfrak{h}$ with $\mathfrak{h} = \mathfrak{h}' \oplus \mathfrak{a}$ and $h_{\alpha} \in \mathfrak{h}'$ for all $\alpha \in I$,
there is a corresponding subalgebra $\mathfrak{p}_{I, \mathfrak{h}'}$ defined in \S\ref{gvm}. In the case where $\mathfrak{g}$ is semisimple, this is simply the parabolic subalgebra corresponding to $I$. If $\lambda : \mathfrak{h}_{K} \rightarrow K$ is a weight such that $\lambda(h_{\alpha}) \in \mathbb{N}_{0}$ for all $\alpha \in I$, then we define a corresponding generalised Verma module $M_{I,\mathfrak{h}'}(\lambda)$. In the case where $\mathfrak{g}$ is semisimple, this corresponds to the usual meaning of a generalised Verma module.

\begin{mydef}
An \emph{affinoid generalised Verma module} for $KG$ is a $KG$-module of the form $\widehat{{M}_{I,\mathfrak{h}'}(\lambda)} = \Ug \otimes_{\ug} M_{I,\mathfrak{h}'}(\lambda)$, where $M_{I,\mathfrak{h}'}(\lambda)$ is a generalised Verma module as above, and $\lambda$ satisfies the condition $\lambda(p^{n} \mathfrak{h}_{R}) \subseteq R$.
\end{mydef}

\begin{mydef}
We say that the subset $I \subseteq \Delta$ is a \emph{totally proper} subset if in the Dynkin diagram of $\Phi$ with nodes $\Delta$, no connected component is contained entirely in $I$.
\end{mydef}

\noindent We can now state our main theorem, which is a more general version of theorem \ref{theorem1} that applies to the reductive case.

\begin{mythm}
\label{mainthm}
Any affinoid generalised Verma module $\widehat{{M}_{I,\mathfrak{h}'}(\lambda)}$ such that $I \subset \Delta$ is totally proper is faithful as a $KG$-module.
\end{mythm}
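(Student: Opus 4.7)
The plan is to proceed by induction on the semisimple rank of $\mathfrak{g}_s$, taking as base case $I = \emptyset$, where $\widehat{M_{\emptyset, \mathfrak{h}'}(\lambda)}$ is an affinoid Verma module whose faithfulness is the main theorem of \cite{verma}. Two preliminary reductions set up the induction proper. The abelian centre $\mathfrak{a} \subseteq \mathfrak{g}$ contributes a commutative completed tensor factor to $\Ug$ on which $\lambda|_\mathfrak{a}$ acts by a character (well-defined thanks to $\lambda(p^n \mathfrak{h}_R) \subseteq R$), so no non-zero element of this factor can lie in an annihilating ideal and the centre may be discarded. Secondly, when $\mathfrak{g}_s$ splits as a direct sum $\bigoplus_j \mathfrak{g}_j$ of simple ideals, the totally proper hypothesis forces $I \cap \Delta_j \subsetneq \Delta_j$ for every $j$, so $\widehat{M_{I, \mathfrak{h}'}(\lambda)}$ factors as a completed tensor product of affinoid generalised Verma modules over the simple factors, and faithfulness on each factor entails faithfulness of the whole. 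This reduces the problem to the case $\mathfrak{g}_s$ simple and $I \subsetneq \Delta$.

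For the inductive step, I would pick a simple root $\beta \in \Delta \setminus I$ chosen so that $I$ remains totally proper with respect to the subdiagram $\Delta \setminus \{\beta\}$, and let $\mathfrak{g}' \subset \mathfrak{g}$ be the reductive subalgebra generated by $\mathfrak{h}$ together with the root spaces for roots in $\Phi \cap \mathbb{Z}(\Delta \setminus \{\beta\})$, with corresponding uniform subgroup $G' \subseteq G$. Its semisimple rank is strictly smaller, so the inductive hypothesis will apply to $\mathfrak{g}'$. Using the triangular decomposition $\mathfrak{g} = \mathfrak{n}_\beta^- \oplus \mathfrak{g}' \oplus \mathfrak{n}_\beta^+$, where $\mathfrak{n}_\beta^\pm$ denotes the span of root spaces for positive or negative roots with non-trivial $\beta$-component, one analyses $\widehat{M_{I, \mathfrak{h}'}(\lambda)}$ as a completed module over $\mathfrak{g}'$: it decomposes as the completion of a module that is built up from affinoid generalised Verma modules for $\mathfrak{g}'$ attached to the totally proper parabolic data $I \subseteq \Delta \setminus \{\beta\}$. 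The inductive hypothesis then yields faithfulness over $KG'$, and a PBW-style argument promotes this to faithfulness over $KG$ by ruling out annihilating elements whose expansion involves the $\beta$-root directions non-trivially.

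The main obstacle is two-fold. First, the combinatorial choice of $\beta$ may fail: for instance in type $A_4$ with $I = \{\alpha_1, \alpha_3\}$, the only option $\beta = \alpha_2$ disconnects $\Delta \setminus \{\beta\} = \{\alpha_1\} \sqcup \{\alpha_3, \alpha_4\}$ and the piece $\{\alpha_1\}$ lies wholly inside $I$; such configurations require either iterating the reduction along a sequence of simple roots (while carefully tracking the evolution of $I$ and the resulting nested subalgebras) or an independent argument tailored to the exceptional case. Secondly---and I expect more substantively---even once $\beta$ is chosen, the passage from $KG'$-faithfulness to $KG$-faithfulness is non-trivial: a hypothetical non-zero ideal of $KG$ contained in $\Ann_{KG}(\widehat{M_{I,\mathfrak{h}'}(\lambda)})$ would in general have non-trivial expansion in the $\beta$-direction, and showing this forces such an ideal to vanish relies decisively on the freeness of $\widehat{M_{I,\mathfrak{h}'}(\lambda)}$ as a completed module over the negative nilradical of $\mathfrak{p}_{I,\mathfrak{h}'}$ together with its infinite-dimensionality in the $\beta$-direction.
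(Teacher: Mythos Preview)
Your inductive scheme is genuinely different from the paper's, and as it stands it has a real gap at the combinatorial step that you yourself flag. The difficulty is worse than your $A_4$ example suggests: already in type $A_2$ with $I=\{\alpha\}$ the only element of $\Delta\setminus I$ is $\beta$, and removing it leaves the diagram $\{\alpha\}=I$, which is not totally proper. So the very first non-trivial case of the theorem is inaccessible to your induction. More generally, whenever every vertex of $\Delta\setminus I$ is adjacent to $I$ on all sides (e.g.\ $A_3$ with $I=\{\alpha_1,\alpha_3\}$), no admissible $\beta$ exists, and ``iterating along a sequence'' does not help because each removal only makes the situation worse. Your second obstacle is also more serious than indicated: the restriction of $M_I(\lambda)$ to $\mathfrak{g}'=\mathfrak{l}_{\Delta\setminus\{\beta\}}$ is \emph{not} locally finite (any $f_\gamma$ with $\gamma\in\Delta\setminus(I\cup\{\beta\})$ acts freely), so the module does not decompose into generalised Verma modules for $\mathfrak{g}'$ in any straightforward sense, and the promotion from $KG'$- to $KG$-faithfulness cannot proceed via Corollary~\ref{5.3cor} as you implicitly hope.

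The paper's argument avoids both problems by inducting on $|I|$ rather than on the rank, and by restricting to the \emph{smaller} Levi $\mathfrak{l}_I$ rather than to a rank-one-smaller $\mathfrak{g}'$. Over $\mathfrak{l}_I$ every weight $\lambda-\sum_{\alpha\in\Delta\setminus I} c_\alpha\alpha$ is dominant integral, so $M_I(\lambda)$ contains an infinite family of \emph{finite-dimensional} irreducible $\mathfrak{l}_I$-modules. A density argument (Theorem~\ref{annthm}) shows that anything annihilating all of these must annihilate an affinoid generalised Verma module for $\mathfrak{l}_I$ with parabolic datum $I^{o}\subsetneq I$ (the interior of $I$ in the Dynkin diagram), and a short combinatorial lemma shows $I^{o}$ is always totally proper in $I$---so the induction goes through uniformly with no exceptional configurations. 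The passage from $RL$-faithfulness back up to $KG$-faithfulness is handled by a duality argument (Theorem~\ref{leviprop}) using the contravariant functor on category~$\mathcal{O}$, an ingredient your outline has no analogue of.
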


We note that the condition that $I$ is totally proper is in fact necessary. For example, if $I = \Delta$ then $\widehat{{M}_{I,\mathfrak{h}'}(\lambda)}$ is finite-dimensional and so is certainly not faithful over $KG$. Also note that the main case of interest is the case where $\mathfrak{g}_{K}$ is simple, in which our definition of a generalised Verma module is the same as the usual definition. In this case the condition that $I$ is totally proper just means that $I$ is a proper subset of $\Delta$, which is equivalent to the condition that the generalised Verma module is infinite-dimensional over $K$.

Our proof strategy is by induction on the size of the set $I$. Although we are most interested in the case where $\mathfrak{g}_{K}$ is simple, the more general statement is necessary for our induction proof.

\begin{itemize}

\item In \S \ref{reductionsection} we carry out a reduction to the semisimple case. Specifically, given $M_{I,\mathfrak{h}'}(\lambda)$ we associate an affinoid generalised Verma module $\widehat{M_{I}(\mu)}$ for $\mathfrak{g}_{s}$. We then show that for $\widehat{{M}_{I,\mathfrak{h}'}(\lambda)}$ to be faithful over $KG$, it is sufficient that $\widehat{{M}_{I}(\mu)}$ is faithful over $KG_{s}$, where $G_{s}$ is the subgroup of $G$ corresponding to $p^{n+1}\mathfrak{g}_{s}$.

\item This is enough to prove the base case for our induction. The case where $\mathfrak{g}$ is semisimple and $I = \emptyset$ is precisely the case of an affinoid Verma module, which is proved in \cite[Theorem 5.4]{verma}.

\item In \S \ref{dualsection} we restrict to the semisimple case. We use the duality functor on category $\mathcal{O}$ together with Corollary \ref{5.3cor} (which is our version of \cite[Theorem 5.3]{verma}) to show that for an affinoid generalised Verma module to be faithful over $KG$, it is sufficient that it is faithful over $RL$, with $L$ the corresponding Levi subgroup.

\item Looking at the structure of an affinoid generalised Verma module over the Levi subalgebra $\mathfrak{l}$, we note that it contains a family of infinitely many finite-dimensional modules. In \S \ref{annsection} we argue that any element of $\Ul$ annihilating all of these finite-dimensional modules must also annihilate some induced module for $\Ul$.

\item This induced module has the required form for us to apply the inductive hypothesis. This motivates stating the theorem for reductive Lie algebras, because $\mathfrak{l}$ will be reductive but  not semisimple.

\end{itemize}

\section{Background}

\subsection{Preliminaries about Lie algebras}

Whenever $\mathfrak{g}$ is a Lie algebra over a commutative ring $A$ and $B$ is an $A$-algebra, we will write $\mathfrak{g}_{B}$ for the $B$-Lie algebra $\mathfrak{g}_{B} = B \otimes_{A} \mathfrak{g}$.

If $\mathfrak{g}$ is an $F$-Lie algebra, we say an $\Of$-Lie subalgebra $\mathfrak{g}'$ is an $\Of$-lattice of $\mathfrak{g}$ if there is an $F$-basis $x_{1},\dots,x_{r}$ of $\mathfrak{g}$ such that $\mathfrak{g}' = \Of \{x_{1},\dots,x_{r} \}$. 

We now make a statement of the PBW theorem, which is well known in the case of Lie algebras over a field, but also holds for free Lie algebras over commutative rings.

\begin{mythm}
\label{PBWthm}
Suppose $\mathfrak{g}$ is a free, finite rank Lie algebra over a commutative ring $A$, with a basis $x_{1},\dots,x_{m}$. Then $U(\mathfrak{g})$ has a free $A$-basis $\{x_{1}^{s_{1}} \dots x_{m}^{s_{m}} \mid s \in \mathbb{N}_{0}^{m} \}$.
\end{mythm}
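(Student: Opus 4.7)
The plan is to follow the classical two-step proof of the PBW theorem, adapted to the setting where $\mathfrak{g}$ is free of finite rank as an $A$-module. First I would show that the claimed monomials span $U(\mathfrak{g})$, and then I would construct an explicit faithful representation to deduce their $A$-linear independence.

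For the spanning step, write $T(\mathfrak{g})$ for the tensor algebra and $U = T(\mathfrak{g})/J$, where $J$ is the two-sided ideal generated by $x \otimes y - y \otimes x - [x,y]$ for $x, y \in \mathfrak{g}$. Let $U^{(n)}$ denote the image in $U$ of $\bigoplus_{k \leq n} \mathfrak{g}^{\otimes k}$. I would argue by induction on $n$ that $U^{(n)}$ is $A$-spanned by the ordered monomials $x_1^{s_1} \cdots x_m^{s_m}$ with $s_1 + \cdots + s_m \leq n$. The inductive step is a bubble-sort argument: the relation $x_j x_i \equiv x_i x_j \pmod{U^{(n-1)}}$ lets one reorder any length-$n$ product of basis elements into standard form, modulo a lower-degree correction that is handled by the inductive hypothesis.

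The harder step is linear independence. Here I would follow the Bourbaki-style construction of a faithful $\mathfrak{g}$-module structure on the polynomial ring $P = A[y_1, \ldots, y_m]$. One defines operators $\rho(x_i)$ on $P$ by induction on the degree of the monomial being acted on, requiring $\rho(x_i) \cdot (y_1^{s_1} \cdots y_m^{s_m}) = y_i y_1^{s_1} \cdots y_m^{s_m}$ whenever $i \leq \min\{j : s_j > 0\}$, and using the relation $\rho(x_i)\rho(x_j) - \rho(x_j)\rho(x_i) = \rho([x_i,x_j])$ to define it otherwise. Simultaneously verifying that this relation holds on all monomials up to the current degree is what closes the induction; the Jacobi identity for $\mathfrak{g}$ is exactly the consistency condition required. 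By the universal property of $U(\mathfrak{g})$ this extends to a $U$-module structure on $P$, and a short induction on $n = s_1 + \cdots + s_m$ shows that $x_1^{s_1} \cdots x_m^{s_m} \cdot 1 = y_1^{s_1} \cdots y_m^{s_m}$ plus terms of strictly lower total degree in $P$. Since the monomials $y_1^{s_1} \cdots y_m^{s_m}$ are manifestly $A$-linearly independent in the polynomial ring, so are the corresponding ordered monomials in $U$.

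The main obstacle will be the inductive definition of the operators $\rho(x_i)$: these must be defined case-by-case with the Jacobi identity for the structure constants of $\mathfrak{g}$ verified at each step, which is where all the work lies. The argument works uniformly over commutative rings $A$ because it never requires inverting any element of $A$; linear independence is detected in the polynomial ring $P$, which is free on the monomials in the $y_i$ by inspection.
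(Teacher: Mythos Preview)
Your outline is correct and is the standard Bourbaki--Jacobson argument; it works over any commutative ring precisely because $\mathfrak{g}$ is assumed free as an $A$-module, so the polynomial model $P = A[y_1,\ldots,y_m]$ is available and the inductive construction of $\rho$ goes through without needing to invert anything.

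The paper, however, does not prove this at all: its ``proof'' is simply a citation to \cite[2.1.12]{Dixmier}. So you are not taking a different route from the paper so much as supplying the content the paper delegates to a reference. What you have sketched is essentially the argument one finds in Dixmier (or equivalently in Bourbaki, \emph{Lie Groups and Lie Algebras}, Ch.~I, \S2.7), so your proposal and the cited source agree in substance. If you want to match the paper's level of detail you could replace your sketch with the same citation; conversely, if you want a self-contained account, your plan is the right one and the only real labour is, as you say, the careful inductive verification that the Jacobi identity closes the definition of $\rho$.
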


\begin{proof}
\cite[2.1.12]{Dixmier}
\end{proof}

\noindent For some results we will want to consider highest-weight modules over Lie algebras that are not semisimple. To this end, we establish a more general setting to define highest-weight modules.

Firstly, suppose $\mathfrak{a}$ is an abelian $K$-Lie algebra and $M$ is a $U(\mathfrak{a})$-module. 

\begin{mydef}
\begin{itemize}
\item Suppose $\lambda : \mathfrak{a}_{K} \rightarrow K$ is a $K$-linear map. We say $v \in M$ is a \emph{weight vector} of weight $\lambda$ if $a \cdot v = \lambda(a) v$ for all $a \in \mathfrak{a}_{K}$.

\item Given $\lambda : \mathfrak{a}_{K} \rightarrow K$ we write $M_{\lambda}$ for the $K$-subspace of $M$ consisting of all $\lambda$-weight vectors.

\item We say $M$ is a \emph{weight module} if $M$ has a $K$-basis consisting of weight vectors.

\end{itemize}
\end{mydef}

\begin{mydef}
\label{triang}
We say an $\Of$-Lie algebra $\mathfrak{g}$ has a \emph{triangular decomposition} if there are free, finite-rank $\Of$-Lie subalgebras $\mathfrak{n}^{+}, \mathfrak{h}, \mathfrak{n}^{-}$ satisfying the following conditions:

\begin{enumerate}
\item $\mathfrak{h}$ is abelian. Given this, we can consider weight vectors and weight modules over $\uh$.

\item $\mathfrak{g} = \mathfrak{n}^{+} \oplus \mathfrak{h} \oplus \mathfrak{n}^{-}$ as an $\Of$-module.

\item Suppose $\mathfrak{n}$ is either $\mathfrak{n}^{+}$ or $\mathfrak{n}^{-}$. Then $[\mathfrak{h}, \mathfrak{n}] \subseteq \mathfrak{n}$. Moreover, the $\Of$-linear maps $[h,-] : \mathfrak{n} \rightarrow \mathfrak{n}$ for $h \in \mathfrak{h}$ are simultaneously diagonalisable. Note that this condition together with Theorem \ref{PBWthm} imply that $\un$ is a weight module with $h \in \mathfrak{h}_{K}$ acting by $[h,-]$.

\item Suppose $\mathfrak{n}$ is either $\mathfrak{n}^{+}$ or $\mathfrak{n}^{-}$. Suppose $x_{1},\dots,x_{r}$ is an $\Of$-basis of weight vectors for $\mathfrak{n}$ with corresponding weights $\alpha_{1},\dots,\alpha_{r} : \mathfrak{h}_{K} \rightarrow K$. Then for any $c_{1},\dots,c_{r} \in \mathbb{N}_{0}$ not all zero we have $\sum c_{i} \alpha_{i} \neq 0$. In particular, for any $\lambda \in \mathfrak{h}_{K}^{*}$, the subspace $\un_{\lambda}$ is finite-dimensional.

\end{enumerate}

\noindent Note for instance that the usual definition of a triangular decomposition for a semisimple Lie algebra meets these conditions.

Now, if $M$ is a $\ug$-module, we say a weight vector $x \in M$ is a \emph{highest-weight vector if} $e \cdot x = 0$ for all $e \in \mathfrak{n}^{+}$. We say $M$ is a \emph{highest-weight module} if $M$ is generated by a highest-weight vector.

\end{mydef}

Now restrict to the case where $\mathfrak{g}_{F}$ is a split semisimple $F$-Lie algebra with root system $\Phi$.

\begin{mydef}
\label{chevalley}
We say that an $\Of$-basis $\{h_{\alpha} \mid \alpha \in \Delta \} \cup \{e_{\alpha},f_{\alpha} \mid \alpha \in \Phi^{+} \}$ for $\mathfrak{g}$
is a \emph{Chevalley basis} if the following relations are satisfied:

\begin{itemize}
\item $[h_{\alpha}, h_{\beta}] = 0$ for all $\alpha, \beta \in \Delta$,

\item $[h_{\alpha}, e_{\beta}] = \langle \beta, \alpha^{\lor} \rangle e_{\beta}$ for all $\alpha \in \Delta, \beta \in \Phi^{+}$,

\item $[h_{\alpha}, f_{\beta}] = -\langle \beta, \alpha^{\lor} \rangle f_{\beta}$ for all $\alpha \in \Delta, \beta \in \Phi^{+}$,

\item $[e_{\alpha},f_{\alpha}] = h_{\alpha}$ for all $\alpha \in \Delta$.

\item Suppose we write $e_{-\alpha} = f_{\alpha}$ whenever $\alpha \in \Phi^{+}$. Then for $\alpha, \beta \in \Phi$, we have $[e_{\alpha},e_{\beta}] = C_{\alpha,\beta}e_{\alpha + \beta}$ for $C_{\alpha,\beta} \in  \Of$, where $C_{\alpha, \beta} \neq 0 $ if and only if $\alpha + \beta \in \Phi$, and where $C_{\alpha,\beta} =  C_{-\beta, -\alpha}$ for any $\alpha, \beta$.

\end{itemize}
\end{mydef}

\noindent This is analagous to the concept of a Chevalley basis for a semisimple Lie algebra over an algebraically closed field of characteristic $0$, which is known to always exist (see for instance  \cite[\S 25.2]{otherHumphreys}). In our setting we will assume that a Chevalley basis exists. For example we could take $\mathfrak{g} = \mathfrak{sl}_{m+1}(\Of)$ for some $m$, which has a Chevalley basis.

We now look at some subalgebras of $\mathfrak{g}$. Given a subset $I \subseteq \Delta$, we can then define a corresponding root subsystem $\Phi_{I} = \mathbb{Z}I \cap \Phi$. Then we have the following subalgebras of $\mathfrak{g}$:

\begin{mydef}
\begin{itemize}
\label{subalgebras}
\item $\mathfrak{h}$ will denote the Cartan subalgebra spanned by $\{h_{\alpha} \mid \alpha \in \Delta \}$.

\item $\mathfrak{p}_{I}$ is the parabolic subalgebra spanned by $\{h_{\alpha} \mid \alpha \in \Delta \} \cup \{e_{\alpha} \mid \alpha \in \Phi^{+} \} \cup \{f_{\alpha} \mid \alpha \in \Phi_{I}^{+} \}$,

\item $\mathfrak{l}_{I}$ is the Levi subalgebra spanned by $\{h_{\alpha} \mid \alpha \in \Delta \} \cup \{e_{\alpha},f_{\alpha} \mid \alpha \in \Phi_{I}^{+} \}$,

\item $\mathfrak{g}_{I}$ is the semisimple subalgebra spanned by $\{h_{\alpha} \mid \alpha \in I \} \cup \{e_{\alpha},f_{\alpha} \mid \alpha \in \Phi_{I}^{+} \}$.

\end{itemize}

\end{mydef}

\noindent Note that $\mathfrak{p}_{I}$ and $\mathfrak{l}_{I}$ have triangular decompositions, so we can define highest-weight modules over them although they are not semisimple.

\subsection{Completions}
\label{completions}
For now, we take $\mathfrak{g}$ to be any $\Of$-Lie algebra which is free of finite rank as an $\Of$-module. 

\begin{mydef}

We fix a descending filtration of $\ug$ as follows: first take $\Gamma_{0}\ug = U(p^{n} \mathfrak{g}_{R})$. Then take $\Gamma_{i}\ug = \pi^{i} \Gamma_{0} \ug$ for each $i \in \mathbb{Z}$.

Define $\Ug$ to be the completion of $\ug$ with respect to this filtration, which is itself a filtered ring with a filtration $\Gamma_{i}\Ug$.

\end{mydef}

\begin{mylem}
\label{PBW}
Suppose we have an $\Of$-basis $x_{1},\dots,x_{r}$ for $\mathfrak{g}$. Write $(p^{n} x)^{s} = (p^{n} x_{1})^{s_{1}} \dots (p^{n} x_{n})^{s_{n}}$ for $s \in \mathbb{N}_{0}^{r}$, and use the notation $|s| = \underset{i}{\sum}s_{i}$. Then as a $K$-vector space, we can identify $\Ug$ with the ring of convergent power series:
 $$\Ug = \{ \underset{s \in \mathbb{N}_{0}^{r}}{\sum} C_{s} (p^{n} x)^{s} \mid C_{s} \in K, C_{s} \rightarrow 0 \text{ as } |s| \rightarrow \infty \}.$$
\end{mylem}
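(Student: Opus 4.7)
The plan is to apply Theorem \ref{PBWthm} to the free $R$-Lie algebra $p^{n}\mathfrak{g}_{R}$ and then unwind the inverse-limit definition of the completion in this basis.

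First I would check that $p^{n}\mathfrak{g}_{R}$ is a free $R$-Lie algebra of rank $r$ with basis $p^{n}x_{1},\dots,p^{n}x_{r}$; it is closed under the bracket because $[p^{n}x_{i},p^{n}x_{j}] = p^{n}\cdot(p^{n}[x_{i},x_{j}])$ with $[x_{i},x_{j}]\in\mathfrak{g}_{R}$. Theorem \ref{PBWthm} then supplies the $R$-basis $\{(p^{n}x)^{s}:s\in\mathbb{N}_{0}^{r}\}$ of $\Gamma_{0}\ug = U(p^{n}\mathfrak{g}_{R})$. Since $\ug \cong K\otimes_{R}U(p^{n}\mathfrak{g}_{R})$, the same collection is also a $K$-basis of $\ug$, so every element of $\ug$ has a unique finite PBW expansion $\sum_{s}C_{s}(p^{n}x)^{s}$ with $C_{s}\in K$, and lies in $\Gamma_{i}\ug = \pi^{i}\Gamma_{0}\ug$ if and only if every coefficient satisfies $v(C_{s}) \geq i$, where $v$ denotes the $\pi$-adic valuation on $K$.

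Next I would unwind $\Ug = \varprojlim_{i}\ug/\Gamma_{i}\ug$ using this basis. From a formal sum $\sum_{s}C_{s}(p^{n}x)^{s}$ with $C_{s}\to 0$, the truncations $u_{N} = \sum_{|s|\leq N}C_{s}(p^{n}x)^{s}$ form a Cauchy sequence in $\ug$: for any $i$ only finitely many $s$ satisfy $v(C_{s})<i$, so $u_{M}-u_{N}\in\Gamma_{i}\ug$ once $M,N$ are sufficiently large, and the sequence determines an element of $\Ug$. Conversely, if $(v_{j})$ is Cauchy in $\ug$ with PBW expansions $v_{j} = \sum_{s}C_{s,j}(p^{n}x)^{s}$, then by the characterisation of $\Gamma_{i}\ug$ each coordinate sequence $(C_{s,j})_{j}$ is Cauchy in $K$ and therefore converges to some $C_{s}\in K$, and the Cauchy condition on $(v_{j})$ forces all but finitely many $C_{s}$ to lie in $\pi^{i}R$ for any given $i$, i.e.\ $C_{s}\to 0$.

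The main subtlety is to show that these two constructions are mutually inverse, which reduces to the uniqueness part of the PBW expansion: two distinct formal sums cannot represent the same element of $\Ug$, as their difference would contain a nonzero coefficient $C_{s}$ of finite valuation, preventing it from lying in $\bigcap_{i}\Gamma_{i}\ug = 0$. The $K$-linearity of the identification is then transparent from the linearity of the PBW expansion in its coefficients.
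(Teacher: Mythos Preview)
Your proof is correct and follows essentially the same approach as the paper: apply Theorem \ref{PBWthm} to $p^{n}\mathfrak{g}_{R}$ to obtain the free $R$-basis $\{(p^{n}x)^{s}\}$ of $\Gamma_{0}\ug$, and then read off the description of the completion from the resulting description of the filtration. The paper's own proof is considerably terser, simply stating that the result ``follows from the definition of $\Ug$'' once the PBW basis of $\Gamma_{0}\ug$ (and hence of each $\Gamma_{i}\ug$) is in hand; your Cauchy-sequence argument is a correct unpacking of exactly that step.
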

 
\begin{proof}
The PBW theorem (Theorem \ref{PBWthm}) implies that $\Gamma_{0}\ug$ has a free $R$-basis $\{ (p^{n}x)^{s} \mid s \in \mathbb{N}_{0}^{r} \}$. Then $\Gamma_{i} \ug$ has free $R$-basis $\{ \pi^{i}(p^{n}x)^{s} \mid s \in \mathbb{N}_{0}^{r} \}$. The result now follows from the definition of $\Ug$.

\end{proof}

\begin{mydef}
For any finitely generated $\ug$-module $M$, we define the completion of $M$ as $\hat{M} = \Ug \otimes_{\ug} M$. 
\end{mydef}

\begin{myprop}
\label{flatnessprop}
$\Ug$ is flat as a right $\ug$-module. In particular, the completion functor $M \mapsto \hat{M}$ is an exact functor from finitely generated $\ug$-modules to $\Ug$-modules.
\end{myprop}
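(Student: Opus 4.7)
The plan is to identify $\Ug$ as the rational localisation of the $\pi$-adic completion of a certain Noetherian $R$-subalgebra of $\ug$, and then invoke standard flatness of non-commutative completions.

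Set $A := \Gamma_{0}\ug = U(p^{n}\mathfrak{g}_{R})$. Using Lemma \ref{PBW}, I identify $\Ug$ with $\hat{A}\otimes_{R}K$, where $\hat{A}$ denotes the $\pi$-adic completion of $A$. The point is that in any convergent series $\sum_{s}C_{s}(p^{n}x)^{s}$ with $C_{s}\in K$ and $C_{s}\to 0$, only finitely many coefficients $C_{s}$ have negative $\pi$-adic valuation, so after multiplying by a sufficiently large power of $\pi$ one lands in $\hat{A}$. Similarly $\ug = A\otimes_{R}K = A[\pi^{-1}]$, because inverting $\pi$ in $p^{n}\mathfrak{g}_{R}$ recovers $\mathfrak{g}_{K}$. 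Since localisation at $\pi$ is flat, it will suffice to prove that $\hat{A}$ is flat as a right $A$-module and then invert $\pi$.

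To use completion flatness I first need $A$ to be right Noetherian. Filter $A$ by total PBW degree; by Theorem \ref{PBWthm} the associated graded ring is the symmetric algebra $\mathrm{Sym}_{R}(p^{n}\mathfrak{g}_{R}) \cong R[x_{1},\dots,x_{r}]$, which is Noetherian by Hilbert's basis theorem, so $A$ itself is left- and right-Noetherian by the standard filtered lifting argument. Now equip $A$ with its $\pi$-adic filtration; the associated graded is $(A/\pi A)[\tau]$, and $A/\pi A \cong U(\mathfrak{g}_{R/\pi R})$ is Noetherian by the same PBW argument, so this associated graded is Noetherian. The $\pi$-adic filtration on $A$ is moreover Zariskian (elements of positive filtration lie in the Jacobson radical of the Rees ring), so by the Li--van Oystaeyen theory of Zariskian filtered rings, $\hat{A}$ is faithfully flat over $A$ on both sides. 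Inverting $\pi$ gives that $\Ug$ is flat as a right $\ug$-module, and the ``in particular'' clause is immediate: a flat right module gives an exact tensor functor, hence $M\mapsto\hat{M}$ is exact on all $\ug$-modules, a fortiori on finitely generated ones.

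The main anticipated obstacle is justifying flatness of the non-commutative completion $A\hookrightarrow \hat{A}$: the commutative analogue is textbook, but in our setting one must invoke a genuine non-commutative result. If one prefers to avoid an explicit appeal to Zariskian-filtration machinery, the alternative is a direct Artin--Rees argument for the Noetherian ring $A$, verifying that the natural map $\hat{A}\otimes_{A}N\to\hat{N}$ is an isomorphism for every finitely generated $A$-module $N$ and that this operation preserves short exact sequences. Either route yields the same conclusion and is straightforward once $A$ has been shown to be Noetherian.
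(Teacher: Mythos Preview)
Your argument is correct and matches the paper's approach: identify $A=\Gamma_{0}\ug=U(p^{n}\mathfrak{g}_{R})$, observe via PBW that its associated graded is a polynomial ring over $R$ so $A$ is Noetherian, conclude that the $\pi$-adic completion $\hat{A}=\Gamma_{0}\Ug$ is flat over $A$, and then invert $\pi$. The only cosmetic difference is that the paper cites \cite[3.2.3(iv)]{Dmodules} for the flatness of the completion (and passes to $K$ via the identification $\Ug\otimes_{\ug}M\cong\Gamma_{0}\Ug\otimes_{\Gamma_{0}\ug}M$), whereas you invoke the Zariskian/Artin--Rees machinery directly.
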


\begin{proof}
First, note that $\Gamma_{0}\Ug =  \{ \underset{s \in \mathbb{N}_{0}^{r}}{\sum} C_{s} (p^{n} x)^{s} \mid C_{s} \in R, C_{s} \rightarrow 0 \text{ as } |s| \rightarrow \infty \} $ is the $\pi$-adic completion of $\Gamma_{0}\ug$. Also note that $\Gamma_{0} \ug = U(p^{n} \mathfrak{g}_{R})$ is a left Noetherian ring, because its associated graded ring is a polynomial ring over $R$ by the PBW theorem. So by \cite[3.2.3 (iv)]{Dmodules} we can see that $\Gamma_{0} \Ug$ is flat as a right $\Gamma_{0} \ug$-module. 

Now if $M$ is a $\ug$-module, $\Ug \otimes_{\ug} M \cong \Gamma_{0} \Ug \otimes_{\Gamma_{0} \ug} M$ as abelian groups, and so $\Ug$ is flat over $\ug$.
\end{proof}

\begin{myprop}
Suppose $M$ is a finitely-generated $\ug$-module with a good filtration $\Gamma_{i}M$ with respect to the filtration of $\ug$ (i.e. $\Gamma_{i} = \pi^{i} \Gamma_{0}M$ for all $i$ and $\Gamma_{0}M$ is a finitely generated $U(p^{n} \mathfrak{g}_{R})$-module). Then $\hat{M}$ is the completion of $M$ with respect to $\Gamma$. Thus $\hat{M}$ also has a filtration $\Gamma_{i} \hat{M}$.
\end{myprop}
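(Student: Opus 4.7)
The plan is to reduce to standard facts about $\pi$-adic completions of finitely generated modules over Noetherian rings, applied to the integral piece $\Gamma_{0}M$. First, the hypothesis $\Gamma_{i}M = \pi^{i}\Gamma_{0}M$ together with exhaustiveness of the filtration gives $M = \Gamma_{0}M \otimes_{R} K$. From the identification $\hat{M} \cong \Gamma_{0}\Ug \otimes_{\Gamma_{0}\ug} M$ noted in the proof of Proposition \ref{flatnessprop}, combined with flatness of localisation, $\hat{M} \cong (\Gamma_{0}\Ug \otimes_{\Gamma_{0}\ug} \Gamma_{0}M) \otimes_{R} K$. Now $\Gamma_{0}\ug$ is Noetherian (its associated graded is a polynomial algebra over $R$ by PBW) and $\Gamma_{0}M$ is finitely generated over it, so the standard theorem on $\pi$-adic completions of finitely generated modules over a Noetherian ring identifies $\Gamma_{0}\Ug \otimes_{\Gamma_{0}\ug} \Gamma_{0}M$ canonically with the $\pi$-adic completion $\widehat{\Gamma_{0}M} := \varprojlim_{i} \Gamma_{0}M/\pi^{i}\Gamma_{0}M$. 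Hence $\hat{M} \cong \widehat{\Gamma_{0}M}[1/\pi]$.

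Next I would define the filtration on $\hat{M}$ by $\Gamma_{i}\hat{M} := \pi^{i}\widehat{\Gamma_{0}M}$, viewed inside $\widehat{\Gamma_{0}M}[1/\pi]$. This is an exhaustive separated filtration, the natural map $M \to \hat{M}$ is filtered, and the key claim to check is that the induced maps $M/\Gamma_{i}M \to \hat{M}/\Gamma_{i}\hat{M}$ are isomorphisms. Writing both sides as directed unions indexed by $j$ of $\pi^{-j}\Gamma_{0}M / \pi^{i}\Gamma_{0}M$ and $\pi^{-j}\widehat{\Gamma_{0}M} / \pi^{i}\widehat{\Gamma_{0}M}$ respectively, the comparison reduces, after multiplication by $\pi^{j}$, to the defining isomorphism $\Gamma_{0}M/\pi^{k}\Gamma_{0}M \cong \widehat{\Gamma_{0}M}/\pi^{k}\widehat{\Gamma_{0}M}$ of the $\pi$-adic completion; here I use that $\Gamma_{0}M$ is $R$-torsion-free, since it sits inside the $K$-vector space $M$.

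Finally, I would verify that $\hat{M}$ is complete with respect to $\Gamma$. Given a Cauchy sequence $(x_{n})$ in $\hat{M}$, the differences eventually lie in $\Gamma_{0}\hat{M} = \widehat{\Gamma_{0}M}$, so there exist $j_{0}$ and $N$ with $x_{n} \in \pi^{-j_{0}}\widehat{\Gamma_{0}M}$ for all $n \geq N$. Inside this piece, which is isomorphic to $\widehat{\Gamma_{0}M}$ via multiplication by $\pi^{j_{0}}$ and therefore $\pi$-adically complete, the sequence converges. Combining with the previous step, $\hat{M} \cong \varprojlim_{i} \hat{M}/\Gamma_{i}\hat{M} \cong \varprojlim_{i} M/\Gamma_{i}M$, which is exactly the completion of $M$ with respect to $\Gamma$; the final assertion about $\Gamma_{i}\hat{M}$ being a filtration is built into the construction.

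The main obstacle is the middle step: one must carefully check that inverting $\pi$ commutes compatibly with the projections modulo $\pi^{i}$ on both $\Gamma_{0}M$ and $\widehat{\Gamma_{0}M}$, since the resulting quotients $M/\pi^{i}\Gamma_{0}M$ are directed unions rather than quotients of a single Noetherian module. Once this is established, the first and third steps are essentially routine applications of standard Noetherian and topological machinery.
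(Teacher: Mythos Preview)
Your proposal is correct and follows essentially the same route as the paper: reduce to the integral piece $\Gamma_{0}M$, invoke the standard isomorphism $\Gamma_{0}\Ug \otimes_{\Gamma_{0}\ug} \Gamma_{0}M \cong \widehat{\Gamma_{0}M}$ for finitely generated modules over the Noetherian ring $\Gamma_{0}\ug$ (the paper cites \cite[3.2.3(iii)]{Dmodules} for this), and then tensor up with $K$. The only cosmetic difference is direction: the paper starts with the filtration completion $N = \varprojlim M/\Gamma_{i}M$, identifies $\Gamma_{0}N$ as the $\pi$-adic completion of $\Gamma_{0}M$, and then matches $\hat{M}$ with $N$ after applying $K\otimes_{R}-$; you start from $\hat{M}$, realise it as $\widehat{\Gamma_{0}M}[1/\pi]$, and then verify by hand that this is the filtration completion. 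Your extra verification of the quotients and of completeness is correct but more than the paper spells out; in particular the ``main obstacle'' you flag is not really an obstacle, since both sides are simply $K\otimes_{R}$ applied to the defining isomorphism $\Gamma_{0}M/\pi^{k}\Gamma_{0}M \cong \widehat{\Gamma_{0}M}/\pi^{k}\widehat{\Gamma_{0}M}$.
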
 

\begin{proof}
Write $N$ for the completion of $M$ with respect to $\Gamma$. Then $N$ has a filtration $\Gamma$, with $\Gamma_{j}N = \varprojlim_{i \geq j}\Gamma_{i}M = \varprojlim_{i \geq j} \pi^{i} \Gamma_{0}M$. 

So $\Gamma_{0} N$ is the $\pi$-adic completion of $\Gamma_{0}M$, a finitely generated $\Gamma_{0} \ug$-module. Then by \cite[3.2.3 (iii)]{Dmodules}, the natural map $\Gamma_{0} \Ug \otimes_{\Gamma_{0} \ug} \Gamma_{0}M \rightarrow \Gamma_{0} N$ is an isomorphism.

Now note that $\Ug \otimes_{\ug} M = K \otimes_{R} (\Gamma_{0} \Ug \otimes_{\Gamma_{0} \ug} \Gamma_{0}M)$ and $N = K \otimes_{R} \Gamma_{0} N$. Since $K$ is flat over $R$ (being the field of fractions of $R$), we obtain an isomorphism $\Ug \otimes_{\ug} M \rightarrow N$.

\end{proof}

\noindent From now on, we will restrict ourselves to filtered modules in the following category.

\begin{mydef}
The \emph{category} $\mathcal{M}_{n}(\mathfrak{g})$ is the category of all finitely generated filtered $\ug$-modules $(M, \Gamma)$ such that $\Gamma$ is a good filtration, and such that $\Gamma_{0}M$ is free as an $R$-module. The morphisms are filtered module homomorphisms. We will write $M \in \mathcal{M}_{n}(\mathfrak{g})$ if there exists a filtration for $M$ such that $(M,\Gamma) \in \mathcal{M}_{n}(\mathfrak{g})$.

\end{mydef}

\begin{mylem}
\label{separatedlem}
Suppose $(M,\Gamma) \in \mathcal{M}_{n}(\mathfrak{g})$. Then  the filtration $\Gamma$ is separated. This has the following consequences:

\begin{itemize}
\item $M$ embeds as a $\ug$-submodule of $\hat{M}$,

\item the topology of $\hat{M}$ induced by the filtration is Hausdorff.

\end{itemize}
\end{mylem}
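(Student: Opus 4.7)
The strategy is to establish separatedness of the filtration on $M$ directly from its definition, then derive both consequences as essentially formal corollaries.

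The plan is to first show $\bigcap_{i \in \mathbb{Z}} \Gamma_{i} M = 0$ using the structural hypothesis that $\Gamma_{0} M$ is free as an $R$-module. Since $\Gamma_{i} M = \pi^{i} \Gamma_{0} M$ by definition of $\mathcal{M}_{n}(\mathfrak{g})$, the intersection of the filtration is contained in $\bigcap_{i \geq 0} \pi^{i} \Gamma_{0} M$. Choosing a free $R$-basis $\{ e_{j} \}_{j \in J}$ of $\Gamma_{0} M$ (which may be infinite but provides unique finite-support expansions of elements), any $m \in \bigcap_{i \geq 0} \pi^{i} \Gamma_{0}M$ has coefficients $c_{j} \in \bigcap_{i \geq 0} \pi^{i} R$. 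Since $R$ is a complete discretely valued ring with uniformiser $\pi$, this intersection is zero by Krull's intersection theorem (or directly since $v_{\pi}(c_{j}) \geq i$ for all $i$ forces $c_{j} = 0$), so $m = 0$.

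For the first consequence, the preceding proposition identifies $\hat{M}$ with the completion $N = \varprojlim_{i} M / \Gamma_{i} M$, and the natural map $M \to N$ has kernel exactly $\bigcap_{i} \Gamma_{i} M$, which we just showed to be zero; hence $M \hookrightarrow \hat{M}$.

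For the second consequence, the filtration on $\hat{M}$ is $\Gamma_{j} \hat{M} = \ker ( \hat{M} \twoheadrightarrow M / \Gamma_{j} M )$, equivalently $\Gamma_{j} \hat{M} = \pi^{j} \Gamma_{0} \hat{M}$. Any element $x \in \bigcap_{j} \Gamma_{j} \hat{M}$ has zero image in $M/\Gamma_{j} M$ for every $j$, and so corresponds to the zero compatible sequence in the inverse limit; hence $x = 0$, which is equivalent to the topology being Hausdorff. There is no real obstacle here; the only subtlety is ensuring that the freeness of $\Gamma_{0} M$ (rather than, say, mere flatness or torsion-freeness) is what makes the coefficient argument work cleanly when $\Gamma_{0} M$ has infinite $R$-rank.
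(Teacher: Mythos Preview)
Your proof is correct and follows the same approach as the paper: choose a free $R$-basis for $\Gamma_{0}M$, write $\Gamma_{i}M = \pi^{i}\Gamma_{0}M$ as a direct sum over that basis, and observe that $\bigcap_{i}\pi^{i}R = 0$ forces $\bigcap_{i}\Gamma_{i}M = 0$. The paper's proof is terser (it does not spell out the two consequences, treating them as standard facts about completions), but your additional explanation of why $M \hookrightarrow \hat{M}$ and why the filtration on $\hat{M}$ is separated is correct and helpful.
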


\begin{proof}
Suppose $\mathcal{B}$ is a free $R$-basis for $\Gamma_{0}M$. We can describe the filtration of $M$ as follows:  $\Gamma_{i}M = \bigoplus_{v \in \mathcal{B}} \pi^{i}R v$, so $\underset{i \in \mathbb{Z}}{\bigcap} \Gamma_{i}M = 0$.
\end{proof}

Let $M \in \mathcal{M}_{n}(\mathfrak{g})$. Given a free $R$-basis $\mathcal{B}$ for $\Gamma_{0}M$, every element of $\hat{M}$ can be uniquely described as a sum $\underset{v \in \mathcal{B}}{\sum} C_{v} v$ with $v \in M$, which is convergent in the sense that for any $i$, $C_{v} \in \pi^{i}R$ for all but finitely many $v$. Moreover, every such convergent sum describes an element of $\hat{M}$. We can then describe $M \subseteq \hat{M}$ as the subset of all the finite sums.

\begin{mylem}
\begin{enumerate}

\item Suppose $M$ is a $\ug$-module which is finite-dimensional over $K$. Then the embedding $M \subseteq \hat{M}$ is a bijection, and so $M$ can be considered as a $\Ug$-module.

\item Suppose $M \in \mathcal{M}_{n}(\mathfrak{g})$ and $N \leq M$ is a finite-dimensional $\ug$-submodule. Then $\Ug \cdot N \subseteq N$ in $M$.
\end{enumerate}
\end{mylem}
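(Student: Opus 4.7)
The plan is to handle the two parts of the lemma in order, with the second reducing to the first via flatness of $\Ug$ over $\ug$.

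For part (1), I aim to equip $M$ with a good filtration and then read off $\hat{M}$ using the proposition identifying $\hat{M}$ with the filtered completion of $M$. Concretely, I will exhibit a $\Gamma_{0}\ug$-stable $R$-lattice $L \subseteq M$ and set $\Gamma_{i}M = \pi^{i} L$; this makes $(M,\Gamma)$ an object of $\mathcal{M}_{n}(\mathfrak{g})$, so the preceding proposition identifies $\Gamma_{0}\hat{M}$ with the $\pi$-adic completion of $\Gamma_{0}M = L$. Since $L$ is a finitely generated module over the complete discrete valuation ring $R$, it is already $\pi$-adically complete, so $\Gamma_{0}\hat{M} = L$. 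Tensoring with $K$ yields $\hat{M} = K \otimes_{R} L = M$, and under this identification the natural map $m \mapsto 1 \otimes m$ becomes the identity. Hence the embedding of Lemma \ref{separatedlem} is a bijection, and $M$ inherits a $\Ug$-module structure from $\hat{M}$.

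For part (2), apply part (1) to the finite-dimensional submodule $N$: the natural map $N \to \hat{N}$ is a bijection, giving $N$ a canonical $\Ug$-module structure. By flatness of $\Ug$ over $\ug$ (Proposition \ref{flatnessprop}), the inclusion $N \hookrightarrow M$ induces an injection of $\Ug$-modules $\hat{N} \hookrightarrow \hat{M}$. By functoriality this injection carries $N \subseteq \hat{N}$ to the image of $N \subseteq M$ under the embedding $M \hookrightarrow \hat{M}$ from Lemma \ref{separatedlem}, so $N$ viewed as a subset of $\hat{M}$ coincides with the $\Ug$-submodule $\hat{N}$. In particular $\Ug \cdot N \subseteq N$ as subsets of $\hat{M}$; and since $N \subseteq M$, we conclude $\Ug \cdot N \subseteq N \subseteq M$, as claimed.

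The main obstacle is producing the stable lattice $L$ for part (1). Starting from any $R$-lattice $L_{0}$ of $M$, the $\Gamma_{0}\ug$-submodule $\Gamma_{0}\ug \cdot L_{0}$ need not itself be a lattice unless the action of $p^{n}\mathfrak{g}_{R}$ on $M$ is bounded. In the setting of the paper this boundedness is available: finite-dimensional representations of the reductive Lie algebra $\mathfrak{g}_{K}$ are integrable, arising as algebraic representations of the corresponding split group scheme defined over $\Of$, and so admit $\mathfrak{g}_{R}$-stable (hence $p^{n}\mathfrak{g}_{R}$-stable) $R$-lattices. Once the lattice is in hand the remainder is just the formal completion computation sketched above.
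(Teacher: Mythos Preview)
Your overall architecture matches the paper's: for part (1), exhibit a good filtration with $\Gamma_{0}M$ a finite free $R$-lattice, observe that the filtered completion of a finite-rank free $R$-module is itself, and conclude $\hat{M}=M$; for part (2), use flatness to embed $\hat{N}=N$ into $\hat{M}$. Part (2) is essentially identical to the paper's argument.

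The difference lies in how part (1) obtains the lattice. The paper simply picks an arbitrary $K$-basis $x_{1},\dots,x_{r}$ of $M$, sets $\Gamma_{i}M=\pi^{i}R\{x_{1},\dots,x_{r}\}$, and asserts without comment that this is a good filtration, so that $(M,\Gamma)\in\mathcal{M}_{n}(\mathfrak{g})$ and the description of $\hat{M}$ preceding the lemma gives $\hat{M}=M$ directly. You are right to flag this step: for $\Gamma$ to be good one needs $R\{x_{1},\dots,x_{r}\}$ to be a $U(p^{n}\mathfrak{g}_{R})$-submodule, and an arbitrary $R$-lattice need not be stable. So you have identified a point the paper glosses over.

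That said, your proposed justification does not close the gap in the stated generality. At this point in \S\ref{completions} the Lie algebra $\mathfrak{g}$ is an arbitrary free finite-rank $\Of$-Lie algebra, not yet assumed reductive; and even once $\mathfrak{g}_{K}$ is reductive with nonzero centre $\mathfrak{a}_{K}$, a finite-dimensional $\ug$-module need not be integrable, since the centre may act by an arbitrary scalar. Indeed the lemma is false as literally written: take $\mathfrak{g}=\Of$ abelian with basis $x$, $n=0$, and $M=K$ with $x$ acting by $\pi^{-2}$; then $x-\pi^{-2}=-\pi^{-2}(1-\pi^{2}x)$ is a unit in $\Ug$ and $\hat{M}=0$. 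In the paper's actual applications the finite-dimensional modules arise inside $\mathcal{O}_{n}$, where Proposition~\ref{catOstructure} and Corollary~\ref{catOincatM} supply the required stable lattice; but neither the paper's one-line assertion nor your integrability argument justifies the lemma in the generality in which it is stated.
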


\begin{proof}

\begin{enumerate}
\item Suppose $M$ has a $K$-basis $x_{1},\dots,x_{r}$. Then define a good filtration $\Gamma$ on $M$ by $\Gamma_{i}M = \pi^{i} R \{x_{1},\dots,x_{r} \}$. So $(M,\Gamma) \in \mathcal{M}_{n}(\mathfrak{g})$. Then our description of $\hat{M}$ and $M$ above means that both $\hat{M}$ and $M$ consist of all sums $\sum a_{i} x_{i}$ with $a_{i} \in K$, that is, $\hat{M} = M$.

\item By exactness of the completion functor (Proposition \ref{flatnessprop}), we can see that the embedding $N \subseteq M$ of $\ug$-modules extends to an embedding $\hat{N} \subseteq \hat{M}$ of $\Ug$-modules. But since $\hat{N} = N$ as above, this means that $N \subseteq \hat{M}$ is a $\Ug$-submodule.

\end{enumerate}

\end{proof}

\noindent The following result will be useful when considering generalised Verma modules, which we will see are locally finite over the corresponding parabolic subalgebra.

\begin{myprop}
\label{locallyfiniteprop}
Suppose $(M, \Gamma) \in \mathcal{M}_{n}(\mathfrak{g})$ and $\mathfrak{p}$ is an $\Of$-Lie subalgebra of $\mathfrak{g}$ such that $M$ is a locally finite $\up$-module. Then $\Up \cdot M \subseteq M$ inside $\hat{M}$.

\end{myprop}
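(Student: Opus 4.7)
The plan is a density-plus-continuity argument. I fix $m \in M$ and set $N := \up \cdot m$; by the local finiteness hypothesis $N$ is a finite-dimensional $K$-subspace of $\hat{M}$, so it suffices to show $\Up \cdot m \subseteq N$. For any $u \in \Up$, I will approximate $u$ by a sequence $(u_k) \subseteq \up$, observe that $u_k \cdot m \in N$, use continuity of the action to deduce $u_k \cdot m \to u \cdot m$ in $\hat{M}$, and then conclude $u \cdot m \in N$ by showing that $N$ is closed in $\hat{M}$.

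For the continuity step, I first need that $\Up$ embeds in $\Ug$ with compatible filtrations, so that its action on $\hat{M}$ is the restriction of the $\Ug$-action. This follows from PBW (Theorem \ref{PBWthm}): extending an $\Of$-basis of $\mathfrak{p}$ to one of $\mathfrak{g}$ realises $\Gamma_0 \up = U(p^{n} \mathfrak{p}_{R})$ as an $R$-module direct summand of $\Gamma_0 \ug$, so $\Gamma_i \up = \up \cap \Gamma_i \ug$, and passing to inverse limits gives an inclusion $\Up \hookrightarrow \Ug$ with $\Gamma_i \Up = \Up \cap \Gamma_i \Ug$. Since $\hat{M}$ is a filtered $\Ug$-module with $\Gamma_j \Ug \cdot \Gamma_i \hat{M} \subseteq \Gamma_{i+j} \hat{M}$, for fixed $m$ the map $u \mapsto u \cdot m$ from $\Ug$ to $\hat{M}$ is continuous, and its restriction to $\Up$ is too. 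Because $\up$ is by definition dense in $\Up$, any $u \in \Up$ arises as a limit $u = \lim u_k$ with $u_k \in \up$, and then $u_k \cdot m \to u \cdot m$ in $\hat{M}$.

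The step I expect to be the main obstacle is showing that $N$ is closed in $\hat{M}$. By Lemma \ref{separatedlem} the filtration on $\hat{M}$ is separated, and since $\pi \Gamma_i \hat{M} = \Gamma_{i+1} \hat{M}$ this filtration makes $\hat{M}$ into a Hausdorff topological $K$-vector space. I would then invoke the standard non-archimedean fact that any finite-dimensional $K$-subspace of a Hausdorff topological $K$-vector space over a complete valued field is closed. The proof is by induction on dimension: for a line $Kv$ one defines the valuation $\nu(x) := \sup\{i : x \in \Gamma_i \hat{M}\}$ and uses completeness of $K$ to see that Cauchy sequences in $Kv$ converge in $Kv$; higher-dimensional subspaces are handled by passing to the Hausdorff quotient $\hat{M}/N'$ for a closed codimension-one subspace $N'$. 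Once $N$ is closed, the sequence $(u_k \cdot m) \subseteq N$ converging to $u \cdot m$ in $\hat{M}$ forces $u \cdot m \in N \subseteq M$, completing the proof; everything else is routine filtration bookkeeping.
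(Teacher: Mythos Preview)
Your proposal is correct and follows essentially the same density-plus-continuity argument as the paper. The only difference is in the closedness step: the paper enlarges $N$ to a span $K\{v_1,\dots,v_r\}$ of finitely many elements of the free $R$-basis $\mathcal{B}$ of $\Gamma_0 M$, so that closedness is immediate from the explicit coordinate description of $\hat{M}$, whereas you work with $N$ directly and invoke the general fact that finite-dimensional subspaces of Hausdorff topological $K$-vector spaces over a complete field are closed.
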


\begin{proof}
Let $w \in M$, and we claim that $\Up \cdot w \subseteq M$. Let $\mathcal{B}$ be a free $R$-basis for $\Gamma_{0}M$.

Firstly, consider $\up \cdot w$. Each element of $\up \cdot w$ can be described as a finite linear combination of elements of $\mathcal{B}$, and moreover $\up \cdot w$ is finite-dimensional by local finiteness. So we can choose $v_{1},\dots,v_{r} \in \mathcal{B}$ such that $\up \cdot w \subseteq K \{v_{1},\dots,v_{r} \} \subseteq M$. Then any element of $\Up \cdot w$ is the limit of a sequence of elements of $\up \cdot w \subseteq K \{v_{1},\dots,v_{r} \}$, and hence also lies in $K \{v_{1},\dots,v_{r} \} \subseteq M$.
\end{proof}

\noindent When proving that an element of $\Ug$ annihilates $\hat{M}$, it can be useful to use the following result, which comes from $M$ being a  dense subspace in $\hat{M}$ in the topology of the filtration $\Gamma$.

\begin{mylem}
\label{densitylem}
Let $(M,\Gamma) \in \mathcal{M}_{n}(\mathfrak{g})$. Suppose $x \in \Ug$ such that $x \cdot M = 0$ in $\hat{M}$. Then $x \cdot \hat{M} = 0$.

\end{mylem}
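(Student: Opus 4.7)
The plan is a standard density-and-continuity argument. I would observe that the filtration on $\hat{M}$ is Hausdorff (Lemma \ref{separatedlem}), that $M$ sits as a dense subspace of $\hat{M}$ in the filtration topology, and that left-multiplication by any fixed $x \in \Ug$ is continuous. Combining these, any $y \in \hat{M}$ is a limit of elements of $M$, each of which is annihilated by $x$, so $x \cdot y = 0$.

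For density, I would fix a free $R$-basis $\mathcal{B}$ of $\Gamma_{0}M$ and use the description recorded just after Lemma \ref{separatedlem}: every $y \in \hat{M}$ is a unique convergent sum $y = \sum_{v \in \mathcal{B}} C_{v} v$ with $C_{v} \in K$ and $C_{v} \to 0$. For any $i$, only finitely many $v$ satisfy $C_{v} \notin \pi^{i}R$, so the truncation of $y$ to those basis vectors lies in $M$ and differs from $y$ by an element of $\Gamma_{i}\hat{M}$. Thus arbitrary truncations give a sequence in $M$ converging to $y$.

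For continuity, $x$ lies in some $\Gamma_{-k}\Ug$, and because $\hat{M}$ is by construction a filtered $\Ug$-module with $\Gamma_{j}\Ug \cdot \Gamma_{i}\hat{M} \subseteq \Gamma_{i+j}\hat{M}$, multiplication by $x$ sends $\Gamma_{i}\hat{M}$ into $\Gamma_{i-k}\hat{M}$; hence it is continuous in the filtration topology.

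Putting the pieces together: given $y \in \hat{M}$, choose $m_{j} \in M$ with $y - m_{j} \in \Gamma_{j}\hat{M}$. By hypothesis $x \cdot m_{j} = 0$, so $x \cdot y = x \cdot (y - m_{j}) \in \Gamma_{j-k}\hat{M}$ for every $j$. Since the filtration on $\hat{M}$ is Hausdorff, $x \cdot y$ lies in $\bigcap_{j} \Gamma_{j-k}\hat{M} = 0$. I do not anticipate a serious obstacle; the only point needing care is checking that the filtration on $\hat{M}$ really is compatible with the $\Ug$-module structure in the required way, which follows from tracing the construction $\Gamma_{0}\hat{M} = \Gamma_{0}\Ug \otimes_{\Gamma_{0}\ug} \Gamma_{0}M$.
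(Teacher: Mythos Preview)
Your proposal is correct and is essentially the same argument as the paper's: the paper writes an arbitrary $v \in \hat{M}$ as a convergent sum $\sum_{i \geq j} v_{i}$ with $v_{i} \in \Gamma_{i}M$, uses $x \cdot \sum_{j \leq i \leq k} v_{i} = 0$ and the filtered-module inequality to get $x \cdot v \in \Gamma_{m+k+1}\hat{M}$ for all $k$, and concludes by separatedness. Your version just makes the density/continuity interpretation explicit rather than leaving it implicit.
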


\begin{proof}
Let $v \in \hat{M}$. Say $v = \underset{i \geq j}{\sum} v_{i}$ where $j \in \mathbb{Z}$ and $v_{i} \in \Gamma_{i}M$ for all $i$. Also say $x \in \Gamma_{m}\Ug$ (using the filtration on $\Ug$ coming from the filtration on $\ug$).

Then $x \cdot v = x \cdot v - x \cdot \underset{j \leq i \leq k}{\sum} v_{i} = x \cdot \underset{i > k}{\sum}v_{i} \in \Gamma_{m+k+1} \hat{M}$ for each $k \geq j$. So $x \cdot v = 0$ since $\Gamma$ is separated by Lemma \ref{separatedlem}. 

\end{proof}

\noindent \textbf{For the remainder of the section we want to specialise further. So now assume that $\mathfrak{g}$ is a free, finite-rank $\Of$-Lie algebra with a triangular decomposition in the sense of Definition \ref{triang}}. 

We can now define a category of modules in our setting analogous to the usual concept of category $\mathcal{O}$  for split semisimple Lie algebras.

\begin{mydef}
\label{categoryO}
The \emph{category} $\mathcal{O}(\mathfrak{g})$ is the full subcategory of all $\ug$-modules $M$ satisfying:

\begin{itemize}
\item $M$ is a weight module,

\item $M$ is finitely generated over $\ug$,

\item $M$ is locally finite over $U(\mathfrak{n}^{+}_{K})$.

\end{itemize}

\noindent The \emph{category} $\mathcal{O}_{n}(\mathfrak{g})$ is the full subcategory of $\mathcal{O}(\mathfrak{g})$ consisting of modules $M$  such that all weights $\lambda$ of $M$ satisfy $\lambda(p^{n} \mathfrak{h}_{R}) \subseteq R$. 

\end{mydef}

\noindent In particular, $\mathcal{O}_{n}(\mathfrak{g})$ contains any highest-weight module of weight $\lambda$ such that $\lambda(p^{n} \mathfrak{h}_{R}) \subseteq R$. The following results are standard facts about category $\mathcal{O}$.

\begin{mylem}
\label{catObasics}
Suppose $M \in \mathcal{O}(\mathfrak{g})$. Then:

\begin{itemize}
\item $M$ is generated by a finite set of weight vectors,

\item the weight spaces of $M$ are finite-dimensional.

\end{itemize}
\end{mylem}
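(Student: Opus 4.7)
The plan is to prove the two bullet points in turn, using only the triangular decomposition axioms and the PBW theorem (Theorem \ref{PBWthm}).

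For the first bullet, since $M$ is finitely generated over $\ug$, I would fix any finite generating set $u_1,\ldots,u_m$. As $M$ is a weight module, each $u_i$ is a (finite) $K$-linear combination of weight vectors $u_i = \sum_j v_{i,j}$. The set $\{v_{i,j}\}$ is finite and still generates $M$ as a $\ug$-module, which proves the first statement.

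For the second bullet, let $w_1,\ldots,w_m$ be weight vector generators with weights $\mu_1,\ldots,\mu_m$, and set $N_i = U(\mathfrak{n}^+_K) \cdot w_i$. By local finiteness, each $N_i$ is finite-dimensional over $K$. Condition (3) of Definition \ref{triang} ensures that $\un$ has a PBW-basis of weight vectors under the adjoint action of $\mathfrak{h}$; for such a weight vector $x$ of weight $\alpha$ the computation $h\cdot(x\cdot w_i) = [h,x]\cdot w_i + x\cdot(h\cdot w_i) = (\alpha+\mu_i)(h)\, x\cdot w_i$ shows $x\cdot w_i$ is a weight vector of weight $\alpha+\mu_i$. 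Hence each $N_i$ is a finite-dimensional weight module and admits a finite weight basis $v_{i,1},\ldots,v_{i,k_i}$ with weights $\mu_{i,j}$.

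Now I apply the PBW decomposition $\ug = U(\mathfrak{n}^-_K)\, U(\mathfrak{h}_K)\, U(\mathfrak{n}^+_K)$ (valid by condition (2) of Definition \ref{triang} and Theorem \ref{PBWthm}). Since each $v_{i,j}$ is a weight vector, $U(\mathfrak{h}_K)$ acts on it by scalars, giving
\[ M = \sum_{i,j} U(\mathfrak{n}^-_K)\cdot v_{i,j}. \]
Fix a weight $\lambda$. Choosing a weight-vector $\Of$-basis $y_1,\ldots,y_r$ of $\mathfrak{n}^-$ with weights $\beta_1,\ldots,\beta_r$, the PBW monomials $y^s = y_1^{s_1}\cdots y_r^{s_r}$ form a $K$-basis of $U(\mathfrak{n}^-_K)$, and $y^s\cdot v_{i,j}$ lies in $M_{\sum_k s_k\beta_k + \mu_{i,j}}$. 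Thus $M_\lambda$ is spanned by those $y^s\cdot v_{i,j}$ with $\sum_k s_k\beta_k = \lambda - \mu_{i,j}$. By the final clause of condition (4) of Definition \ref{triang} (applied to $\mathfrak{n}^-$), the weight space $U(\mathfrak{n}^-_K)_{\lambda-\mu_{i,j}}$ is finite-dimensional, so only finitely many tuples $s$ contribute for each fixed $(i,j)$; summing over the finitely many $(i,j)$ yields $\dim_K M_\lambda < \infty$.

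There is no real obstacle here; this is a direct adaptation of the standard category $\mathcal{O}$ argument. The only point to be careful about is that the paper's triangular decomposition is a slight generalisation of the semisimple case, so one should verify (as above) that the PBW weight-basis of $\un$ behaves the expected way under the adjoint action of $\mathfrak{h}$ and that condition (4) is invoked correctly to bound the relevant $s$.
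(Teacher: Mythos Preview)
Your proof is correct and follows essentially the same route as the paper: both arguments pass from a finite generating set to finitely many weight-vector generators, enlarge to a finite weight basis of the $U(\mathfrak{n}^+_K)$-span via local finiteness, and then use the PBW factorisation together with condition~(4) on the weight spaces of $U(\mathfrak{n}^-_K)$ to bound $\dim_K M_\lambda$. The only cosmetic difference is the PBW ordering (the paper writes $U(\mathfrak{n}^-_K)\,U(\mathfrak{n}^+_K)\,U(\mathfrak{h}_K)$ rather than your $U(\mathfrak{n}^-_K)\,U(\mathfrak{h}_K)\,U(\mathfrak{n}^+_K)$) and that you spell out the weight-vector computation $h\cdot(x\cdot w_i)=(\alpha+\mu_i)(h)\,x\cdot w_i$ more explicitly.
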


\begin{proof}
Since $M \in \mathcal{O}(\mathfrak{g})$, $M$ is finitely generated. Moreover, $M$ is a weight module. So let $T$ be a finite generating set for $M$, then let $S$ be the set of all weight components of elements of $T$. Thus $S$ is a finite generating set for $M$ consisting of weight vectors.

Now, since $M$ is locally finite over $U(\mathfrak{n}^{+}_{K})$, we have that $U(\mathfrak{n}^{+}_{K}) S$ is finite-dimensional and spanned by weight vectors. Say $S'$ is a $K$-basis for $U(\mathfrak{n}^{+}_{K}) S$ consisting of weight vectors. Then $S'$ generates $M$. 

Moreover, by Theorem \ref{PBWthm}, $M = \ug S = U(\mathfrak{n}^{-}_{K}) U(\mathfrak{n}^{+}_{K}) \uh S$. Since the elements of $S$ are weight vectors, $M = U(\mathfrak{n}^{-}_{K}) U(\mathfrak{n}^{+}_{K})S = U(\mathfrak{n}^{-}_{K})S'$. By Definition \ref{triang}, the weight spaces of $U(\mathfrak{n}^{-}_{K})$ are finite-dimensional. Since $S'$ is finite, the weight spaces of $M$ are finite-dimensional: if we write $\lambda_{v}$ for the weight of $v \in S'$, then $M_{\lambda} = \underset{v \in S'}{\sum} U(\mathfrak{n}^{-}_{K})_{\lambda - \lambda_{v}} v$ for any weight $\lambda$.
\end{proof}

\noindent We now want to show that any $M \in \mathcal{O}_{n}(\mathfrak{g})$ has a filtration $\Gamma$ such that $(M,\Gamma) \in \mathcal{M}_{n}(\mathfrak{g})$, so that our results for $\mathcal{M}_{n}(\mathfrak{g})$ remain applicable.

\begin{myprop}
\label{catOstructure}
Suppose $M$ is a $\ug$-module in category $\mathcal{O}_{n}(\mathfrak{g})$. Let $S$ be a finite set of weight vectors generating $M$. Then $U(p^{n} \mathfrak{g}_{R})S$ is free as an $R$-module, with a free $R$-basis consisting of weight vectors.
\end{myprop}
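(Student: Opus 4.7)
The plan is to prove that $N := U(p^n \mathfrak{g}_R) S$ decomposes as a direct sum of its weight spaces $N_\lambda := N \cap M_\lambda$, and that each $N_\lambda$ is a finitely generated torsion-free $R$-module. Since $R$ is a discrete valuation ring, each such $N_\lambda$ is then free of finite rank, and assembling free $R$-bases of the $N_\lambda$ yields the required free $R$-basis of $N$ consisting of weight vectors.

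To obtain the weight grading, I would apply the PBW theorem (Theorem \ref{PBWthm}) to an ordered $\Of$-basis of $\mathfrak{g}$ built from weight-vector bases of $\mathfrak{n}^-$, $\mathfrak{h}$, and $\mathfrak{n}^+$. The resulting PBW monomials of $U(p^n \mathfrak{g}_R)$ are weight vectors under the adjoint $\mathfrak{h}$-action, so applied to the weight vectors in $S$ they produce weight vectors in $M$. Hence $N$ is an $R$-span of weight vectors, forcing $N = \bigoplus_\lambda N_\lambda$ with $N_\lambda \subseteq M_\lambda$; by Lemma \ref{catObasics} each $M_\lambda$ is finite-dimensional over $K$, so each $N_\lambda$ is automatically torsion-free, and it remains only to prove that $N_\lambda$ is finitely generated over $R$.

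For this I would first use local finiteness of $M$ over $U(\mathfrak{n}^+_K)$ to see that $V := U(\mathfrak{n}^+_K) S$ is finite-dimensional over $K$. The hypothesis $\lambda(p^n \mathfrak{h}_R) \subseteq R$ for every weight $\lambda$ of $M$ then forces each element of $U(p^n \mathfrak{h}_R)$ to act on any weight vector in $V$ by an $R$-scalar; since each PBW monomial $(p^n e)^c \cdot s$ is itself a weight vector, this gives $U(p^n \mathfrak{h}_R) \cdot U(p^n \mathfrak{n}^+_R) S = U(p^n \mathfrak{n}^+_R) S$ and hence $N = U(p^n \mathfrak{n}^-_R) \cdot U(p^n \mathfrak{n}^+_R) S$. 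Condition (4) of Definition \ref{triang}, applied to weight-vector bases of $\mathfrak{n}^\pm$, ensures that the weight spaces of $U(\mathfrak{n}^\pm_K)$ are finite-dimensional, so only finitely many PBW monomials in $p^n \mathfrak{n}^\pm_R$ have any given adjoint weight. Combining these observations, only finitely many elements of the form $(p^n f)^a \cdot s'$, with $s'$ ranging over a weight-vector $R$-spanning set of $U(p^n \mathfrak{n}^+_R) S$, land in $M_\lambda$, and these span $N_\lambda$ over $R$; so $N_\lambda$ is finitely generated, as required.

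The hardest step will be making precise the finiteness of PBW monomials per weight: it rests on the observation, drawn from condition (4) of Definition \ref{triang} by a separating-hyperplane argument, that the weights $\alpha_i$ of the chosen bases of $\mathfrak{n}^\pm$ lie in a common open half-space of $\mathfrak{h}_K^*$, which gives a uniform bound on the entries of any multi-index $c$ with $\sum c_i \alpha_i$ equal to a fixed weight. Once this finiteness is in hand, the freeness of each $N_\lambda$ over the discrete valuation ring $R$ is immediate from the standard fact that a finitely generated torsion-free module over a DVR is free of finite rank, and a free $R$-basis of weight vectors for $N$ is obtained as the disjoint union of such bases across weights.
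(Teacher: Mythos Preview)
Your proposal is correct and follows essentially the same approach as the paper: reduce via PBW and the $\mathcal{O}_n$ hypothesis to $N = U(p^n\mathfrak{n}^-_R)\cdot U(p^n\mathfrak{n}^+_R)S$, then combine local $U(\mathfrak{n}^+_K)$-finiteness with the finite-dimensionality of the weight spaces of $U(\mathfrak{n}^\pm_K)$ to show each $N_\lambda$ is finitely generated and torsion-free over the PID $R$, hence free. The separating-hyperplane discussion in your last paragraph is unnecessary (and would require care over a non-ordered field $K$), since condition~(4) of Definition~\ref{triang} already records the finite-dimensionality of $U(\mathfrak{n}_K)_\lambda$ as part of its statement.
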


\begin{proof}
We have $U(p^{n} \mathfrak{g}_{R})S = \bigoplus_{\lambda \in \mathfrak{h}_{K}^{*}} U(p^{n} \mathfrak{g}_{R})S \cap M_{\lambda}$ since the elements of $S$ are weight vectors, and hence $U(p^{n} \mathfrak{g}_{R})S$ is spanned by weight vectors. So it is sufficient to show the weight spaces of $U(p^{n} \mathfrak{g}_{R})S$ are free over $R$. 

Now, writing $\mathfrak{n}^{-}$ for the $\mathcal{O}_{F}$-subalgebra of $\mathfrak{g}$ spanned by the $f_{\alpha}$, we have $U(p^{n} \mathfrak{g}_{R})S = U(p^{n} \mathfrak{n}^{-}_{R}) U(p^{n} \mathfrak{n}^{+}_{R}) U(p^{n} \mathfrak{h}_{R}) S$ by the PBW theorem. 

The assumption that all weights $\lambda$ of $M$ satisfy $\lambda(p^{n} \mathfrak{h}_{R}) \subseteq R$ ensures $U(p^{n} \mathfrak{h}_{R}) S = R S$, so $U(p^{n} \mathfrak{g}_{R})S = U(p^{n} \mathfrak{n}^{-}_{R}) U(p^{n} \mathfrak{n}^{+}_{R}) S$.

By assumption, both $\mathfrak{n}^{+}$ and $\mathfrak{n}^{-}$ have free $\Of$-bases of weight vectors, say $e_{1},\dots,e_{r}$ and  $f_{1},\dots,f_{r'}$ respectively. So by Theorem \ref{PBWthm}, $U(p^{n} \mathfrak{n}^{+}_{R})S$ is spanned as an $R$-module by the set of weight vectors $S' = \{ (p^{n} e_{1})^{i_{1}} \dots (p^{n} e_{r})^{i_{r}} s \mid   i_{j} \in \mathbb{N}_{0}, s \in S \}$. But note that since $U(\mathfrak{n}_{K}^{+})$ has finite-dimensional weight spaces (by our definition of a triangular decomposition), only finitely many monomials $(p^{n} e_{1})^{i_{1}} \dots (p^{n} e_{r})^{i_{r}}$ can have the same weight. Therefore only finitely many elements of $S'$ can have the same weight. 

But since $M$ is locally finite over $U(\mathfrak{n}^{+}_{K})$, elements of $S'$ can only have finitely many different weights, and so $S'$ has only finitely many non-zero elements. So $U(p^{n} \mathfrak{g}_{R})S = U(p^{n} \mathfrak{n}^{-}_{R})S'$ with $S'$ a finite set of weight vectors.

Now, fix a weight $\lambda$ of $M$. Then $U(p^{n} \mathfrak{g}_{R})S \cap M_{\lambda}$ is spanned by the set of all $(p^{n} f_{1})^{i_{1}} \dots (p^{n} f_{r'})^{i_{r}} s'$ with $ i_{j} \in \mathbb{N}_{0}, s' \in S'$ of weight $\lambda$. There are only finitely many such elements since the weight spaces of $U(\mathfrak{n}^{-}_{K})$ are finite-dimensional and since $S'$ is finite, so $U(p^{n} \mathfrak{g}_{R})S \cap M_{\lambda}$ is finitely generated over $R$.

Now, $U(p^{n} \mathfrak{g}_{R})S \cap M_{\lambda}$ is a finitely generated torsion-free module over a principal ideal domain $R$, and so is free of finite rank over $R$.

\end{proof}

\begin{mycor}
\label{catOincatM}
Suppose $M \in \mathcal{O}_{n}(\mathfrak{g})$. Then there is a good filtration $\Gamma$ for $M$ such that $(M,\Gamma) \in \mathcal{M}_{n}(\mathfrak{g})$, and also $\Gamma_{0}M$ has a free $R$-basis of weight vectors.

\end{mycor}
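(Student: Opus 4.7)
The plan is to simply construct the filtration directly using the data we have already assembled. Given $M \in \mathcal{O}_{n}(\mathfrak{g})$, I would invoke Lemma \ref{catObasics} to fix a finite set $S$ of weight vectors generating $M$ over $\ug$, and then set $\Gamma_{0}M = U(p^{n}\mathfrak{g}_{R})S$ and $\Gamma_{i}M = \pi^{i}\Gamma_{0}M$ for all $i \in \mathbb{Z}$. The entire construction is forced by this choice, and what remains is essentially a verification.

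First I would check that $\Gamma$ is a filtration compatible with the filtration on $\ug$. Compatibility with the ring filtration is immediate: $\Gamma_{i}\ug \cdot \Gamma_{j}M = \pi^{i}U(p^{n}\mathfrak{g}_{R}) \cdot \pi^{j}U(p^{n}\mathfrak{g}_{R})S \subseteq \pi^{i+j}U(p^{n}\mathfrak{g}_{R})S = \Gamma_{i+j}M$. For exhaustiveness, note that $\ug = K \otimes_{R} U(p^{n}\mathfrak{g}_{R})$ because $p^n$ is a unit in $K$, so $M = \ug \cdot S = K \otimes_{R} U(p^{n}\mathfrak{g}_{R})S = \bigcup_{i} \pi^{i}\Gamma_{0}M$, as needed. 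By construction $\Gamma_{i}M = \pi^{i}\Gamma_{0}M$ and $\Gamma_{0}M$ is finitely generated (in fact by $S$) over $U(p^{n}\mathfrak{g}_{R})$, so $\Gamma$ is good.

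The remaining conditions — that $\Gamma_{0}M$ be free as an $R$-module and admit a free $R$-basis of weight vectors — are precisely the content of Proposition \ref{catOstructure}, applied to our choice of $S$. This immediately gives $(M,\Gamma) \in \mathcal{M}_{n}(\mathfrak{g})$ together with the weight-basis refinement.

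There is really no obstacle here: the corollary is essentially the conjunction of Lemma \ref{catObasics} and Proposition \ref{catOstructure} repackaged in the language of $\mathcal{M}_{n}(\mathfrak{g})$. The only thing one might worry about is the exhaustiveness of the filtration, but this is handled by the observation that $p^{n}$ is invertible in $K$, so that enlarging $U(p^{n}\mathfrak{g}_{R})S$ by negative powers of $\pi$ recovers the full $\ug$-module generated by $S$, namely $M$ itself.
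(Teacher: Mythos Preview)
Your proof is correct and follows essentially the same approach as the paper: choose a finite generating set $S$ of weight vectors via Lemma~\ref{catObasics}, set $\Gamma_{i}M = \Gamma_{i}\ug \cdot S = \pi^{i}U(p^{n}\mathfrak{g}_{R})S$, and invoke Proposition~\ref{catOstructure} for the free $R$-basis of weight vectors. You have simply spelled out the verification that $\Gamma$ is good (compatibility, exhaustiveness) where the paper leaves this implicit.
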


\begin{proof}
Let $S$ be a finite generating set of $M$ consisting of weight vectors, which exists by Lemma \ref{catObasics}.

Define $\Gamma_{i}M = \Gamma_{i}\ug S$ for all $i \in \mathbb{Z}$, so $\Gamma$ is a good filtration for $M$. Moreover, by Proposition \ref{catOstructure}, $\Gamma_{0}M = U(p^{n} \mathfrak{g}_{R})S$ has a free $R$-basis of weight vectors.

\end{proof}

\noindent If a basis $x_{1},\dots,x_{r}$ for $\mathfrak{g}$ consists of weight vectors, (e.g. a Chevalley basis when $\mathfrak{g}$ is semisimple), then we can describe weight components of $y \in \Ug$ extending the concept of weight components in $\ug$.

\begin{mydef}
If $y = \sum C_{s} (\pi x)^{s} \in \Ug$ and $\lambda$ is a weight, then the $\lambda$-\emph{weight component} is the convergent sum of all $C_{s} (\pi x)^{s}$ which have weight $\lambda$. We also say that $y$ has weight $\lambda$ if all weight components of $y$ with weight not equal to $\lambda$ are zero. We then say $y$ is a \emph{weight vector} if $y$ has weight $\lambda$ for some $\lambda$. 

\end{mydef}

\begin{mylem}
\label{weightvectorlem}
Suppose $M \in \mathcal{O}_{n}(\mathfrak{g})$. Suppose $x \in \Ug$ is a weight vector. Then $x \cdot M \subseteq M$ inside $\hat{M}$.
\end{mylem}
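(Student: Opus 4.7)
The plan is to reduce to the case where $w \in M$ is itself a weight vector, and then show that $x \cdot w$ lies in the single finite-dimensional weight space $M_{\lambda + \mu}$, which I will prove is closed in $\hat{M}$.

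To begin, I would apply Corollary \ref{catOincatM} to equip $M$ with a filtration $\Gamma$ so that $(M, \Gamma) \in \mathcal{M}_{n}(\mathfrak{g})$ and $\Gamma_{0}M$ has a free $R$-basis $\mathcal{B}$ of weight vectors; write $\mathcal{B} = \bigsqcup_{\nu} \mathcal{B}_{\nu}$ according to weight. Lemma \ref{catObasics} then gives that each weight space $M_{\nu}$, and hence each $\mathcal{B}_{\nu}$, is finite. Since $M$ is a weight module it is $K$-spanned by weight vectors, so by $K$-linearity of the action of $x$ I may assume that $w$ is a weight vector of some weight $\mu$.

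The next step is to use the triangular decomposition of $\mathfrak{g}$ to pick an $\mathcal{O}_{F}$-basis $x_{1}, \dots, x_{r}$ of $\mathfrak{g}$ consisting of weight vectors, and to write $x = \sum_{s} C_{s} (p^{n} x)^{s}$ as a convergent sum via Lemma \ref{PBW}. The monomials $(p^{n} x)^{s}$ are themselves weight vectors in \ug{} and are $K$-linearly independent, so the hypothesis that $x$ has weight $\lambda$ forces $C_{s} = 0$ whenever $(p^{n} x)^{s}$ has weight different from $\lambda$. Consequently the partial sums $y^{(N)} := \sum_{|s| \leq N} C_{s} (p^{n} x)^{s} \cdot w$ all lie in $M_{\lambda + \mu}$ and converge in $\hat{M}$ to $x \cdot w$.

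It then remains to show that $M_{\lambda + \mu}$ is closed in $\hat{M}$. Each $z \in \hat{M}$ has a unique expansion $z = \sum_{v \in \mathcal{B}} C_{v}(z) \, v$ with $C_{v}(z) \to 0$, so the weight-component projection $\pi_{\nu}(z) := \sum_{v \in \mathcal{B}_{\nu}} C_{v}(z) \, v$ is well-defined, lands in $M_{\nu}$ (a finite sum since $\mathcal{B}_{\nu}$ is finite), and preserves filtration degrees, so is continuous. For $\nu \neq \lambda + \mu$ each $y^{(N)}$ satisfies $\pi_{\nu}(y^{(N)}) = 0$, so continuity forces $\pi_{\nu}(x \cdot w) = 0$; hence the only surviving coefficients of $x \cdot w$ are those indexed by $\mathcal{B}_{\lambda + \mu}$, giving $x \cdot w \in M_{\lambda + \mu} \subseteq M$. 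The principal subtlety will be setting up the continuous weight-component projections on the completion $\hat{M}$, which relies crucially on the weight-vector $R$-basis for $\Gamma_{0}M$ supplied by Corollary \ref{catOincatM} together with the finite-dimensionality of weight spaces coming from local $U(\mathfrak{n}^{+}_{K})$-finiteness.
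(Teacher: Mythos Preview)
Your proof is correct and follows essentially the same approach as the paper's: both fix a weight-vector $R$-basis $\mathcal{B}$ for $\Gamma_{0}M$, approximate $x$ by weight-$\lambda$ elements of $\ug$, and conclude that $x\cdot w$ lies in the finite-dimensional span $K\mathcal{B}_{\lambda+\mu}$. The only cosmetic differences are that the paper treats a general $w$ with finitely many weight components at once (rather than reducing to a single weight vector) and simply uses that a finite-dimensional $K$-subspace of $\hat{M}$ is closed, whereas you spell this out via explicit weight-component projections.
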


\begin{proof}

Let $S$ be a finite generating set of weight vectors of $M$, which exists by Lemma \ref{catObasics}. Let $\Gamma$ be the corresponding good filtration, so $\Gamma_{i}M = \Gamma_{i}\ug S$. Let $\mathcal{B}$ be a free $R$-basis for $\Gamma_{0}M$ consisting of weight vectors (which exists by Proposition \ref{catOstructure}).

Suppose $w \in M$. Then $w$ has finitely many non-zero weight components, since $M$ is a weight module. Say $W$ is the set of all weights of components of $w$. Say $x$ has weight $\lambda$, and let $\mathcal{B}'$ be the set of all elements of $\mathcal{B}$ with weights lying in $\lambda + W$. Note that since the weight spaces of $M$ are finite-dimensional by Lemma \ref{catObasics}, only finitely many elements of $\mathcal{B}$ can have the same weight and therefore $\mathcal{B}'$ is finite.

Now we can write $x = \lim_{i} x_{i}$ where $x_{i} \in \ug$ have weight $\lambda$. Then all weight components of $x_{i} \cdot w$ have weights in $\lambda + W$, and therefore $x_{i} \cdot w \in K \mathcal{B}'$. Therefore $x \cdot w = \lim_{i} x_{i} \cdot w \in K \mathcal{B}' \subseteq M$.
\end{proof}

\begin{mylem}
\label{weightcompslem}
Let $M \in \mathcal{O}_{n}(\mathfrak{g})$. Suppose $x \in \Ann_{\Ug} \hat{M}$. Then all weight components of $x$ lie in $\Ann_{\Ug} \hat{M}$.
\end{mylem}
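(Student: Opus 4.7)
The plan is to deduce that each weight component $x_{\lambda}$ annihilates every weight vector of $M$, then use Lemma \ref{densitylem} to extend to $\hat{M}$. Since $M$ has a $K$-basis of weight vectors, and the map $\Ug \to \hat{M}$, $y \mapsto y \cdot v$, is continuous for any $v$, it suffices to fix an arbitrary weight vector $v \in M$, say of weight $\mu$, and show that $x_{\lambda} \cdot v = 0$ for every weight $\lambda$.

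Fix a good filtration on $M$ from Corollary \ref{catOincatM}, so that $\Gamma_{0}M$ has a free $R$-basis $\mathcal{B}$ of weight vectors, and every element of $\hat{M}$ admits a unique expansion $\sum_{w \in \mathcal{B}} c_{w}\, w$ as a convergent series. Using the weight-vector PBW basis from Lemma \ref{PBW}, write $x = \sum_{s} C_{s} (p^{n} x)^{s}$ and group by weight to obtain $x = \sum_{\lambda} x_{\lambda}$. Because $C_{s} \to 0$, for each filtration level $i$ only finitely many terms of $x$ lie outside $\Gamma_{i}\Ug$; the same bound therefore holds for the sub-sums $x_{\lambda}$, so all but finitely many $x_{\lambda}$ lie in $\Gamma_{i}\Ug$, and the sum $\sum_{\lambda} x_{\lambda}$ converges to $x$ in $\Ug$. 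By continuity of the action on $v$, the sum $\sum_{\lambda} (x_{\lambda} \cdot v)$ then converges in $\hat{M}$ to $x \cdot v = 0$.

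By Lemma \ref{weightvectorlem}, each summand $x_{\lambda} \cdot v$ lies in $M$ and is a weight vector of weight $\mu + \lambda$, so its basis expansion uses only elements of $\mathcal{B}_{\mu + \lambda}$. In the combined convergent expansion of $\sum_{\lambda} x_{\lambda} \cdot v$, each basis element $w \in \mathcal{B}$ of weight $\nu$ therefore receives its coefficient solely from the single summand $x_{\nu - \mu} \cdot v$. The uniqueness of the convergent expansion in $\hat{M}$, combined with $\sum_{\lambda} x_{\lambda} \cdot v = 0$, forces every $\mathcal{B}$-coefficient of every $x_{\lambda} \cdot v$ to vanish. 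Hence $x_{\lambda} \cdot v = 0$ for all $\lambda$, and applying Lemma \ref{densitylem} yields $x_{\lambda} \in \Ann_{\Ug} \hat{M}$.

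The main obstacle is the convergence bookkeeping: one must verify that $x = \sum_{\lambda} x_{\lambda}$ is a genuinely convergent rearrangement in $\Ug$, that this rearrangement passes through the (filtration-continuous) action on $v$ to give a convergent sum in $\hat{M}$, and that the weight-orthogonality of distinct basis vectors in $\mathcal{B}$ survives passage to the completion. All three of these rest on the explicit description of $\hat{M}$ as convergent $\mathcal{B}$-series with coefficients tending to zero, and once they are in place the argument is formal.
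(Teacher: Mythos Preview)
Your proof is correct and follows essentially the same approach as the paper: decompose $x$ into weight components, apply to a weight vector $v$ of weight $\mu$, observe that the resulting convergent sum in $\hat{M}$ separates by weight, and conclude each $x_{\lambda}\cdot v = 0$, then invoke Lemma~\ref{densitylem}. The paper's version is considerably terser on the convergence bookkeeping you spell out in your final paragraph, but the underlying argument is the same.
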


\begin{proof}
Say $x = \underset{\lambda}{\sum} x_{\lambda}$ as a convergent sum, where $x_{\lambda} \in \Ug$ has weight $\lambda$. We claim $x_{\lambda} \cdot M = 0$ for each $\lambda$, which by Lemma \ref{densitylem} will imply $x_{\lambda} \in \Ann_{\Ug} \hat{M}$.

Let $v \in M$ be a weight vector of weight $\mu$. Then $0 = x \cdot v = \underset{\lambda}{\sum} x_{\lambda} \cdot v$. Then the $(\lambda + \mu)$-weight component of $x \cdot v$ is $x_{\lambda} \cdot v$ for each $\lambda$, so $x_{\lambda} \cdot v = 0$ for all $\lambda$. Thus indeed each $x_{\lambda}$ annihilates $M \subseteq \hat{M}$.
\end{proof}

\noindent We will now claim that any $M \in \mathcal{O}_{n}(\mathfrak{g})$ is diagonalisable over $\Uh$. Suppose $\lambda : p^{n} \mathfrak{h}_{R} \rightarrow R$, then $\lambda$ defines a filtered homomorphism $\lambda : \uh \rightarrow K$ (with the filtration $\pi^{i} R$ on $K$). So this extends to a filtered homomorphism $\lambda : \Uh \rightarrow K$.

\begin{mylem}
\label{diagonalmult}
Suppose $M \in \mathcal{O}_{n}(\mathfrak{g})$ and $v \in M$ has weight $\lambda \in \mathfrak{h}_{K}^{*}$ such that $\lambda( p^{n} \mathfrak{h}_{R}) \subseteq R$. Then for any $x \in \Uh$ we have $x \cdot v = \lambda(x) v$ in $\hat{M}$.
\end{mylem}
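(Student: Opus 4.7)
The plan is to prove the result by reducing to the known case $x \in \uh$ via continuity, exploiting that $\Uh$ is the completion of $\uh$ and that both the action on $\hat M$ and the character $\lambda$ are continuous with respect to the relevant filtrations.

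First, I would fix an $\Of$-basis $h_{1}, \dots, h_{d}$ of $\mathfrak{h}$ and use the PBW-type description (Lemma \ref{PBW} applied to $\mathfrak{h}$) to write an arbitrary $x \in \Uh$ as a convergent sum $x = \sum_{s \in \mathbb{N}_{0}^{d}} C_{s} (p^{n} h)^{s}$ with $C_{s} \in K$, $C_{s} \to 0$. Setting $x_{N} = \sum_{|s| \leq N} C_{s} (p^{n} h)^{s} \in \uh$, the sequence $x_{N}$ converges to $x$ in the filtration topology on $\Uh$.

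Next, since $v$ is a weight vector of weight $\lambda$ and $x_{N} \in \uh$ is a polynomial in $h_{1}, \dots, h_{d}$, a direct computation using $h \cdot v = \lambda(h) v$ for $h \in \mathfrak{h}_{K}$ gives $x_{N} \cdot v = \lambda(x_{N}) v$, where here $\lambda(x_{N})$ is obtained by substituting $\lambda(h_{i})$ for $h_{i}$ in the polynomial expression for $x_{N}$ — this is precisely the value of the filtered homomorphism $\lambda : \uh \to K$ at $x_{N}$, which agrees with the extension $\lambda : \Uh \to K$ defined in the paragraph preceding the lemma.

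Now I would pass to the limit. Choose a good filtration $\Gamma$ for $M$ as in Corollary \ref{catOincatM}, and fix $j$ with $v \in \Gamma_{j}M$. If $x \in \Gamma_{k}\Uh$ and $x - x_{N} \in \Gamma_{k_{N}}\Uh$ with $k_{N} \to \infty$, then $(x - x_{N}) \cdot v \in \Gamma_{k_{N} + j}\hat{M}$, while continuity of $\lambda : \Uh \to K$ gives $\lambda(x) - \lambda(x_{N}) \in \pi^{k_{N}}R$ for suitably large $N$, so $(\lambda(x) - \lambda(x_{N})) v \in \Gamma_{k_{N} + j} \hat{M}$. Combining with the exact identity at each finite stage yields
\[
x \cdot v - \lambda(x) v \;=\; (x - x_{N}) \cdot v \;-\; (\lambda(x) - \lambda(x_{N})) v \;\in\; \Gamma_{k_{N} + j} \hat{M}
\]
for every $N$. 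Since the filtration on $\hat{M}$ is separated by Lemma \ref{separatedlem}, the intersection $\bigcap_{m} \Gamma_{m}\hat{M} = 0$, which forces $x \cdot v = \lambda(x) v$ in $\hat{M}$.

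The only mild obstacle I anticipate is bookkeeping with the filtration indices to confirm that both error terms $(x - x_{N}) \cdot v$ and $(\lambda(x) - \lambda(x_{N})) v$ really do lie in arbitrarily deep pieces of $\Gamma \hat{M}$; this reduces to the fact that $\lambda$ is a filtered homomorphism (established explicitly just before the lemma statement) together with the continuity of the $\Ug$-action on $\hat{M}$, both of which are immediate from the definitions.
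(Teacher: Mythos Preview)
Your proof is correct and follows essentially the same approach as the paper: approximate $x \in \Uh$ by elements of $\uh$ (where the identity $x_{N} \cdot v = \lambda(x_{N}) v$ is immediate), then pass to the limit using that both the module action and the character $\lambda$ are filtered, concluding via separatedness of the filtration on $\hat{M}$. The paper organizes the approximation slightly differently---writing $x = \sum_{i \geq j} x_{i}$ with $x_{i} \in \Gamma_{i}\uh$ rather than truncating a PBW expansion---but this is purely cosmetic.
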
 

\begin{proof}

Say $x = \underset{i \geq j}{\sum} x_{i}$ where $j \in \mathbb{Z}$ and $x_{i} \in \Gamma_{i} \uh$ for each $i$. Say $v \in \Gamma_{m}M$, where $m \in \mathbb{Z}$. Then for each $k \geq j$, we have: 
\begin{equation*}
\begin{split}
x \cdot v - \lambda(x) v & = \underset{j \leq i \leq k}{\sum} x_{i} \cdot v + \underset{i > k}{\sum} x_{i} \cdot v - \lambda(\underset{j \leq i \leq k}{\sum} x_{i}) v - \lambda(\underset{i>k}{\sum} x_{i})v \\
& = \underset{i>k}{\sum} x_{i} \cdot v - \lambda(\underset{i>k}{\sum} x_{i}) v \in \Gamma_{m+k+1}\hat{M}. 
\end{split}
\end{equation*}
\noindent So $x \cdot v = \lambda(x)v.$

\end{proof}

\subsection{Induced modules}
\label{gvm}
In this section, we look at modules induced from finite-dimensional modules. Firstly, we want to confirm that the modules relevant to us are in category $\mathcal{M}_{n}(\mathfrak{g})$ given an appropriate filtration. This allows us to apply the results of \S \ref{completions}.

\begin{myprop}
\label{locallyfinite}
Suppose $\mathfrak{g} = \mathfrak{p} \oplus \mathfrak{n}$ as $\Of$-modules, where both $\mathfrak{p}$ and $\mathfrak{n}$ are free, finite rank $\Of$-Lie algebras. Suppose $(V, \Gamma) \in \mathcal{M}_{n}(\mathfrak{p})$ such that $\Gamma_{0}V$ has finite rank over $R$. Let $M = \ug \otimes_{\up} V$. Then:

\begin{enumerate}
\item $\Gamma$ induces a filtration on $M$ such that $(M, \Gamma) \in \mathcal{M}_{n}(\mathfrak{g})$,

\item $M$ is locally finite as a $\up$-module, which in particular means it can be considered as a (locally finite) $\Up$-module.
\end{enumerate}

\end{myprop}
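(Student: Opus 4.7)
The plan is to use the Poincar\'e--Birkhoff--Witt theorem to describe $M$ as $U(\mathfrak{n}_{K}) \otimes_{K} V$ as a $K$-vector space, and then transport the good filtration on $V$ to $M$. Fix an $\Of$-basis $y_{1},\dots,y_{s}$ of $\mathfrak{n}$; together with an $\Of$-basis of $\mathfrak{p}$ this gives an $\Of$-basis of $\mathfrak{g}$. Applying Theorem \ref{PBWthm} over $R$ gives $U(p^{n}\mathfrak{g}_{R}) = U(p^{n}\mathfrak{n}_{R}) \cdot U(p^{n}\mathfrak{p}_{R})$, with the ordered monomials $(p^{n}y)^{\beta}$ (for $\beta \in \mathbb{N}_{0}^{s}$) forming a free $R$-basis of $U(p^{n}\mathfrak{n}_{R})$; in particular $\ug$ is free as a right $\up$-module on the same monomials, so $M = \bigoplus_{\beta} (p^{n}y)^{\beta} \otimes V$ as $K$-vector spaces. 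I then set $\Gamma_{0}M = \bigoplus_{\beta} (p^{n}y)^{\beta} \otimes \Gamma_{0}V$ and $\Gamma_{i}M = \pi^{i}\Gamma_{0}M$.

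For part (1), freeness of $\Gamma_{0}M$ over $R$ is immediate from freeness of $\Gamma_{0}V$ over $R$ and of $U(p^{n}\mathfrak{n}_{R})$. Stability of $\Gamma_{0}M$ under $U(p^{n}\mathfrak{g}_{R})$ follows from the factorisation
\[
U(p^{n}\mathfrak{g}_{R}) \cdot \Gamma_{0}V \;=\; U(p^{n}\mathfrak{n}_{R}) \cdot U(p^{n}\mathfrak{p}_{R}) \cdot \Gamma_{0}V \;\subseteq\; U(p^{n}\mathfrak{n}_{R}) \cdot \Gamma_{0}V \;=\; \Gamma_{0}M,
\]
using that $\Gamma_{0}V$ is closed under $U(p^{n}\mathfrak{p}_{R})$ since $(V,\Gamma) \in \mathcal{M}_{n}(\mathfrak{p})$. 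Because $\Gamma_{0}V$ has finite $R$-rank, it generates $\Gamma_{0}M$ over $U(p^{n}\mathfrak{g}_{R})$ by finitely many elements, so $\Gamma$ is a good filtration and $(M,\Gamma) \in \mathcal{M}_{n}(\mathfrak{g})$.

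For part (2) I first observe that $V = K \otimes_{R} \Gamma_{0}V$ is finite-dimensional over $K$. I then introduce the auxiliary increasing filtration $M_{k} = \bigoplus_{|\beta| \leq k} (p^{n}y)^{\beta} \otimes V$, each $M_{k}$ finite-dimensional over $K$, and claim that $\mathfrak{p}_{K} \cdot M_{k} \subseteq M_{k}$. The claim is proved by induction on $k$: for $x \in \mathfrak{p}_{K}$ and a generator of the form $(p^{n}y_{i})(p^{n}y)^{\beta'} \otimes v$ with $|\beta'| = k-1$, I write
\[
x(p^{n}y_{i}) \;=\; (p^{n}y_{i})x + p^{n}[x,y_{i}]_{\mathfrak{p}} + p^{n}[x,y_{i}]_{\mathfrak{n}},
\]
and check that each of the three resulting summands, applied to the lower-degree piece, lands in $M_{k}$: the first drops to $M_{k-1}$ by induction before reintroducing the factor $(p^{n}y_{i})$; the second is a $\mathfrak{p}$-element acting on $M_{k-1}$; and the third is a single $\mathfrak{n}$-element raising the $\mathfrak{n}$-degree by at most one. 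Hence every $v \in M$ lies in some $M_{k}$ with $\up \cdot v \subseteq M_{k}$ finite-dimensional, giving local finiteness as a $\up$-module. Proposition \ref{locallyfiniteprop} then promotes this to a locally finite $\Up$-action on $M$ inside $\hat{M}$. The one genuine subtlety is this induction: because $[\mathfrak{p},\mathfrak{n}] \subseteq \mathfrak{g}$ can have a non-zero $\mathfrak{p}$-component, left multiplication by $\mathfrak{p}$ is not $U(\mathfrak{n}_{K})$-linear, and one must keep track of how the $\mathfrak{p}$-components of commutators interact with PBW reordering so that the $\mathfrak{n}$-degree truly does not increase.
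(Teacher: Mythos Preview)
Your proof is correct. Part~(1) matches the paper exactly: both set $\Gamma_{0}M = U(p^{n}\mathfrak{g}_{R})\cdot\Gamma_{0}V$ and use PBW to exhibit a free $R$-basis $\{(p^{n}y)^{\beta}v : \beta\in\mathbb{N}_{0}^{s},\ v\in B\}$. For part~(2) the paper filters instead by total $\mathfrak{g}$-degree, taking $\Omega_{i}M = K\{x_{1}\cdots x_{j}v : j\leq i,\ x_{l}\in\mathfrak{g},\ v\in V\}$; the inductive step is then just $y\cdot x_{1}\cdots x_{i+1}v = [y,x_{1}]x_{2}\cdots x_{i+1}v + x_{1}(yx_{2}\cdots x_{i+1}v)$, both terms manifestly in $\Omega_{i+1}M$, with no need to split $[y,x_{1}]$ into $\mathfrak{p}$- and $\mathfrak{n}$-components or to invoke PBW reordering inside $U(\mathfrak{n}_{K})$. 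Your $\mathfrak{n}$-degree filtration $M_{k}$ in fact coincides with $\Omega_{k}M$, so the two arguments establish the same chain of finite-dimensional $\up$-submodules; the paper's choice simply sidesteps the bookkeeping you flag as the ``genuine subtlety''.
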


\begin{proof}
Define $\Gamma_{i}M = U(p^{n} \mathfrak{g}_{R}) \Gamma_{i}V $ . Then $\underset{i}{\bigcup} \Gamma_{i}M = \ug V = M$. Moreover, $\Gamma_{i}M = \pi^{i} U(p^{n} \mathfrak{g}_{R}) \Gamma_{0}V = \pi^{i} \Gamma_{0}M$, and $\Gamma_{0}M$ is finitely generated over $U(p^{n} \mathfrak{g}_{R})$ (e.g. by any finite generating set for $\Gamma_{0}V$ as a $U(p^{n} \mathfrak{p}_{R})$-module). So $\Gamma$ is a good filtration for $M$.

Given a finite free $R$-basis $B$ for $\Gamma_{0}V$ and a free $\Of$-basis $y_{1},\dots,y_{r}$ for $\mathfrak{n}$, the PBW theorem implies that $\Gamma_{0}M$ has free $R$-basis $\{(p^{n}y_{1})^{s_{1}} \dots (p^{n} y_{r})^{s_{r}} v \mid x_{i} \in \mathbb{N}_{0}, v \in B \}$. So $(M, \Gamma) \in \mathcal{M}_{n}(\mathfrak{g})$. This proves the first part.

In order to prove the second part, we claim that the $K$-vector spaces $\Omega_{i}M = K \{ x_{1} \dots x_{j} v \mid 0 \leq j \leq i, x_{1},\dots,x_{j} \in \mathfrak{g}, v \in V \}$ are in fact $\up$-modules. The proposition then follows since each $\Omega_{i}M$ is finite-dimensional, and $\underset{i}{\bigcup} \Omega_{i}M = M$.

We proceed by induction. Firstly, $\Omega_{0}M = V$ is a $\up$-module. Now assume $\Omega_{i}M$ is a $\up$-module, and show $\Omega_{i+1}M$ is a $\up$-module.

Let $y \in \up$, $x_{1},\dots,x_{i+1} \in \mathfrak{g}$ and $v \in V$. Then $y \cdot x_{1} \dots x_{i+1} v = [y,x_{1}] x_{2} \dots x_{i+1} v + x_{1} y x_{2} \dots x_{i+1} v$. 

By definition, $[y,x_{1}] x_{2} \dots x_{i+1} v \in \Omega_{i+1}M$. By our induction assumption, $y x_{2} \dots x_{i+1} v \in \Omega_{i} M$ and hence $x_{1} y x_{2} \dots x_{i+1} \in \Omega_{i+1}M$. So $y \cdot x_{1} \dots x_{i+1} v \in \Omega_{i+1}M$.

We now conclude that every $\Omega_{i}M$ is a $\up$-module, and therefore $M$ is locally finite over $\up$. $M$ now can be considered as a $\Up$-module by Proposition \ref{locallyfiniteprop}.

\end{proof}

\noindent Now note that in the case where $\mathfrak{p}$ has a triangular decomposition and $V \in \mathcal{O}_{n}(\mathfrak{p})$, we know there is a filtration $\Gamma$ for $V$ such that $(V,\Gamma) \in \mathcal{M}_{n}(\mathfrak{p})$. So the above result implies that $M = \ug \otimes_{\up}V $ has a filtration $\Gamma$ such that $(M,\Gamma) \in \mathcal{M}_{n}(\mathfrak{g})$. 

\medskip

\noindent We now look at the specific induced modules that we want to study.

\noindent \textbf{For the rest of the section, let $\mathfrak{g} = \mathfrak{g}_{s} \oplus \mathfrak{a}$ be an $\Of$-lattice of a split reductive $F$-Lie algebra, with root system $\Phi$, a fixed set of simple roots $\Delta$, $\mathfrak{a}$ the centre of $\mathfrak{g}$, and a Chevalley basis  $\{e_{\alpha},f_{\alpha} \mid \alpha \in \Phi^{+} \} \cup \{ h_{\alpha} \mid \alpha \in \Delta \}$ for $\mathfrak{g}_{s}$. Say $\mathfrak{h}_{s}$ is a Cartan subalgebra of $\mathfrak{g}$, and $\mathfrak{h} = \mathfrak{a} \oplus \mathfrak{h}_{s}$.}

Suppose $I \subseteq \Delta$ and $\mathfrak{h}' \subseteq \mathfrak{h}$ is a subalgebra such that $\mathfrak{h} = \mathfrak{h}' \oplus \mathfrak{a}$ and $h_{\alpha} \in \mathfrak{h}'$ for all $\alpha \in I$. Suppose $\lambda: \mathfrak{h}_{K} \to K$ is a weight such that $\lambda(h_{\alpha}) \in \mathbb{N}_{0}$ for all $\alpha \in I$. 

We then have a corresponding subalgebra $\mathfrak{p}_{I,\mathfrak{h}'} = \Of \{e_{\alpha} \mid \alpha \in \Phi^{+} \} \oplus \Of \{f_{\alpha} \mid \alpha \in \Phi_{I}^{+} \} \oplus \mathfrak{h}'$.

Note that the parabolic subalgebra $\mathfrak{p}_{I}$ of $\mathfrak{g}_{s}$ is isomorphic to $\mathfrak{p}_{I,\mathfrak{h}'}$ as a Lie algebra. Specifically, let $f : \mathfrak{g} \rightarrow \mathfrak{g}_{s}$ be the projection map with kernel $\mathfrak{a}$, which is a Lie algebra homomorphism. Then $f$ restricts to an isomorphism $f : \mathfrak{p}_{I,\mathfrak{h}'} \rightarrow \mathfrak{p}_{I}$.

Let $\mathfrak{g}_{I}$ be the semisimple subalgebra of $\mathfrak{g}_{s}$ corresponding to $I$. That is, $\mathfrak{g}_{I}$ is spanned over $\Of$ by  $\{e_{\alpha},f_{\alpha} \mid \alpha \in \Phi_{I}^{+} \} \cup \{ h_{\alpha} \mid \alpha \in I \}$. Then let $V$ be the unique irreducible highest-weight $U(\mathfrak{g}_{I,K})$-module of weight $\lambda |_{\mathfrak{h}_{I,K}}$. The existence and uniqueness of $V$ is a standard fact, see for instance \cite[7.1.11, 7.1.13]{Dixmier}. Then $V$ is finite-dimensional because $\lambda(h_{\alpha}) \in \mathbb{N}_{0}$ for all $\alpha \in I$, see for instance \cite[7.2.6]{Dixmier}. Moreover, $V$ can be made into a $U(\mathfrak{p}_{I,\mathfrak{h'},K})$-module as follows. 

\medskip

\noindent Firstly, let $\mathfrak{h}''_{K} = \underset{\alpha \in I}{\bigcap}\ker \alpha \leq \mathfrak{h}'_{K}$, so  $\mathfrak{h}''_{K}$ is central in $U(\mathfrak{g}_{I,K} \oplus \mathfrak{h}''_{K})$. We can also see that $\mathfrak{h}'_{K} = \mathfrak{h}''_{K} \oplus \mathfrak{h}_{I,K}$. This holds because if $h \in \mathfrak{h}''_{K} \cap \mathfrak{h}_{I,K}$ then $0 = \alpha(h) = \langle h, h_{\alpha}^{\lor} \rangle$ for all $\alpha \in I$, which means $(h,h_{\alpha}) = 0$ for all $\alpha \in I$ and hence $h = 0$ since the $h_{\alpha}$ with $\alpha \in I$ span $\mathfrak{h}_{I,K}$. So $\mathfrak{h}''_{K} \cap \mathfrak{h}_{I,K} = 0$. Moreover by standard linear algebra $\dim_{K} \mathfrak{h}''_{K} = \dim_{K} \underset{\alpha \in I}{\bigcap} \alpha \geq \mathfrak{h}'_{K} - |I|$, which means indeed $\mathfrak{h}'_{K} = \mathfrak{h}''_{K} \oplus \mathfrak{h}_{I,K}$.

By Theorem \ref{PBWthm}, we can see that $U(\mathfrak{g}_{I,K} \oplus \mathfrak{h}''_{K}) \cong U(\mathfrak{g}_{I,K}) \otimes_{K} U(\mathfrak{h}''_{K})$ as a $K$-module, and moreover since $\mathfrak{h}''_{K}$ is central, this is also an isomorphism of $K$-algebras. We can then define a $K$-algebra homomorphism $U(\mathfrak{g}_{I,K}+\mathfrak{h}''_{K}) \rightarrow U(\mathfrak{g}_{I,K})$ by mapping each $h \in \mathfrak{h}''_{K}$ to $\lambda(h)$. Then inflate $V$ along this homomorphism to make $V$ into a $U(\mathfrak{g}_{I,K}+\mathfrak{h}''_{K})$-module, which is a highest-weight module of weight $\lambda$.
 
Then note that the nilpotent subalgebra $\mathfrak{n}_{I} = \Of \{e_{\alpha} \mid \alpha \in \Phi^{+} \setminus \Phi_{I}^{+} \}$ satisfies $\mathfrak{p}_{I,\mathfrak{h}',K} = \mathfrak{g}_{I,K} \oplus \mathfrak{n}_{I,K} \oplus \mathfrak{h}''_{K} $. This is because by definition $\mathfrak{p}_{I,\mathfrak{h}',K} = \Of \{e_{\alpha} \mid \alpha \in \Phi^{+} \} \oplus \mathfrak{h}'_{K} \oplus \{f_{\alpha} \mid \alpha \in \Phi_{I}^{+} \} =  (\Of \{e_{\alpha} \mid \alpha \in \Phi_{I}^{+} \} \oplus \mathfrak{h}_{I,K} \oplus \{f_{\alpha} \mid \alpha \in \Phi_{I}^{+} \} ) \oplus \Of \{ e_{\alpha} \mid \alpha \in \Phi^{+} \setminus \Phi_{I}^{+} \} \oplus \mathfrak{h}''_{K} = \mathfrak{g}_{I,K} \oplus\mathfrak{n}_{I,K}  \oplus \mathfrak{h}''_{K}$.

 Also $\mathfrak{n}_{I}$ is an ideal of $\mathfrak{p}_{I,\mathfrak{h}',K}$. This is because if $\alpha \in \Phi^{+} \setminus \Phi_{I}^{+}$ and $\beta \in \Phi^{+}$, then $\alpha + \beta \notin \Phi_{I}^{+}$ since $\alpha$ and hence $\alpha + \beta$ has a non-zero coefficient of some simple root not in $I$. Thus $[e_{\alpha},e_{\beta}]$ (which is either $0$ or a scalar multiple of $e_{\alpha + \beta}$) lies in $\mathfrak{n}_{I}$. Similarly if $\alpha \in \Phi^{+} \setminus \Phi_{I}^{+}$ and $\beta \in \Phi_{I}^{+}$ then $\alpha - \beta$ has a positive coefficient of some element of $\Delta \setminus I$ and so $[e_{\alpha},f_{\beta}] \in \mathfrak{n}_{I}$.

Inflate $V$ along the projection $\mathfrak{p}_{I,\mathfrak{h}',K} \rightarrow \mathfrak{g}_{I,K}+\mathfrak{h}''_{K}$ with kernel $\mathfrak{n}_{I,K}$, and we make $V$ into an irreducible highest-weight $U(\mathfrak{p}_{I,\mathfrak{h}',K})$-module of weight $\lambda$ such that all $e_{\alpha}$ with $\alpha \in \Phi^{+} \setminus \Phi_{I}^{+}$ act as $0$.
 
\begin{mydef}
The \emph{generalised Verma module} corresponding to $I$ and $\lambda$ is $M_{I,\mathfrak{h}'}(\lambda) = U(\mathfrak{g}) \otimes_{U(\mathfrak{p}_{I,\mathfrak{h}'})} V$.
\end{mydef}

\noindent Suppose $\mathcal{B}$ is a $K$-basis for $V$. Let $x_{1},\dots,x_{r}$ be the elements of $\{f_{\alpha} \mid \alpha \in \Phi \setminus \Phi_{I} \}$. Let $x_{r+1},\dots,x_{s}$ be an $\Of$-basis for $\mathfrak{a}$. Then by the PBW theorem, $M_{I,\mathfrak{h}'}(\lambda)$ has basis $\{x^{t}v\mid t \in \mathbb{N}_{0}^{s}, v \in \mathcal{B} \}$. Here, $x^{t}$ denotes $x_{1}^{t_{1}} \dots x_{s}^{t_{s}}$.

In the case where $\mathfrak{g}_{F}$ is split semisimple (meaning that $\mathfrak{h}'$ must equal $\mathfrak{h}$) we simply write $M_{I}(\lambda)$ for $M_{I,\mathfrak{h}'}(\lambda)$, which agrees with the notation of $\cite{Humphreys}$.

\subsection{$F$-uniform groups}
\label{Funiform}
Suppose $G$ is a uniform pro-p group in the sense of \cite[Definition 4.1]{prop}. Then $G$ has an associated $\mathbb{Z}_{p}$-Lie algebra $L_{G}$. The elements of this Lie algebra are the elements of $G$. The structure is described in full detail in \cite[\S4]{prop}, but we can describe the operations as follows:

\begin{itemize}
\item $a \cdot g = g^{a}$ for $a \in \mathbb{Z}_{p}$, $g \in G$,

\item $g + h = \lim_{i \to \infty} (g^{p^{i}} h^{p^{i}})^{p^{-i}}$ for $g,h \in G$,

\item $[g,h] = \lim_{i \to \infty} (g^{-p^{i}} h^{-p^{i}} g^{p^{i}} h^{p^{i}})^{p^{-2i}}$ for $g,h \in G$.

\end{itemize}

\noindent Any uniform pro-p group can be described using a minimal topological generating set as follows.

\begin{myprop}
Let $G$ be a uniform pro-p group. Then if $\{x_{1},\dots,x_{r} \}$ is a topological generating set for $G$ of minimal size, the map $(\lambda_{1},\dots,\lambda_{r}) \mapsto x_{1}^{\lambda_{1}} \dots x_{r}^{\lambda_{r}}$ is a homeomorphism from $\mathbb{Z}_{p}^{r}$ to $G$.
\end{myprop}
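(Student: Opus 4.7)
The plan is to establish this as a standard ``coordinates of the second kind'' result for uniform pro-$p$ groups, proceeding via the lower $p$-series $P_{1}(G) \geq P_{2}(G) \geq \dots$ of $G$. The key structural facts I would use (all from \cite[\S 3--4]{prop}) are that for a uniform pro-$p$ group the $p$-power map induces group isomorphisms $P_{i}(G)/P_{i+1}(G) \to P_{i+1}(G)/P_{i+2}(G)$, that each quotient $P_{i}(G)/P_{i+1}(G)$ is an elementary abelian $p$-group of rank $r$, and that the images of any minimal topological generating set form a basis of the Frattini quotient $G/P_{2}(G) \cong \mathbb{F}_{p}^{r}$.

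First I would check that the map $\phi : \mathbb{Z}_{p}^{r} \to G$ is well-defined and continuous. Well-definedness requires that for each $x \in G$ the map $\mathbb{Z} \to G$, $n \mapsto x^{n}$, extends continuously to $\mathbb{Z}_{p}$, which follows from the fact that $x$ topologically generates a procyclic pro-$p$ subgroup isomorphic to a quotient of $\mathbb{Z}_{p}$. Continuity of $\phi$ then follows from continuity of multiplication and of the individual power maps.

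Next, the heart of the argument is bijectivity, which I would prove by induction on the lower $p$-series. Given $g \in G$, the minimal generating property provides unique $a_{1}, \dots, a_{r} \in \{0, 1, \dots, p-1\}$ with $g \equiv x_{1}^{a_{1}} \dots x_{r}^{a_{r}} \pmod{P_{2}(G)}$. Setting $g_{1} = g(x_{1}^{a_{1}} \dots x_{r}^{a_{r}})^{-1} \in P_{2}(G)$, the isomorphism $G/P_{2}(G) \to P_{2}(G)/P_{3}(G)$ induced by $p$-th powers, together with the fact that $x_{1}^{p}, \dots, x_{r}^{p}$ generate $P_{2}(G)$ topologically, lets me extract digits $a_{i}' \in \{0, \dots, p-1\}$ with $g_{1} \equiv x_{1}^{a_{1}'p} \dots x_{r}^{a_{r}'p} \pmod{P_{3}(G)}$, and so on. Assembling the digits gives $\lambda_{i} = a_{i} + a_{i}' p + a_{i}'' p^{2} + \dots \in \mathbb{Z}_{p}$ with $\phi(\lambda) = g$; uniqueness of the digits at each stage (from the basis property modulo the filtration) yields injectivity, and completeness of $\mathbb{Z}_{p}$ together with the fact that $\bigcap_{i} P_{i}(G) = 1$ (since $G$ is a pro-$p$ group whose topology is determined by the lower $p$-series) yields surjectivity.

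Finally, since $\mathbb{Z}_{p}^{r}$ is compact and $G$ is Hausdorff, the continuous bijection $\phi$ is automatically a homeomorphism. The main obstacle is the inductive step: one must verify carefully that adjustments carried out at level $P_{i}(G)/P_{i+1}(G)$ do not spoil the previously chosen digits, and this is where the defining ``uniform'' hypothesis on $G$ (powerfulness together with $|P_{i}(G) : P_{i+1}(G)|$ being constant) is indispensable, since it forces the $p$-th power map to behave compatibly with the filtration. Once those compatibilities are in hand the rest of the argument is bookkeeping.
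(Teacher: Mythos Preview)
Your sketch is correct and is essentially the standard ``coordinates of the second kind'' argument found in the reference the paper cites. The paper's own proof, however, is simply a citation to \cite[Theorem 4.9]{prop} and does not reproduce any of the argument, so your treatment is considerably more detailed than what appears in the paper. One small remark on the inductive step you flagged as the main obstacle: to pass from $x_{1}^{a_{1}}\cdots x_{r}^{a_{r}}\cdot x_{1}^{a_{1}'p}\cdots x_{r}^{a_{r}'p}$ to $x_{1}^{a_{1}+a_{1}'p}\cdots x_{r}^{a_{r}+a_{r}'p}$ modulo $P_{3}(G)$ one uses that $[G,P_{2}(G)]\subseteq P_{3}(G)$, and more generally $[P_{i}(G),P_{j}(G)]\subseteq P_{i+j}(G)$, which is exactly the compatibility you allude to in your final paragraph.
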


\begin{proof}
\cite[Theorem 4.9]{prop}
\end{proof}

\noindent We can then relate minimal topological generating sets of $G$ and $\mathbb{Z}_{p}$-bases of $L_{G}$.

\begin{myprop}
\label{basisprop}
A subset $\{x_{1},\dots,x_{r} \}$ of $G$ is a minimal topological generating set of $G$ if and only if it is a $\mathbb{Z}_{p}$-basis for $L_{G}$.
\end{myprop}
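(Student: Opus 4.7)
The strategy is to reduce both conditions to a statement about a common $\mathbb{F}_p$-vector space obtained from a mod-$p$ (respectively Frattini) quotient.

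First, by the Burnside basis theorem for pro-$p$ groups, a subset $\{x_1,\dots,x_r\}\subseteq G$ is a minimal topological generating set if and only if $r = d(G)$ and the images of the $x_i$ form an $\mathbb{F}_p$-basis of the Frattini quotient $G/\Phi(G)$. Since $G$ is uniform, and hence powerful, $\Phi(G)=G^p\overline{[G,G]}=G^p$, so the condition becomes: the images form an $\mathbb{F}_p$-basis of $G/G^p$. Dually, since $L_G$ is a free $\mathbb{Z}_p$-module of rank $\dim G = d(G)$, Nakayama's lemma over the local ring $\mathbb{Z}_p$ yields: $\{x_1,\dots,x_r\}\subseteq L_G$ is a $\mathbb{Z}_p$-basis if and only if $r = d(G)$ and the images of the $x_i$ form an $\mathbb{F}_p$-basis of $L_G/pL_G$.

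It therefore suffices to show that the set-theoretic identity $G = L_G$ descends to an $\mathbb{F}_p$-linear isomorphism $G/G^p \xrightarrow{\sim} L_G/pL_G$. On underlying subsets we have $G^p = pL_G$, because the scalar action on $L_G$ is defined by $p\cdot g = g^p$, so the cosets agree as sets and the $\mathbb{F}_p$-scalar action agrees. For additivity I would verify that $gh \equiv g + h \pmod{G^p}$ for all $g,h\in G$: using the powerfulness containment $\overline{[G,G]} \subseteq G^p$, it suffices to check this on the abelianisation of $G$, where the defining limit $g+h = \lim_i(g^{p^i} h^{p^i})^{p^{-i}}$ collapses to $gh$. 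Combining the two characterisations through this identification gives the proposition.

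The main obstacle is precisely the additive compatibility $gh \equiv g+h \pmod{G^p}$: this is the one step with real Lie-theoretic content, and it relies crucially on uniformity via the inclusion $\overline{[G,G]}\subseteq G^p$. Everything else is a repackaging of the Burnside/Nakayama equivalences on the group and Lie-algebra sides respectively.
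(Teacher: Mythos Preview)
Your approach is correct and essentially identical to the paper's: both reduce to the $\mathbb{F}_p$-linear identification $G/G^p \cong L_G/pL_G$ and then invoke Burnside on the group side and Nakayama on the module side. The paper handles the key compatibility $gh\equiv g+h\pmod{G^p}$ by directly citing results from Dixon--du~Sautoy--Mann--Segal (that the additive and multiplicative cosets of $G_2=G^p$ coincide, and that $(G,+)$ is itself uniform of the same dimension); your ``pass to the abelianisation'' reconstruction is slightly loose as written---the limit defining $g+h$ is taken in $G$, not in a quotient---but the intended argument (the BCH/Hall--Petresco correction terms lie in $[L_G,L_G]\subseteq pL_G=G^p$) is exactly what underlies those cited facts.
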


\begin{proof}
\cite[Theorem 4.17]{prop} gives that any minimal topological generating set for $G$ is a $\mathbb{Z}_{p}$-basis for $L_{G}$. Adapting the proof of this theorem, we can show the converse also holds.

From \cite[Corollary 4.15]{prop}, the multiplicative subgroup $G_{2} = G^{p}$ of $G$ is also an additive subgroup of $G$, and the additive cosets of $G_{2}$ are the same as the multiplicative cosets. By \cite[Proposition 4.16]{prop}, $(G,+)$ is a uniform pro-p group with the same dimension as $G$. By \cite[Lemma 3.4]{prop}, $G_{2}$ is the Frattini subgroup of both $G$ and $(G,+)$.

Suppose $X$ is a $\mathbb{Z}_{p}$-basis for $L_{G}$. Then $X$ topologically generates $(G,+)$. By \cite[Proposition 1.9]{prop}, the image of $X$ in $G/G_{2}$ generates $G / G_{2}$ (which is the same group whether we use the multiplication or the addition operation on $G$). Therefore $X$ topologically generates $G$, and is minimal since the dimensions of $G$, $(G,+)$ as uniform groups are the same.

\end{proof}

\noindent Now make the further assumption that $G$ is an $F$-analytic group. Following \cite[Remark 2.2.5]{Orlik}, we can identify $L_{G}$ with a $\mathbb{Z}_{p}$-submodule of the $F$-Lie algebra $\text{Lie}(G)$, via the exponential map.

\begin{mydef}
A uniform pro-p group $G$ is an \emph{$F$-uniform group} if $G$ is an $F$-analytic group, and moreover $L_{G}$ is an $\Of$-submodule of $\text{Lie}(G)$.

\end{mydef}

\begin{mydef}
We say an $\Of$-Lie algebra $\mathfrak{g}$ is powerful if $[\mathfrak{g},\mathfrak{g}]\subseteq p \mathfrak{g}$.
\end{mydef}

\begin{mythm}
\label{categoryisom}
The assignment $G \mapsto L_{G}$ is an isomorphism of categories from the category of $F$-uniform groups to the category of free, finite rank, torsion-free, powerful $\Of$-Lie algebras. The inverse is given by $L \mapsto (L,*)$ where $x*y = \log(\exp(x)\exp(y))$.
\end{mythm}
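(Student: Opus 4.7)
The plan is to bootstrap from the classical Lazard correspondence between uniform pro-$p$ groups and free, finite rank, torsion-free, powerful $\mathbb{Z}_{p}$-Lie algebras, proved in \cite{prop}, and then track the additional $\Of$-structure on each side. The argument splits naturally into three parts: checking that $G \mapsto L_{G}$ lands in the target category, checking that $L \mapsto (L,*)$ lands in the source category, and checking functoriality of morphisms.

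For $G \mapsto L_{G}$: the $\mathbb{Z}_{p}$-correspondence already gives that $L_{G}$ is free of finite rank, torsion-free, and powerful over $\mathbb{Z}_{p}$. By definition of $F$-uniform, $L_{G}$ is an $\Of$-submodule of the $F$-Lie algebra $\mathrm{Lie}(G)$ via the exponential map, so it inherits an $\Of$-action. The bracket on $L_{G}$ defined by the group-commutator limit formula coincides with the restriction of the $\Of$-bilinear bracket on $\mathrm{Lie}(G)$, by the standard compatibility of $\exp$ with commutators. Since $L_{G}$ is finitely generated and torsion-free over the principal ideal domain $\Of$, it is a free $\Of$-module of finite rank, and powerfulness over $\Of$ is implied by powerfulness over $\mathbb{Z}_{p}$. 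On morphisms, a continuous group homomorphism $\phi: G \to H$ induces a $\mathbb{Z}_{p}$-Lie algebra map by continuity of the defining limits; the assumption that $\phi$ is $F$-analytic makes its differential $\mathrm{Lie}(G) \to \mathrm{Lie}(H)$ an $F$-linear map, whose restriction $L_{\phi}$ is then $\Of$-linear.

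For the inverse $L \mapsto (L,*)$: restricting scalars, $L$ is also a free, finite rank, torsion-free, powerful $\mathbb{Z}_{p}$-Lie algebra, so the $\mathbb{Z}_{p}$-correspondence produces a uniform pro-$p$ group $G = (L,*)$ with $L_{G} = L$ as $\mathbb{Z}_{p}$-Lie algebras. To upgrade $G$ to an $F$-analytic group, choose an $\Of$-basis $x_{1},\dots,x_{r}$ of $L$ and use the map $\Of^{r} \to G$, $(a_{1},\dots,a_{r}) \mapsto (a_{1} x_{1}) * \cdots * (a_{r} x_{r})$, as a global chart: this is $F$-analytic because $*$ is given by the Baker--Campbell--Hausdorff series, whose universal Lie polynomials are $\Of$-multilinear given that the bracket of $L$ is $\Of$-bilinear. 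Under this $F$-analytic structure, $L_{G}$ recovers $L$ as an $\Of$-submodule of $\mathrm{Lie}(G)$, so $G$ is $F$-uniform and the two functors are mutually inverse on objects; functoriality of the inverse on morphisms is dual to the argument above.

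The main obstacle I anticipate is verifying that the $\Of$-scalar action on $L_{G}$ induced by the group operation $a \cdot x = x^{a}$ for $a \in \Of$ agrees with the $\Of$-action inherited from $\mathrm{Lie}(G)$ (or, on the other side, with the given $\Of$-action on $L$). This is where $F$-analyticity rather than merely $p$-adic analyticity is used: exponentiation by $a \in \Of$ must first be shown to extend the $\mathbb{Z}$-power map continuously in the $F$-analytic topology, and the identity $a \cdot x = a x$ is then verified for $a \in \mathbb{Z}$ directly from the BCH definition of $*$ and promoted to $a \in \Of$ via density of $\mathbb{Z}$ in $\Of$ and continuity. Everything else is essentially bookkeeping around the established $\mathbb{Z}_{p}$-correspondence.
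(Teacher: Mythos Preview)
Your overall strategy---reduce to the Lazard correspondence over $\mathbb{Z}_p$ from \cite[9.10]{prop} and then track the $\Of$-structure---is exactly what the paper has in mind; indeed the paper's entire proof is the sentence ``Follows from \cite[9.10]{prop}.'' and leaves the $\Of$-bookkeeping implicit. So at the level of approach you are aligned with the paper, and in fact supplying more detail than it does.

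There is, however, a genuine error in your execution of the ``main obstacle''. You propose to verify $a \cdot x = ax$ for $a \in \Of$ by checking it for $a \in \mathbb{Z}$ and then invoking density of $\mathbb{Z}$ in $\Of$. But $\mathbb{Z}$ is dense in $\mathbb{Z}_p$, not in $\Of$, whenever $F \neq \mathbb{Q}_p$; for instance if $F$ is ramified or has residual degree $>1$ then $\Of$ properly contains the closure of $\mathbb{Z}$. So this continuity argument cannot reach the full $\Of$-action. Relatedly, there is no group-theoretic power operation $x^a$ for $a \in \Of \setminus \mathbb{Z}_p$: the paper's definition of the $\Of$-action on $L_G$ is \emph{not} via exponentiation in $G$ but purely via the ambient $F$-linear structure on $\mathrm{Lie}(G)$. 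The compatibility you actually need is that, for the inverse functor, the $F$-analytic chart you build from an $\Of$-basis of $L$ gives $\mathrm{Lie}((L,*)) \cong L \otimes_{\Of} F$ as $F$-Lie algebras, with $L$ sitting inside as its obvious $\Of$-lattice; this follows because the BCH series is a formal Lie series and the bracket on $L$ is already $\Of$-bilinear, so the chart is visibly $\Of$-equivariant on tangent spaces. No density argument is required---the $\Of$-structure comes for free from the construction of the chart, not from extending $\mathbb{Z}_p$-scalars by continuity.
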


\begin{proof}
Follows from \cite[9.10]{prop}.
\end{proof}

\noindent We will use this result repeatedly to show the existence of groups corresponding to certain $\Of$-Lie algebras.

\begin{myex}
Given any $m \in \mathbb{N}$, the group corresponding to $p^{n+1} \mathfrak{sl}_{m+1}(\Of)$ is $\ker( SL_{m+1}(\Of) \rightarrow SL_{m+1}(\Of / p^{n+1} \Of))$.
\end{myex}

\subsection{Iwasawa algebras}
\label{Iwasawa}
Given any profinite group $G$, we can define the Iwasawa algebras $RG$ and $KG$ as follows.

\begin{mydef}
\begin{itemize}
\item $RG = \varprojlim_{N \unlhd_{o} G} R[G/N]$.

\item $KG = K \otimes_{R} RG$.

\end{itemize}

\end{mydef}

\noindent Now suppose $G$ is an $F$-uniform group. We can then describe the structure of $RG$ and $KG$ as follows. 
\begin{myprop}
\label{iwasawastructure}
Let $\{x_{1},\dots,x_{r}\}$ be a minimal topological generating set for $G$. Write $b^{s} = (x_{1}-1)^{s_{1}} \dots (x_{r} - 1)^{s_{r}} \in RG$ for $s \in \mathbb{N}_{0}^{r}$. Then $RG$ can be described as follows:

$$
RG = \{ \underset{s \in \mathbb{N}_{0}^{r}}{\sum} C_{s} b^{s} \mid C_{s} \in R \}
$$

\noindent It follows from this that we can describe $KG$ as:

$$
KG = \{ \underset{s \in \mathbb{N}_{0}^{r}}{\sum} C_{s} b^{s} \mid C_{s} \in K, \text{ the } C_{s} \text{ bounded as } |s|\rightarrow \infty \}
$$
\end{myprop}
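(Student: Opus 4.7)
The plan is to realize $RG$ as an inverse limit of finite group rings and then carry out an explicit change of basis from group elements to the products $b^s$.

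First I would fix a cofinal family of open normal subgroups by taking the lower $p$-series $G = G_1 \supseteq G_2 \supseteq \dots$; for a uniform pro-$p$ group this satisfies $G_n = G^{p^{n-1}}$, is cofinal among open normal subgroups, and has trivial intersection (these are standard consequences of \cite{prop}). Hence $RG = \varprojlim_n R[G/G_n]$. By Proposition \ref{basisprop} and the homeomorphism $\mathbb{Z}_p^r \cong G$ sending $(\lambda_1,\dots,\lambda_r) \mapsto x_1^{\lambda_1}\dots x_r^{\lambda_r}$, every element of $G/G_n$ has a unique representative $x_1^{j_1}\dots x_r^{j_r}$ with $0 \leq j_i < p^{n-1}$; so $R[G/G_n]$ is $R$-free of rank $p^{(n-1)r}$ on these elements.

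Next I would perform the change of basis to the $b^s$. Expanding each factor of $b^s = (x_1-1)^{s_1}\cdots(x_r-1)^{s_r}$ by the one-variable binomial theorem (no commutativity is needed, since each factor involves only the single variable $x_i$) gives
\[ b^s = \sum_{0 \leq j_i \leq s_i} \left( \prod_{i=1}^{r} \binom{s_i}{j_i}(-1)^{s_i - j_i} \right) x_1^{j_1}\cdots x_r^{j_r}. \]
With respect to the lexicographic order on $s$, the change-of-basis matrix between the group elements $x_1^{j_1}\cdots x_r^{j_r}$ and the $b^s$ (for $0 \leq s_i, j_i < p^{n-1}$) is unit upper-triangular, with leading entry $1$ on $x_1^{s_1}\cdots x_r^{s_r}$. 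Thus $\{ b^s : 0 \leq s_i < p^{n-1}\}$ is also a free $R$-basis of $R[G/G_n]$.

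For the inverse limit I would observe that the transition map $R[G/G_{n+1}] \twoheadrightarrow R[G/G_n]$ is induced by $x_i \mapsto x_i$ and hence sends $b^s$ (at level $n+1$) to $b^s$ (at level $n$) for each fixed $s$. A coherent sequence $(y_n) \in RG$ therefore corresponds uniquely to a sequence $(C_s)_{s \in \mathbb{N}_0^r}$ in $R$ via $y_n = \sum_{0 \leq s_i < p^{n-1}} C_s b^s$: uniqueness at each finite stage is the basis statement, and existence imposes no convergence condition since each $y_n$ is a finite sum. The $KG$ statement then drops out: any element of $KG$ equals $\pi^{-m}y$ for some $y \in RG$ and $m \geq 0$, yielding an expansion in the $b^s$ with coefficients in $\pi^{-m}R$ (hence bounded); conversely any bounded formal sum $\sum C_s b^s$ with $C_s \in K$ satisfies $\pi^m C_s \in R$ uniformly for some $m$, so equals $\pi^{-m}\sum (\pi^m C_s)b^s \in KG$.

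The main delicate point is the structural step identifying $G_n = G^{p^{n-1}}$ and verifying cofinality of the lower $p$-series; this must be imported from the theory of uniform pro-$p$ groups rather than proved afresh. Once that is granted, the rest of the argument is a routine triangular change of basis, made transparent by the fact that each factor $(x_i - 1)^{s_i}$ involves only one non-commutative variable and the product order in $b^s$ is fixed once and for all.
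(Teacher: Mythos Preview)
The paper does not prove this; it simply cites it as a standard fact from \cite[\S 28]{Schneider}. Your attempt is therefore more detailed than what the paper offers, but it has a genuine gap in the inverse-limit step.

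The finite-level change of basis is correct: the $b^s$ with $0 \le s_i < p^{n-1}$ form an $R$-basis of $R[G/G_n]$ by your unit-triangular argument. The problem is the claim that a coherent sequence $(y_n)$ corresponds to a single family $(C_s)$ via $y_n = \sum_{0 \le s_i < p^{n-1}} C_s\, b^s$. The transition map $R[G/G_{n+1}] \to R[G/G_n]$ does send $b^s$ to the image of $b^s$, but for $s$ with some $s_i \ge p^{n-1}$ that image is \emph{not} zero in characteristic zero: already for $r=1$, the image of $(x_1-1)^{p^{n-1}}$ in $R[G/G_n]$ is $\sum_{j=1}^{p^{n-1}-1} \binom{p^{n-1}}{j}(-1)^{p^{n-1}-j} x_1^j$, which is nonzero since the binomial coefficients, though divisible by $p$, do not vanish in $R$. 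Hence the high-index terms at level $n+1$ contribute to the low-index coefficients at level $n$, so writing $y_n$ in the basis $\{b^s\}$ does not produce coefficients that are stable in $n$. The same issue breaks the converse: truncating an arbitrary series $\sum_s C_s b^s$ at the box $0\le s_i<p^{n-1}$ does not give a coherent sequence.

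What is missing is the analytic input that $b^s \to 0$ in the profinite topology of $RG$ as $|s|\to\infty$, equivalently that the augmentation-ideal powers tend to zero. In the standard treatments this comes from the $p$-valuation on $G$ (Lazard, as in Schneider) or from the explicit description of $\mathbb{Z}_pG$ as a completed power-series ring in \cite[\S 7]{prop}, which one then base-changes to $R$. Either way, the argument needs more than the combinatorial triangular change of basis you give.
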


\begin{proof}
A standard fact about Iwasawa algebras, for example \cite[\S 28]{Schneider}.
\end{proof}

\noindent Now assume $\mathfrak{g}$ is an $\Of$-Lie algebra such that $L_{G} = p^{n+1} \mathfrak{g}$. We wish to embed $KG$ in the completed enveloping algebra $\Ug$. In the setting of the introduction, this allows us to consider completed highest-weight modules for $\mathfrak{g}_{K}$ as $KG$-modules. To this end, we make the following claims inspired by \cite[Theorem 10.4]{irredreps}.

We write $\text{id}$ for the identity map $G \rightarrow L_{G} = p^{n+1} \mathfrak{g}$. This allows us to describe a mapping of $G$ into $\Ug$ as follows.

\begin{myprop}
The map $x \mapsto \exp(\text{id}(x))$ defines an injective continuous homomorphism from $G$ to the multiplicative group $\Ug^{\times}$ (with the subspace topology on $\Ug^{\times}$ given by the filtration on $\Ug$).
\end{myprop}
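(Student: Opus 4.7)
The plan is to verify, in turn, well-definedness of $x \mapsto \exp(\text{id}(x))$, the group homomorphism property, injectivity, and continuity.

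For well-definedness, note that $\text{id}(x) \in L_{G} = p^{n+1}\mathfrak{g}$. Since $p^{n}\mathfrak{g}_{R} \subseteq \Gamma_{0}\Ug$ by the definition of the filtration, we have $p^{n+1}\mathfrak{g}_{R} \subseteq p\,\Gamma_{0}\Ug \subseteq \Gamma_{e}\Ug$, where $e := v_{\pi}(p) \geq 1$. Writing $y = \text{id}(x) \in \Gamma_{e}\Ug$, we have $y^{k} \in \Gamma_{ke}\Ug$, and using the standard estimate $v_{\pi}(k!) = e\,v_{p}(k!) \leq ek/(p-1)$ we obtain
$$
\tfrac{y^{k}}{k!} \;\in\; \Gamma_{ke - v_{\pi}(k!)}\Ug \;\subseteq\; \Gamma_{ke(p-2)/(p-1)}\Ug,
$$
whose index tends to infinity as $k \to \infty$ since $p > 2$. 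Hence $\exp(\text{id}(x)) = \sum_{k\geq 0} \text{id}(x)^{k}/k!$ converges in $\Ug$ and lies in $1 + \Gamma_{e}\Ug$. Elements of $1+\Gamma_{1}\Ug$ are invertible via the geometric series (equivalently, $\exp(-\text{id}(x))$ converges and yields a two-sided inverse by a formal computation inside the commutative subring generated by $\text{id}(x)$), so the image sits in $\Ug^{\times}$.

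For the homomorphism property, Theorem \ref{categoryisom} identifies the inverse of $G \mapsto L_{G}$ with $(L,*)$, where $x*y = \log(\exp(x)\exp(y))$ is the Baker--Campbell--Hausdorff series. Interpreting this in $\Ug$: for $x,y \in G$ the product $xy$ satisfies $\text{id}(xy) = \text{id}(x)*\text{id}(y)$ inside $L_{G}$, and the same convergence estimate as above (applied to each iterated bracket, which again lies in $p^{n+1}\mathfrak{g}$) shows that both sides of the formal identity $\exp(X)\exp(Y) = \exp(X*Y)$ converge in $\Ug$ for $X,Y \in \Gamma_{e}\Ug$. Substituting $X = \text{id}(x)$, $Y = \text{id}(y)$ gives $\exp(\text{id}(x))\exp(\text{id}(y)) = \exp(\text{id}(xy))$ as required.

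Injectivity follows because $\log(1+z) = -\sum_{k\geq 1}(-z)^{k}/k$ converges on $1 + \Gamma_{e}\Ug$ by an analogous estimate, and the formal identity $\log\circ\exp = \mathrm{id}$ on $\Gamma_{e}\Ug$ yields $\exp(\text{id}(x)) = 1 \Rightarrow \text{id}(x) = 0 \Rightarrow x = 1$. For continuity, pick a minimal topological generating set $\{x_{1},\dots,x_{r}\}$ for $G$, which by Proposition \ref{basisprop} is a $\mathbb{Z}_{p}$-basis of $L_{G}$; then $(\lambda_{1},\dots,\lambda_{r}) \mapsto x_{1}^{\lambda_{1}}\cdots x_{r}^{\lambda_{r}}$ is a homeomorphism $\mathbb{Z}_{p}^{r} \to G$, and $\text{id}(x_{1}^{\lambda_{1}}\cdots x_{r}^{\lambda_{r}})$ is given by the convergent BCH series in the $\lambda_{i}\,\text{id}(x_{i})$, hence continuous in $(\lambda_{i})$; composing with the continuous map $\exp:\Gamma_{e}\Ug \to 1+\Gamma_{e}\Ug$ finishes the argument. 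The main obstacle will be making the BCH manipulation in $\Ug$ fully rigorous: one has to justify not merely termwise convergence but also that rearrangement of the formal product of two exponential series equals the exponential of the BCH series in the topology of the filtration; the rest is a direct application of the filtration machinery set up in \S\ref{completions}.
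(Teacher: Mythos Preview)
Your proposal is correct and follows essentially the same approach as the paper: well-definedness via convergence of the exponential series on $p^{n+1}\mathfrak{g}$, the homomorphism property via Theorem~\ref{categoryisom} and the Baker--Campbell--Hausdorff identity, injectivity from $\exp(y)\neq 1$ for $y\neq 0$, and continuity via a minimal topological generating set. You supply more explicit filtration estimates (the bound on $v_{\pi}(k!)$ and the use of $\log$ as an inverse) than the paper does, and you phrase continuity as a composition $\mathbb{Z}_{p}^{r}\xrightarrow{\text{BCH}} L_{G}\xrightarrow{\exp}\Ug^{\times}$ rather than the paper's direct estimate $\exp(G_{i})\subseteq 1+\Gamma_{?}\Ug$, but these are cosmetic differences; your honest flag about the BCH rearrangement is a point the paper also glosses over.
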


\begin{proof}
Note that for any $y \in p^{n+1} \mathfrak{g}$, $\exp(y) = \sum \frac{y^{j}}{j!}$ defines an element of $\Ug$. So $x \mapsto \exp(\text{id}(x))$ does define a map $G \rightarrow \Ug$  . Moreover, Theorem \ref{categoryisom} implies that $x * y = \log(\exp(x)\exp(y))$ defines an operation on $G$ that coincides with the group operation. This implies that $x \mapsto \exp(\text{id}(x))$ is a homomorphism. Moreover, it is clear that for any $y \in p^{n+1}\mathfrak{g} \setminus 0$ we have $\exp(y) \neq 1 $ in $\Ug$ and so the map is injective.

For the continuity, we choose a minimal topological generating set $x_{1},\dots,x_{r}$ for $G$. By \cite[Theorem 4.9]{prop} this choice gives a homeomorphism $\mathbb{Z}_{p}^{r} \rightarrow G$ via $a \mapsto x_{1}^{a_{1}} \dots x_{r}^{a_{r}}$. So we can consider the topology of $G$ as being given by the filtration $G_{i} = \{x_{1}^{a_{1}} \dots x_{r}^{a_{r}} \mid p^{i} | a_{j} \text{ for all } j \}$. Also note that $\exp(p^{n+1} \mathfrak{g}) \subseteq \Gamma_{1} \Ug$, so we can see that $\exp(G_{i}) \subseteq \Gamma_{rp^{i}} \Ug$. Therefore $x \mapsto \exp (x)$ is indeed continuous.
\end{proof}

\begin{mycor}

There is an embedding of $K$-algebras $KG \rightarrow \Ug$ given by $x \mapsto \exp(\text{id}(x))$ for $x \in G$.
\end{mycor}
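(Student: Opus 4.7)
The plan is to promote the continuous group homomorphism $\iota := \exp \circ \mathrm{id} : G \to \Ug^{\times}$ of the previous proposition to a $K$-algebra embedding, using the explicit presentation of $KG$ from Proposition \ref{iwasawastructure}. Fix a minimal topological generating set $\{x_{1}, \dots, x_{r}\}$ of $G$, which by Proposition \ref{basisprop} is a $\mathbb{Z}_{p}$-basis of $L_{G} = p^{n+1}\mathfrak{g}$; set $u_{i} := \iota(x_{i}) - 1 \in \Ug$. Since $\mathrm{id}(x_{i}) \in p^{n+1}\mathfrak{g} \subseteq \Gamma_{1}\Ug$, the defining series $u_{i} = \sum_{j\geq 1}\mathrm{id}(x_{i})^{j}/j!$ lies in $\Gamma_{1}\Ug$. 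I would then define
\[
\phi : KG \longrightarrow \Ug, \qquad \phi\Bigl(\sum_{s} C_{s}\, b^{s}\Bigr) := \sum_{s} C_{s}\, u_{1}^{s_{1}}\cdots u_{r}^{s_{r}},
\]
where $b^{s} = (x_{1}-1)^{s_{1}}\cdots(x_{r}-1)^{s_{r}}$ as in Proposition \ref{iwasawastructure}. This converges in $\Ug$ because $u_{1}^{s_{1}}\cdots u_{r}^{s_{r}} \in \Gamma_{|s|}\Ug$ while the coefficients $C_{s}$ are bounded in $K$.

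To see $\phi$ is a $K$-algebra homomorphism, observe that $K$-linearity is built into the definition. The restriction of $\phi$ to the subring $R[G] \subseteq KG$ coincides with the unique $R$-algebra extension of the group homomorphism $\iota$ to $R[G]$, so $\phi$ is multiplicative on $R[G]$; since $R[G]$ is dense in $KG$ in the topology of Proposition \ref{iwasawastructure} and $\phi$ is continuous by the same convergence estimate, multiplicativity extends to all of $KG$.

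The main obstacle is injectivity, and I would handle it with a leading-term argument. From $u_{i} = \mathrm{id}(x_{i}) + \tfrac{1}{2}\mathrm{id}(x_{i})^{2} + \cdots$ and the fact that $\mathrm{id}(x_{i})^{j} \in \Gamma_{j}\Ug$, an inductive expansion shows that each product $u^{s} := u_{1}^{s_{1}}\cdots u_{r}^{s_{r}}$ agrees, modulo a strictly higher-order piece of the filtration $\Gamma$, with the ordered monomial $\mathrm{id}(x_{1})^{s_{1}}\cdots\mathrm{id}(x_{r})^{s_{r}}$. By Lemma \ref{PBW}, scalar multiples of such ordered monomials form a topological $K$-basis of $\Ug$, so distinct multi-indices $s$ produce linearly independent leading terms. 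Equipping $KG$ with its natural $\pi$-adic filtration (via the presentation in Proposition \ref{iwasawastructure}) and $\Ug$ with $\Gamma$, the map $\phi$ is filtered, and the preceding calculation shows the associated graded map is injective. Since both filtrations are separated, injectivity of the graded map lifts to injectivity of $\phi$.
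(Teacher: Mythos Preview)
Your strategy parallels the paper's, but the injectivity argument as written does not go through. You equip $KG$ with the $\pi$-adic filtration (so each $b^{s}$ sits in filtration degree $0$) and $\Ug$ with $\Gamma$, and then claim $\mathrm{gr}\,\phi$ is injective. But $u_{i}=\exp(\mathrm{id}(x_{i}))-1$ already lies in $\Gamma_{1}\Ug$, so for $|s|\geq 1$ the image $u^{s}$ lies in $\Gamma_{|s|}\subseteq\Gamma_{1}$ and hence has zero class in $\mathrm{gr}_{0}\Ug=\Gamma_{0}/\Gamma_{1}$. Thus $\mathrm{gr}_{0}\phi$ kills every $b^{s}$ with $|s|\geq 1$, and these are non-zero in $RG/\pi RG$; the associated graded map is not injective. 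The mismatch is structural: $b^{s}$ has source-degree $0$ but target-degree at least $|s|$, so passing to graded pieces of the $\pi$-adic filtration throws away exactly the information you need.

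Your underlying leading-term observation is correct, but it must be organised via the ascending PBW-degree grading rather than via $\Gamma$. This is what the paper does: writing $\mathrm{id}(x_{i})=p^{n+1}y_{i}$ with $\{y_{i}\}$ a $\mathbb{Z}_{p}$-basis of $\mathfrak{g}$, one has $\phi(b^{s})=p^{(n+1)|s|}\,y_{1}^{s_{1}}\cdots y_{r}^{s_{r}}+(\text{PBW degree}>|s|)$; if $0\neq x=\sum C_{s}b^{s}\in\ker\phi$, take $m$ minimal with some $C_{s}\neq 0$ for $|s|=m$ and read off the degree-$m$ component of $\phi(x)$ to get $\sum_{|s|=m}C_{s}\,y^{s}=0$, contradicting Lemma~\ref{PBW}. (If you want a graded formulation, you need a filtration on $KG$ that assigns $b^{s}$ degree $|s|$, e.g.\ the $\mathfrak m$-adic one on $RG$, not the $\pi$-adic one.) A smaller point: your multiplicativity step is a bit circular --- asserting that $\phi|_{R[G]}$ equals the algebra extension of $\iota$ amounts to checking $\phi(g)=\iota(g)$ for arbitrary $g\in G$, which already requires knowing $\phi$ respects products. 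The paper avoids this by invoking the universal property of $RG$ (\cite[Corollary~19.3]{Schneider}) to produce the ring homomorphism first, and then reads off the formula $b^{s}\mapsto u^{s}$ as a consequence.
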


\begin{proof}
Using \cite[Corollary 19.3]{Schneider}, the map $x  \mapsto \exp(\text{id}(x))$ extends to an $R$-homomorphism $RG \mapsto \Ug$, and hence to a $K$-homomorphism $\phi : KG \rightarrow \Ug$.

Now we check that $\phi$ is injective. To do this, we fix a $\mathbb{Z}_{p}$-basis $x_{1},\dots,x_{r}$ for $\mathfrak{g}$. Then $p^{n+1}x_{1},\dots, p^{n+1} x_{r}$ is a $\mathbb{Z}_{p}$-basis for $L_{G}$ and hence a topological generating set for $G$ by Proposition \ref{basisprop}. Then the elements $b^{s} = (p^{n+1}x_{1}-1)^{s_{1}} \dots (p^{n+1}x_{r}-1)^{s_{r}}$ (as in Proposition \ref{iwasawastructure}) satisfy:

\begin{equation*}
\begin{split}
\phi(b^{s}) & = (e^{p^{n+1}x_{1}}-1)^{s_{1}} \dots (e^{p^{n+1}x_{r}}-1)^{s_{r}} \\
 & = p^{(n+1)|s|}x_{1}^{s_{1}} \dots x_{r}^{s_{r}} + \text{ (higher degree terms)}.
\end{split}
\end{equation*}

Now suppose $0 \neq x = \underset{s \in \mathbb{N}_{0}^{r}}{\sum} C_{s} b^{s} \in KG$ satisfies $\phi(x) = 0$. Suppose $m \in \mathbb{N}_{0}$ is minimal such that some $s$ with $|s|=m$ satisfies $C_{s} \neq 0$. Then $0 = \phi(x) = \underset{|s|=m}{\sum} C_{s} x_{1}^{s_{1}} \dots x_{m}^{s_{m}} + \text{higher degree terms}$. Lemma \ref{PBW} now implies that all $C_{s}$ with $|s|=m$ are zero, a contradiction. So $\phi$ is injective.

\end{proof}

\noindent This proof now implies the following result.

\begin{mycor}
\label{embedding}
Given a $\mathbb{Z}_{p}$-basis $x_{1},\dots,x_{r}$ for $\mathfrak{g}$, we can identify $KG$ with the following subalgebra of $\Ug$:

$$
KG = \{ \sum_{s \in \mathbb{N}_{0}^{r}} C_{s} (e^{p^{n+1}x_{1}}-1)^{s_{1}} \dots (e^{p^{n+1}x_{r}}-1)^{s_{r}} \mid C_{s} \in K \text{ are bounded} \}.
$$

\end{mycor}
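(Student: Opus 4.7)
The plan is to deduce this from the description of $KG$ in Proposition \ref{iwasawastructure} together with the previous corollary, which already gives us the injective $K$-algebra embedding $\phi : KG \to \Ug$ sending $x \mapsto \exp(\text{id}(x))$ for $x \in G$. Since $x_{1},\dots,x_{r}$ is a $\mathbb{Z}_{p}$-basis for $\mathfrak{g}$, Proposition \ref{basisprop} says that $p^{n+1}x_{1},\dots,p^{n+1}x_{r}$ is a minimal topological generating set for $G$. Applying Proposition \ref{iwasawastructure} to this generating set, every element of $KG$ has a unique expression
\[
\sum_{s \in \mathbb{N}_{0}^{r}} C_{s}\, b^{s}, \qquad b^{s} = (p^{n+1}x_{1}-1)^{s_{1}} \cdots (p^{n+1}x_{r}-1)^{s_{r}},
\]
with $C_{s} \in K$ bounded.

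Next, I would compute the image of $b^{s}$ under $\phi$. By definition of $\phi$ on group elements and the fact that $\text{id}(p^{n+1}x_{i}) = p^{n+1}x_{i}$ inside $L_{G} = p^{n+1}\mathfrak{g}$, we get $\phi(p^{n+1}x_{i}) = e^{p^{n+1}x_{i}}$ and hence $\phi(b^{s}) = (e^{p^{n+1}x_{1}}-1)^{s_{1}} \cdots (e^{p^{n+1}x_{r}}-1)^{s_{r}}$. Since $\phi$ is a $K$-algebra homomorphism and continuous with respect to the natural filtrations (observe that $e^{p^{n+1}x_{i}}-1 \in \Gamma_{j}\Ug$ for some $j \geq 1$ by the exponential series, so these factors have strictly positive filtration degree), applying $\phi$ termwise to a bounded sum $\sum C_{s} b^{s}$ produces a sum
\[
\sum_{s \in \mathbb{N}_{0}^{r}} C_{s}\, (e^{p^{n+1}x_{1}}-1)^{s_{1}} \cdots (e^{p^{n+1}x_{r}}-1)^{s_{r}}
\]
which converges in $\Ug$ because $b^{s}$, hence $\phi(b^{s})$, has filtration degree tending to $\infty$ as $|s| \to \infty$, and the $C_{s}$ are bounded.

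This shows that $\phi(KG)$ is contained in the set described on the right-hand side of the corollary. Conversely, every such bounded sum in $\Ug$ is the image of the corresponding sum in $KG$ (which lies in $KG$ by Proposition \ref{iwasawastructure}), and injectivity of $\phi$ from the previous corollary gives a bijection between the two descriptions. Identifying $KG$ with $\phi(KG)$ yields the stated equality.

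The only genuinely non-trivial point is the convergence/continuity verification: one must check that $\phi$ really sends convergent series in $KG$ to convergent series in $\Ug$ and that the resulting elements exhaust the claimed subalgebra. This reduces to the filtration estimate $e^{p^{n+1}x_{i}}-1 \in \Gamma_{1}\Ug$, which follows immediately from $p^{n+1}\mathfrak{g} \subseteq \pi \cdot p^{n}\mathfrak{g}_{R}$ (since $p \in \pi R$) and the exponential power series, so this obstacle is mild and all remaining steps are bookkeeping.
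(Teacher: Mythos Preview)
Your proposal is correct and follows essentially the same route as the paper: the paper simply writes ``This proof now implies the following result,'' referring to the computation $\phi(b^{s}) = (e^{p^{n+1}x_{1}}-1)^{s_{1}} \cdots (e^{p^{n+1}x_{r}}-1)^{s_{r}}$ already carried out in the proof of the preceding corollary, together with Proposition~\ref{iwasawastructure}. Your write-up makes explicit the continuity/convergence step that the paper leaves implicit (it is absorbed into the appeal to \cite[Corollary 19.3]{Schneider} when extending the map to $RG$), but the argument is the same.
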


\subsection{A faithfulness result for Iwasawa algebras}

The following result is a modification of \cite[Theorem 5.3]{verma}.

\begin{mythm}
\label{5.3}
Let $G$ be an $F$-uniform group with associated $\Of$-Lie algebra $L_{G} = p^{n+1}\mathfrak{g}$. Let $N,P$ be $F$-uniform subgroups with associated $\Of$-Lie algebras $L_{N} = p^{n+1}\mathfrak{n}, L_{P} = p^{n+1}\mathfrak{p}$ respectively such that $\mathfrak{g}= \mathfrak{n} \oplus \mathfrak{p}$. 

Suppose $\hat{M}$ is a $\Ug$-module, such that $\hat{M}$ is free as a $\Un$-module by restriction with a basis $\mathcal{B}$, and that $\un \mathcal{B}$ is a faithful locally finite $RP$-module. Then $\hat{M}$ is faithful as a $KG$-module.
\end{mythm}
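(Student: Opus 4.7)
My plan is to mirror the argument of \cite[Theorem 5.3]{verma}, exploiting the decomposition $\mathfrak{g} = \mathfrak{n} \oplus \mathfrak{p}$ to factor any element of $KG$ into an ``$N$-part'' that acts through the $\Un$-basis $\mathcal{B}$ and a ``$P$-part'' lying in $RP$ up to a fixed denominator. First, I would fix $\mathbb{Z}_{p}$-bases $y_{1},\dots,y_{a}$ of $\mathfrak{n}$ and $z_{1},\dots,z_{b}$ of $\mathfrak{p}$ so that together they form a $\mathbb{Z}_{p}$-basis of $\mathfrak{g}$, and use Corollary \ref{embedding} to write every $x \in KG$ uniquely as a bounded convergent sum
$$x = \sum_{s \in \mathbb{N}_{0}^{a}} (e^{p^{n+1}y}-1)^{s}\,\beta_{s}, \qquad \beta_{s} = \sum_{t \in \mathbb{N}_{0}^{b}} C_{s,t}\,(e^{p^{n+1}z}-1)^{t} \in KP,$$
with the $C_{s,t} \in K$ uniformly bounded so that $\pi^{N}\beta_{s} \in RP$ for some $N$ independent of $s$.

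Assuming $x$ annihilates $\hat{M}$, I would take an arbitrary $w \in \un\mathcal{B}$. The hypothesis that $\un\mathcal{B}$ is $RP$-stable (combined with scaling by $\pi^{N}$) ensures $\beta_{s}\cdot w \in \un\mathcal{B}$ for every $s$; writing this uniquely as $\beta_{s}\cdot w = \sum_{v' \in \mathcal{B}} \alpha_{s,w,v'}\,v'$ with $\alpha_{s,w,v'} \in \un$ and finitely many non-zero per pair, the identity $x \cdot w = 0$ combined with the $\Un$-freeness of $\hat{M}$ on $\mathcal{B}$ forces, for each $v' \in \mathcal{B}$,
$$\sum_{s} (e^{p^{n+1}y}-1)^{s}\,\alpha_{s,w,v'} = 0 \quad \text{in } \Un.$$

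The principal step, and the one I expect to be the main obstacle, is a linear-independence lemma: if $\alpha_{s} \in \un$ satisfy $\sum_{s}(e^{p^{n+1}y}-1)^{s}\alpha_{s}=0$ in $\Un$, then every $\alpha_{s}$ vanishes. I plan to prove this by a leading-term argument in the PBW filtration of $\Un$, using that by Lemma \ref{PBW} the associated graded ring is a polynomial ring and that the principal symbol of $e^{p^{n+1}y_{i}}-1$ coincides with that of $p^{n+1}y_{i}$, while elements of $\un$ have bounded filtration degree; any non-trivial $\un$-linear relation among the $(e^{p^{n+1}y}-1)^{s}$ would then descend to a non-trivial monomial identity in the graded polynomial ring, which is absurd. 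Granted the lemma, $\beta_{s}\cdot w = 0$ for every $s$ and every $w \in \un\mathcal{B}$, so each $\pi^{N}\beta_{s} \in RP$ annihilates the $RP$-module $\un\mathcal{B}$. By faithfulness $\pi^N\beta_{s}=0$ and hence $\beta_{s}=0$ for every $s$, so $x = 0$ and $\hat{M}$ is faithful over $KG$.
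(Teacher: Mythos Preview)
Your approach is essentially the same as the paper's, and the linear-independence lemma you identify as the principal step is precisely the injectivity of the multiplication map $KN \otimes_{K} \un \to \Un$, which the paper cites directly from the proof of \cite[Theorem~5.3]{verma} rather than reproving.

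There is, however, a genuine gap. You invoke only that $\un\mathcal{B}$ is $RP$-stable, but the hypothesis is that it is \emph{locally finite} over $RP$, and this is essential at the rearrangement step. Stability alone gives $\beta_{s}\cdot w \in \un\mathcal{B}$ and hence a finite expression $\beta_{s}\cdot w = \sum_{v'} \alpha_{s,w,v'}\,v'$ for each fixed $s$; but as $s$ varies, both the set of $v'$ that appear and the PBW degrees and $\pi$-adic sizes of the $\alpha_{s,w,v'}$ could grow without bound. You then cannot justify interchanging the infinite sum over $s$ with the sum over $v'$, nor guarantee that $\sum_{s}(e^{p^{n+1}y}-1)^{s}\alpha_{s,w,v'}$ even converges in $\Un$. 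Local finiteness is exactly what repairs this: since $RP\cdot w$ lies in a fixed finite-dimensional subspace of $\un\mathcal{B}$, all $\pi^{N}\beta_{s}\cdot w$ are uniformly expressible using a single finite subset $\mathcal{B}'\subseteq\mathcal{B}$ and coefficients drawn from a fixed finite-dimensional subspace of $\un$ with a uniform $\pi$-adic bound. The paper makes precisely this move, writing $\zeta_{\alpha}\cdot v = \sum_{|\beta|\leq k,\,w\in\mathcal{B}'} D_{\alpha,\beta,w}\,x^{\beta}w$ with the $D_{\alpha,\beta,w}$ uniformly bounded. This uniformity is also what reduces your linear-independence lemma to the injectivity of $KN \otimes_{K} \un \to \Un$ applied to a \emph{finite} tensor $\sum_{\beta}\bigl(\sum_{s}D_{s,\beta}\,(e^{p^{n+1}y}-1)^{s}\bigr)\otimes x^{\beta}$, rather than requiring a statement about genuinely infinite $\un$-linear combinations, for which your leading-term sketch would not suffice as written.
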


\begin{proof}

Let $g_{1},\dots,g_{r}$ be a minimal topological generating set for $G$ such that $g_{1},\dots,g_{s}$ is a topological generating set for $N$ and $g_{s+1},\dots,g_{n}$ is a topological generating set for $P$ (this can be done due to Proposition \ref{basisprop}).

Also let $x_{1},\dots,x_{m}$ be an $\Of$-basis for $\mathfrak{g}$ such that $x_{1},\dots,x_{i}$ and $x_{i+1}, \dots,x_{m}$ are bases for $\mathfrak{n}, \mathfrak{p}$ respectively. 

Write $b^{\alpha} = (g_{1}-1)^{\alpha_{1}} \dots (g_{r}-1)^{\alpha_{r}}$ for $\alpha \in \mathbb{N}_{0}^{r}$, and $x^{\beta} = x_{1}^{\beta_{1}} \dots x_{m}^{\beta_{m}}$ for $\beta \in \mathbb{N}_{0}^{m}$.

Let $\zeta \in \Ann_{RG} \hat{M}$. Then $\zeta$ has the form $\zeta = \underset{\alpha \in \mathbb{N}_{0}^{r}}{\sum} C_{\alpha} b^{\alpha}$ for some $C_{\alpha} \in R$. So we can write $\zeta = \underset{\alpha \in \mathbb{N}_{0}^{s}}{\sum} b^{\alpha} \zeta_{\alpha}$ for some $\zeta_{\alpha} \in RP$.

Let $v \in \un \mathcal{B}$. Then since $\un \mathcal{B}$ is locally finite as an $RP$-module, $RP v$ is contained in the $R$-span of a set $\pi^{-r} \{ x^{\beta} w \mid \beta \in \mathbb{N}_{0}^{i}, |\beta| \leq k, w \in \mathcal{B}' \}$, where $k \in \mathbb{N}$, $r \in \mathbb{N}$ and $\mathcal{B}'$ is a finite subset of $\mathcal{B}$.

So for each $\alpha \in \mathbb{N}_{0}^{s}$ we can write $\zeta_{\alpha} \cdot v = \underset{|\beta| \leq k, w \in \mathcal{B}' }{\sum} D_{\alpha,\beta,w} x^{\beta}w$ for some $D_{\alpha,\beta,w} \in R$, uniformly bounded in $\alpha, \beta$ and $w$. 

Then we have : 

\begin{equation*}
\begin{split}
0 & = \zeta \cdot v \\
& = \sum_{\alpha} b^{\alpha} \zeta^{\alpha} \cdot v \\
& = \sum_{\alpha} b^{\alpha} \sum_{\beta,w} D_{\alpha,\beta,w} x^{\beta}w \\ 
& = \sum_{w} \sum_{\alpha,\beta} D_{\alpha,\beta,w}b^{\alpha} x^{\beta} w. 
\end{split}
\end{equation*}

Since $\mathcal{B}$ is a free basis for $\hat{M}$ over $\Un$, we can see that $\underset{\alpha,\beta}{\sum} D_{\alpha,\beta,w}b^{\alpha}x^{\beta} = 0$ for each $w$. So now fix a $w \in \mathcal{B}$.

Now, from the proof of \cite[Theorem 5.3]{verma}, the multiplication map $KN \otimes_{K} \un \rightarrow \Un$ is injective. So $\underset{\alpha,\beta}{\sum} D_{\alpha,\beta,w} b^{\alpha} \otimes x^{\beta} = 0$. Since the $x^{\beta} \in \un$ are linearly independent, this means $\underset{\alpha}{\sum} D_{\alpha, \beta, w} b^{\alpha} = 0$ for any $\beta$, and so all $D_{\alpha,\beta,w}=0$ since the $b^{\alpha}$ are linearly independent in $KN$.

Thus $\zeta_{\alpha} \cdot v = 0$ for each $\alpha$ and each $v \in \un \mathcal{B}$. Since $\un \mathcal{B}$ is faithful over $RP$, it now follows that each $\zeta_{\alpha} = 0$. So $\zeta = 0$.

\end{proof}

\noindent The main setting in which we want to use this result is for induced modules, as in the following corollary.

\begin{mycor}
\label{5.3cor} 
Let $G$ be an $F$-uniform group with associated $\Of$-Lie algebra $L_{G} = p^{n+1}\mathfrak{g}$. Let $N,P$ be $F$-uniform subgroups with associated $\Of$-Lie algebras $L_{N} = p^{n+1}\mathfrak{n}, L_{P} = p^{n+1}\mathfrak{p}$ respectively such that $\mathfrak{g}= \mathfrak{n} \oplus \mathfrak{p}$. 

Let $V \in \mathcal{M}_{n}(\mathfrak{p})$ be finite-dimensional over $K$. Take $M = \ug \otimes_{\up} V$. Suppose $M$ is a faithful $RP$-module. Then $\hat{M}$ is a faithful $KG$-module.
\end{mycor}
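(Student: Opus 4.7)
The plan is to invoke Theorem \ref{5.3} directly. For that, I need to exhibit an $\Un$-basis $\mathcal{B}$ of $\hat{M}$ such that $\un \mathcal{B}$ is a faithful and locally finite $RP$-module.

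First I would choose $\mathcal{B}$ to be any $K$-basis of $V$, which is finite since $V$ is finite-dimensional over $K$. Using the decomposition $\mathfrak{g} = \mathfrak{n} \oplus \mathfrak{p}$ of $\Of$-modules and the PBW theorem (Theorem \ref{PBWthm}), the canonical map $\un \otimes_{K} V \to \ug \otimes_{\up} V = M$ is an isomorphism of left $\un$-modules. In particular $\un \mathcal{B} = M$ and $\mathcal{B}$ is a free $\un$-basis of $M$.

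The main technical step is lifting this freeness from $M$ over $\un$ to $\hat{M}$ over $\Un$. By Proposition \ref{locallyfinite}, applied to a good filtration on $V$ whose existence is guaranteed by $V \in \mathcal{M}_{n}(\mathfrak{p})$ and whose zeroth piece $\Gamma_{0}V$ is a finite-rank free $R$-module (since $V$ is finite-dimensional over $K$), $M$ acquires a good filtration $\Gamma$ with $(M,\Gamma) \in \mathcal{M}_{n}(\mathfrak{g})$, and $\Gamma_{0}M$ has the explicit free $R$-basis $\{(p^{n}y_{1})^{s_{1}} \cdots (p^{n}y_{r})^{s_{r}} v \mid s \in \mathbb{N}_{0}^{r},\ v \in B\}$, where $y_{1},\dots,y_{r}$ is an $\Of$-basis of $\mathfrak{n}$ and $B$ is a free $R$-basis of $\Gamma_{0}V$. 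After a harmless rescaling I may take $B$ inside the span of $\mathcal{B}$ in such a way that the sets agree up to a $K$-linear change of basis. Taking $\pi$-adic completions and then inverting $\pi$, Lemma \ref{PBW} identifies the completion of $\Gamma_{0}M$ with $\Gamma_{0}\Un \otimes_{R} \Gamma_{0}V$, and consequently $\hat{M} \cong \Un \otimes_{K} V$ as a left $\Un$-module. This exhibits $\mathcal{B}$ as a free $\Un$-basis of $\hat{M}$.

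For the second hypothesis of Theorem \ref{5.3}, note that $\un \mathcal{B} = M$. Proposition \ref{locallyfinite} gives that $M$ is locally finite as a $\up$-module; since $RP$ embeds into $\Up$ (the analogue of Corollary \ref{embedding} for $P$), and by Proposition \ref{locallyfiniteprop} this $\Up$-action restricts to $M$, the module $M$ is locally finite over $RP$ as well. Faithfulness of $M$ over $RP$ is given by hypothesis. Both conditions of Theorem \ref{5.3} now hold, yielding the faithfulness of $\hat{M}$ as a $KG$-module.

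The only real obstacle is the freeness of $\hat{M}$ over $\Un$ with $\mathcal{B}$ as basis; everything else is either a hypothesis of the corollary or a direct consequence of results already established. This obstacle is handled by the PBW-type description of $\Gamma_{0}M$ combined with the finite rank of $\Gamma_{0}V$ over $R$, which lets the completion commute with the tensor factorisation $\un \otimes_{K} V$.
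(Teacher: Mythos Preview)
Your proposal is correct and follows essentially the same route as the paper: choose a $K$-basis $\mathcal{B}$ of $V$, use PBW and the explicit description of $\Gamma_{0}M$ to identify $\hat{M}$ with a free $\Un$-module on $\mathcal{B}$, observe that $\un\mathcal{B}=M$ is locally finite over $RP$ by Proposition~\ref{locallyfinite}, and invoke Theorem~\ref{5.3}. The only cosmetic difference is that the paper avoids your $B$-versus-$\mathcal{B}$ rescaling by directly taking $\mathcal{B}$ to be an $R$-basis of $\Gamma_{0}V$ (which is automatically a $K$-basis of $V$), so you could streamline your argument the same way.
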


\noindent Note that the condition that $V \in \mathcal{M}_{n}(\mathfrak{p})$ is for instance met if $\mathfrak{p}$ has a triangular decomposition and $V \in \mathcal{O}_{n}(\mathfrak{p})$ by Corollary \ref{catOincatM}. Also note that Proposition \ref{locallyfinite} implies that $M$ can indeed be considered as an $RP$-module.

\begin{proof}
Taking a $K$-basis $\mathcal{B}$ for $V$, $\mathcal{B}$ is a generating set for $M$ over $\ug$. This set is a free $\un$-basis for $M$, so we can define a corresponding good filtration $\Gamma_{i}M$ for $M$. Given a free $\Of$-basis $x_{1},\dots,x_{r}$ for $\mathfrak{n}$, $\Gamma_{0}M$ has free $R$-basis $\{ (p^{n}x)^{s} v \mid s \in \mathbb{N}_{0}^{r}, v \in \mathcal{B} \}$ where $(p^{n}x)^{s}$ denotes $(p^{n}x_{1})^{s_{1}} \dots (p^{n} x_{r})^{s_{r}}$.

So $\hat{M}$ can be described as $\{ \underset{s \in \mathbb{N}_{0}^{r}, v \in \mathcal{B}}{\sum} C_{s,v} (p^{n}x)^{s} v \mid C_{s,v} \rightarrow 0 \text{ as } |s| \rightarrow \infty \}$, and in particular is a free $\Un$-module with finite basis $\mathcal{B}$. Then $M = \un \mathcal{B}$ is a locally finite $RP$-module by Proposition \ref{locallyfinite}. So if $M$ is faithful over $RP$, then $\hat{M}$ is faithful over $KG$ by Theorem \ref{5.3}.

\end{proof}

\section{Reduction to the semisimple case}
\label{reductionsection}

\subsection{Set-up}

For this section, let $\mathfrak{g} = \mathfrak{g}_{s} \oplus \mathfrak{a}$ be an $\Of$-lattice of a split reductive $F$-Lie algebra with root system $\Phi$, a fixed set of simple roots $\Delta$, $\mathfrak{a}$ the centre of $\mathfrak{g}$, and a Chevalley basis  $\{e_{\alpha},f_{\alpha} \mid \alpha \in \Phi^{+} \} \cup \{ h_{\alpha} \mid \alpha \in \Delta \}$ for $\mathfrak{g}_{s}$. Say $\mathfrak{h}_{s}$ is the Cartan subalgebra of $\mathfrak{g}$, and $\mathfrak{h} = \mathfrak{a} \oplus \mathfrak{h}_{s}$.

Suppose $I \subseteq \Delta$ and $\mathfrak{h}' \subseteq \mathfrak{h}$ is a subalgebra such that $\mathfrak{h} = \mathfrak{h}' \oplus \mathfrak{a}$ and $h_{\alpha} \in \mathfrak{h}'$ for all $\alpha \in I$. Suppose $\lambda: \mathfrak{h} \to K$ is a weight such that $\lambda(h_{\alpha}) \in \mathbb{N}_{0}$ for all $\alpha \in I$. 

Let $\mathfrak{p}$ be the parabolic subalgebra of $\mathfrak{g}_{s}$ corresponding to $I$, and write $\mathfrak{p}' = \mathfrak{p}_{I,\mathfrak{h}'}$. As in \S \ref{gvm} there is a Lie algebra isomorphism $f : \mathfrak{p}' \rightarrow \mathfrak{p}$ given by the projection of $\mathfrak{g}$ onto $\mathfrak{g}_{s}$ with kernel $\mathfrak{a}$. This extends to a $K$-Lie algebra isomorphism $\mathfrak{p}'_{K} \rightarrow \mathfrak{p}_{K}$ which maps $p^{n} \mathfrak{p}'_{R}$ to $p^{n} \mathfrak{p}_{R}$. Using the universal property of universal enveloping algebras, this extends to a filtered ring isomorphism $\upp \rightarrow \up $, and hence to an isomorphism $f : \Upp \rightarrow \Up$.

Now define $\mu \in \mathfrak{h}_{K}^{*}$ by the composition $\mathfrak{h}_{K} = \mathfrak{h}'_{K} \oplus \mathfrak{a}_{K} \xrightarrow{\text{projection}} \mathfrak{h}'_{K} \xrightarrow{\lambda |_{\mathfrak{h}'_{K}}} K $. So $\mu(h) = \lambda(h)$ for $h \in \mathfrak{h}'$ and $\mu(h) = 0$ for $h \in \mathfrak{a}$. Let $V$ be the irreducible highest-weight $U(\mathfrak{p}_{K}'+\mathfrak{a}_{K})$-module of highest-weight $\mu$. Then $V$ restricts to both a $\up$-module and to a $\upp$-module. 

We now have a generalised Verma module for $\mathfrak{g}_{s,K}$ given by $M_{I}(\mu |_{\mathfrak{h}_{s}}) = U(\mathfrak{g}_{s,K}) \otimes_{\up} V$. Also note that $\mu |_{\mathfrak{h}'_{K}} = \lambda |_{\mathfrak{h}'_{K}}$ by definition of $\mu$, which means that the restriction of $V$ to $\upp$ can be considered as the irreducible highest-weight module of weight $\lambda |_{\mathfrak{h}'_{K}}$. Thus $M_{I,\mathfrak{h}'}(\lambda) = \ug \otimes_{\upp}V$. 

We then take the $F$-uniform groups $G,G_{s},P,P'$ corresponding to the $\Of$-Lie algebras $p^{n+1}\mathfrak{g},p^{n+1} \mathfrak{g}_{s},p^{n+1}\mathfrak{p},p^{n+1}\mathfrak{p}'$ respectively.

With this notation established, we can now state the main result of this section.

\begin{mythm}
\label{reductionprop}
Suppose $\widehat{M_{I}(\mu |_{\mathfrak{h}_{s}})}$ is faithful as a $KG_{s}$-module. Then $\widehat{M_{I,\mathfrak{h}'}(\lambda)}$ is faithful as a $KG$-module.
\end{mythm}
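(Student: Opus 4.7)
The plan is to apply Corollary \ref{5.3cor} with the decomposition $\mathfrak{g} = (\mathfrak{n}_I^{-} \oplus \mathfrak{a}) \oplus \mathfrak{p}'$, where $\mathfrak{n}_I^{-}$ is the $\Of$-span of $\{f_{\alpha} : \alpha \in \Phi^{+} \setminus \Phi_{I}^{+}\}$. Both summands are $\Of$-Lie subalgebras (using centrality of $\mathfrak{a}$), and scaling by $p^{n+1}$ gives powerful lattices, so they correspond to $F$-uniform subgroups $N$ and $P'$ of $G$ via Theorem \ref{categoryisom}. Since $\mathfrak{p}'$ has a triangular decomposition with Cartan $\mathfrak{h}'$, and $V$ is a finite-dimensional highest-weight $\upp$-module of weight $\lambda|_{\mathfrak{h}'_{K}}$ satisfying $\lambda(p^{n} \mathfrak{h}'_{R}) \subseteq R$, we have $V \in \mathcal{O}_{n}(\mathfrak{p}')$ and hence $V \in \mathcal{M}_{n}(\mathfrak{p}')$ by Corollary \ref{catOincatM}. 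Corollary \ref{5.3cor} then reduces the theorem to showing that $M_{I,\mathfrak{h}'}(\lambda)$ is faithful as an $RP'$-module.

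To relate the two induced modules, I would introduce the Lie algebra automorphism $\tilde{\iota} : \mathfrak{g} \to \mathfrak{g}$ defined as the identity on each $e_{\alpha}, f_{\alpha}$ and on $\mathfrak{a}$, and sending $h \in \mathfrak{h}_{s}$ to $f^{-1}(h) \in \mathfrak{h}'$. Centrality of $\mathfrak{a}$ makes $\tilde{\iota}$ a Lie algebra homomorphism, and its restriction $\tilde{\iota}|_{\mathfrak{p}} = f^{-1}$ gives $\tilde{\iota}(\up) = \upp$ inside $\ug$. By PBW applied to the decomposition $\mathfrak{g} = \mathfrak{a} \oplus \tilde{\iota}(\mathfrak{g}_{s})$, there is an isomorphism $\ug \cong U(\mathfrak{a}_{K}) \otimes_{K} \tilde{\iota}(U(\mathfrak{g}_{s,K}))$ of right $\upp$-modules, and tensoring over $\upp$ with $V$ produces a $\upp$-module isomorphism
\[
\Phi : U(\mathfrak{a}_{K}) \otimes_{K} M_{I}(\mu|_{\mathfrak{h}_{s}}) \xrightarrow{\sim} M_{I,\mathfrak{h}'}(\lambda),
\]
where on the left-hand side $\upp$ acts through $f : \upp \to \up$ on the second tensor factor alone (noting that $V$ viewed as a $\up$-module via $f^{-1} = \tilde{\iota}|_{\mathfrak{p}}$ is precisely the highest-weight $\up$-module of weight $\mu|_{\mathfrak{h}_{s}}$ used to build $M_{I}(\mu|_{\mathfrak{h}_{s}})$). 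Local $\upp$-finiteness of both sides lets us extend $\Phi$ uniquely to an isomorphism of $\Upp$-modules, and in particular of $RP'$-modules.

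Now suppose $\zeta \in RP'$ annihilates $M_{I,\mathfrak{h}'}(\lambda)$. Under $\Phi$ the element $\zeta$ acts on $U(\mathfrak{a}_{K}) \otimes_{K} M_{I}(\mu|_{\mathfrak{h}_{s}})$ via $\zeta' := f(\zeta) \in RP$ acting on the second factor, so $\zeta \cdot (a \otimes m) = a \otimes (\zeta' \cdot m) = 0$ for all $a \in U(\mathfrak{a}_{K})$ and $m \in M_{I}(\mu|_{\mathfrak{h}_{s}})$. Since $U(\mathfrak{a}_{K}) \neq 0$, $\zeta'$ annihilates $M_{I}(\mu|_{\mathfrak{h}_{s}})$ inside $\widehat{M_{I}(\mu|_{\mathfrak{h}_{s}})}$. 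Lemma \ref{densitylem} upgrades this to $\zeta' \cdot \widehat{M_{I}(\mu|_{\mathfrak{h}_{s}})} = 0$, and viewing $\zeta' \in RP \subseteq KG_{s}$ via Corollary \ref{embedding}, the assumed faithfulness of $\widehat{M_{I}(\mu|_{\mathfrak{h}_{s}})}$ as a $KG_{s}$-module forces $\zeta' = 0$, hence $\zeta = 0$.

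The main obstacle is the careful construction of the twist $\tilde{\iota}$ and the verification that $\Phi$ genuinely intertwines the $\upp$-actions on both sides. The subtlety is that the Cartan subalgebras $\mathfrak{h}_{s}$ and $\mathfrak{h}'$ of $\mathfrak{h}$ agree only modulo $\mathfrak{a}$, so the naive inclusion $\up \hookrightarrow \ug$ does \emph{not} land inside $\upp$, and one must use $\tilde{\iota}$ to identify the two subalgebras compatibly with the decomposition along $\mathfrak{a}$. Once $\Phi$ is set up so that the highest-weight vector of $V$ corresponds to itself on both sides, the remaining passage through the completion and the density lemma is formal.
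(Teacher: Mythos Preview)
Your map $\tilde{\iota}$ is not a Lie algebra automorphism of $\mathfrak{g}$ in general, and this breaks the argument. The relation $[e_{\alpha},f_{\alpha}]=h_{\alpha}$ holds for every $\alpha\in\Delta$, but $\tilde{\iota}(e_{\alpha})=e_{\alpha}$, $\tilde{\iota}(f_{\alpha})=f_{\alpha}$, and $\tilde{\iota}(h_{\alpha})=f^{-1}(h_{\alpha})$, so $\tilde{\iota}$ respects this bracket only when $f^{-1}(h_{\alpha})=h_{\alpha}$, i.e.\ when $h_{\alpha}\in\mathfrak{h}'$. The hypotheses guarantee this for $\alpha\in I$ but not for $\alpha\in\Delta\setminus I$, and in the inductive application of this theorem later in the paper the relevant $\mathfrak{h}'$ is genuinely different from $\mathfrak{h}_{s}$. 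Consequently $\tilde{\iota}(\mathfrak{g}_{s})$ is not closed under brackets, $\tilde{\iota}(U(\mathfrak{g}_{s,K}))$ is not a subalgebra of $\ug$, and the claimed $\upp$-equivariance of $\Phi$ fails: if $h'\in\mathfrak{h}'$ decomposes as $h_{s}+a_{0}$ with $a_{0}\in\mathfrak{a}$ nonzero, then on $M_{I,\mathfrak{h}'}(\lambda)$ the element $h'$ acts as $h_{s}+a_{0}$, and the $a_{0}$-piece moves the $U(\mathfrak{a}_{K})$-factor rather than acting through $f$ on the second factor alone.

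The paper avoids this obstruction by not seeking an isomorphism at all. It passes instead to the \emph{quotient} $\ug\otimes_{U(\mathfrak{p}'_{K}+\mathfrak{a}_{K})}V$ of $M_{I,\mathfrak{h}'}(\lambda)$, on which $\mathfrak{a}$ acts trivially (by the choice of $\mu$). On that quotient any $x\in\mathfrak{p}'$ and $f(x)\in\mathfrak{p}$ differ by an element of $\mathfrak{a}$ and hence act identically, so $f$ does carry $\Ann_{RP'}$ to $\Ann_{RP}$ there. One then identifies this quotient with $M_{I}(\mu|_{\mathfrak{h}_{s}})$ as a $U(\mathfrak{g}_{s,K})$-module and concludes. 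The point is that an annihilator inclusion suffices, so losing the $U(\mathfrak{a}_{K})$-direction by taking a quotient is harmless; your attempt to keep it via an isomorphism is exactly what forces the incompatibility above.
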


\noindent Note that by Corollary \ref{5.3cor} if $M_{I,\mathfrak{h}'}(\lambda)$ is faithful as an $RP'$-module, then $\widehat{M_{I,\mathfrak{h}'}(\lambda)}$ is faithful as a $KG$-module. So we will consider $M_{I,\mathfrak{h}'}(\lambda)$ as an $RP'$-module. We in fact only need the hypothesis that $M_{I}(\mu |_{\mathfrak{h}_{s}})$ is faithful as an $RP$-module.

The structure of the proof will be as follows.

\begin{itemize}
\item Show that $\Ann_{RP'} M_{I,\mathfrak{h}'}(\lambda) \subseteq \Ann_{RP'} \ug \otimes_{U(\mathfrak{p}'_{K} + \mathfrak{a}_{K})} V$. 

\item Show that $f(\Ann_{RP'} \ug \otimes_{U(\mathfrak{p}'_{K} + \mathfrak{a}_{K})} V) = \Ann_{RP} \ug \otimes_{U(\mathfrak{p}'_{K} + \mathfrak{a}_{K})} V$. In particular, $\ug \otimes_{U(\mathfrak{p}'_{K} + \mathfrak{a}_{K})} V$ is faithful over $RP$ if and only if it is faithful over $RP'$.

\item Show that  $M_{I}(\mu |_{\mathfrak{h}_{s}})$ is isomorphic to $\ug \otimes_{U(\mathfrak{p}'_{K} + \mathfrak{a}_{K})} V$ as a $U(\mathfrak{g}_{s,K})$-module. 

\item Conclude that if $M_{I}(\mu |_{\mathfrak{h}_{s}})$ is faithful over $RP$, then $M_{I,\mathfrak{h}'}(\lambda)$ is faithful over $RP'$.

\end{itemize}

\subsection{First step}

\begin{mylem}
$\ug \otimes_{U(\mathfrak{p}'_{K} + \mathfrak{a}_{K})} V$ is a $\ug$-quotient of $\ug \otimes_{\upp} V = M_{I,\mathfrak{h}'}(\lambda)$.
\end{mylem}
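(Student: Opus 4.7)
The plan is to prove this as an instance of a general base-change principle: if $A \subseteq B$ are $K$-subalgebras of a ring $R$ and $V$ is a $B$-module, then $R \otimes_A V$ surjects canonically onto $R \otimes_B V$, because the latter is a quotient of the underlying $K$-tensor product $R \otimes_K V$ by strictly more relations. Here I will take $A = \upp$, $B = U(\mathfrak{p}'_{K} + \mathfrak{a}_{K})$, and $R = \ug$, so the lemma becomes essentially immediate.

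More concretely, I would first note that since $\mathfrak{p}'$ is a Lie subalgebra of $\mathfrak{p}' + \mathfrak{a}$, the universal property of the enveloping algebra (combined with the PBW theorem, Theorem \ref{PBWthm}) yields a $K$-algebra embedding $\upp \hookrightarrow U(\mathfrak{p}'_{K} + \mathfrak{a}_{K})$. Moreover, as arranged in the set-up of this section, the $\upp$-action used to build $M_{I,\mathfrak{h}'}(\lambda) = \ug \otimes_{\upp} V$ is exactly the restriction of the $U(\mathfrak{p}'_{K} + \mathfrak{a}_{K})$-action on $V$ to $\upp$: by uniqueness of the irreducible highest-weight module of weight $\lambda|_{\mathfrak{h}'_{K}}$ for $\upp$, the restricted module agrees with the one used in the general construction of \S\ref{gvm}.

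The map is then defined via the $K$-bilinear pairing
\[
\ug \times V \longrightarrow \ug \otimes_{U(\mathfrak{p}'_{K} + \mathfrak{a}_{K})} V, \qquad (x,v) \longmapsto x \otimes v.
\]
This pairing is $\upp$-balanced, since for any $y \in \upp$, the element $y$ also lies in $U(\mathfrak{p}'_{K} + \mathfrak{a}_{K})$ and acts on $V$ by the same operator; hence $xy \otimes v = x \otimes y \cdot v$ in the target. By the universal property of $\ug \otimes_{\upp} V$, this descends to a well-defined $K$-linear map $\phi : \ug \otimes_{\upp} V \to \ug \otimes_{U(\mathfrak{p}'_{K} + \mathfrak{a}_{K})} V$ sending $x \otimes v$ to $x \otimes v$. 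It is $\ug$-linear because the $\ug$-action on both modules is by left multiplication on the first factor, and it is surjective because pure tensors $x \otimes v$ generate the target as an abelian group.

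There is no real obstacle; the only point that requires care is to check that the two a priori different $\upp$-module structures on $V$ (the one constructed in \S\ref{gvm} via inflation, and the one obtained by restriction from $U(\mathfrak{p}'_{K} + \mathfrak{a}_{K})$) coincide, but this is already handled by the uniqueness of irreducible highest-weight modules of a given weight and is in fact the content of the comment in the paragraph introducing this lemma.
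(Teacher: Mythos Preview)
Your proof is correct and follows essentially the same approach as the paper: both invoke the general base-change principle that for rings $A \subseteq B \subseteq C$ and a $B$-module $N$, the map $C \otimes_A N \to C \otimes_B N$ sending $c \otimes n \mapsto c \otimes n$ is a well-defined surjective $C$-module homomorphism, applied here with $A = \upp$, $B = U(\mathfrak{p}'_K + \mathfrak{a}_K)$, $C = \ug$. Your version spells out the balancing and compatibility of the $\upp$-structures in slightly more detail, but the argument is the same.
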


\begin{proof}

In general, suppose $A \leq B \leq C$ are rings and $N$ is a $B$-module. By the universal property of tensor products, there is a map $C \otimes_{A} N \rightarrow C \otimes_{B} N$ given by $c \otimes x \mapsto c \otimes x$, which is a surjective $C$-module homomorphism. Now apply this with $A = U(\mathfrak{p}'_{K}), B = U(\mathfrak{p}'_{K} + \mathfrak{a}_{K}), C = U(\mathfrak{g}_{K})$ and $N = V$.
\end{proof}

\begin{mycor}
\label{reduction3}
$\Ann_{RP'} M_{I,\mathfrak{h}'}(\lambda) \subseteq \Ann_{RP'} \ug \otimes_{U(\mathfrak{p}'_{K} + \mathfrak{a}_{K})} V$. 
\end{mycor}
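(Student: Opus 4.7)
The plan is straightforward and leverages the lemma just established. From the preceding lemma we have a surjective $\ug$-module homomorphism
\[
\phi : M_{I,\mathfrak{h}'}(\lambda) = \ug \otimes_{\upp} V \longrightarrow \ug \otimes_{U(\mathfrak{p}'_{K} + \mathfrak{a}_{K})} V.
\]
I want to upgrade this to an $RP'$-module homomorphism. Once that is done, the inclusion of annihilators follows immediately: if $\zeta \in RP'$ annihilates the source, then $\zeta \cdot \phi(v) = \phi(\zeta \cdot v) = 0$ for every $v$, and since $\phi$ is surjective, $\zeta$ annihilates the quotient.

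The only nontrivial point is therefore the $RP'$-linearity of $\phi$. Here the idea is to pass through $\Upp$: recall that $RP'$ embeds in $\Upp$ by Corollary~\ref{embedding}, so it is enough to show that $\phi$ is $\Upp$-linear. Both the source and the target are induced from a finite-dimensional $\upp$-module, so by Proposition~\ref{locallyfinite} both are locally finite $\upp$-modules and therefore carry canonical $\Upp$-actions. The action of an element $x \in \Upp$ on a vector $w$ in either module is determined entirely by the finite-dimensional $\upp$-submodule generated by $w$ (write $x = \lim x_i$ with $x_i \in \upp$, and the limit stabilises inside that finite-dimensional subspace). Because $\phi$ is a $\upp$-module homomorphism by restriction, it commutes with every $x_i$ and hence, by passing to the limit inside a finite-dimensional subspace where everything is continuous, commutes with $x$. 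So $\phi$ is $\Upp$-linear, and in particular $RP'$-linear.

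I do not expect any obstacle beyond this verification; the content of the corollary is essentially a formal consequence of the previous lemma together with the compatibility of $RP'$-actions with the locally finite $\upp$-structure, and the reasoning is the same whether one works with $RP'$ directly or with its image in $\Upp$.
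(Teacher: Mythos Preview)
Your proof is correct and follows essentially the same approach as the paper: both argue that the surjection is a $\Upp$-homomorphism by invoking local finiteness over $\upp$ (Proposition~\ref{locallyfinite}) and the resulting $\Upp$-action (Proposition~\ref{locallyfiniteprop}), then restrict to $RP' \subseteq \Upp$. The paper simply asserts that a $\upp$-quotient between locally finite modules is automatically a $\Upp$-quotient, whereas you spell out the limit argument inside finite-dimensional subspaces; this is the same content with a little more detail.
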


\begin{proof}

Note that $M_{I,\mathfrak{h}'}(\lambda)$ and  $\ug \otimes_{U(\mathfrak{p}'_{K} + \mathfrak{a}_{K})} V$ are both locally finite $\upp$-modules by Proposition \ref{locallyfinite}, so can be considered as $\Upp$-modules by Proposition \ref{locallyfiniteprop}. 

The above lemma now implies that $\ug \otimes_{U(\mathfrak{p}'_{K} + \mathfrak{a}_{K})} V$ is a $\upp$-quotient of $M_{I,\mathfrak{h}'}(\lambda)$, and hence also a $\Upp$-quotient. Thus $\Ann_{\Upp} M_{I,\mathfrak{h}'}(\lambda) \subseteq \Ann_{\Upp} \ug \otimes_{U(\mathfrak{p}'_{K} + \mathfrak{a}_{K})} V$, and the result follows since $RP' \subseteq \Upp$.
\end{proof}

\subsection{The image of the annihilator under $f$}

Now we claim that $f : \Upp \rightarrow  \Up$ maps the $RP'$-annihilator of $\ug \otimes_{U(\mathfrak{p}'_{K} + \mathfrak{a}_{K})} V$ to the $RP$-annihilator. We will then look at the annihilator in $RP$.

\begin{mylem}
\begin{equation*}
f(\Ann_{\Upp} \ug \otimes_{U(\mathfrak{p}'_{K} + \mathfrak{a}_{K})} V) = \Ann_{\Up} \ug \otimes_{U(\mathfrak{p}'_{K} + \mathfrak{a}_{K})} V.
\end{equation*}
\end{mylem}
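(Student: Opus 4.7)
The plan is to show that for every $x \in \Upp$ and every $m \in M := \ug \otimes_{U(\mathfrak{p}'_K+\mathfrak{a}_K)} V$, the elements $x$ and $f(x)$ act identically on $m$; since $f : \Upp \to \Up$ is a ring isomorphism, the equality $f(\Ann_{\Upp} M) = \Ann_{\Up} M$ then follows at once by transporting annihilators along $f$.

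The first ingredient is that $\mathfrak{a}_K$ acts as zero on $M$. Indeed, by construction of $\mu$ as the composition $\mathfrak{h}_K \twoheadrightarrow \mathfrak{h}'_K \xrightarrow{\lambda} K$ along $\mathfrak{a}_K$, we have $\mu(\mathfrak{a})=0$, so each $a \in \mathfrak{a}_K$ acts by $0$ on the highest-weight generator of $V$ and, being central, on all of $V$. Centrality of $\mathfrak{a}$ in $\mathfrak{g}$ then gives $a \cdot (u \otimes v) = u \otimes (a \cdot v) = 0$ for every elementary tensor in $M$. Given this, any $x \in \mathfrak{p}'$ decomposes uniquely as $x = x_s + x_a$ in $\mathfrak{g} = \mathfrak{g}_s \oplus \mathfrak{a}$, and by definition of $f$ as projection along $\mathfrak{a}$ we have $f(x) = x_s$; hence $x \cdot m = x_s \cdot m + x_a \cdot m = f(x) \cdot m$. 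By multiplicativity and linearity, the identity $x \cdot m = f(x) \cdot m$ extends to all $x \in \upp$, so that in particular the finite-dimensional subspaces $\upp \cdot m$ and $\up \cdot m$ coincide as a single subspace $W \subseteq M$ (local finiteness being supplied by Proposition~\ref{locallyfinite}).

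The second step promotes this agreement from $\upp, \up$ to the completions $\Upp, \Up$, which act on $M$ inside $\widehat{M}$ by Proposition~\ref{locallyfiniteprop}. Given $x \in \Upp$, approximate $x = \lim_i x_i$ with $x_i \in \upp$; the action $x \cdot m$ is then realised as $\lim_i x_i \cdot m$ inside the finite-dimensional space $W$, and similarly $f(x) \cdot m = \lim_i f(x_i) \cdot m$ inside $W$. Since the finite-stage actions agree by the previous paragraph, so do their limits, giving $x \cdot m = f(x) \cdot m$ on all of $M$. Consequently $x \in \Ann_{\Upp} M$ if and only if $f(x) \in \Ann_{\Up} M$, which is the desired equality.

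I do not expect a serious obstacle; the only point requiring care is ensuring that the limit computation of the second step is legitimate, but this is precisely what the local finiteness machinery of Section~\ref{completions} is set up to handle.
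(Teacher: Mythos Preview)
Your proof is correct and follows essentially the same line as the paper's: both establish that $\mathfrak{a}$ acts trivially on $M$ (because $\mu(\mathfrak{a})=0$ and $\mathfrak{a}$ is central), deduce that $x$ and $f(x)$ act identically for $x \in \mathfrak{p}'$, and then pass to the completion. Your treatment is in fact more thorough, since you spell out the limiting argument via local finiteness that justifies the passage from $\upp$ to $\Upp$, whereas the paper simply asserts this step.
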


\begin{proof}
Let $x \in \mathfrak{g}$. Then $x - f(x) \in \mathfrak{a}$. Now, $\mathfrak{a}$ acts trivially on $V$ by choice of $\mu$, and hence acts trivially on $\ug \otimes_{U(\mathfrak{p}'_{K} + \mathfrak{a}_{K})} V$ since $\mathfrak{a}$ is central in $\mathfrak{g}$. Thus for any $v \in \ug \otimes_{U(\mathfrak{p}'_{K} + \mathfrak{a}_{K})} V$ we have $x v = f(x) v$.

Now consider the algebra isomorphism $f : \Upp \rightarrow \Up$. For any $x \in \Upp$, $x$ and $f(x)$ have the same action on $\ug \otimes_{U(\mathfrak{p}'_{K} + \mathfrak{a}_{K})} V$.
\end{proof}

\begin{mylem}
$f(RP') = RP$. 
\end{mylem}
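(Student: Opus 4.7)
The plan is to exploit the fact that the Lie algebra isomorphism $f: \mathfrak{p}' \to \mathfrak{p}$ is an $\Of$-Lie algebra isomorphism that restricts to an isomorphism $p^{n+1}\mathfrak{p}' \to p^{n+1}\mathfrak{p}$ of powerful $\Of$-Lie algebras. By Theorem \ref{categoryisom}, this corresponds to a group isomorphism $\phi: P' \to P$ between the associated $F$-uniform groups. The key point is that the induced map $f: \Upp \to \Up$ is compatible with $\phi$ via the embeddings $P' \hookrightarrow \Upp^{\times}$ and $P \hookrightarrow \Up^{\times}$ described in \S\ref{Iwasawa}.

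More concretely, I would fix an $\Of$-basis $x'_{1}, \dots, x'_{m}$ for $\mathfrak{p}'$; then $f(x'_{1}), \dots, f(x'_{m})$ is an $\Of$-basis (and hence a $\mathbb{Z}_{p}$-basis) for $\mathfrak{p}$. Since $f: \Upp \to \Up$ is a continuous filtered ring isomorphism it respects convergent sums, so
\begin{equation*}
f(e^{p^{n+1}x'_{i}} - 1) = e^{p^{n+1} f(x'_{i})} - 1
\end{equation*}
in $\Up$. Applying Corollary \ref{embedding} with the $\mathbb{Z}_{p}$-basis $x'_{1}, \dots, x'_{m}$ of $\mathfrak{p}'$ describes $KP'$ as the set of convergent sums $\sum_{s} C_{s} \prod_{i}(e^{p^{n+1}x'_{i}}-1)^{s_{i}}$ with bounded $C_{s} \in K$, and intersecting with $\Gamma_{0}\Upp$ picks out $RP'$. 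Mapping such an expression termwise by $f$ (legitimate by continuity) yields the analogous description of $KP$ and $RP$ given by Corollary \ref{embedding} applied to the basis $f(x'_{1}), \dots, f(x'_{m})$ of $\mathfrak{p}$. This proves $f(RP') \subseteq RP$.

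The reverse inclusion follows by running the same argument with the inverse $f^{-1}: \mathfrak{p} \to \mathfrak{p}'$ (which exists as an $\Of$-Lie algebra isomorphism since $f$ is one), yielding $f^{-1}(RP) \subseteq RP'$. The only point that requires any real care is justifying that $f$ commutes with the exponential series used to embed the Iwasawa algebra, but this is immediate from $f$ being a filtered $K$-algebra isomorphism mapping $\Gamma_{i}\Upp$ to $\Gamma_{i}\Up$. Combining the two containments gives $f(RP') = RP$.
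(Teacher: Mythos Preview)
Your proof is correct and follows essentially the same approach as the paper: pick a basis for $\mathfrak{p}'$, observe that $f$ sends $e^{p^{n+1}x_i}-1$ to $e^{p^{n+1}f(x_i)}-1$, and invoke Corollary~\ref{embedding} on both sides. One small slip worth fixing: an $\Of$-basis $x'_1,\dots,x'_m$ is not in general a $\mathbb{Z}_p$-basis unless $F=\mathbb{Q}_p$, so you should either start directly with a $\mathbb{Z}_p$-basis (as the paper does) or expand your $\Of$-basis against a $\mathbb{Z}_p$-basis of $\Of$ before applying Corollary~\ref{embedding}.
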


\begin{proof}

Fix a $\mathbb{Z}_{p}$-basis $x_{1},\dots,x_{r}$ for $\mathfrak{p}'$. Then by Corollary $\ref{embedding}$, we can consider $RP'$ as the ring of power series in $e^{p^{n+1}x_{i}} - 1$, contained in $\Upp$. Now, $f (e^{p^{n+1}x_{i}}-1) = e^{p^{n+1}f(x_{i})} - 1$ with the $f(x_{i})$ forming a $\mathbb{Z}_{p}$-basis for $\mathfrak{p}$. So indeed $f(RP') = RP$.

\end{proof}

\noindent Combining the two previous results, we obtain the following.

\begin{mycor}
\label{reduction2}
$\ug \otimes_{U(\mathfrak{p}'_{K} + \mathfrak{a}_{K})} V$ is faithful over $RP'$ if and only if it is faithful over $RP$.
\end{mycor}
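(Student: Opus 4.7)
The plan is to deduce the corollary as a direct formal consequence of the two preceding lemmas. Writing $M = \ug \otimes_{U(\mathfrak{p}'_{K} + \mathfrak{a}_{K})} V$ for brevity, I would first observe that both $RP'$ and $RP$ act on $M$ through their embeddings into the completed enveloping algebras $\Upp$ and $\Up$ respectively. This is guaranteed because $M$ is locally finite over $\upp$ and over $\up$ by Proposition \ref{locallyfinite}, so by Proposition \ref{locallyfiniteprop} it carries natural $\Upp$- and $\Up$-module structures, and the Iwasawa algebras act through the embeddings of Corollary \ref{embedding}. Consequently
\[
\Ann_{RP'} M = RP' \cap \Ann_{\Upp} M \qquad \text{and} \qquad \Ann_{RP} M = RP \cap \Ann_{\Up} M.
\]

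Next, since $f : \Upp \to \Up$ is a ring isomorphism it carries intersections to intersections, and the two preceding lemmas provide exactly the two inputs we need: $f(RP') = RP$ and $f(\Ann_{\Upp} M) = \Ann_{\Up} M$. Combining these gives
\[
f(\Ann_{RP'} M) \;=\; f(RP') \cap f(\Ann_{\Upp} M) \;=\; RP \cap \Ann_{\Up} M \;=\; \Ann_{RP} M.
\]
Because $f$ is a bijection, $\Ann_{RP'} M$ vanishes if and only if $\Ann_{RP} M$ vanishes, which is precisely the statement of the corollary.

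There is no real obstacle in this step: the corollary is a one-line transport of annihilators along the isomorphism $f$, and the only mild subtlety worth flagging in the written proof is checking that the $RP'$- and $RP$-actions on $M$ really do factor through $\Upp$ and $\Up$, so that intersecting the annihilators inside the completed enveloping algebras gives the correct Iwasawa-algebra annihilators. This is handled uniformly by Propositions \ref{locallyfinite} and \ref{locallyfiniteprop} together with the embedding described in Corollary \ref{embedding}.
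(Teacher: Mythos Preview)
Your proposal is correct and matches the paper's approach exactly: the paper simply states that the corollary follows by ``combining the two previous results,'' and your argument spells out precisely this combination, transporting the annihilator along the isomorphism $f$ using $f(RP') = RP$ and $f(\Ann_{\Upp} M) = \Ann_{\Up} M$. The extra care you take in verifying that the $RP'$- and $RP$-actions factor through the completed enveloping algebras is a reasonable clarification the paper leaves implicit.
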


\subsection{Proof of Theorem \ref{reductionprop}}

There is now only one lemma left in order to prove Theorem \ref{reductionprop}.

\begin{mylem}
\label{reduction1}
$M_{I}(\mu |_{\mathfrak{h}_{s}})$ is isomorphic to $\ug \otimes_{U(\mathfrak{p}'_{K} + \mathfrak{a}_{K})} V$ as a $U(\mathfrak{g}_{s,K})$-module.
\end{mylem}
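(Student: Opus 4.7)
The plan is to construct an explicit $U(\mathfrak{g}_{s,K})$-module homomorphism
\[
\phi : U(\mathfrak{g}_{s,K}) \otimes_{\up} V \;\longrightarrow\; \ug \otimes_{U(\mathfrak{p}'_{K} + \mathfrak{a}_{K})} V,\qquad u \otimes v \longmapsto u \otimes v,
\]
using the inclusion $U(\mathfrak{g}_{s,K}) \hookrightarrow \ug$ on the first tensor factor, and then to verify it is an isomorphism by comparing PBW decompositions.

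The first step is well-definedness. I need to check that for every $p \in \up$, $u \in U(\mathfrak{g}_{s,K})$, and $v \in V$, the equation $up \otimes v = u \otimes (p \cdot v)$ holds in $\ug \otimes_{U(\mathfrak{p}'_{K} + \mathfrak{a}_{K})} V$. The key observation is that, although $\mathfrak{p}$ is not literally contained in $\mathfrak{p}'$, we do have $\mathfrak{p}_K + \mathfrak{a}_K = \mathfrak{p}'_K + \mathfrak{a}_K$ because $\mathfrak{h}_{s,K} \oplus \mathfrak{a}_K = \mathfrak{h}_K = \mathfrak{h}'_K \oplus \mathfrak{a}_K$ and the root-vector generators of $\mathfrak{p}$ and $\mathfrak{p}'$ coincide. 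Hence $\mathfrak{p} \subseteq \mathfrak{p}'_K + \mathfrak{a}_K$, so any $p \in \up$ can be passed across the tensor on the right hand side. The content of the check is then that the resulting action of $p$ on $V$ (regarded as sitting inside $U(\mathfrak{p}'_K + \mathfrak{a}_K)$) agrees with the given $\up$-action on $V$. Writing $p = p' + a$ with $p' \in \mathfrak{p}'_K$ and $a \in \mathfrak{a}_K$, we have $f(p') = p$ and hence $p' = p$ modulo $\mathfrak{a}_K$; since $\mu(\mathfrak{a}) = 0$ and $\mathfrak{a}$ is central, the element $a$ acts as zero on every weight vector of $V$, so $p$ and $p'$ act identically on $V$, which is precisely the $\up$-action used on the left hand side. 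That $\phi$ is a $U(\mathfrak{g}_{s,K})$-module map is immediate from the formula.

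For the isomorphism, I would use PBW twice. Let $\mathfrak{n}^{-}$ denote the $\Of$-span of $\{f_\alpha \mid \alpha \in \Phi^{+} \setminus \Phi_I^{+}\}$, so that $\mathfrak{g}_{s,K} = \mathfrak{n}^{-}_K \oplus \mathfrak{p}_K$ and $\mathfrak{g}_K = \mathfrak{n}^{-}_K \oplus (\mathfrak{p}'_K + \mathfrak{a}_K)$ as $K$-vector spaces. By Theorem \ref{PBWthm}, the multiplication maps
\[
U(\mathfrak{n}^{-}_K) \otimes_K \up \;\longrightarrow\; U(\mathfrak{g}_{s,K}), \qquad U(\mathfrak{n}^{-}_K) \otimes_K U(\mathfrak{p}'_K + \mathfrak{a}_K) \;\longrightarrow\; \ug
\]
are $K$-linear isomorphisms (and isomorphisms of right modules over the respective subalgebras). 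Tensoring with $V$ over $\up$ (resp.\ over $U(\mathfrak{p}'_K + \mathfrak{a}_K)$) exhibits both the domain and codomain of $\phi$ as canonically isomorphic to $U(\mathfrak{n}^{-}_K) \otimes_K V$, and under these identifications $\phi$ becomes the identity. Hence $\phi$ is an isomorphism.

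I do not expect a serious obstacle here; the only subtle point is the well-definedness check, which reduces to the fact that $\mathfrak{a}$ acts trivially on $V$, so that the two different embeddings of the ``parabolic'' into $\ug$ act the same on $V$.
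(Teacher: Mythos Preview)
Your proposal is correct and follows essentially the same approach as the paper: construct the natural map $u\otimes v\mapsto u\otimes v$ and verify it is an isomorphism by matching PBW bases of the form $f^{s}\otimes v$ on both sides. The only cosmetic difference is that the paper obtains the map as the composite $U(\mathfrak{g}_{s,K})\otimes_{\up}V \to \ug\otimes_{\up}V \to \ug\otimes_{U(\mathfrak{p}_K+\mathfrak{a}_K)}V = \ug\otimes_{U(\mathfrak{p}'_K+\mathfrak{a}_K)}V$, which makes well-definedness automatic and bypasses your explicit check (which is in any case immediate, since by the set-up the $\up$-action on $V$ is defined as the restriction of the $U(\mathfrak{p}'_K+\mathfrak{a}_K)$-action).
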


\begin{proof}
Take a $K$-basis $\mathcal{B}$ for $V$, and let $x_{1},\dots,x_{k}$ be the elements of $\{f_{\alpha} \mid \alpha \in \Phi^{+} \setminus \Phi_{I} \}$ in some order. Then both $M_{I}(\mu |_{\mathfrak{h}_{s}}) = U(\mathfrak{g}_{s,K}) \otimes_{\up} V$ and $\ug \otimes_{U(\mathfrak{p}'_{K} + \mathfrak{a}_{K})} V$ have $K$-bases $\{ x^{s} \otimes v \mid s \in \mathbb{N}_{0}^{k}, v \in \mathcal{B} \}$. 

Now, there is naturally a $U(\mathfrak{g}_{s,K})$-homomorphism given by the composition $U(\mathfrak{g}_{s,K}) \otimes_{\up} V \hookrightarrow  \ug \otimes_{\up} V \twoheadrightarrow \ug \otimes_{U(\mathfrak{p}_{K}+\mathfrak{a}_{K})} V =  \ug \otimes_{U(\mathfrak{p'}_{K}+\mathfrak{a}_{K})} V$, which is an isomorphism because it maps each $x^{s} \otimes v$ in $M_{I}(\mu |_{\mathfrak{h}_{s}})$ to $x^{s} \otimes v$ in $\ug \otimes_{U(\mathfrak{p'}_{K}+\mathfrak{a}_{K})} V$.
\end{proof}

\noindent We now combine these pieces for a proof of Theorem \ref{reductionprop}.

\begin{proof}[Proof of Theorem \ref{reductionprop}]
Suppose $\widehat{M_{I}(\mu |_{\mathfrak{h}_{s}})}$ is faithful as a $KG_{s}$-module. Note that Lemma \ref{densitylem} applies to $M_{I}(\mu |_{\mathfrak{h}_{s}})$ since it lies in category $\mathcal{O}_{n}(\mathfrak{g})$. So any $x \in KG_{s}$ such that $x \cdot M_{I}(\mu |_{\mathfrak{h}_{s}})  = 0$ in $ \widehat{M_{I}(\mu |_{\mathfrak{h}_{s}})}$ also annihilates $\widehat{M_{I}(\mu |_{\mathfrak{h}_{s}})}$, and hence must equal zero. So $M_{I}(\mu |_{\mathfrak{h}_{s}})$ is faithful as an $RP$-module.

By Lemma \ref{reduction1}, $\ug \otimes_{U(\mathfrak{p}'_{K} + \mathfrak{a}_{K})} V$ is faithful as an $RP$-module. Then by Corollary \ref{reduction2}, $\ug \otimes_{U(\mathfrak{p}'_{K} + \mathfrak{a}_{K})} V$ is faithful as an $RP'$-module.

By Corollary \ref{reduction3}, $M_{I,\mathfrak{h}'}(\lambda)$ is faithful as an $RP'$-module. Finally, by Corollary \ref{5.3cor} $\widehat{M_{I,\mathfrak{h}'}(\lambda)}$ is faithful over $KG$.

\end{proof}

\section{The duality functor on category $\mathcal{O}$}
\label{dualsection}

\subsection{Defining the duality functor}
We now restrict to the case where $\mathfrak{g}$ is semisimple, with Chevalley basis $\{e_{\alpha},f_{\alpha} \mid \alpha \in \Phi^{+} \} \cup \{h_{\alpha} \mid \alpha \in \Delta \}$. The goal of this section is to prove the following result.

\begin{mythm}
\label{leviprop}
Suppose $\hat{M} = \widehat{M_{I}(\lambda)}$ is an  affinoid generalised Verma module such that $I \subset \Delta$ is totally proper, and $M = M_{I}(\lambda)$. Let $\mathfrak{l} = \mathfrak{l}_{I}$ be the corresponding Levi subalgebra of $\mathfrak{g}$, with $L$ being the $F$-uniform group corresponding to $p^{n+1}\mathfrak{l}$.

If $M$ is faithful over $RL$, then $\hat{M}$ is faithful over $KG$.
\end{mythm}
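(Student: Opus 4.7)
The strategy is to combine Corollary \ref{5.3cor} with the duality functor on category $\mathcal{O}$. Write $\mathfrak{n}^{\pm} = \mathfrak{n}_I^{\pm}$ for the nilradicals of $\mathfrak{p}_I$ and its opposite parabolic $\mathfrak{p}^- = \mathfrak{l} \oplus \mathfrak{n}^-$, with corresponding $F$-uniform subgroups $N^{\pm}$ and $P^-$. Corollary \ref{5.3cor} applied to the decomposition $\mathfrak{g} = \mathfrak{n}^- \oplus \mathfrak{p}_I$, with $V$ the finite-dimensional $\ul$-module inducing $M$, reduces the claim that $\hat{M}$ is faithful over $KG$ to the claim that $M$ is faithful over $RP_I$. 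So fix $z \in RP_I$ with $z \cdot M = 0$; the goal is to show $z = 0$.

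To bring in the hypothesis of faithfulness over $RL$, introduce the duality functor $D : \mathcal{O}_n(\mathfrak{g}) \to \mathcal{O}_n(\mathfrak{g})$ defined by $D(N) = \bigoplus_\mu \mathrm{Hom}_K(N_\mu, K)$ with $\mathfrak{g}$-action twisted by the Chevalley anti-involution $\tau$ (satisfying $\tau(e_\alpha) = f_\alpha$, $\tau(h_\alpha) = h_\alpha$). One verifies that $D$ is a contravariant involution on $\mathcal{O}_n(\mathfrak{g})$ preserving weight-space dimensions, and that $\tau$ extends continuously to $\Ug$, interchanging $RP_I$ with $RP_I^-$ and fixing $RL$. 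The identity $\tau(\Ann_{\Ug}(N)) = \Ann_{\Ug}(D(N))$ then converts $z \cdot M = 0$ into $\tau(z) \cdot D(M) = 0$ with $\tau(z) \in RP_I^-$; the finite-dimensional simple $\mathfrak{l}$-constituents of $M$ are self-$D$-dual, so $D(M) \cong M$ as $\ul$-modules, and hence $D(M)$ is also faithful over $RL$ by the hypothesis.

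It remains to show that any $w \in RP_I^-$ annihilating $D(M)$ must be zero. Exploit that $D(M)$ is locally $U(\mathfrak{n}^+_K)$-finite and decomposes as $\bigoplus_k D(M)^{(k)}$ with each $D(M)^{(k)}$ a finite-dimensional $\ul$-submodule (mirroring the $\mathfrak{l}$-structure of $M$). Using a PBW-style expansion of $w$ inside $\widehat{U(\mathfrak{p}^-)_{n,K}}$, in the spirit of the injectivity of $KN \otimes_K U(\mathfrak{n}_K) \to \widehat{U(\mathfrak{n})_{n,K}}$ from the proof of Theorem \ref{5.3}, write $w = \sum_\gamma u^\gamma l_\gamma$ with $u^\gamma$ a PBW basis of $U(\mathfrak{n}^-_K)$ and $l_\gamma \in RL$. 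Evaluating against the $D(M)^{(k)}$ and using the weight-component argument (Lemma \ref{weightcompslem}) to isolate individual $l_\gamma$ forces each $l_\gamma$ to annihilate every $D(M)^{(k)}$, hence all of $D(M)$. Faithfulness of $D(M)$ over $RL$ then gives $l_\gamma = 0$ for every $\gamma$, so $w = 0$ and finally $z = 0$.

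The main obstacle is making the PBW-style expansion of $w \in RP_I^-$ with coefficients genuinely in $RL$ (rather than merely in $\Ul$) rigorous at the Iwasawa-algebra level, and then cleanly separating those coefficients using the $\mathfrak{l}$-submodule structure of $D(M)$. A secondary technical issue is ensuring that $D$ interacts correctly with the completed enveloping algebra $\Ug$: namely that $\tau$ extends continuously and that the annihilator identity $\tau \circ \Ann_{\Ug} = \Ann_{\Ug} \circ D$ really holds in the completion, not just at the level of $\ug$.
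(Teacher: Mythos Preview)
Your overall architecture matches the paper's: reduce via Corollary~\ref{5.3cor} to faithfulness of $M$ over $RP_I$, then use the Chevalley involution $\tau$ and duality to transport to $\mathfrak{p}^-$, where the hypothesis on $RL$ can be brought in. But the execution diverges at the crucial point, and your step~4 contains a real obstruction that you yourself flag as the ``main obstacle''.

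The problem is that $D(M)=M^\vee$ is \emph{not} free over $U(\mathfrak{n}^-_K)$ when $M$ is reducible. Already for $\mathfrak{sl}_2$ with $\lambda\in\mathbb{N}_0$, the highest-weight vector of $M(\lambda)^\vee$ lies in the finite-dimensional socle $L(\lambda)$, so $f^{\lambda+1}$ kills it; hence the ``PBW separation'' of $w=\sum_\gamma u^\gamma l_\gamma$ by acting on $D(M)$ cannot isolate the individual $l_\gamma$. The weight-component trick does not rescue this: weight components of $w$ land in $\widehat{U(\mathfrak{p}^-)_{n,K}}$, not in $RP^-$, and they still involve infinitely many $u^\gamma$ of a given weight once one unpacks the exponentials. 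So the road from ``$D(M)$ faithful over $RL$'' to ``$D(M)$ faithful over $RP^-$'' is blocked precisely because $D(M)$ is not of the induced form needed to feed into Corollary~\ref{5.3cor} or Theorem~\ref{5.3}.

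The paper sidesteps this by never asking $D(M)$ to be an induced module. Instead it applies Corollary~\ref{5.3cor} to $M$ itself over $\mathfrak{p}^-=\mathfrak{l}\oplus\mathfrak{n}^-$ (the lemma just before the proof shows $M\cong U(\mathfrak{p}^-_K)\otimes_{\ul}V$), obtaining that $\hat{M}$ is faithful over $KP^-$ directly from the $RL$-faithfulness hypothesis. Duality is then used only at the level of composition factors: for $x\in\Ann_{RP}M$, each weight component $x_\nu$ annihilates $M$ (Lemma~\ref{weightcompslem}), so $\tau(x_\nu)$ annihilates $M^\vee$ (Corollary~\ref{dualannihilatorcor}), hence annihilates every composition factor of $M^\vee$; since $M$ and $M^\vee$ have the same composition factors and $M$ has finite length $l$, any product $\tau(x_{\nu_1})\cdots\tau(x_{\nu_l})$ annihilates $M$. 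Summing, $\tau(x)^l\in KP^-$ annihilates $\hat{M}$, so $\tau(x)^l=0$ and, $KP^-$ being a domain, $x=0$. This composition-factor trick is the missing idea that lets one return from $M^\vee$ to $M$, where the induced-module machinery genuinely applies; it also makes your ``secondary technical issue'' (extending the annihilator identity to the completion) disappear, since only the weight-vector case (Corollary~\ref{dualannihilatorcor}) is ever needed.
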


\noindent Recall our definitions of the categories $\mathcal{O}(\mathfrak{g})$ and $\mathcal{O}_{n}(\mathfrak{g})$ from Definition \ref{categoryO}. We will now define the duality functor on category $\mathcal{O}(\mathfrak{g})$ as in \cite{Humphreys}.

\begin{mydef}

Define the transpose map $\tau : \mathfrak{g} \rightarrow \mathfrak{g}$ by $\tau(e_{\alpha}) = f_{\alpha}, \tau(f_{\alpha}) = e_{\alpha}$ for all $\alpha \in \Phi^{+}$, and $\tau(h_{\alpha}) = h_{\alpha}$ for all $\alpha \in \Delta$. Then by the relations assumed on our Chevalley basis, $\tau$ is a Lie algebra anti-automorphism that maps $\mathfrak{g}_{R} \rightarrow \mathfrak{g}_{R}$. Thus $\tau$ extends to a filtered anti-automorphism $\tau: \ug \rightarrow \ug$, and hence to a filtered anti-automorphism $\tau : \Ug \rightarrow \Ug$.

\end{mydef}

\begin{mydef}
For $M \in \mathcal{O}(\mathfrak{g})$, let $M^{*}$ be the dual vector space of $M$, with $\ug$-action $(x \cdot \phi)(v) = \phi(\tau(x) v)$ for $x \in \ug, \phi \in M^{*}, v \in M$.

Define $M^{\lor}$ to be the subspace of $M^{*}$ consisting of all $\phi \in M^*$ such that $\phi(M_{\lambda}) = 0$ for all but finitely many $\lambda$. Then $M^{\lor}$ is the \emph{dual module} of $M$.

\end{mydef}

\noindent We now state some facts from \cite{Humphreys} about the duality functor.

\begin{myprop}
\label{dualityfacts}
\begin{itemize}
\item The functor $^{\lor} : \mathcal{O}(\mathfrak{g}) \rightarrow \mathcal{O}(\mathfrak{g})$ is an exact contravariant functor.

\item For any $M \in \mathcal{O}(\mathfrak{g})$, $M$ and $M^{\lor}$ have the same composition factor multiplicities.

\item For any $M \in \mathcal{O}(\mathfrak{g})$ and any weight $\lambda$, $\dim_{K} M_{\lambda} = \dim_{K} M_{\lambda}^{\lor}$. In particular, $^{\lor}$ restricts to a functor $\mathcal{O}_{n}(\mathfrak{g}) \rightarrow \mathcal{O}_{n}(\mathfrak{g})$.

\end{itemize}

\begin{proof}
The first two parts are \cite[Theorem 3.2]{Humphreys}. For the third part, $\dim_{K} M_{\lambda} = \dim_{K} M_{\lambda}^{\lor}$ because $M_{\lambda}^{\lor}$ is isomorphic as a $K$-vector space to the standard dual space $(M_{\lambda})^{*}$.

Suppose $M \in \mathcal{O}_{n}(\mathfrak{g})$ and $\lambda$ is a weight of $M^{\lor}$. Then $\dim M_{\lambda} = \dim M_{\lambda}^{\lor} \neq 0$, so $\lambda$ is a weight of $M$ and thus $\lambda(p^{n} \mathfrak{h}_{R}) \subseteq R$. So $M^{\lor} \in \mathcal{O}_{n}(\mathfrak{g})$. 
\end{proof}

\end{myprop}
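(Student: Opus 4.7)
The plan is to handle Parts (i) and (ii) by a direct appeal to the standard arguments in \cite[Theorem 3.2]{Humphreys}: those proofs only use the existence of an anti-automorphism $\tau$ of $\ug$ that fixes $\mathfrak{h}$ pointwise together with finite-dimensionality of the weight spaces (which we have from Lemma \ref{catObasics}), and nothing in them is specific to the ground field being $\mathbb{C}$. Contravariance of $^\lor$ is immediate from the formula $f^\lor(\phi) = \phi \circ f$. Exactness will be visible once Part (iii) identifies $M^\lor$ with $\bigoplus_\lambda (M_\lambda)^*$ weight-space by weight-space, since this identification is functorial in $M$ and $(-)^*$ is exact on finite-dimensional $K$-vector spaces. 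The remaining structural claim, that $M^\lor$ actually lies in $\mathcal{O}(\mathfrak{g})$, will require a separate check; see the final paragraph.

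For Part (iii), the key computation is to pin down the $\uh$-action on $M^*$. Because $\tau$ fixes each $h_\alpha$, and hence every element of $\mathfrak{h}_K$, we have for $\phi \in M^*$, $h \in \mathfrak{h}_K$ and $v \in M_\mu$
\[
(h \cdot \phi)(v) \;=\; \phi(\tau(h) v) \;=\; \phi(h v) \;=\; \mu(h)\, \phi(v).
\]
Thus $\phi$ is a weight vector of weight $\lambda$ if and only if $\phi$ vanishes on $M_\mu$ for every weight $\mu \neq \lambda$ of $M$, and the restriction map $\rho_\lambda : M^\lor_\lambda \to (M_\lambda)^*$, $\phi \mapsto \phi|_{M_\lambda}$, is then a $K$-linear isomorphism. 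Since $M_\lambda$ is finite-dimensional by Lemma \ref{catObasics}, this gives the dimension equality $\dim_K M^\lor_\lambda = \dim_K M_\lambda$. In particular the weights of $M^\lor$ form a subset of those of $M$, which immediately yields the last assertion: if $M \in \mathcal{O}_n(\mathfrak{g})$, meaning $\lambda(p^n \mathfrak{h}_R) \subseteq R$ for all weights $\lambda$ of $M$, then the same constraint holds for every weight of $M^\lor$.

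Part (ii) then follows formally: Part (iii) shows that the formal characters of $M$ and $M^\lor$ coincide, and a standard argument shows the simple objects $L(\mu) \in \mathcal{O}(\mathfrak{g})$ are self-dual (indeed $L(\mu)^\lor$ is simple with highest weight $\mu$), so equality of characters forces equality of composition factor multiplicities.

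The main technical obstacle is the verification that $M^\lor$ is actually an object of $\mathcal{O}(\mathfrak{g})$: one needs it to be finitely generated over $\ug$ and locally finite under $U(\mathfrak{n}^+_K)$. Local $\mathfrak{n}^+$-finiteness follows from the fact that the weights of $M$ are bounded above in the root-order (a standard consequence of finite generation plus $\mathfrak{n}^+$-local finiteness of $M$, using the triangular PBW decomposition), combined with Part (iii) and the observation that $e_\alpha$ shifts weights in $M^\lor$ by $+\alpha$; so $U(\mathfrak{n}^+_K)\phi$ is contained in the finite direct sum of the finitely many weight spaces of $M^\lor$ above a given weight, and each such weight space is finite-dimensional. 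Finite generation of $M^\lor$ is then obtained via finite length: the composition series of $M$ (finite by standard category $\mathcal{O}$ arguments) yields via exactness a composition series of $M^\lor$ of the same finite length. These verifications are precisely the ones contained in \cite[Theorem 3.2]{Humphreys}, and they transfer to our $p$-adic setting without change.
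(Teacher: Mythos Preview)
Your proposal is correct and follows essentially the same approach as the paper: both defer Parts (i) and (ii) to \cite[Theorem 3.2]{Humphreys}, and for Part (iii) both identify $M^\lor_\lambda$ with $(M_\lambda)^*$ and then read off the $\mathcal{O}_n$-restriction from the inclusion of weight sets. The paper is simply terser, whereas you spell out the $\uh$-action computation and sketch why the Humphreys arguments for $M^\lor \in \mathcal{O}(\mathfrak{g})$ transfer to the present setting.
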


\subsection{Completions and the duality functor}

We now want to see how the duality functor interacts with our completions.

\begin{mylem}

Suppose $M \in \mathcal{O}_{n}(\mathfrak{g})$ and $x \in \Ug$ is a weight vector. Then for any $\phi \in M^{\lor}$, $x \cdot \phi \in M^{\lor}$ is given by $(x \cdot \phi)(v) = \phi(\tau(x) v)$ for $v \in M$.
\end{mylem}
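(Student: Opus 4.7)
The plan is to introduce the candidate functional $\psi$ defined by $\psi(v) = \phi(\tau(x) v)$ for $v \in M$, verify that $\psi$ lies in $M^{\lor}$, and then show that $x \cdot \phi = \psi$ inside the ambient completion $\widehat{M^{\lor}}$. Since $\tau$ is a filtered anti-automorphism of $\Ug$ that sends a weight vector of weight $\mu$ to a weight vector of weight $-\mu$ (inspecting its action on a Chevalley basis), $\tau(x)$ is a weight vector of weight $-\lambda$, where $\lambda$ denotes the weight of $x$. Applying Lemma \ref{weightvectorlem} to $\tau(x)$ and $M$ gives $\tau(x) \cdot M \subseteq M$ inside $\hat{M}$, so $\psi$ is a well-defined $K$-linear functional on $M$. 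For $v \in M_\mu$ the proof of Lemma \ref{weightvectorlem} shows that $\tau(x) v$ lies in $M_{\mu - \lambda}$; since $\phi$ vanishes on all but finitely many weight spaces, $\psi$ vanishes on $M_\mu$ except for a finite shifted set of weights, so $\psi \in M^{\lor}$.

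To show $x \cdot \phi = \psi$, I may assume by linearity that $\phi$ is a weight vector of weight $\nu$. Approximate $x$ by elements $x_i \in \ug$ with $x_i \to x$ in $\Ug$, and by taking the $\lambda$-weight components of the partial sums (possible since the filtration on $\Ug$ is compatible with the weight grading, as exploited in Lemma \ref{weightcompslem}) I may assume every $x_i$ has weight $\lambda$. By the definition of the $\ug$-action on $M^{\lor}$, $(x_i \cdot \phi)(v) = \phi(\tau(x_i) v)$ for each $i$, and by the construction of the $\Ug$-action on $M^{\lor}$ via $M^{\lor} \hookrightarrow \widehat{M^{\lor}}$ and Lemma \ref{weightvectorlem}, $x_i \cdot \phi \to x \cdot \phi$ in $\widehat{M^{\lor}}$.

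The decisive point is that $x \cdot \phi$, $\psi$, and each $x_i \cdot \phi$ are all weight vectors of weight $\lambda + \nu$ in $\widehat{M^{\lor}}$, and that the weight space $(\widehat{M^{\lor}})_{\lambda + \nu}$ coincides with the finite-dimensional $K$-vector space $(M^{\lor})_{\lambda + \nu}$: Lemma \ref{catObasics} gives finite-dimensionality of $(M^{\lor})_{\lambda + \nu}$, which forces a free $R$-basis of $\Gamma_{0} M^{\lor}$ to contain only finitely many weight vectors of weight $\lambda + \nu$, so any convergent sum of such basis vectors is automatically finite. Hence the convergence $x_i \cdot \phi \to x \cdot \phi$ takes place inside a finite-dimensional $K$-vector space, on which evaluation at a fixed $v$ is continuous. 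Analogously, $\tau(x_i) v \to \tau(x) v$ inside the finite-dimensional subspace $M_{\mu - \lambda} \subseteq \hat{M}$, and $\phi$ is continuous thereon. Combining,
\begin{equation*}
(x \cdot \phi)(v) = \lim_{i} (x_i \cdot \phi)(v) = \lim_{i} \phi(\tau(x_i) v) = \phi(\tau(x) v) = \psi(v)
\end{equation*}
for every weight vector $v \in M_\mu$, and hence for all $v \in M$ by linearity.

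The main obstacle I anticipate is relating the filtration-topology convergence in $\widehat{M^{\lor}}$ to the pointwise behaviour of the functional $x \cdot \phi$ on $M$; this is overcome by the observation that a weight vector of a fixed weight in $\widehat{M^{\lor}}$ is forced to lie in the finite-dimensional weight space of $M^{\lor}$, so the whole analysis reduces to finite-dimensional $K$-vector spaces on which evaluation and $\phi$ are automatically continuous.
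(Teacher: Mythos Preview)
Your proof is correct but takes a different route from the paper. The paper decomposes $x$ via PBW as $\sum_{s,t} f^{t} e^{s} H_{s,t}$ with $H_{s,t} \in \Uh$, then uses local finiteness of both $M$ and $M^{\lor}$ over $U(\mathfrak{n}^{+}_{K})$ to show that only finitely many terms act nontrivially on $\phi$ and on $v$, reducing the identity to an honest finite sum that is checked directly using Lemma \ref{diagonalmult}. Your argument instead approximates $x$ by weight-$\lambda$ elements $x_{i} \in \ug$, notes that all of $x_{i}\cdot\phi$, $x\cdot\phi$, $\psi$ land in the single finite-dimensional weight space $(M^{\lor})_{\lambda+\nu}$, and passes to the limit using that evaluation maps and $\phi$ are automatically continuous on finite-dimensional $K$-spaces. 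The paper's approach is more hands-on and makes explicit where the category-$\mathcal{O}$ hypotheses enter (local finiteness over $\mathfrak{n}^{+}$), while yours is cleaner and more topological: it isolates the single structural fact that weight spaces are finite-dimensional and hence closed in the completion, so limits of weight vectors of a fixed weight remain there. Both arguments ultimately rely on Lemma \ref{weightvectorlem} and the finite-dimensionality of weight spaces from Lemma \ref{catObasics}; yours just packages the analysis more uniformly through continuity rather than an explicit PBW bookkeeping.
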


\begin{proof}
Firstly, note that $x \cdot \phi$ can be considered as an element of  $M^{\lor}$ by Lemma \ref{weightvectorlem}.

Both sides of the equality are linear in $\phi$, so we can assume $\phi$ is a weight vector. Say $\phi$ has weight $\mu$ and $x$ has weight $\nu$, then $x \cdot \phi$ has weight $\mu + \nu$. So for $v$ in any weight space of $M$ other than $M_{ \mu + \nu}$, we have $(x \cdot \phi)(v) = 0 = \phi(\tau(x) v)$. So it is sufficient to check the equality for $v \in M_{\mu + \nu}$. Fix some $v \in M_{\mu + \nu}$.

Fix an ordering of $\Phi^{+}$, and write $e^{s} = \underset{\alpha \in \Phi^{+}}{\prod} e_{\alpha}^{s_{\alpha}}$ and $ f^{s} = \\underset{\alpha \in \Phi^{+}}{\prod} f_{\alpha}^{s_{\alpha}}$ for $s \in \mathbb{N}_{0}^{  \Phi^{+}}$ (with the order of the multiplication subject to the chosen ordering of $\Phi^{+}$). Then using Lemma \ref{PBW} we can write $x$ in the form $\underset{s,t}{\sum} f^{t} e^{s} H_{s,t}$ where $H_{s,t} \in \Uh$ and the sum is over all $s,t \in \mathbb{N}_{0}^{\Phi}$ such that $f^{t} e^{s}$ has weight $\nu$. Then $\tau(x) = \underset{s,t}{\sum}H_{s,t}f^{s}e^{t} $.

Since $M^{\lor}$ is locally finite over $U(\mathfrak{n}_{K}^{+})$, for all but finitely many $s$ we have $e^{s} \cdot \phi = 0$. Moreover, for each fixed $s$ there are only finitely many $t$ such that $f^{t} e^{s}$ has weight $\nu$. Therefore there are only finitely many pairs $s,t$ such that $f^{t} e^{s}$ has weight $\nu$ and $f^{t} e^{s} \cdot \phi \neq 0$.

Similarly, since $M$ is locally finite over $U(\mathfrak{n}_{K}^{+})$, there are only finitely many pairs $s,t$ such that $f^{t} e^{s}$ has weight $\nu$ and $f^{s} e^{t} \cdot v \neq 0$.

So let $\mathcal{S}$ be the set of all the pairs $(s,t)$ with $s,t \in \mathbb{N}_{0}^{\Phi^{+}}$ such that $f^{t} e^{s}$ has weight $\nu$ and either $f^{t} e^{s} \cdot \phi \neq 0$ or $f^{s} e^{t} \cdot v \neq 0$. Then $\mathcal{S}$ is a finite set. 

So $(x \cdot \phi)(v) = \underset{(s,t) \in \mathcal{S}}{\sum} (f^{t}e^{s}H_{s,t} \cdot \phi)(v) = \underset{(s,t) \in \mathcal{S}}{\sum} \mu(H_{s,t}) (f^{t} e^{s} \cdot \phi)(v)$ using Lemma \ref{diagonalmult}.

Now, whenever $(s,t) \in \mathcal{S}$, we know $f^{t} e^{s}$ has weight $\nu$. Thus $f^{s}e^{t}$ has weight $-\nu$, and so $f^{s} e^{t} v$ has weight $\mu$. So we have:

\begin{equation*}
\begin{split}
(x \cdot \phi)(v) & = \sum_{(s,t) \in \mathcal{S}} \mu(H_{s,t}) (f^{t} e^{s} \cdot \phi)(v) \\
& = \sum_{(s,t) \in \mathcal{S}} \mu(H_{s,t}) \phi(f^{s}e^{t}v)\\
& = \sum_{(s,t) \in \mathcal{S}} \phi(H_{s,t} \cdot f^{s} e^{t} v)\\
&= \phi( \sum_{(s,t) \in \mathcal{S}} H_{s,t} f^{s} e^{t} v)\\
&= \phi(\tau(x) \cdot v)
\end{split}
\end{equation*}
  
\end{proof}

\begin{mycor}
\label{dualannihilatorcor}
Suppose $x \in \Ann_{\Ug} \hat{M}$ is a weight vector. Then $\tau(x)  \cdot M^{\lor} = 0$ in $\widehat{M^{\lor}}$.
\end{mycor}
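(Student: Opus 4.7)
The plan is to reduce the statement to the preceding lemma applied to $\tau(x)$ in place of $x$, and then use the fact that $x$ annihilates $M$ (as a subspace of $\hat{M}$) together with injectivity of the embedding $M\hookrightarrow\hat M$.

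First I would observe that since $\tau$ is a filtered anti\nobreakdash-automorphism of $\Ug$ fixing $\mathfrak{h}$ pointwise, it sends a weight vector of weight $\nu$ to a weight vector of weight $-\nu$; in particular, $\tau(x)\in\Ug$ is itself a weight vector. Hence by Lemma \ref{weightvectorlem} applied to the module $M^{\lor}\in\mathcal{O}_n(\mathfrak{g})$, we already know that $\tau(x)\cdot M^{\lor}\subseteq M^{\lor}$ inside $\widehat{M^{\lor}}$. Because the embedding $M^{\lor}\hookrightarrow \widehat{M^{\lor}}$ is injective by Lemma \ref{separatedlem}, it suffices to show $\tau(x)\cdot\phi=0$ in $M^{\lor}$ for every $\phi\in M^{\lor}$, i.e., that $(\tau(x)\cdot\phi)(v)=0$ for every $v\in M$.

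Fix $\phi\in M^{\lor}$ and $v\in M$. Applying the preceding lemma to the weight vector $\tau(x)$ gives
\[
(\tau(x)\cdot\phi)(v)=\phi\bigl(\tau(\tau(x))v\bigr)=\phi(x\cdot v),
\]
using that $\tau$ is an involution on $\Ug$ (which follows from $\tau^{2}=\mathrm{id}$ on $\mathfrak{g}$ and the extension by the universal property). Now $x$ is a weight vector in $\Ug$, so by Lemma \ref{weightvectorlem} we have $x\cdot v\in M$, and by hypothesis $x\cdot v=0$ in $\hat M$; since $M\hookrightarrow\hat M$ is injective, $x\cdot v=0$ already in $M$. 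Therefore $\phi(x\cdot v)=0$, proving $(\tau(x)\cdot\phi)(v)=0$.

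There is no real obstacle here beyond keeping track of the bookkeeping: the only non\nobreakdash-trivial inputs are the preceding lemma (to evaluate the dual action of an infinite series) and Lemma \ref{weightvectorlem} (to ensure both $x\cdot v$ and $\tau(x)\cdot\phi$ remain in the uncompleted modules). The mild subtlety worth spelling out is that one applies the preceding lemma to $\tau(x)$ rather than to $x$, so that on the right\nobreakdash-hand side one gets $\phi(x\cdot v)$ with $v\in M$, which is precisely the situation in which the annihilation hypothesis on $\hat M$ becomes usable.
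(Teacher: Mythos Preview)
Your proof is correct and follows essentially the same approach as the paper: apply the preceding lemma to the weight vector $\tau(x)$, use $\tau^{2}=\mathrm{id}$ to obtain $(\tau(x)\cdot\phi)(v)=\phi(xv)$, and conclude from $x\in\Ann_{\Ug}\hat{M}$. The paper's version is simply terser, omitting the explicit justifications (that $\tau(x)$ is a weight vector, that $\tau$ is an involution, and that $x\cdot v=0$ already in $M$) which you have spelled out carefully.
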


\begin{proof}
By the previous lemma, for any $\phi \in M^{\lor}$ and $v \in M$ we have $(\tau(x) \cdot \phi)(v) = \phi(x v) = \phi(0)=0$. So $x$ annihilates $M^{\lor}$.

\end{proof}

\subsection{Proof of Theorem \ref{leviprop}}

Now consider the setting of Theorem \ref{leviprop}. That is, we have $I \subseteq \Delta$ and $\lambda \in \mathfrak{h}_{K}^{*}$ such that $\lambda(p^{n}\mathfrak{h}_{R}) \subseteq \mathfrak{h}_{R}$ and $\lambda(h_{\alpha}) \in \mathbb{N}_{0}$ for $\alpha \in I$. Now take $M = M_{I}(\lambda)$. Let $\mathfrak{p} = \mathfrak{p}_{I}$ and let $\mathfrak{p}^{-} = \tau(\mathfrak{p})$ be the opposite parabolic subalgebra, with $F$-uniform group $P^{-}$ corresponding to $p^{n+1} \mathfrak{p}^{-}$. Note that $M \in \mathcal{O}_{n}(\mathfrak{g})$.

\begin{mylem}
$\tau : \Ug \rightarrow \Ug$ satisfies $\tau(KP) = KP^{-}$.
\end{mylem}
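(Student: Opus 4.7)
The plan is to use the explicit description of $KP \subseteq \Up \subseteq \Ug$ from Corollary \ref{embedding}, together with the observation that $\tau$ is an involutive continuous anti-automorphism of $\Ug$ mapping the $\Of$-Lie subalgebra $p^{n+1} \mathfrak{p}_{R}$ bijectively onto $p^{n+1} \mathfrak{p}^{-}_{R}$. The claim should follow by transporting the power-series description of $KP$ through $\tau$ and recognising the result as $KP^{-}$.

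First I would fix a $\mathbb{Z}_{p}$-basis $x_{1},\dots,x_{r}$ of $\mathfrak{p}$; by definition of $\mathfrak{p}^{-} = \tau(\mathfrak{p})$ and since $\tau$ is a bijection $\mathfrak{p}_{R} \to \mathfrak{p}^{-}_{R}$, the list $\tau(x_{r}),\dots,\tau(x_{1})$ (taken in reverse order, for reasons which will become clear) is then a $\mathbb{Z}_{p}$-basis of $\mathfrak{p}^{-}$. The key identity is that since $\tau$ is an anti-homomorphism, $\tau(y^{k}) = \tau(y)^{k}$ for every $y \in \Ug$ and every $k \geq 0$; applying this term-by-term to the exponential series (using continuity of $\tau$, which follows from it being a filtered anti-automorphism) yields $\tau(\exp(p^{n+1}x_{i})) = \exp(p^{n+1}\tau(x_{i}))$, and hence $\tau(e^{p^{n+1}x_{i}} - 1) = e^{p^{n+1}\tau(x_{i})} - 1$.

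By Corollary \ref{embedding}, a general element of $KP$ has the form $\sum_{s \in \mathbb{N}_{0}^{r}} C_{s}(e^{p^{n+1}x_{1}}-1)^{s_{1}} \cdots (e^{p^{n+1}x_{r}}-1)^{s_{r}}$ with $C_{s} \in K$ bounded. Since $\tau$ is a continuous $K$-linear anti-automorphism, it maps this element to $\sum_{s} C_{s}(e^{p^{n+1}\tau(x_{r})}-1)^{s_{r}} \cdots (e^{p^{n+1}\tau(x_{1})}-1)^{s_{1}}$, which by Corollary \ref{embedding} applied to the basis $\tau(x_{r}),\dots,\tau(x_{1})$ of $\mathfrak{p}^{-}$ is precisely an element of $KP^{-}$. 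This proves $\tau(KP) \subseteq KP^{-}$. For the reverse inclusion I would note that $\tau^{2} = \mathrm{id}$ on $\Ug$ (since $\tau$ swaps $e_{\alpha} \leftrightarrow f_{\alpha}$ and fixes each $h_{\alpha}$, hence is involutive on generators of $\ug$, extended continuously to $\Ug$) and run the same argument with $\mathfrak{p}^{-}$ in place of $\mathfrak{p}$.

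There is no serious obstacle here — the entire argument is formal once one correctly tracks the order-reversal of products induced by the anti-multiplicativity of $\tau$. The only point worth double-checking is that the power-series description in Corollary \ref{embedding} is insensitive to the chosen ordering of the $\mathbb{Z}_{p}$-basis, which is immediate since the statement of that corollary applies to any such basis.
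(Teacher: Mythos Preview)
Your proposal is correct and follows essentially the same approach as the paper: both fix a $\mathbb{Z}_{p}$-basis of $\mathfrak{p}$, use Corollary \ref{embedding} to write elements of $KP$ as bounded power series in the $e^{p^{n+1}x_{i}}-1$, and then apply the anti-automorphism $\tau$ term-by-term to obtain the corresponding description of $KP^{-}$. Your write-up is in fact slightly more careful than the paper's, explicitly noting the continuity of $\tau$ and invoking $\tau^{2}=\mathrm{id}$ for the reverse inclusion, whereas the paper simply asserts that the image equals $KP^{-}$.
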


\begin{proof}
Suppose $x_{1}, \dots, x_{r}$ is a $\mathbb{Z}_{p}$-basis for $\mathfrak{p}$. Then $\tau(x_{1}), \dots, \tau(x_{r})$ is a $\mathbb{Z}_{p}$-basis for $\mathfrak{p}^{-}$. We now use the description of the embedding $KP \subseteq \Up$ from Corollary \ref{embedding}. Given $x \in KP$, we can write:

$$
x = \sum_{s \in \mathbb{N}_{0}^{r}} C_{s} (e^{p^{n+1}x_{1}}-1)^{s_{1}} \dots (e^{p^{n+1}x_{r}}-1)^{s_{r}}
$$

\noindent with the $C_{s}$ bounded. Then:

$$
\tau(x) = \sum_{s \in \mathbb{N}_{0}^{r}} C_{s} (e^{p^{n+1}\tau(x_{r})}-1)^{s_{r}} \dots (e^{p^{n+1}\tau(x_{1})}-1)^{s_{1}}
$$

\noindent We can now see that the image of $KP$ under $\tau$ is precisely $KP^{-}$.
\end{proof}

\noindent The following result allows us to apply Corollary \ref{5.3cor} to $M$ considered as a $KP^{-}$-module. We decompose $\mathfrak{p}^{-}$ as $\mathfrak{p}^{-} = \mathfrak{l} \oplus \mathfrak{n}$ where $\mathfrak{n}$ is spanned by all $f_{\alpha}$ with $\alpha \in \Phi^{+} \setminus \Phi_{I}^{+}$. 

\begin{mylem}
$M$ is isomorphic as a $U(\mathfrak{p}_{K}^{-})$-module to $U(\mathfrak{p}_{K}^{-}) \otimes_{\ul} V$, where $V$ is the irreducible highest-weight $\up$-module of weight $\lambda$ and $V$ is finite-dimensional.

\end{mylem}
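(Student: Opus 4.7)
The strategy is to write down the obvious candidate map, check well-definedness, and then confirm bijectivity by comparing PBW bases on the two sides. Note that $V$ is finite-dimensional by construction: it was introduced in \S\ref{gvm} as the irreducible highest-weight $\up$-module of weight $\lambda$, which is finite-dimensional precisely because $\lambda(h_\alpha) \in \mathbb{N}_0$ for each $\alpha \in I$.

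Identifying $V$ with $1 \otimes V \subseteq M$, I would define
\[
\phi : U(\mathfrak{p}_K^-) \otimes_{\ul} V \to M, \qquad u \otimes v \mapsto u \cdot (1 \otimes v).
\]
Well-definedness over $\otimes_{\ul}$ is immediate: for $\ell \in \ul \subseteq \up$, the action of $\ell$ on the submodule $1 \otimes V \subseteq M$ agrees with the original action of $\ell$ on $V$, so $u\ell \otimes v$ and $u \otimes \ell v$ have the same image in $M$. By construction $\phi$ is a homomorphism of left $U(\mathfrak{p}_K^-)$-modules.

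To prove that $\phi$ is a bijection, I would exhibit matching $K$-bases. Fix a $K$-basis $\mathcal{B}$ of $V$, let $x_1, \dots, x_k$ enumerate $\{f_\alpha \mid \alpha \in \Phi^+ \setminus \Phi_I^+\}$, and write $x^s = x_1^{s_1} \cdots x_k^{s_k}$ for $s \in \mathbb{N}_0^k$. The $\Of$-module decomposition $\mathfrak{g} = \mathfrak{n} \oplus \mathfrak{p}$ together with Theorem \ref{PBWthm}, applied to any ordered basis listing the generators of $\mathfrak{n}$ first, shows that $\ug \cong U(\mathfrak{n}_K) \otimes_K \up$ as a right $\up$-module, and hence $M = \ug \otimes_{\up} V$ has $K$-basis $\{x^s \otimes v \mid s \in \mathbb{N}_0^k,\ v \in \mathcal{B}\}$; this is also recorded explicitly in the paper just after the definition of $M_I(\lambda)$. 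Applying exactly the same reasoning to the decomposition $\mathfrak{p}^- = \mathfrak{n} \oplus \mathfrak{l}$ yields $U(\mathfrak{p}_K^-) \cong U(\mathfrak{n}_K) \otimes_K \ul$ as a right $\ul$-module, and therefore $U(\mathfrak{p}_K^-) \otimes_{\ul} V$ has $K$-basis $\{x^s \otimes v \mid s \in \mathbb{N}_0^k,\ v \in \mathcal{B}\}$ as well.

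Under $\phi$, each basis element $x^s \otimes v$ on the left is sent to $x^s \cdot (1 \otimes v) = x^s \otimes v$ on the right, so $\phi$ maps one basis onto the other and is thus a $K$-linear bijection; being a $U(\mathfrak{p}_K^-)$-module map as well, it furnishes the required isomorphism. There is no genuine obstacle here; the only point demanding attention is that the \emph{same} subalgebra $\mathfrak{n}$ plays the role of the PBW tail in both factorisations, of $\ug$ relative to $\mathfrak{p}$ and of $U(\mathfrak{p}_K^-)$ relative to $\mathfrak{l}$, which is clear from the definitions.
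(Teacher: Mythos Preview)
Your proof is correct and follows essentially the same approach as the paper: both construct the natural $U(\mathfrak{p}_K^-)$-map from $U(\mathfrak{p}_K^-)\otimes_{\ul}V$ into $M$ (the paper phrases it as the composite $U(\mathfrak{p}_K^-)\otimes_{\ul}V \to \ug\otimes_{\ul}V \to \ug\otimes_{\up}V$, which is your $u\otimes v \mapsto u\cdot(1\otimes v)$) and verify bijectivity by matching PBW bases coming from the common complement $\mathfrak{n}$. Your version is slightly more explicit about well-definedness, but the argument is the same.
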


\begin{proof}
Given a $K$-basis $\mathcal{B}$ for $V$ and an ordering $\{f_{1},\dots,f_{r} \}$ of all $f_{\alpha} \in \mathfrak{n}$, the PBW theorem implies that both $M$ and $U(\mathfrak{p}_{K}^{-}) \otimes_{\ul} V$ have a $K$-basis $\{ f_{1}^{s_{1}} \dots f_{r}^{s_{r}} \otimes v \mid s \in \mathbb{N}_{0}^{r}, v \in \mathcal{B} \}$.

Now, there is naturally a $U(\mathfrak{p}^{-}_{K})$-homomorphism given by the composition $U(\mathfrak{p}^{-}_{K}) \otimes_{\ul} V \rightarrow \ug \otimes_{\ul} V \rightarrow \ug \otimes_{\up} V = M$, which is an isomorphism since it sends $f_{1}^{s_{1}} \dots f_{r}^{s_{r}} \otimes v $ in $U(\mathfrak{p}_{K}^{-}) \otimes_{\ul} V$ to 
$f_{1}^{s_{1}} \dots f_{r}^{s_{r}} \otimes v $ in $M$

The fact that $V$ is finite-dimensional follows from the condition $\lambda(h_{\alpha}) \in \mathbb{N}_{0}$ for all $\alpha \in I$.
\end{proof}

\begin{proof}[Proof of Theorem \ref{leviprop}]

Suppose $M$ is faithful over $RL$. Then by the above lemma, we can apply Corollary $\ref{5.3cor}$ to see that $M$ is also faithful as a $KP^{-}$-module. Now we wish to apply our results with the duality functor to see that $M$ is faithful over $RP$.

Suppose $x \in \Ann_{RP}M$. Say $x = \underset{\lambda}{\sum} x_{\lambda}$ as a convergent sum, where $x_{\lambda} \in \Up$ is the $\lambda$-
weight component of $x$. Then $\tau(x) = \underset{\lambda}{\sum} \tau(x_{\lambda})$ as a convergent sum of weight components. Moreover, $\tau(x) \in RP^{-}$.

For each $\lambda$, $x_{\lambda}$ annihilates $M$ using Lemma \ref{weightcompslem} (and the fact $M \in \mathcal{O}_{n}(\mathfrak{g})$). Therefore $\tau(x_{\lambda})$ annihilates $M^{\lor}$ by Corollary \ref{dualannihilatorcor}. We know by Proposition \ref{dualityfacts} that $M$ and $M^{\lor}$ have the same composition factors as $\ug$-modules, so $\tau(x_{\lambda})$ annihilates all composition factors of $M$.

The fact that $M \in \mathcal{O}(\mathfrak{g})$ implies $M$ has finite composition length by \cite[Theorem 1.11]{Humphreys}.
Let $l$ be the composition length of $M$.  So for any weights $\lambda_{1},\dots,\lambda_{l}$ we now know that $\tau(x_{\lambda_{1}}) \dots \tau(x_{\lambda_{l}})$ annihilates $M \subseteq \hat{M}$. Therefore $\tau(x)^{l} \in RP^{-} $ annihilates $M \subseteq \hat{M}$. By Lemma \ref{densitylem}, $\tau(x)^{l} \in \Ann_{KP^{-}} \hat{M}$.

Since $\hat{M}$ is faithful over $KP^{-}$, we now see that $\tau(x)^{l} = 0$. We know $KP^{-}$ is a domain, for instance since it embeds in $\widehat{U(\mathfrak{g})_{n,K}}$, so $\tau(x) = 0$. Since $\tau$ is injective, $x = 0$.

Therefore $M$ is faithful over $RP$. By Corollary \ref{5.3cor}, $\hat{M}$ is now faithful over $KG$.

\end{proof}

\section{Annihilator of a collection of finite-dimensional modules}
\label{annsection}

\subsection{Motivation and outline}
Motivated by Theorem \ref{leviprop}, we want to show that an affinoid generalised Verma module $M_{I}(\lambda)$ is faithful over the Iwasawa algebra $RL$, where $L$ corresponds to $p^{n+1} \mathfrak{l}$ with $\mathfrak{l} = \mathfrak{l}_{I}$.

To this end, we look at a family of infinitely many finite-dimensional $\ul$-submodules of $M_{I}(\lambda)$. Note that any finite-dimensional $RL$-module is certainly not faithful. But we want to show that the simultaneous annihilator in $RL$ of our family of finite-dimensional submodules is contained in the annihilator of some induced module for $\mathfrak{l}$.

More specifically, the family of finite-dimensional modules we look at will be the irreducible highest-weight $\ul$-modules $L(\lambda - \underset{\alpha \in \Delta \setminus I}{\sum} c_{\alpha} \alpha)$ for $c_{\alpha} \in \mathbb{N}_{0}$. 

Denote $L_{\lambda,c} = L(\lambda - \underset{\alpha \in \Delta \setminus I}{\sum} c_{\alpha} \alpha)$ for any $c \in (p^{-n}R)^{\Delta \setminus I}$. Write $I^{o} = \{ \beta \in I \mid \alpha(h_{\beta}) = 0 \text{ for all } \alpha \in \Delta \setminus I \}$, which we can consider as the interior of $I$ in the Dynkin diagram with nodes $\Delta$. Also write $M_{I,\lambda,c}$ for the generalised Verma module  $M_{I^{o}}(\lambda - \sum c_{\alpha} \alpha)$. Note that for any $\beta \in I^{o}$ and any $c$ we have $(\lambda - \sum c_{\alpha} \alpha)(h_{\beta}) = \lambda(h_{\beta}) \in \mathbb{N}_{0}$, so we can define such a generalised Verma module.

The main points of our argument will then be as follows.

\begin{enumerate}
\item We will use a density argument to show that if $x \in \Ul$ annihilates all $L_{\lambda,c}$ for $c_{\alpha} \in \mathbb{N}_{0}$, then $x$ annihilates all $L_{\lambda,c}$ for $c_{\alpha} \in p^{-n}R$. 

\item We will define surjective $\Ul$-homomorphisms $\phi_{c} : \widehat{M_{I,\mathfrak{h}'}(\lambda)} \rightarrow \widehat{M_{I,\lambda,c}}$ for $c \in (p^{-n}R)^{\Delta \setminus I}$. Then if $x \in \underset{c \in \mathbb{N}_{0}^{\Delta \setminus I}}{\bigcap} \Ann_{\Ul} L_{\lambda,c}$ and $v \in \widehat{M_{I,\mathfrak{h}'}(\lambda)}$, we can see that $\phi_{c}(x \cdot v) = x \phi_{c}(v) = 0$ for all $c$ such that $M_{I,\lambda,c}$ is irreducible.

\item We show that if $w \in \widehat{M_{I,\mathfrak{h}'}(\lambda)}$  and $\phi_{c}(w) = 0$ for all $c$ such that $M_{I,\lambda,c}$ is irreducible, then $w = 0$. This will involve looking at a certain set of $c$ for which an irreducibility criterion is met, then using a density argument.

\end{enumerate}

\subsection{Set-up}

In this section, we slightly change notation to make clearer how the main result of the section relates to Theorem $\ref{mainthm}$. That is, $\mathfrak{g}$ now plays the role of $\mathfrak{l}$ from \S \ref{dualsection}, so is the Levi subalgebra of a larger semisimple Lie algebra.

 We now assume that $\mathfrak{g}_{\Delta}$ is an $\Of$-lattice of a split semisimple $F$-Lie algebra with root system $\Phi$ and that $\mathfrak{g}_{\Delta}$ has a Chevalley basis, and that $\mathfrak{g}$ is the Levi subalgebra corresponding to the totally proper subset $I \subseteq \Delta$. So $\mathfrak{g}$ is a reductive Lie algebra with root system $\Phi_{I}$. As before, write $\mathfrak{g}_{s}$ for the semisimple part of $\mathfrak{g}$ and $\mathfrak{h}$ for the Cartan subalgebra. Let $\lambda \in \mathfrak{h}_{K}^{*}$ be a weight such that $\lambda(p^{n} \mathfrak{h}_{R}) \subseteq R$ and such that $\lambda(h_{\alpha}) \in \mathbb{N}_{0}$ for all $\alpha \in I$ (meaning $\lambda$ restricts to a dominant integral weight for $\mathfrak{g}_{s,K}$). Also write $\Delta \setminus I = \{\alpha_{1},\dots,\alpha_{r} \}$.

As before, use the notation $I^{o} = \{ \alpha \in I \mid \alpha_{i}(h_{\alpha}) = 0 \text{ for all } i \}$. For $c \in (p^{-n}R)^{r}$ write $L_{\lambda,c} = L(\lambda - \sum c_{i} \alpha_{i})$ and $M_{I,\lambda,c} = M_{I^{o}}(\lambda - \sum c_{i} \alpha_{i})$.

In this setting, we can use the following facts.

\begin{itemize}
\item $\alpha_{i}(h_{\alpha}) \in \mathbb{Z}_{\leq 0}$ for all $1 \leq i \leq r$ and $\alpha \in I$.

\item The $K$-basis $\mathcal{H}$ for $\mathfrak{h}_{K}$ dual to $\Delta$ is an $\Of$-basis for $\mathfrak{h}$, using the assumption from the introduction that $p$ does not divide the determinant of the Cartan matrix of $\Phi$. (In fact, this is the reason behind our assumption on $p$).

\end{itemize}

We now note that given a weight $\mu \in \mathfrak{h}_{K}^{*}$ and a highest-weight $U(\mathfrak{g}_{s,K})$-module $M$ with highest-weight $\mu |_{\mathfrak{h}_{s}}$, we can consider $M$ as a highest-weight $\ug$-module with highest-weight $\mu$. We do this by inflating  along the homomorphism $U(\mathfrak{g}_{K}) \cong U(\mathfrak{g}_{s,K}) \otimes_{K} U(\mathfrak{a}_{K}) \rightarrow U(\mathfrak{g}_{s,K})$ given by mapping $h \in \mathfrak{a}_{K}$ to $\mu(h)$. 

So we write $L(\mu)$ for the irreducible highest-weight $U(\mathfrak{g}_{s,K})$-module of weight $\mu$ extended to a $\ug$-module, and if $J \subseteq I$ such that $\mu(h_{\alpha}) \in \mathbb{N}_{0}$ for $\alpha \in J$ we write $M_{J}(\mu)$ for the generalised Verma module for $U(\mathfrak{g}_{s,K})$ extended to a $\ug$-module. In particular, if in addition $\mu(p^{n}\mathfrak{h}_{R}) \subseteq R$ then these modules lie in $\mathcal{O}_{n}(\mathfrak{g})$ and so we can take their completions.

Now consider the set of weights $C = \{ \lambda - \underset{i}{\sum} c_{i} \alpha_{i} \mid c_{i} \in \mathbb{N}_{0} \}$. Note that for any choice of $c_{i} \in \mathbb{N}_{0}$ and any $\alpha \in I$, we have $\lambda(h_{\alpha}) - \underset{i}{\sum} c_{i} \alpha_{i}(h_{\alpha}) \in \mathbb{N}_{0}$, meaning that the irreducible highest-weight $\ug$-module $L_{\lambda,c} = 
L(\lambda - \underset{i}{\sum} c_{i} \alpha_{i})$ is finite-dimensional. In this section we will look at the simultaneous annihilator in $\Ug$ of all the finite-dimensional modules $L(\mu), \mu \in C$. In particular, we will aim to show this is contained in the annihilator of some induced module for $\mathfrak{g}$.

To this end, we need a subalgebra of $\mathfrak{g}$ and a finite-dimensional module in order to obtain an induced module. Take $\mathfrak{h}'$ to be spanned by the elements of $\mathcal{H}$ corresponding to $I$. That is, $\mathfrak{h}'$ has $\Of$-basis $\{h'_{\alpha} \mid \alpha \in I \}$ satisfying $\beta (h'_{\alpha}) = \delta_{\alpha,\beta}$ for $\beta \in \Delta$. 
Then $\mathfrak{h}'$ contains all $h_{\alpha}$ with $\alpha \in I^{o}$.

Using the notation of \S \ref{gvm} we now take $\mathfrak{p} = \mathfrak{p}_{I^{o},\mathfrak{h}'}$. Take $V$ to be the irreducible highest-weight $\mathfrak{p}_{K}$-module of weight $\lambda |_{\mathfrak{h}'_{K}}$. Then $V$ is finite-dimensional over $K$. The induced module we now wish to study is $M_{I^{o},\mathfrak{h}'}(\lambda) = \ug \otimes_{\up} V$. The main result of this section is as follows.

\begin{mythm}
\label{annthm}
$\underset{\mu \in C}{\bigcap} \Ann_{\Ug}L(\mu) \subseteq \Ann_{\Ug} \widehat{(M_{I^{o},\mathfrak{h}'}(\lambda))}$.
\end{mythm}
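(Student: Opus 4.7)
My plan follows the three-step outline stated at the start of \S\ref{annsection} essentially verbatim; I describe what each step requires and how they combine.

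The first step is a density argument: if $x \in \Ul$ annihilates $L_{\lambda,c}$ for every $c \in \mathbb{N}_0^r$, then $x$ annihilates $L_{\lambda,c}$ for every $c \in (p^{-n}R)^r$. The mechanism is that for generic $c$ the generalised Verma module $M_{I,\lambda,c}$ is irreducible, so $L_{\lambda,c} = M_{I,\lambda,c}$, and the matrix coefficients of any fixed element of $\ug$ on a Verma module, in the PBW basis, depend polynomially on the highest weight. For $x$ in the completion $\Ul$ one must upgrade this polynomial dependence to $\pi$-adic continuous dependence on $c$, using the filtration bounds implicit in the description of $\Ul$ (Lemma \ref{PBW}) together with $c \in p^{-n}R$; then density of $\mathbb{N}_0$ in $p^{-n}R$ finishes.

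The second step constructs, for each $c \in (p^{-n}R)^r$, a surjective $\Ul$-homomorphism $\phi_c : \widehat{M_{I,\mathfrak{h}'}(\lambda)} \twoheadrightarrow \widehat{M_{I,\lambda,c}}$. The key observation is that $\alpha_i(h'_\beta) = \delta_{\alpha_i,\beta} = 0$ whenever $\alpha_i \in \Delta \setminus I$ and $\beta \in I$, so $(\lambda - \sum c_i\alpha_i)|_{\mathfrak{h}'_K} = \lambda|_{\mathfrak{h}'_K}$; the same $\mathfrak{p}_{I^o,\mathfrak{h}'}$-module $V$ then sits inside both sides, and $\phi_c$ is produced by modifying only the action of the centre $\mathfrak{a}$ and invoking exactness of completion (Proposition \ref{flatnessprop}).

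The third step establishes joint injectivity: any $w \in \widehat{M_{I,\mathfrak{h}'}(\lambda)}$ with $\phi_c(w) = 0$ for all $c$ at which $M_{I,\lambda,c}$ is irreducible must vanish. Using the PBW basis $\{x^t \otimes v\}$ from \S\ref{gvm} with $x_1,\dots,x_r$ the $f_{\alpha_i}$ and $x_{r+1},\dots,x_s$ an $\Of$-basis of $\mathfrak{a}$, the map $\phi_c$ modifies only the $\mathfrak{a}$-variables via polynomial functions of $c$; the set of $c$ at which $M_{I,\lambda,c}$ is irreducible is the complement of finitely many rational hyperplanes by a Jantzen-style criterion, so is $p$-adically dense in $(p^{-n}R)^r$. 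A standard density/separation argument then forces $w = 0$.

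Combining: given $x \in \bigcap_{\mu \in C}\Ann_{\Ug} L(\mu)$, Step 1 extends annihilation to $L_{\lambda,c}$ for all $c \in (p^{-n}R)^r$; whenever $M_{I,\lambda,c}$ is irreducible we have $L_{\lambda,c} = M_{I,\lambda,c}$, so by Lemma \ref{densitylem} $x$ annihilates $\widehat{M_{I,\lambda,c}}$; then for any $v \in \widehat{M_{I,\mathfrak{h}'}(\lambda)}$ we have $\phi_c(xv) = x\phi_c(v) = 0$, and Step 3 gives $xv = 0$. The main obstacle will be Step 1: transferring polynomial dependence of the $\ug$-action on the highest weight into $\pi$-adic continuous dependence that holds uniformly for elements of the completion $\Ul$, and pinning down a tractable subset of $c$ on which $L_{\lambda,c} = M_{I,\lambda,c}$ so that the density of integers in $p^{-n}R$ actually applies.
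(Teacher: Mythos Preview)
Your three-step outline matches the paper's, and Steps 2 and 3 are essentially the paper's construction. The gap is in the mechanism you propose for Step 1.

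You write that ``for generic $c$ the generalised Verma module $M_{I,\lambda,c}$ is irreducible, so $L_{\lambda,c} = M_{I,\lambda,c}$, and the matrix coefficients of any fixed element of $\ug$ on a Verma module, in the PBW basis, depend polynomially on the highest weight,'' and then appeal to density of $\mathbb{N}_0$ in $p^{-n}R$. But the hypothesis is that $x$ annihilates $L_{\lambda,c}$ for $c \in \mathbb{N}_0^r$, and for these $c$ the module $L_{\lambda,c}$ is finite-dimensional while $M_{I,\lambda,c} = M_{I^{o}}(\lambda - \sum c_i\alpha_i)$ is (for $I^{o} \subsetneq I$) infinite-dimensional; so $L_{\lambda,c}$ is a \emph{proper} quotient of $M_{I,\lambda,c}$ precisely at the integer points you want to use. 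Annihilating the quotient only says $x \cdot M_{I,\lambda,c}$ lands in the maximal submodule $N_c$, and $N_c$ moves with $c$ in a way that is not captured by vanishing of the PBW matrix coefficients. So there is no polynomial (or power-series) family of scalars that you know to vanish on $\mathbb{N}_0^r$, and the density argument does not get off the ground.

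The paper's fix is to avoid comparing the full modules across $c$ altogether. One first reduces (via Lemma \ref{weightcompslem}) to $x$ a weight vector, and then uses irreducibility of $L_{\lambda,c}$ itself---which holds for \emph{all} $c$, by definition---to reduce further to the case of a weight-$0$ element acting on the highest-weight vector $v_c$: if $x \cdot v \neq 0$ for some weight vector $v = y v_c$, choose $z \in \ug$ with $zxy \cdot v_c = v_c$, so $zxy$ has weight $0$ and still lies in the annihilator. For a weight-$0$ element, after rewriting the Cartan part in the shifted generators $\lambda(h) - h$, the action on $v_c$ is scalar multiplication by a single convergent power series $Q(c)$; this $Q$ vanishes on $\mathbb{N}_0^r$ and hence identically, by \cite[Lemma 4.7]{verma}. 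This scalar-on-highest-weight-vector reduction is the missing idea in your Step 1.

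A smaller point on Step 3: the paper does not invoke a genericity statement of the form ``complement of finitely many hyperplanes.'' It proves directly (Lemma \ref{gvmirredlem}) that $M_{I,\lambda,c}$ is irreducible for all $c \in \mathbb{Z}_{\leq -A}^r$ with $A$ large, using only that $\alpha_i(h_\alpha) \in \mathbb{Z}_{\leq 0}$ for $\alpha \in I$; this set is already enough for the power-series density argument.
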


\subsection{A density argument}

The result we aim to prove in this subsection is as follows. For any $c \in (p^{-n}R)^{r}$, use the notation $L_{\lambda,c} = L(\lambda - \underset{i}{\sum} c_{i} \alpha_{i})$.

\begin{myprop}
\label{annprop}
Suppose $x \in \underset{\mu \in C}\bigcap \Ann_{\Ug}L(\mu)$. Then for any $c \in (p^{-n}R)^{r}$, we have $x \in \Ann_{\Ug} \widehat{L_{\lambda,c}}$.
\end{myprop}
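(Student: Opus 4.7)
Plan: By Lemma \ref{weightcompslem} we may assume $x\in\Ug$ is a weight vector of some weight $\nu$. By Lemma \ref{densitylem} it is enough to show $x\cdot L_{\lambda,c}=0$ inside $\widehat{L_{\lambda,c}}$, and since $L_{\lambda,c}=\ug v_c$ for the highest-weight vector $v_c$, this reduces to proving $(xy)v_c=0$ in $\widehat{L_{\lambda,c}}$ for every $y\in\ug$. Write $\mu_c=\lambda-\sum_i c_i\alpha_i$ throughout.

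The key idea is that $(xy)v_c$ varies rigid-analytically with the parameter $c\in(p^{-n}R)^r$. Lift $v_c$ to the highest-weight vector $\tilde v_c$ of the completed Verma module $\widehat{M(\mu_c)}$ and expand $xy$ in ordered PBW form via Lemma \ref{PBW}; the $E$-part annihilates $\tilde v_c$ and the $H$-part acts by the scalar $\mu_c(H)$ (Lemma \ref{diagonalmult}), so that
\[
(xy)\tilde v_c=\sum_{\mathbf{s}}b_{\mathbf{s}}(c)\,(p^n F)^{\mathbf{s}}\,\tilde v_c\quad\text{in }\widehat{M(\mu_c)},
\]
where each $b_{\mathbf{s}}\colon (p^{-n}R)^r\to K$ is a uniformly convergent sum of PBW coefficients of $xy$ against the polynomial-in-$c$ scalars $\mu_c((p^n H)^{\mathbf{h}})$, hence a rigid analytic function on the closed polydisk.

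The density input is that $\mathbb{N}_0^r$ is Zariski-dense in $(p^{-n}R)^r$ in the rigid analytic sense: every non-zero rigid analytic function on this polydisk is non-zero at some point of $\mathbb{N}_0^r$. This reduces inductively to the one-variable fact that a non-zero rigid analytic function on a compact disk has only finitely many zeros, while $\mathbb{N}_0$ is infinite inside such a disk. Given this, the hypothesis should translate into the vanishing of certain rigid-analytic scalar observables of $(xy)v_c$ on $\mathbb{N}_0^r$, extending to all of $(p^{-n}R)^r$ and forcing $(xy)v_c=0$ in $\widehat{L_{\lambda,c}}$.

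The main obstacle is that the Verma-side coefficients $b_{\mathbf{s}}(c)$ vanishing is strictly stronger than the quotient vanishing of $(xy)v_c$ in $\widehat{L_{\lambda,c}}$, because the kernel of the projection $\widehat{M(\mu_c)}\to\widehat{L_{\lambda,c}}$ depends on $c$. I expect to close this gap either by pairing $(xy)v_c$ against a rigid-analytically varying family of dual weight vectors (supplied by the duality functor of Proposition \ref{dualityfacts}) to extract honest scalar rigid-analytic functions of $c$ whose hypothesised vanishing on $\mathbb{N}_0^r$ propagates everywhere, or by exploiting that on a Zariski-open dense subset of $(p^{-n}R)^r$ the projection is an isomorphism on the relevant finite-dimensional weight space, so that the two notions of vanishing coincide generically and hence everywhere by density.
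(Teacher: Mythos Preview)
Your outline correctly identifies the central obstacle, but neither of your proposed fixes closes it, and the paper's argument bypasses it by a trick you are missing.

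The difficulty with fix (b) is that the hypothesis lives precisely on the locus $\mathbb{N}_0^r$, and there the Verma module $M(\mu_c)$ is \emph{never} irreducible: for $c\in\mathbb{N}_0^r$ the weight $\mu_c$ is dominant integral for $\mathfrak{g}_s$, so $L_{\lambda,c}$ is finite-dimensional while $M(\mu_c)$ is not. Hence on the very set where you know the image vanishes, the projection on your fixed weight space has nontrivial kernel, and you cannot lift the vanishing to the analytic coefficients $b_{\mathbf s}(c)$. Knowing that the projection is an isomorphism on a Zariski-open set disjoint from $\mathbb{N}_0^r$ does not let density get started. Fix (a) has the same problem in dual form: a family of linear functionals that detects vanishing in $L_{\lambda,c}$ must itself encode the $c$-dependent maximal submodule, and there is no evident rigid-analytic construction of such a family across $\mathbb{N}_0^r$.

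The paper avoids tracking the kernel altogether by reducing to a \emph{scalar} observable. Fix $c\in(p^{-n}R)^r$ and suppose $x\cdot v\ne 0$ for some weight vector $v=yv_c\in L_{\lambda,c}$ with $y\in\ug$. Since $L_{\lambda,c}$ is irreducible, choose a weight vector $z\in\ug$ with $zxy\cdot v_c=v_c$; then $zxy$ has weight $0$ and still lies in the two-sided ideal $\bigcap_{\mu\in C}\Ann_{\Ug}L(\mu)$. For a weight-$0$ element $w\in\Ug$, the action $w\cdot v_{c'}$ on the highest-weight vector is just a scalar $Q(c')\,v_{c'}$, and after expanding $w$ in PBW form one sees that $Q$ is a single convergent power series in $c'$ (all monomials involving any $e_\alpha$ or any $h^\beta$ with $\beta\ne\alpha_i$ kill $v_{c'}$). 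The hypothesis gives $Q(c')=0$ for all $c'\in\mathbb{N}_0^r$, so $Q\equiv 0$ by the density lemma, contradicting $Q(c)=1$. Thus the reduction to weight $0$ via irreducibility at the \emph{target} $c$ is the missing idea: it collapses the problem to one scalar rigid-analytic function of $c$, where no quotient needs to be taken and density applies directly.
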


\noindent To prove this, we first want the following lemma. Write $v_{c}$ for the highest-weight vector of $L_{\lambda,c}$ for any $c \in (p^{-n}R)^{r}$.

\begin{mylem}
Suppose $x \in \Ug$ has weight $0$ and $x \cdot v_{c} = 0$ in $L_{\lambda,c}$ for all $c \in \mathbb{N}_{0}^{r}$. Then $x \cdot v_{c} = 0$ in $\widehat{L_{\lambda,c}}$ for all $c \in (p^{-n}R)^{r}$.
\end{mylem}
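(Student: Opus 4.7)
The plan is to reduce the statement to an identity theorem for restricted power series in the Tate algebra. The first observation is that, although $c$ is allowed to vary in $(p^{-n}R)^{r}$, each twisted weight $\lambda_{c} := \lambda - \sum_{i} c_{i} \alpha_{i}$ still satisfies $\lambda_{c}(p^{n} \mathfrak{h}_{R}) \subseteq R$: each $\alpha_{i}$ takes integer values on the coroot basis, so $p^{n} \alpha_{i}(\mathfrak{h}_{R}) \subseteq p^{n} R$ and this absorbs the denominator $c_{i} \in p^{-n}R$. Hence $L_{\lambda,c} \in \mathcal{O}_{n}(\mathfrak{g})$, and Lemma \ref{weightvectorlem} applies to the weight-$0$ element $x$, giving $x \cdot v_{c} \in L_{\lambda,c}$. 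Since $x \cdot v_{c}$ has weight $\lambda_{c}$ and the highest-weight space $K v_{c}$ is one-dimensional, we can write $x \cdot v_{c} = \phi(c) v_{c}$ for some scalar $\phi(c) \in K$. The claim reduces to showing that $\phi$ vanishes on $(p^{-n}R)^{r}$ given that it vanishes on $\mathbb{N}_{0}^{r}$.

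Next I would identify $\phi(c)$ explicitly. Writing $x$ in PBW form with ordering $\mathfrak{n}^{-}, \mathfrak{h}, \mathfrak{n}^{+}$, every summand containing a non-trivial $(p^{n}e)^{s}$-factor on the right kills $v_{c}$ by the highest-weight property, and among the remaining summands the weight-$0$ constraint on $x$ forces the $\mathfrak{n}^{-}$-part to be trivial, because no non-trivial non-negative combination of positive roots of $\Phi_{I}$ can vanish. Only the pure Cartan part $H \in \Uh$ of $x$ therefore acts non-trivially on $v_{c}$, and Lemma \ref{diagonalmult} gives $\phi(c) = \lambda_{c}(H)$. Using the dual basis $\mathcal{H} = \{h'_{\alpha}\}$ and setting $d_{j} := p^{n} c_{j} \in R$, one computes $\lambda_{c}(p^{n} h'_{\alpha}) = p^{n} \lambda(h'_{\alpha})$ for $\alpha \in I$ and $\lambda_{c}(p^{n} h'_{\alpha_{j}}) = p^{n}\lambda(h'_{\alpha_{j}}) - d_{j}$ for $\alpha = \alpha_{j}$. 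Substituting and binomially expanding, $\phi$ becomes a restricted power series $F(d_{1},\dots,d_{r})$ in the Tate algebra $K\langle d_{1},\dots,d_{r} \rangle$ on the closed unit polydisk $R^{r}$; convergence follows from the coefficients of $H$ tending to zero together with the $R$-integrality of the constants $p^{n}\lambda(h'_{\alpha})$.

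The main obstacle is the final step: showing $F \equiv 0$ on $R^{r}$. By hypothesis $F$ vanishes on $(p^{n} \mathbb{N}_{0})^{r}$, and continuity of $F$ together with density of $\mathbb{N}_{0}$ in $\mathbb{Z}_{p}$ upgrade this to vanishing on the closed polydisk $(p^{n} \mathbb{Z}_{p})^{r}$. To propagate this to the full polydisk $R^{r}$ I would induct on $r$: in one variable a non-zero element of $K\langle d \rangle$ has only finitely many zeros in $R$ by Weierstrass preparation, so vanishing on the infinite set $p^{n} \mathbb{Z}_{p} \subseteq R$ forces it to be zero; for general $r$, freezing $d_{2},\dots,d_{r}$ in $p^{n}\mathbb{Z}_{p}$ reduces to the one-variable case for $d_{1}$, which implies that each coefficient $F_{k}(d_{2},\dots,d_{r})$ in the expansion $F = \sum_{k} d_{1}^{k} F_{k}$ vanishes on $(p^{n} \mathbb{Z}_{p})^{r-1}$, at which point the inductive hypothesis concludes. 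Translating back, $\phi(c) = F(p^{n} c) = 0$ for every $c \in (p^{-n}R)^{r}$, which is the required vanishing of $x \cdot v_{c}$ inside $\widehat{L_{\lambda,c}}$.
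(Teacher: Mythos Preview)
Your proof is correct and follows essentially the same route as the paper: both isolate the pure Cartan part of $x$ via PBW and the weight-$0$ condition, evaluate it on $v_{c}$ to obtain a convergent power series in the $c_{i}$ (the paper via the shifted generators $\lambda(h'_{\alpha_{i}})-h'_{\alpha_{i}}$, you via the rescaling $d_{j}=p^{n}c_{j}$), and then invoke an identity principle for such series vanishing on $\mathbb{N}_{0}^{r}$. The only substantive difference is that the paper outsources the last step to \cite[Lemma~4.7]{verma}, whereas you spell it out with Weierstrass preparation and induction on $r$; both are valid and equivalent in strength.
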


\begin{proof}
By Lemma \ref{PBW} and the fact that $\lambda(p^{n}R) \subseteq R$, we can write $x$ in the form:

\begin{equation*}
x = \sum_{s,t,u} b_{s,t,u} \prod_{\alpha \in \Phi_{I}^{+}} f_{\alpha}^{s_{\alpha}} \prod_{\alpha \in \Phi_{I}^{+}} e_{\alpha}^{t_{\alpha}} \prod_{h \in \mathcal{H}}  h^{u_{h}} 
\end{equation*}

\noindent where $s,t$ run over $\mathbb{N}_{0}^{\Phi_{I}^{+}}$ and $u$ runs over $\mathbb{N}_{0}^{\mathcal{H}}$, where the product is subject to a fixed ordering of $\Phi_{I}^{+}$, and where the convergence condition $p^{-n(|s|+|t|+|u|)}b_{s,t,u} \rightarrow 0$ as $|s|+|t|+|u| \rightarrow \infty$ is met. But then using the fact that $p^{n}\lambda(h) \in R$ for any $h \in \mathcal{H}$, we can instead write:

\begin{equation*}
x = \sum_{s,t,u} a_{s,t,u} \prod_{\alpha \in \Phi_{I}^{+}} f_{\alpha}^{s_{\alpha}} \prod_{\alpha \in \Phi_{I}^{+}} e_{\alpha}^{t_{\alpha}} \prod_{h \in \mathcal{H}}  (\lambda(h) - h)^{u_{h}} 
\end{equation*}

\noindent where $p^{-n(|s|+|t|+|u|)}a_{s,t,u} \rightarrow 0$ as $|s|+|t|+|u| \rightarrow \infty$.

Now note that since $x$ has weight $0$, any such monomial for which some $s_{\alpha}\neq 0$ also has some $t_{\alpha} \neq 0$ and hence annihilates all $v_{c}$. Also note that for $h \in \mathcal{H} \setminus \{h'_{\alpha_{1}},\dots,h'_{\alpha_{r}} \}$, and any $c \in (p^{-n}R)^{r}$, we have $(\lambda - \underset{i}{\sum} c_{i} \alpha_{i})(h) = \lambda(h)$ by choice of the basis $\mathcal{H}$, and so any monomial with $u_{h} \neq 0$ also annihilates $v_{c}$.

So we now write $W$ for the $K$-vector space of all convergent sums of the  $\underset{\alpha \in \Phi_{I}^{+}}{\prod} f_{\alpha}^{s_{\alpha}} \underset{\alpha \in \Phi_{I}^{+}}{\prod} e_{\alpha}^{t_{\alpha}} \underset{h \in \mathcal{H}}{\prod}  (\lambda(h) - h)^{u_{h}}$ such that either some $s_{\alpha} \neq 0$ or some $u_{h} \neq 0$ with $h \notin \{h'_{\alpha_{1}},\dots,h'_{\alpha_{r}} \}$. Then $W$ annihilates $v_{c}$ in $\widehat{L_{\lambda,c}}$ for any $c$.

We now write $x$ in the form $x = Q(\lambda(h'_{\alpha_{1}}) - h'_{\alpha_{1}},\dots, \lambda(h'_{\alpha_{r}}) - h'_{\alpha_{r}}) + (\text{element of } W)$, where $Q \in K \llbracket X_{1},\dots,X_{r} \rrbracket$ such that $Q$ converges on $(p^{-n}R)^{r}$.

Then for any $c \in p^{-n}R$ we have $x \cdot v_{c} = Q(c) v_{c}$, since $h'_{\alpha_{i}} \cdot v_{c} = \lambda(h'_{\alpha_{i}}) v_{c} - \underset{j}{\sum}c_{j} \alpha_{j}(h'_{\alpha_{i}}) = \lambda(h'_{\alpha_{i}}) - c_{i}$ and so $(\lambda(h'_{\alpha_{i}}) - h'_{\alpha_{i}})\cdot v_{c} = c_{i} v_{c}$.

Now, $Q(c) = 0$ for all $c \in \mathbb{N}_{0}^{r}$, and also $Q$ converges on $R$. So \cite[Lemma 4.7]{verma} implies that $Q = 0$. Thus $x \cdot v_{c} = 0$ for all $c \in (p^{-n}R)^{r}$.
\end{proof}

\begin{proof}[Proof of Proposition \ref{annprop}]
First note that by Lemma \ref{weightcompslem}, all weight vectors of $x$ lie in $\underset{\mu \in C}{\bigcap}\Ann_{\Ug}L(\mu)$. So it is sufficient to prove the result in the case where $x$ is a weight vector. So from now on, assume $x$ is a weight vector.

Suppose $c \in (p^{-n}R)^{r}$ such that $x \cdot \widehat{L_{\lambda,c}} \neq 0$. Then by Lemma \ref{densitylem}, $x$ does not annihilate $L_{\lambda,c}$. Therefore $x$ does not annihilate all weight vectors of $L_{\lambda,c}$.

So choose a weight vector $v \in L_{\lambda,c}$ such that $xv \neq 0$. Then say $v = y v_{c}$, where $y \in \ug$ is a weight vector.

Now since $L_{\lambda,c}$ is irreducible, we can choose a weight vector $z \in \ug$ such that $z x y v_{c} = v_{c}$. Thus $z x y$ has weight $0$. Moreover, $z x y \in \underset{\mu \in A}{\bigcap} \Ann_{\Ug}L(\mu)$ since each $\Ann_{\Ug}L(\mu)$ is a two-sided ideal. But then the above lemma implies that $z x y v_{c} = 0$, a contradiction.
\end{proof}

\subsection{Projections from $M_{I,\mathfrak{h}'}(\lambda)$}

First, we describe the structure of $M_{I,\mathfrak{h}'}(\lambda)$ as follows. Recall $V$ is the irreducible highest-weight $\up$-module of weight $\lambda$, which arises from the irreducible highest-weight $U(\mathfrak{g}_{I^{o},K})$-module of weight $\lambda |_{\mathfrak{h}_{I^{o}}}$. Let $w$ be the highest-weight vector of $V$, and let $\mathcal{B}$ be a free $R$-basis for $U(p^{n}\mathfrak{g}_{I^{o},R})w$ (which exists by Proposition \ref{catOstructure}).

 We write $f^{s} = \prod f_{\alpha}^{s_{\alpha}}$ for $s \in \mathbb{N}_{0}^{\Phi_{I}^{+}\setminus \Phi_{I^{o}}^{+}}$, and write $h^{t} = \prod (h'_{\alpha_{i}})^{t_{i}}$ for $t \in \mathbb{N}_{0}^{r}$, subject to a choice of ordering of $\Phi_{I}^{+}\setminus \Phi_{I^{o}}^{+}$. Then $M_{I,\mathfrak{h}'}(\lambda)$ has a $K$-basis $f^s h^t v $ for $s \in \mathbb{N}_{0}^{\Phi_{I}^{+}\setminus \Phi_{I^{o}}^{+}}, t \in \mathbb{N}_{0}^{r}$ and $v \in \mathcal{B}$.

Now note that $U(p^{n}\mathfrak{g}_{R})w$ has an $R$-basis consisting of the $p^{n(|s|+|t|)}f^{s}h^{t}v$ with $s \in \mathbb{N}_{0}^{\Phi_{I}^{+}\setminus \Phi_{I^{o}}^{+}}, t \in \mathbb{N}_{0}^{r}$ and $v \in \mathcal{B}$. So we use the good filtration $\Gamma_{i} M_{I,\mathfrak{h}'}(\lambda) = \pi^{i} R \{ p^{n(|s|+|t|)}f^{s}h^{t}v \mid s \in \mathbb{N}_{0}^{\Phi_{I}^{+}\setminus \Phi_{I^{o}}^{+}}, t \in \mathbb{N}_{0}^{r},v \in \mathcal{B} \}$ which satisfies $(M_{I,\mathfrak{h}'}(\lambda),\Gamma) \in \mathcal{M}_{n}(\mathfrak{g})$.

Now fix some $c \in (p^{-n}R)^{r}$ and consider the structure of $M_{I,\lambda,c} = 
M_{I}(\lambda - \underset{i}{\sum} c_{i} \alpha_{i})$. Note first that $(\lambda - \underset{i}{\sum} c_{i} \alpha_{i}) |_{\mathfrak{h}'} = \lambda |_{\mathfrak{h}'}$. Therefore we can (as in our construction of generalised Verma modules in \S \ref{gvm}) extend $V$ to an irreducible highest-weight $U(\mathfrak{p}'_{K}+\mathfrak{h}_{K})$ with highest-weight $\lambda - \sum c_{i} \alpha_{i}$. Given this, we can restrict $V$ to a $U(\mathfrak{p}_{I^{o},K})$-module (where $\mathfrak{p}_{I^{o}}$ is the parabolic subalgebra of $\mathfrak{g}_{s}$ corresponding to $I^{o}$). Then $V |_{U(\mathfrak{p}_{I^{o},K})}$ is the irreducible highest-weight module of weight $(\lambda - \underset{i}{\sum} c_{i} \alpha_{i}) |_{\mathfrak{h}_{s}}$. We can thus write $M_{I,\lambda,c} = U(\mathfrak{g}_{s,K}) \otimes_{U(\mathfrak{p}_{I^{o},K})} V $, extended to a $\ug$-module.

Therefore $M_{I,\lambda,c}$ has a $K$-basis $f^{s} v$ for $s \in \mathbb{N}_{0}^{\Phi_{I}^{+}\setminus \Phi_{I^{o}}^{+}}$ and $v \in \mathcal{B}$. We take the good filtration $\Gamma_{i}M_{I,\lambda,c} = \pi^{i} R \{p^{n|s|}f^{s} v \mid s \in \mathbb{N}_{0}^{\Phi_{I}^{+}\setminus \Phi_{I^{o}}^{+}}, v \in \mathcal{B} \}$, which gives $(M_{I,\lambda,c},\Gamma) \in \mathcal{M}_{n}(\mathfrak{g})$. 

\begin{mylem}
The embedding of $V$ in $M_{I,\lambda,c}$ as $U(\mathfrak{p}_{I^{o},K})$-modules is also an embedding of $U(\mathfrak{p}_{I^{o},K} + \mathfrak{h}_{K})$-modules.
\end{mylem}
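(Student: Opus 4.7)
The plan is to reduce the compatibility check to the central direction $\mathfrak{a}_{K}$, and there use the centrality of $\mathfrak{a}$ to see that both module structures give the same scalar action.

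First I would observe that since $\mathfrak{p}_{I^{o}}$ is the parabolic subalgebra of the semisimple part $\mathfrak{g}_{s}$ corresponding to $I^{o}$, it contains the full Cartan $\mathfrak{h}_{s}$ of $\mathfrak{g}_{s}$. Combined with the decomposition $\mathfrak{h} = \mathfrak{h}_{s} \oplus \mathfrak{a}$, this yields $\mathfrak{p}_{I^{o},K} + \mathfrak{h}_{K} = \mathfrak{p}_{I^{o},K} + \mathfrak{a}_{K}$. Since the embedding is already $U(\mathfrak{p}_{I^{o},K})$-equivariant by hypothesis, it therefore suffices to check that the embedding intertwines the action of $\mathfrak{a}_{K}$ on the two sides.

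Next I would compare the two $\mathfrak{a}_{K}$-actions. On the extended $V$, the construction recalled just before the lemma (mirroring the construction of generalised Verma modules in \S \ref{gvm}) has $\mathfrak{a}_{K}$ acting on all of $V$ as the scalar character $(\lambda - \sum_{i} c_{i}\alpha_{i})|_{\mathfrak{a}_{K}}$; this is consistent with the $U(\mathfrak{p}_{K})$-action precisely because $\mathfrak{a}$ is central in $\mathfrak{g}$. On $M_{I,\lambda,c}$, the highest-weight vector $w_{M}$ has $\mathfrak{h}_{K}$ acting via $\lambda - \sum_{i} c_{i}\alpha_{i}$ by definition of the generalised Verma module as a $\ug$-module. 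Since each $a \in \mathfrak{a}_{K}$ commutes with all of $\ug$, it acts on every vector $u \cdot w_{M} \in M_{I,\lambda,c}$ (with $u \in \ug$) by the same scalar $(\lambda - \sum_{i} c_{i}\alpha_{i})(a)$, and in particular by this scalar on the embedded copy of $V$.

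The two $\mathfrak{a}_{K}$-actions therefore agree, and combined with the given $U(\mathfrak{p}_{I^{o},K})$-equivariance this yields the desired $U(\mathfrak{p}_{I^{o},K} + \mathfrak{h}_{K})$-equivariance. There is essentially no obstacle in this argument: once one notices that the question reduces to the central direction $\mathfrak{a}_{K}$, the centrality of $\mathfrak{a}$ forces the action there to be a single scalar character on both modules, and the construction of each module pins this character down to the same value.
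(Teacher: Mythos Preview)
Your argument is correct and takes a slightly different route from the paper's own proof. The paper does not reduce to the central direction $\mathfrak{a}_{K}$; instead it checks the action of every $h \in \mathfrak{h}_{K}$ directly on a spanning set of $V$, observing that $V$ is spanned by vectors of the form $f_{\beta_{1}}\cdots f_{\beta_{l}} v$ with $\beta_{i}\in\Phi_{I^{o}}^{+}$ and that on both $V$ and its image in $M_{I,\lambda,c}$ each such vector is an $\mathfrak{h}_{K}$-weight vector of weight $(\lambda - \sum c_{i}\alpha_{i} - \sum \beta_{i})$. Your reduction is a bit more conceptual: noticing that $\mathfrak{h}_{s}\subseteq\mathfrak{p}_{I^{o}}$ forces $\mathfrak{p}_{I^{o},K}+\mathfrak{h}_{K}=\mathfrak{p}_{I^{o},K}+\mathfrak{a}_{K}$ lets you avoid the weight computation on each spanning vector and just invoke centrality once. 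The paper's approach, on the other hand, is slightly more self-contained in that it does not appeal to the decomposition $\mathfrak{h}=\mathfrak{h}_{s}\oplus\mathfrak{a}$ or to the identification of the extended action on $V$ via the centre. Both arguments are short and either would serve.
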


\begin{proof}
We need to check that any $h \in \mathfrak{h}_{K}$ has the same action on $V$ and the image of $V$ in $M_{I,\lambda,c} = U(\mathfrak{g}_{s,K}) \otimes_{U(\mathfrak{p}_{I^{o},K})} V$. 

Suppose we write $v$ for the highest-weight vector generating $V$, and write $w$ for its image in $M_{I,\lambda,c}$. Then by definition $V$ is spanned by elements $f_{\beta_{1}} \dots f_{\beta_{l}} v$ for $\beta_{i} \in  \Phi_{I^{o}}^{+}$. Then any $h \in \mathfrak{h}_{K}$ acts on both $f_{\beta_{1}} \dots f_{\beta_{l}} v$ and $f_{\beta_{1}} \dots f_{\beta_{l}} w$ by the scalar $(\lambda - \sum c_{i} \alpha_{i} - \sum \beta_{i})(h)$.
\end{proof}

\noindent In particular, we can see that the embedding of $V$ into $M_{I,\lambda,c}$ is an embedding of $U(\mathfrak{p}_{K})$-modules.

\begin{myprop}
Fix $c \in (p^{-n}R)^{r}$. There is a filtered $\ug$-homomorphism $\phi_{c}: M_{I^{o},\mathfrak{h}'}(\lambda) \rightarrow M_{I,\lambda,c}$ given by mapping $f^{s} h^{t} v$ to $f^{s} (\lambda(h)-c)^{t} v$ for $s \in \mathbb{N}_{0}^{\Phi_{I}^{+}\setminus \Phi_{I^{o}}^{+}}, t \in \mathbb{N}_{0}^{r}$ and $v \in \mathcal{B}$, where $(\lambda(h)-c)^{t}$ denotes $\prod (\lambda(h'_{\alpha_{i}}) - c_{i})^{t_{i}}$.

\end{myprop}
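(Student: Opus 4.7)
My plan is to use the universal property of the tensor product: to define a $\ug$-homomorphism from $M_{I^o, \mathfrak{h}'}(\lambda) = \ug \otimes_\up V$ into $M_{I, \lambda, c}$, it suffices to produce a $\up$-module homomorphism $V \to M_{I, \lambda, c}$. I would take this to be the natural inclusion of $V$ into $M_{I, \lambda, c}$ provided by the preceding lemma, which furnishes a $U(\mathfrak{p}_{I^o, K} + \mathfrak{h}_K)$-embedding.

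The key step is to verify that this inclusion is $\up$-linear when we regard $V$ as a $\mathfrak{p}_K$-module with $\mathfrak{p} = \mathfrak{p}_{I^o, \mathfrak{h}'}$. Since $\mathfrak{p} \subseteq \mathfrak{p}_{I^o} + \mathfrak{h}$ (because $\mathfrak{h}' \subseteq \mathfrak{h}$), the preceding lemma takes care of everything except the action of $\mathfrak{h}'$. On the highest-weight vector of $V$, the element $h'_\alpha$ (for $\alpha \in I$) acts as $\lambda(h'_\alpha)$ in the original $\up$-module structure, and as $(\lambda - \sum_i c_i \alpha_i)(h'_\alpha)$ inside $M_{I, \lambda, c}$. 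Since $\alpha_i \in \Delta \setminus I$ and $\alpha \in I$ force $\alpha_i(h'_\alpha) = \delta_{\alpha_i, \alpha} = 0$, these scalars agree; this is the only nontrivial compatibility check.

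Having extended to $\phi_c$, I would verify the explicit formula directly. For $v \in \mathcal{B}$, $\phi_c(f^s h^t v) = f^s h^t \cdot v$ in $M_{I, \lambda, c}$; and since each $h'_{\alpha_i}$ (with $\alpha_i \in \Delta \setminus I$) lies in the centre $\mathfrak{a}$ of $\mathfrak{g}$, it acts on $M_{I, \lambda, c}$ as the scalar $(\lambda - \sum_j c_j \alpha_j)(h'_{\alpha_i}) = \lambda(h'_{\alpha_i}) - c_i$. Hence $h^t \cdot v = (\lambda(h) - c)^t v$, which yields the formula. For the filtered property, I would check that $\phi_c$ sends the basis element $p^{n(|s|+|t|)} f^s h^t v$ of $\Gamma_0 M_{I^o, \mathfrak{h}'}(\lambda)$ to $p^{n|s|} f^s \cdot \prod_i \bigl(p^n(\lambda(h'_{\alpha_i}) - c_i)\bigr)^{t_i} \cdot v$; each factor $p^n(\lambda(h'_{\alpha_i}) - c_i)$ lies in $R$ by the assumptions $\lambda(p^n \mathfrak{h}_R) \subseteq R$ and $c \in (p^{-n} R)^r$, so the image lies in $\Gamma_0 M_{I, \lambda, c}$. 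I do not anticipate any real obstacle; the only mild subtlety is the $\mathfrak{h}'$-compatibility check in the second paragraph, which is ultimately a direct consequence of the choice of $\mathcal{H}$ as the basis dual to $\Delta$.
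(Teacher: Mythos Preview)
Your proposal is correct and matches the paper's approach essentially step for step: apply the universal property of the tensor product to the $\up$-embedding of $V$ into $M_{I,\lambda,c}$ (guaranteed by the preceding lemma together with the identity $(\lambda - \sum_i c_i\alpha_i)|_{\mathfrak{h}'} = \lambda|_{\mathfrak{h}'}$, which is precisely your $\mathfrak{h}'$-compatibility check), then read off the explicit formula and verify the filtered condition exactly as you do. The only cosmetic difference is that the paper justifies the scalar action of $h'_{\alpha_j}$ on $V \subseteq M_{I,\lambda,c}$ via its commutation with $U(\mathfrak{g}_{I^o,K})$ rather than via membership in the centre $\mathfrak{a}$, but your observation is equally valid.
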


\begin{proof}
We use the embedding of $V$ in $M_{I,\lambda,c}$ as $\up$-modules. By the universal property of tensor products, we can define a homomorphism of abelian groups  $\phi_{c} : M_{I^{o},\mathfrak{h}'}(\lambda) = \ug \otimes_{\up} V \rightarrow M_{I,\lambda,c}$ by $x \otimes v \mapsto x \cdot v$ for $x \in \ug, v \in V$.  Then $\phi_{c}$ is also a $\ug$-module homomorphism by definition.

Now for any $j$, since $h'_{\alpha_{j}}$ commutes with $U(\mathfrak{g}_{I^{o},K})$, we can see that $h'_{\alpha_{j}}$ acts on $V$ by the scalar $(\lambda - \sum c_{i} \alpha_{i})(h'_{\alpha_{j}}) = \lambda(h'_{\alpha_{j}}) - c_{j}$. Therefore we can see that $\phi_{c}$ maps $f^{s} h^{t} v$ to $f^{s} (\lambda(h)-c)^{t} v$ as required.

We can now see the homomorphism $\phi_{c}$ is a filtered homomorphism because $p^{n(|s|+|t|)}f^{s}h^{t}v \in \Gamma_{0}M_{I,\mathfrak{h}'}(\lambda)$ maps to $p^{n|t|}(\lambda(h)-c)^{t} p^{n|s|}f^{s} v$, which lies in $\Gamma_{0}M_{I,\lambda,c}$ because each $p^{n}c_{i}$ and $p^{n} \lambda(h'_{\alpha_{i}})$ lies in $R$.

\end{proof}

\begin{mycor}
For any $c \in (p^{-n}R)^{r}$ there is a filtered $\Ug$-homomorphism $\phi_{c}: \widehat{M_{I^{o},\mathfrak{h}'}(\lambda)} \rightarrow \widehat{M_{I,\lambda,c}}$ given by mapping $f^{s} h^{t} v$ to $f^{s} c^{t} v$ for $s \in \mathbb{N}_{0}^{\Phi_{I}^{+}\setminus \Phi_{I^{o}}^{+}}, t \in \mathbb{N}_{0}^{r}$ and $v \in \mathcal{B}$.
\end{mycor}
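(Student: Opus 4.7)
The plan is to obtain the corollary from the preceding proposition purely by functoriality of the completion functor. Both $M_{I^{o},\mathfrak{h}'}(\lambda)$ and $M_{I,\lambda,c}$ lie in $\mathcal{M}_{n}(\mathfrak{g})$, so their completions are well-defined filtered $\Ug$-modules.

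First, I would apply the exact completion functor $\widehat{(-)} = \Ug \otimes_{\ug} -$ supplied by Proposition \ref{flatnessprop} to the filtered $\ug$-homomorphism $\phi_{c}$ produced by the previous proposition. Since this functor is $K$-linear and sends $\ug$-linear maps to $\Ug$-linear maps, it yields the desired $\Ug$-homomorphism $\hat{\phi}_{c} := 1 \otimes \phi_{c} : \widehat{M_{I^{o},\mathfrak{h}'}(\lambda)} \to \widehat{M_{I,\lambda,c}}$. Next I would check that $\hat{\phi}_{c}$ is filtered. This is automatic: $\phi_{c}$ sends $\Gamma_{i}$ of the source into $\Gamma_{i}$ of the target for every $i$, and the construction of the completions via inverse limits of the filtration pieces preserves that property.

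Finally, for the explicit formula on a basis vector $f^{s} h^{t} v$, I would use that this element already lies in the embedded copy $M_{I^{o},\mathfrak{h}'}(\lambda) \hookrightarrow \widehat{M_{I^{o},\mathfrak{h}'}(\lambda)}$ provided by Lemma \ref{separatedlem}, so $\hat{\phi}_{c}$ acts on it as $\phi_{c}$ does. The image $f^{s} c^{t} v$ then follows from the previous proposition, once one reads the factor $h^{t}$ as $\prod_{i}(\lambda(h'_{\alpha_{i}})-h'_{\alpha_{i}})^{t_{i}}$ (a harmless change of basis in $U(\mathfrak{a}_{K})$): the element $h'_{\alpha_{i}} \in \mathfrak{a}$ acts on $v \in V \subseteq M_{I,\lambda,c}$ by the scalar $\lambda(h'_{\alpha_{i}}) - c_{i}$, reflecting that the $\mathfrak{a}$-weight of $V$ inside $M_{I,\lambda,c}$ is $(\lambda - \sum_{j} c_{j} \alpha_{j})|_{\mathfrak{a}}$, so $(\lambda(h'_{\alpha_{i}}) - h'_{\alpha_{i}}) \cdot v = c_{i} v$. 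No substantive obstacle is expected here; the content is pure functoriality of the completion, with only notational bookkeeping on the $h^{t}$ factor to be done.
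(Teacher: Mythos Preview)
Your approach is correct and matches the paper's: the corollary is stated without proof and is meant to follow immediately from the preceding proposition by applying the completion functor, exactly as you describe.

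One remark on the bookkeeping you flag: the discrepancy between $f^{s}c^{t}v$ in the corollary and $f^{s}(\lambda(h)-c)^{t}v$ in the proposition is simply a typo in the paper's statement of the corollary, not a change of basis. The lemma immediately following the corollary computes $\phi_{c}(w) = \sum a_{s,t,v}(\lambda(h)-c)^{t}f^{s}v$, which confirms that the intended formula is the one from the proposition. So your reinterpretation of $h^{t}$ as $\prod_{i}(\lambda(h'_{\alpha_{i}})-h'_{\alpha_{i}})^{t_{i}}$ is unnecessary; simply carry over the proposition's formula verbatim.
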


\noindent It is now helpful to consider a set of $c$ such that $M_{I,\lambda,c}$ is irreducible, so that we can use Proposition \ref{annprop}. In fact this module is generically irreducible, but we will just find a set of such $c$ that is sufficiently large for our purposes. To this end, choose $A \in \mathbb{N}$ to be sufficiently large that $\langle \lambda + \rho, \alpha^{\lor} \rangle - A \notin \mathbb{N}$ for all $\alpha \in \Phi_{I}$ (where $\rho = \frac{1}{2} \underset{\alpha \in \Phi^{+}}{\sum}$).

\begin{mylem}
\label{gvmirredlem}
Suppose $c \in \mathbb{Z}_{\leq -A}^{r}$. Then $M_{I,\lambda,c}$ is irreducible, so is isomorphic to $ L_{\lambda,c}$.
\end{mylem}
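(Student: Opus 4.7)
My plan is to reduce to showing $M_{I,\lambda,c}$ is irreducible as a $U(\mathfrak{g}_{s,K})$-module, which suffices because the centre $\mathfrak{a}$ of $\mathfrak{g}$ acts by scalars on $M_{I,\lambda,c}$, and then to apply a strong-linkage argument inside the semisimple Lie algebra $\mathfrak{g}_{s}$. Writing $\mu = (\lambda - \sum_{i} c_{i}\alpha_{i})|_{\mathfrak{h}_{s}}$, the module becomes the generalised Verma module $M_{I^o}(\mu)$ for $\mathfrak{g}_{s}$, induced from the finite-dimensional irreducible $U(\mathfrak{p}_{I^o,K})$-module $V$ of highest weight $\mu$.

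The heart of the argument is the inequality $\langle \mu + \rho, \alpha^{\lor} \rangle \leq 0$ for every $\alpha \in \Phi_I^+ \setminus \Phi_{I^o}^+$. Writing $\alpha = \sum_{\beta \in I} n_{\beta} \beta$, since $\alpha \notin \Phi_{I^o}$ some $\beta_0 \in I \setminus I^o$ has $n_{\beta_0} > 0$; by the definition of $I^o$, $\beta_0$ is adjacent in the Dynkin diagram to some $\alpha_{i_0} \in \Delta \setminus I$, which forces $\langle \alpha_{i_0}, \alpha^{\lor} \rangle \leq -1$. For all $\alpha_i \in \Delta \setminus I$ we have $\langle \alpha_i, \alpha^{\lor} \rangle \leq 0$, so combined with $c_i \leq -A$ this gives $\sum_i c_i \langle \alpha_i, \alpha^{\lor} \rangle \geq A$. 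Since $\langle \lambda + \rho, \alpha^{\lor} \rangle$ is an integer (using that $\lambda$ is integral on all coroots of $\Phi_I$ and that $\rho$ is integral), the hypothesis $\langle \lambda + \rho, \alpha^{\lor} \rangle - A \notin \mathbb{N}$ forces $\langle \lambda + \rho, \alpha^{\lor} \rangle \leq A$, so $\langle \mu + \rho, \alpha^{\lor} \rangle = \langle \lambda + \rho, \alpha^{\lor} \rangle - \sum_i c_i \langle \alpha_i, \alpha^{\lor} \rangle \leq 0$.

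For the irreducibility I would argue by contradiction: a proper non-zero submodule $N \subsetneq M_{I^o}(\mu)$ would contain a maximal vector $v$ of some weight $\nu \neq \mu$, and $\nu$ must be $I^o$-dominant integral because $M_{I^o}(\mu) = U(\mathfrak{n}_{I^o,K}^{-}) \otimes_K V$ is $U(\mathfrak{l}_{I^o,K})$-locally finite. Then $L(\nu)$ is a composition factor of the ordinary Verma module $M(\mu)$ for $\mathfrak{g}_{s,K}$, so the BGG strong linkage theorem supplies a chain $\mu = \mu_0 > \mu_1 > \cdots > \mu_k = \nu$ with $\mu_i = s_{\beta_i} \cdot \mu_{i-1}$, $\beta_i \in \Phi_I^+$, and $\langle \mu_{i-1} + \rho, \beta_i^{\lor} \rangle \in \mathbb{Z}_{>0}$. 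If every $\beta_i \in \Phi_{I^o}^+$ then $\nu \in W_{I^o} \cdot \mu$, but $\mu$ is itself $I^o$-dominant integral and hence the unique such weight in its $W_{I^o}$-orbit, forcing $\nu = \mu$, a contradiction. Otherwise let $i_0$ be minimal with $\beta_{i_0} \in \Phi_I^+ \setminus \Phi_{I^o}^+$; then $\mu_{i_0 - 1} = w \cdot \mu$ for some $w \in W_{I^o}$, so $\langle \mu_{i_0-1} + \rho, \beta_{i_0}^{\lor} \rangle = \langle \mu + \rho, (w^{-1}\beta_{i_0})^{\lor} \rangle$. Since $W_{I^o}$ permutes $\Phi_I^+ \setminus \Phi_{I^o}^+$ (each $s_{\beta}$ for $\beta \in I^o$ preserves positivity of any root distinct from $\beta$ and preserves having non-zero coefficient in $I \setminus I^o$), this pairing equals $\langle \mu + \rho, \gamma^{\lor}\rangle$ for some $\gamma \in \Phi_I^+ \setminus \Phi_{I^o}^+$, which is $\leq 0$ by the core computation, contradicting its positivity.

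The main obstacle is marshalling the BGG strong linkage theorem in precisely the right form for this reductive Levi setting, together with the observation that the dot action of $W_I$ defined using the full $\rho$ agrees with the one defined using $\rho_I$ (because $\rho - \rho_I$ is $W_I$-invariant, being a half-sum of roots permuted by $W_I$). Once these are in place, everything reduces to the combinatorial verification sketched above.
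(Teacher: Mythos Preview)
Your proof is correct, and the core combinatorial computation---showing that for each $\alpha \in \Phi_I^+ \setminus \Phi_{I^o}^+$ there is some $\alpha_{i_0}$ with $\langle \alpha_{i_0}, \alpha^{\lor}\rangle < 0$, whence $\sum_i c_i \langle \alpha_i, \alpha^{\lor}\rangle \geq A$---is exactly the same as in the paper. The difference lies in how irreducibility is deduced from this numerical fact. The paper simply cites Jantzen's simplicity criterion (as in \cite[Theorem 9.12]{Humphreys}): once one knows $\langle \mu + \rho, \alpha^{\lor}\rangle \notin \mathbb{N}$ for all $\alpha \in \Phi_I^+ \setminus \Phi_{I^o}^+$, irreducibility of $M_{I^o}(\mu)$ is immediate. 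You instead re-prove this implication by hand via the BGG strong linkage theorem, tracking a hypothetical maximal vector of weight $\nu$, splitting on whether the linkage chain stays inside $\Phi_{I^o}^+$, and using that $W_{I^o}$ permutes $\Phi_I^+ \setminus \Phi_{I^o}^+$. Your route is more self-contained (avoiding the black-box citation of Jantzen's criterion) at the cost of length; the paper's route is a one-line appeal to a standard result. Both are valid, and the observation you record about $\rho - \rho_I$ being $W_I$-invariant is exactly what makes the paper's use of the ambient $\rho$ (rather than $\rho_I$) legitimate, though the paper does not spell this out.
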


\begin{proof}
By \cite[Theorem 9.12]{Humphreys} it is sufficient to show that $\langle \lambda - \sum c_{i} \alpha_{i} + \rho, \alpha^{\lor} \rangle \notin \mathbb{N}$ for all $\alpha \in \Phi_{I}^{+} \setminus \Phi_{I^{o}}^{+}$. We have $\langle \lambda - \sum c_{i} \alpha_{i} + \rho, \alpha^{\lor} \rangle = \langle \lambda + \rho, \alpha^{\lor} \rangle - \underset{i}{\sum} c_{i} \langle \alpha_{i},\alpha^{\lor} \rangle$. So fix $\alpha \in \Phi_{I}^{+} \setminus \Phi_{I^{o}}^{+}$.

Note that $\langle \alpha_{i},\alpha^{\lor} \rangle \in \mathbb{Z}_{\leq 0}$ for all $i$. Therefore if any $\langle \alpha_{i}, \alpha^{\lor} \rangle \neq 0$ we will have $\sum c_{i} \langle \alpha_{i},\alpha^{\lor} \rangle \in \mathbb{Z}_{\geq A}$, and hence  $\langle \lambda - \sum c_{i} \alpha_{i} + \rho, \alpha^{\lor} \rangle \notin \mathbb{N}$ by choice of $A$. So we now aim to show that for some $i$, $\langle \alpha_{i}, \alpha^{\lor} \rangle \neq 0$.

Say $\alpha = \underset{\gamma \in I}{\sum} a_{\gamma} \gamma$ where $a_{\gamma} \in \mathbb{N}_{0}$. Then since $\alpha \notin \Phi_{I^{o}}$, fix $\beta \in I \setminus I^{o}$ such that $a_{\beta} \neq 0$. Then fix $i$ such that $\langle \beta,\alpha_{i}^{\lor} \rangle \neq 0$. 

Now $\langle \alpha, \alpha_{i}^{\lor} \rangle = \underset{\gamma \in I}{\sum} a_{\gamma} \langle \gamma, \alpha_{i}^{\lor} \rangle$. Note that $a_{\gamma} \langle \gamma, \alpha_{i}^{\lor} \rangle \in \mathbb{Z}_{\leq 0}$ for all $\gamma$, and $a_{\beta} \langle \beta, \alpha_{i}^{\lor} \rangle < 0$. Thus $\langle \alpha, \alpha_{i}^{\lor} \rangle \in \mathbb{Z}_{<0}$, so $\langle \alpha_{i}, \alpha^{\lor}\rangle \neq 0$ as required
\end{proof}

\subsection{Proof of Theorem \ref{annthm}}
Fix $x \in \underset{\mu \in C}{\bigcap} \Ann_{\Ug}L(\mu)$, then we  will show that $x \in \Ann_{\Ug}\widehat{M_{I,\mathfrak{h}'}(\lambda)}$. So fix $v \in \widehat{M_{I,\mathfrak{h}'}(\lambda)}$, then we wish to show that $x \cdot v = 0$.

To show this, we first aim to show that $\phi_{c}(x \cdot v) = 0$ whenever $c \in \mathbb{Z}_{\leq - A}^{r}$. Note that $M_{I,\lambda,c} = L_{\lambda,c}$ by Lemma \ref{gvmirredlem}. Therefore we can consider $\phi_{c}(v)$ as an element of $L_{\lambda,c}$. Thus $x \cdot \phi_{c}(v) = 0$ by Proposition \ref{annprop}, and since $\phi_{c}$ is a $\Ug$-homomorphism, this means $\phi_{c}(x \cdot v) = x \cdot \phi_{c}(v) = 0$. The following lemma will now imply that $x \cdot v = 0$.

\begin{mylem}
Suppose $w \in \widehat{M_{I^{o},\mathfrak{h}'}(\lambda)}$ such that $\phi_{c}(w) = 0$ for all $c \in \mathbb{Z}_{\leq A}^{r}$. Then $w = 0$.
\end{mylem}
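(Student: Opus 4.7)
The plan is to expand $w$ in the explicit PBW basis of $\widehat{M_{I^o,\mathfrak{h}'}(\lambda)}$ and convert the vanishing of $\phi_c(w)$ into the vanishing of certain convergent power series at a set of points dense enough to force them to be zero. First I would write
\[
w = \sum_{s, t, v} A_{s,t,v} \, f^s h^t v
\]
as a convergent sum, where $s$ ranges over $\mathbb{N}_0^{\Phi_I^+ \setminus \Phi_{I^o}^+}$, $t$ over $\mathbb{N}_0^r$, and $v$ over $\mathcal{B}$, with $A_{s,t,v} \in K$ satisfying $|A_{s,t,v}|\, p^{n(|s|+|t|)} \to 0$ as $|s|+|t| \to \infty$. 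This is exactly the convergence condition arising from the free $R$-basis $\{p^{n(|s|+|t|)} f^s h^t v\}$ of $\Gamma_0 M_{I^o,\mathfrak{h}'}(\lambda)$.

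Next I would exploit that each $h'_{\alpha_i}$ lies in the centre $\mathfrak{a}$ of $\mathfrak{g}$, since by the dual-basis condition $\beta(h'_{\alpha_i}) = \delta_{\alpha_i,\beta}$ vanishes for all $\beta \in I$. Hence $h'_{\alpha_i}$ acts on the entire highest-weight module $M_{I,\lambda,c}$ by the scalar $\lambda(h'_{\alpha_i}) - c_i$, and so
\[
\phi_c(w) \;=\; \sum_{s,v} Q_{s,v}\!\bigl(\lambda(h'_{\alpha_1}) - c_1,\, \ldots,\, \lambda(h'_{\alpha_r}) - c_r\bigr)\, f^s v,
\]
where $Q_{s,v}(X_1,\ldots,X_r) := \sum_t A_{s,t,v} X^t$. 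A short estimate using the convergence condition on $A_{s,t,v}$ shows that $Q_{s,v}$ converges on the polydisc $(p^{-n}R)^r$ and that the above rearrangement is justified. Since $\{p^{n|s|} f^s v\}_{s,v}$ is a free $R$-basis for $\Gamma_0 M_{I,\lambda,c}$, the assumption $\phi_c(w) = 0$ translates to $Q_{s,v}(\lambda(h'_{\alpha_1}) - c_1, \ldots, \lambda(h'_{\alpha_r}) - c_r) = 0$ for every $(s,v)$ and every $c$ in the given range.

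Now shift variables by setting $\tilde Q_{s,v}(y_1,\ldots,y_r) := Q_{s,v}(\lambda(h'_{\alpha_1}) + A + y_1,\, \ldots,\, \lambda(h'_{\alpha_r}) + A + y_r)$. Since each $\lambda(h'_{\alpha_i}) + A$ lies in $p^{-n}R$, the series $\tilde Q_{s,v}$ is still convergent on $(p^{-n}R)^r$, and writing $c_i = -A - m_i$ for $m_i \in \mathbb{N}_0$, the hypothesis gives $\tilde Q_{s,v}(m_1,\ldots,m_r) = 0$ for all $m \in \mathbb{N}_0^r$. I would then finish by applying an $r$-variable version of \cite[Lemma 4.7]{verma}: any convergent power series on $(p^{-n}R)^r$ that vanishes on $\mathbb{N}_0^r$ is identically zero. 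This multi-variable version follows from the single-variable statement by induction on $r$: fix all but one coordinate at elements of $\mathbb{N}_0$, apply the one-variable lemma to see that the coefficient power series (viewed in $r-1$ variables) vanishes on $\mathbb{N}_0^{r-1}$, and iterate. Thus $\tilde Q_{s,v} \equiv 0$, so $Q_{s,v} \equiv 0$, all $A_{s,t,v}$ vanish, and $w = 0$.

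The main obstacle is essentially the bookkeeping around convergence: verifying that each $Q_{s,v}$ genuinely converges on $(p^{-n}R)^r$, that $\phi_c$ may be evaluated term by term on the convergent expansion of $w$ to produce the displayed formula, and that the iterated one-variable lemma yields the required multi-variable density statement. Everything else is a direct translation into coordinates using the centrality of $\mathfrak{a}$.
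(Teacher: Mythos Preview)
Your proposal is correct and follows essentially the same approach as the paper: expand $w$ in the PBW basis, apply $\phi_c$ to obtain for each fixed $(s,v)$ a convergent power series $Q_{s,v}$ evaluated at $\lambda(h'_{\alpha_i})-c_i$, use linear independence of the $f^s v$ to conclude these values vanish, and invoke \cite[Lemma 4.7]{verma} to kill $Q_{s,v}$. Your extra care in observing that each $h'_{\alpha_i}$ is genuinely central in $\mathfrak{g}$, in performing the affine shift to land exactly on $\mathbb{N}_0^r$, and in spelling out the induction that upgrades the one-variable Lemma 4.7 to $r$ variables is all sound and only makes explicit what the paper leaves implicit.
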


\begin{proof}
Using our basis $f^{s}h^{t}v$ for $s \in \mathbb{N}_{0}^{\Phi_{I} \setminus \Phi_{I^{o}}}, t \in \mathbb{N}_{0}^{r}, v \in \mathcal{B}$, write $w$ in the form $w = \sum a_{s,t,v} f^{s}h^{t}v$ where $a_{s,t,v} \in K$ and $p^{-(|s|+|t|)n}a_{s,t,v} \rightarrow 0$ as $|s|+|t| \rightarrow \infty$.

Then for $c \in \mathbb{Z}_{\leq -A}^{r}$, we have $0 = \phi_{c}(w) = \underset{s,t,v}{\sum}a_{s,t,v}(\lambda(h)-c)^{t}f^{s}v$. Therefore for any fixed $s,v$ we have $\underset{t}{\sum} a_{s,t,v} (\lambda(h)-c)^{t} = 0$. Now writing $Q_{s,v}(X) = \underset{t}{\sum} a_{s,t,v} X^{t}$ a formal power series which converges on $(p^{-n}R)^{r}$, we can see that $Q_{s,v} = 0$ on $\underset{i}{\prod} (\lambda(h'_{\alpha_{i}}) - \mathbb{Z}_{\leq_{-A}}) \subseteq (p^{-n}R)^{r}$. By applying \cite[Lemma 4.7]{verma}, we see that $Q_{s,v}(X) = 0$. Therefore $a_{s,t,v} = 0$ for all $s,t,v$, and hence $w = 0$. 

\end{proof}

\section{Proof of main theorem}
We now aim to prove Theorem \ref{mainthm}, which will also imply theorem \ref{theorem1}. Proceed by induction on the size of the set $I$. The base case is as follows.

\begin{mylem}
Theorem \ref{mainthm} is true when $I = \emptyset$.
\end{mylem}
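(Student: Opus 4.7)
The plan is to deduce the base case from the two structural tools that have already been established in the paper, together with the known affinoid Verma result from \cite{verma}. First, I would apply Theorem \ref{reductionprop} in the case $I = \emptyset$ to reduce from the reductive setting of the main theorem to the semisimple setting. That is, given $\widehat{M_{\emptyset,\mathfrak{h}'}(\lambda)}$ for the reductive algebra $\mathfrak{g} = \mathfrak{g}_s \oplus \mathfrak{a}$, Theorem \ref{reductionprop} tells us that it is enough to prove faithfulness of $\widehat{M_{\emptyset}(\mu |_{\mathfrak{h}_s})}$ as a $KG_s$-module, where $\mu$ is the weight constructed from $\lambda$ in \S\ref{reductionsection} and $G_s$ is the $F$-uniform group attached to $p^{n+1}\mathfrak{g}_s$.

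Next, I would observe that when $I = \emptyset$ and $\mathfrak{g}_s$ is semisimple, the construction of $M_{I}(\mu|_{\mathfrak{h}_s})$ in \S\ref{gvm} specialises as follows: the subalgebra $\mathfrak{g}_{I}$ is trivial, so the finite-dimensional module $V$ used in the induction is one-dimensional, generated by a highest-weight vector of weight $\mu|_{\mathfrak{h}_s}$. Consequently $\mathfrak{p}_{\emptyset}$ is the full Borel subalgebra $\mathfrak{b} = \mathfrak{h}_s \oplus \bigoplus_{\alpha \in \Phi^+} \Of e_\alpha$, and $M_{\emptyset}(\mu|_{\mathfrak{h}_s}) = U(\mathfrak{g}_{s,K}) \otimes_{U(\mathfrak{b}_K)} K_{\mu|_{\mathfrak{h}_s}}$ is the ordinary Verma module. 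Hence $\widehat{M_{\emptyset}(\mu|_{\mathfrak{h}_s})}$ is precisely an affinoid Verma module in the sense of \cite{verma}.

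Finally, I would invoke \cite[Theorem 5.4]{verma}, which asserts that every affinoid Verma module for $KG_s$ (under our running hypotheses on $p$, $\mathfrak{g}_s$ and the weight being $R$-integral on $p^n \mathfrak{h}_R$) is faithful as a $KG_s$-module. Combining this with the reduction in the first paragraph gives faithfulness of $\widehat{M_{\emptyset,\mathfrak{h}'}(\lambda)}$ over $KG$, as required. There is essentially no obstacle in this base case: the genuine work has been done in setting up Theorem \ref{reductionprop} (to strip off the centre $\mathfrak{a}$) and in \cite[Theorem 5.4]{verma}. The only thing to check carefully is that the hypothesis $\mu(p^n \mathfrak{h}_{s,R}) \subseteq R$ needed to apply the cited theorem follows from our assumption $\lambda(p^n \mathfrak{h}_R) \subseteq R$, which is immediate from the definition $\mu = \lambda \circ \mathrm{pr}_{\mathfrak{h}'}$ and $\mathfrak{h}_{s,R} \subseteq \mathfrak{h}_R$.
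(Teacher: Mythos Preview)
Your proposal is correct and follows exactly the paper's own approach: invoke Theorem \ref{reductionprop} to reduce to the semisimple case, observe that $M_{\emptyset}(\mu|_{\mathfrak{h}_s})$ is an ordinary Verma module, and conclude via \cite[Theorem 5.4]{verma}. Your additional verification that $\mu(p^{n}\mathfrak{h}_{s,R}) \subseteq R$ is a reasonable detail to include; it follows because $\mathfrak{h} = \mathfrak{h}' \oplus \mathfrak{a}$ as $\Of$-modules, so the projection onto $\mathfrak{h}'$ carries $p^{n}\mathfrak{h}_{R}$ into $p^{n}\mathfrak{h}'_{R} \subseteq p^{n}\mathfrak{h}_{R}$.
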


\begin{proof}
By Theorem \ref{reductionprop} we can assume that $\mathfrak{g}$ is semisimple. Then the case where $I = \emptyset$ is the case of a Verma module, and is proved in \cite[Theorem 5.4]{verma}.
\end{proof}

Now we carry out the induction step. Again, we can assume that $\mathfrak{g}$ is semisimple by Theorem $\ref{reductionprop}$. So we have $\emptyset \neq I \subseteq \Delta$ not containing any connected component of $\Delta$, and $\lambda : p^{n} \mathfrak{h}_{R} \rightarrow R$ (extended $K$-linearly to $\mathfrak{h}_{K}$) such that $\lambda(\alpha) \in \mathbb{N}_{0}$ for all $\alpha \in I$. We aim to show that the affinoid generalised Verma module $\widehat{M_{I}(\lambda)}$ is faithful for $KG$.

Let $\mathfrak{l} = \mathfrak{l}_{I}$, and $L$ the $F$-uniform group with corresponding $\Of$-Lie algebra $p^{n+1} \mathfrak{l}$. By Theorem \ref{leviprop}, it is sufficient to show that $M_{I}(\lambda)$ is faithful for $RL$. To this end, we wish to apply Theorem \ref{annthm} to the reductive Lie algebra $\mathfrak{l}$. 

Writing $\Delta \setminus I = \{ \alpha_{1},\dots,\alpha_{r} \}$, taking $I^{o} = \{ \alpha \in I \mid \langle \alpha, \beta^{\lor} \rangle = 0 \text{ for all } \beta \in \Delta \setminus I \}$ and taking $\mathfrak{h}'$ to be the orthogonal complement of $\{h_{\alpha} \mid \alpha \in \Delta \setminus I \}$, Theorem \ref{annthm} implies that $ \underset{s \in \mathbb{N}_{0}^{r}}{\bigcap} \Ann_{RL} L(\lambda - \underset{i}{\sum}{s_{i} \alpha_{i}}) \subseteq \Ann_{RL} \widehat{M_{I^{o},\mathfrak{h}'}(\lambda)}$, where $M_{I^{o},\mathfrak{h}'}(\lambda)$ is the $\ul$-module as in \S \ref{gvm}. To use this, we want to show that $\Ann_{RL} M_{I}(\lambda) \subseteq \underset{s \in \mathbb{N}_{0}^{r}}{\bigcap} \Ann_{RL} L(\lambda - \underset{i}{\sum}{s_{i} \alpha_{i}})$.

Write $v_{0}$ for the highest-weight vector generating $M_{I}(\lambda)$.
Note that for any $s_{1},\dots,s_{r} \in \mathbb{N}_{0}$, $M_{I}(\lambda)$ contains an element $f_{\alpha_{1}}^{s_{1}} \dots f_{\alpha_{r}}^{s_{r}} \cdot v_{0}$, which is a $\ul$-highest-weight vector of weight $\lambda - \sum s_{i} \alpha_{i}$. This holds because any $e_{\alpha}$ with $\alpha \in \Phi_{I}$ commutes with the $f_{\alpha_{i}}$, noting that $\alpha - \alpha_{i}$ cannot be a root since it has positive coefficients of some simple roots in $I$ and a negative $\alpha_{i}$-coefficient. Thus $\Ann_{RL} M_{I}(\lambda) \subseteq \cap_{s \in \mathbb{N}_{0}^{r}} \Ann_{RL} L(\lambda - \sum{s_{i} \alpha_{i}})$.

Now if we can show that $I^{o}$ is a totally proper subset of $I$, the induction hypothesis will give that $\Ann_{RL} \widehat{M_{I^{o},\mathfrak{h}'}(\lambda)} = 0$ since then $|I^{o}| < |I|$.

Combining these facts, we see that $\Ann_{RL} M_{I}(\lambda) \subseteq \cap_{s \in \mathbb{N}_{0}^{r}} \Ann_{RL} L(\lambda - \sum{s_{i} \alpha_{i}}) \subseteq \Ann_{RL} \widehat{M_{I^{o},\mathfrak{h}'}(\lambda)} = 0$. 

\begin{mylem}
$I^{o}$ is a totally proper subset of $I$.
\end{mylem}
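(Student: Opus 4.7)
The plan is to argue by contradiction: assume some connected component of the Dynkin diagram of $\Phi_{I}$ (with nodes $I$) is entirely contained in $I^{o}$, and then show that this would force a connected component of $\Delta$ (the Dynkin diagram of $\Phi$) to be entirely contained in $I$, contradicting the hypothesis that $I$ is totally proper.

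More precisely, let $C \subseteq I$ be a connected component of the Dynkin diagram on $I$, and suppose for contradiction that $C \subseteq I^{o}$. By the very definition of $I^{o}$, this means that $\langle \alpha, \beta^{\lor} \rangle = 0$ for every $\alpha \in C$ and every $\beta \in \Delta \setminus I$. In other words, no node of $C$ is adjacent in the Dynkin diagram of $\Delta$ to any node of $\Delta \setminus I$.

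The key observation is then that the edges of the Dynkin diagram on $I$ are exactly the restrictions of the edges of the Dynkin diagram on $\Delta$, since both are determined by the numbers $\langle \alpha, \beta^{\lor} \rangle$ for $\alpha, \beta$ in the relevant simple-root set. So $C$ is already a maximal connected subset of $I$ in the $\Delta$-diagram. Combined with the fact just noted (that $C$ has no neighbours in $\Delta \setminus I$), we conclude that $C$ is in fact a connected component of the Dynkin diagram of $\Delta$. Since $C \subseteq I$, this contradicts $I$ being totally proper. I do not expect any real obstacle here: the argument is purely combinatorial about Dynkin diagrams, and the main care is just to keep straight the distinction between connectedness within $I$ and connectedness within $\Delta$.
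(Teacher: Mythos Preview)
Your proof is correct and follows essentially the same approach as the paper: both take a connected component $C$ of $I$ contained in $I^{o}$, observe that elements of $C$ are orthogonal to everything in $\Delta \setminus I$ (by definition of $I^{o}$) and to everything in $I \setminus C$ (since $C$ is a component of $I$), and conclude that $C$ is a connected component of $\Delta$, contradicting total properness of $I$. The paper phrases this last step as a direct case-split on $\beta \in \Delta \setminus C$, while you phrase it via the observation that the induced Dynkin diagram on $I$ has the same edges as in $\Delta$, but the content is identical.
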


\begin{proof}
Suppose $I_{1}$ is a connected component of $I$ contained in $I^{o}$. We then claim $I_{1}$ is a connected component of $\Delta$, which contradicts the assumption that $I$ contains no connected components of $\Delta$.

Let $\alpha \in I_{1} \subseteq I^{o}, \beta \in \Delta \setminus I_{1}$. We claim $\langle \alpha, \beta^{\lor} \rangle = 0$. If $\beta \notin I$, this holds by definition of $I^{o}$. If $\beta \in I$, this holds by assumption that $I_{1}$ is a connected component of $I$.
\end{proof}

\section{The case of $\mathfrak{sl}_{3}$}
\label{sl3case}
In the case where $\mathfrak{g} = \mathfrak{sl}_{3}(\Of)$, we can give a positive answer to \cite[Question B]{verma}. In order to show this, we use our result for generalised Verma modules together with the following technical result. 

For the moment, we can take $\mathfrak{g}$ to be an $\Of$-lattice of a split semisimple $\Of$-Lie algebra with root system $\Phi$ and a Chevalley basis as before. We use the notation $s_{\alpha}$ for the reflection corresponding to a root $\alpha \in \Phi$, and $\rho = \frac{1}{2} \underset{\alpha \in \Phi^{+}}{\sum} \alpha$. Given any $\lambda : \mathfrak{h}_{K} \rightarrow K$, we write $L(\lambda)$ for the irreducible highest-weight $\ug$-module of highest-weight $\lambda$. 

\begin{myprop}
\label{reflectionprop}
Let $\mathfrak{g}$ be an $\Of$-lattice of a split semisimple $\Of$-Lie algebra with root system $\Phi$ and such that $\mathfrak{g}$ has a Chevalley basis. Let $\lambda : \mathfrak{h}_{K} \rightarrow K$ be a weight such that $\lambda( p^{n} \mathfrak{h}_{R}) \subseteq R$. Suppose $\alpha \in \Delta$ such that $\langle \lambda + \rho, \alpha^{\lor} \rangle \notin \mathbb{N}_{0}$.

Then $\Ann_{\Ug} \widehat{L(\lambda)} \subseteq \Ann_{\Ug} \widehat{L(s_{\alpha} \cdot \lambda)}$.
\end{myprop}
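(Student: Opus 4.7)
Set $m := \langle \lambda + \rho, \alpha^{\vee} \rangle$, so by hypothesis $m \notin \mathbb{N}_0$ and $s_{\alpha}\cdot\lambda = \lambda - m\alpha$. The hypothesis immediately ensures that $f_{\alpha}$ acts injectively on the highest-weight vector $v_{\lambda}$ of $L(\lambda)$: the $\mathfrak{sl}_2\langle \alpha\rangle$-submodule generated by $v_{\lambda}$ has $\mathfrak{sl}_2$-highest weight $\langle \lambda, \alpha^{\vee}\rangle = m-1 \notin \{-1, 0, 1, \dots\}$, so it is an infinite-dimensional Verma for $\mathfrak{sl}_2$, and in particular $f_{\alpha}^{k} v_{\lambda} \neq 0$ in $L(\lambda)$ for every $k \in \mathbb{N}_0$.

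The plan follows the $p$-adic density template of Proposition \ref{annprop}. First, by Lemma \ref{weightcompslem} it suffices to treat weight vectors $\xi \in \Ann_{\Ug}\widehat{L(\lambda)}$. Second, one aims to establish the identity ``$\xi$ annihilates $v_{\lambda - k\alpha}$ in $\widehat{L(\lambda - k\alpha)}$'' for every integer $k \in \mathbb{N}_0$. Third, one interpolates these identities to the $p$-adic value $t = m \in p^{-n}R$ via \cite[Lemma 4.7]{verma}, obtaining $\xi \cdot v_{s_{\alpha}\cdot\lambda} = 0$; Lemma \ref{densitylem} then extends this to $\xi \in \Ann_{\Ug}\widehat{L(s_{\alpha}\cdot\lambda)}$, as required. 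To make the interpolation meaningful, one should work through the Verma modules $M(\lambda - t\alpha)$ as an intermediate layer: by Lemma \ref{PBW} the underlying $K$-vector space of $\widehat{M(\lambda - t\alpha)}$ is canonically independent of $t$, and by Lemma \ref{diagonalmult} the $\mathfrak{h}_K$-action is affine in $t$, so $\xi \cdot v_{\lambda - t\alpha}$ genuinely is a $p$-adically analytic vector-valued function of $t$ and the hypotheses of \cite[Lemma 4.7]{verma} apply after extracting a scalar coefficient in a fixed $K$-basis.

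\textbf{Main obstacle.} The critical step is the middle one, namely verifying that $\xi$ annihilates $v_{\lambda - k\alpha} \in \widehat{L(\lambda - k\alpha)}$ at each integer $k \in \mathbb{N}_0$. Classically, the analogue $\Ann_{\ug} L(\lambda) \subseteq \Ann_{\ug} L(\lambda - k\alpha)$ is known under our hypothesis, typically via Enright's reflection functor at $\alpha$ or Joseph's theory of primitive ideals. Lifting this from $\ug$ to $\Ug$ is delicate because the closure of a classical annihilator in $\Ug$ does not in general describe the affinoid annihilator: most strikingly, the affinoid annihilator of a Verma module is zero by \cite[Theorem 5.4]{verma}, while its classical annihilator is the non-zero central-character ideal. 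The most plausible resolution is to construct an affinoid analogue of Enright's completion functor at $\alpha$ --- localise $\widehat{L(\lambda)}$ at $f_{\alpha}$ (or at the associated $p$-adic unit $\exp(p^{n+1}f_{\alpha}) - 1 \in KG$), extract the $e_{\alpha}$-locally-finite submodule, and identify this directly with $\widehat{L(s_{\alpha}\cdot\lambda)}$, thereby skipping the integer-$k$ machinery entirely. Either route requires a non-commutative $\pi$-adic localisation argument in $\Ug$, which is the principal new technical ingredient.
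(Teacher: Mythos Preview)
Your plan misidentifies the interpolation variable, and the ``main obstacle'' you encounter is a symptom of that misidentification rather than a genuine difficulty in the proof.

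You propose to vary the \emph{module}: show $\xi$ kills the highest-weight vector of $\widehat{L(\lambda-k\alpha)}$ for each $k\in\mathbb{N}_0$, then interpolate in $k$ to $k=m$. As you correctly note, the first step already requires something like an affinoid Enright functor or a non-commutative Ore localisation at $f_\alpha$, and you do not carry this out. So as written the proposal has a gap at precisely the point you flag.

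The paper avoids this entirely by interpolating \emph{within the single module} $L(\lambda)$. After the reduction to weight components, one makes a further reduction to weight-$0$ elements: if some weight vector $x\in\Ann_{\Ug}\widehat{L(\lambda)}$ fails to kill some weight vector $v\in L(s_\alpha\cdot\lambda)$, then by irreducibility one can sandwich to get a weight-$0$ element $zxy\in\Ann_{\Ug}\widehat{L(\lambda)}$ with $zxy\cdot v_{s_\alpha\cdot\lambda}=v_{s_\alpha\cdot\lambda}$. So it suffices to show that any weight-$0$ element $x$ in the annihilator kills $v_{s_\alpha\cdot\lambda}$. Now use the $\mathfrak{sl}_2$-string $f_\alpha^{[s]}v_\lambda\in L(\lambda)$, $s\in\mathbb{N}_0$: these are all non-zero (your first paragraph), and a PBW expansion of $x$ in the ordered form
\[
f_\alpha^{s_\alpha}e_\alpha^{t_\alpha}\prod_{\beta\neq\alpha}f_\beta^{s_\beta}\prod_{\beta\neq\alpha}e_\beta^{t_\beta}\prod_{\beta\in\Delta}(\lambda(h^\beta)-h^\beta)^{u_\beta}
\]
shows that every monomial with some $t_\beta>0$ or $u_\beta>0$ ($\beta\neq\alpha$) annihilates both $f_\alpha^{[s]}v_\lambda$ and $v_{s_\alpha\cdot\lambda}$. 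What remains has the shape $\sum_{t\ge 0} f_\alpha^t e_\alpha^t\,Q_t(\lambda(h^\alpha)-h^\alpha)$, and an elementary $\mathfrak{sl}_2$ computation gives
\[
x\cdot f_\alpha^{[s]}v_\lambda \;=\; Q(s)\,f_\alpha^{[s]}v_\lambda,\qquad Q(X)=\sum_t\Bigl(\prod_{i=0}^{t-1}(X-i)\prod_{i=0}^{t}(a+i-X)\Bigr)Q_t(X),
\]
with $a=\lambda(h_\alpha)$. Since $x$ annihilates $L(\lambda)$ and $f_\alpha^{[s]}v_\lambda\neq 0$, $Q(s)=0$ for all $s\in\mathbb{N}_0$, so $Q=0$ by \cite[Lemma 4.7]{verma}. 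Evaluating at $X=a+1$ (where all $t\ge 1$ terms vanish because of the factor $(a+1-X)$) gives $Q_0(a+1)=0$, and this is exactly the scalar by which $x$ acts on $v_{s_\alpha\cdot\lambda}$, since $s_\alpha\cdot\lambda=\lambda-(a+1)\alpha$.

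In short: no family of modules, no Enright completion, no localisation. The reduction to weight $0$ (not merely to weight vectors) is what makes the interpolation possible inside $L(\lambda)$ itself.
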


\noindent Firstly, we prove the following lemma. Write $v_{\lambda}, v_{s_{\alpha} \cdot \lambda}$ for the highest-weight vectors of $M(\lambda)$ and $M(s_{\alpha} \cdot \lambda)$ respectively.

\begin{mylem}
Suppose $x \in \Ann_{\Ug} \widehat{L(\lambda)}$ has weight $0$. Then $x \cdot v_{s_{\alpha} \cdot \lambda} = 0$ in $\widehat{M(s_{\alpha} \cdot \lambda)}$.
\end{mylem}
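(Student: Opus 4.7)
The proof proceeds by reducing $x$ to its ``Harish-Chandra-style'' Cartan component and then exploiting that $x$ annihilates strictly more than the highest-weight vector of $L(\lambda)$. Using Lemma \ref{PBW} with an ordering that places $\mathfrak{n}^{-}$ first, then $\mathfrak{h}$, then $\mathfrak{n}^{+}$, write $x = \sum_{s,u,t} c_{s,u,t} f^{s} h^{u} e^{t}$ as a convergent sum, where the weight-zero hypothesis forces $\sum_{\beta}(t_{\beta}-s_{\beta})\beta = 0$. Let $H_{0} := \sum_{u} c_{0,u,0} h^{u} \in \Uh$ be the pure-Cartan part. For any highest-weight vector $v_{\mu}$ of weight $\mu$ in any $\ug$-module, each monomial with $t \neq 0$ kills $v_{\mu}$, and the weight-zero condition then forces $s = 0$ too; hence $x \cdot v_{\mu} = \mu(H_{0}) v_{\mu}$, after extending $\mu$ continuously to a character of $\Uh$ as in Lemma \ref{diagonalmult}. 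Applying this to $v_{\lambda} \in \widehat{L(\lambda)}$ together with $x v_{\lambda} = 0$ gives $\lambda(H_{0}) = 0$, and applying it to $v_{s_{\alpha} \cdot \lambda} \in \widehat{M(s_{\alpha} \cdot \lambda)}$ reduces the lemma to showing $(s_{\alpha} \cdot \lambda)(H_{0}) = 0$.

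To extract the additional information, consider $w_{k} := f_{\alpha}^{k} v_{\lambda} \in L(\lambda)$. The hypothesis $\langle \lambda+\rho, \alpha^{\lor}\rangle \notin \mathbb{N}_{0}$ is equivalent to $\langle \lambda, \alpha^{\lor}\rangle \notin \{-1, 0, 1, \ldots\}$, so no $\mathfrak{sl}_{2}$-relation of the form $f_{\alpha}^{k} v_{\lambda} = 0$ can arise and each $w_{k}$ is non-zero. A weight-space argument in $M(\lambda)$ shows that for any positive root $\gamma \neq \alpha$, the vector $e_{\gamma} w_{k}$ lies in a weight space strictly above $\lambda$ and is therefore zero; consequently every PBW monomial $f^{s} h^{u} e^{t}$ in $x$ whose $t$ has a non-$\alpha$ component annihilates $w_{k}$. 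The weight-zero condition then forces the $f^{s}$-part of the remaining monomials to involve only $f_{\alpha}$, so the only contributors to $x \cdot w_{k}$ have the form $f_{\alpha}^{j} H_{j} e_{\alpha}^{j}$, where $H_{j} := \sum_{u} c_{j\delta_{\alpha}, u, j\delta_{\alpha}} h^{u} \in \Uh$. Using the standard identity $e_{\alpha}^{j} f_{\alpha}^{k} v_{\lambda} = a_{j,k}\, f_{\alpha}^{k-j} v_{\lambda}$ with
\begin{equation*}
a_{j,k} \;=\; \prod_{i=0}^{j-1}(k-i)\bigl(\langle \lambda, \alpha^{\lor}\rangle - k + i + 1\bigr),
\end{equation*}
we obtain $x \cdot w_{k} = F(k) w_{k}$, where $F(k) := \sum_{j \geq 0} a_{j,k}\, H_{j}\bigl(\lambda - (k-j)\alpha\bigr)$. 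Since $x w_{k} = 0$ and $w_{k} \neq 0$, we deduce $F(k) = 0$ for every $k \in \mathbb{N}_{0}$.

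The final step promotes this vanishing to the (possibly non-integer) value $k = m := \langle \lambda+\rho, \alpha^{\lor}\rangle$. The PBW convergence of $x$ (Lemma \ref{PBW}) together with the normalisation $\mu(p^{n} \mathfrak{h}_{R}) \subseteq R$ yields bounds of the form $|H_{j}(\mu)| \leq |p|^{2nj} \varepsilon_{j}$ with $\varepsilon_{j} \to 0$, whereas on $k \in p^{-n} R$ one has $|a_{j,k}| \leq |p|^{-2nj}$; thus $F$ defines a $K$-analytic function on the disk $p^{-n} R$ vanishing on the infinite subset $\mathbb{N}_{0}$, and \cite[Lemma 4.7]{verma} forces $F \equiv 0$ on $p^{-n} R$. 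At $k = m$, the identity $\langle \lambda, \alpha^{\lor}\rangle - m + 1 = -\langle \rho, \alpha^{\lor}\rangle + 1 = 0$ makes the $i = 0$ factor of $a_{j,m}$ vanish for every $j \geq 1$, so $a_{j,m} = 0$ for all $j \geq 1$ and only the $j = 0$ term survives: $F(m) = H_{0}(\lambda - m\alpha) = (s_{\alpha}\cdot \lambda)(H_{0})$. Combining this with $F(m) = 0$ gives the desired conclusion. The main obstacle I anticipate is verifying the convergence estimates and the applicability of \cite[Lemma 4.7]{verma} in this setting, in particular controlling the interplay between the PBW coefficient bounds on $c_{s,u,t}$, the polynomial growth of $a_{j,k}$ in $k$, and the evaluation of $H_{j}$ at the shifted weights $\lambda - (k-j)\alpha$.
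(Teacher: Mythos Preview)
Your approach is essentially the same as the paper's: write $x$ in PBW form, observe that only monomials of the shape $f_{\alpha}^{j}\,(\text{Cartan})\,e_{\alpha}^{j}$ survive when acting on $f_{\alpha}^{k}v_{\lambda}$, obtain a function of $k$ vanishing on $\mathbb{N}_{0}$, invoke \cite[Lemma 4.7]{verma} to force identical vanishing, and then evaluate at $k = a+1 = \langle\lambda+\rho,\alpha^{\lor}\rangle$ where all $j\geq 1$ terms drop out. The weight-space argument showing $e_{\gamma}f_{\alpha}^{k}v_{\lambda}=0$ for $\gamma\neq\alpha$ and the subsequent reduction of the $f$-part to $f_{\alpha}$ alone are also present in the paper's proof.

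The one substantive difference is the handling of the Cartan part, and this is exactly where your acknowledged ``main obstacle'' lies. The paper chooses the basis $\{h^{\beta}\mid\beta\in\Delta\}$ of $\mathfrak{h}$ dual to the simple roots and rewrites the Cartan monomials in the shifted variables $(\lambda(h^{\beta})-h^{\beta})$. Because $\alpha(h^{\beta})=0$ for $\beta\neq\alpha$, the variable $\lambda(h^{\beta})-h^{\beta}$ commutes with $f_{\alpha}$ and hence annihilates every $f_{\alpha}^{k}v_{\lambda}$; it also annihilates $v_{s_{\alpha}\cdot\lambda}$ since $(s_{\alpha}\cdot\lambda)(h^{\beta})=\lambda(h^{\beta})$. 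Thus all Cartan directions except the single variable $X=\lambda(h^{\alpha})-h^{\alpha}$ disappear, and the relevant part of $x$ becomes $\sum_{t\geq 0} f_{\alpha}^{t}e_{\alpha}^{t}Q_{t}(X)$ with each $Q_{t}$ a one-variable power series. Acting on $f_{\alpha}^{[s]}v_{\lambda}$ yields a single convergent power series $Q(s)$ in one variable, to which \cite[Lemma 4.7]{verma} applies directly.

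By contrast, your $F(k)=\sum_{j}a_{j,k}\,H_{j}(\lambda-(k-j)\alpha)$ keeps the full multivariable Cartan dependence, and each $H_{j}(\lambda-(k-j)\alpha)$ is itself an infinite series in $k$ (since $H_{j}\in\Uh$ need not be a polynomial). Verifying that $F$ is a genuine convergent power series in $k$ on $p^{-n}R$, rather than merely a pointwise-convergent sum, requires a rearrangement argument with uniform estimates that you have not supplied. This is not fatal---the estimates can be made to work---but the paper's dual-basis trick dissolves the difficulty entirely and is worth adopting.
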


\begin{proof}
By our assumption (from the introduction) that $p$ does not divide the determinant of the Cartan matrix of $\Phi$, we have an $\Of$-basis $\{h^{\beta} \mid \beta \in \Delta \}$ dual to $\Delta$, that is, with $\beta (h^{\gamma}) = \delta_{\beta,\gamma}$ for $\beta, \gamma \in \Delta$.

We consider the action of $x$ on $f_{\alpha}^{[s]} v_{\lambda}$, where $f_{\alpha}^{[s]} = \frac{1}{s!}f_{\alpha}^{s}$ for $s  \in \mathbb{N}_{0}$. We write $a = \lambda (h_{\alpha}) \notin \mathbb{N}_{0}$.

\medskip

\noindent \emph{Claim:} for any $s \in \mathbb{N}_{0}$, we have $e_{\alpha} \cdot f_{\alpha}^{[s]} v_{\lambda} = (a+1-s) f_{\alpha}^{[s-1]} v_{\lambda}$.

\noindent \emph{Proof of claim:}
\begin{equation*}
\begin{split}
e_{\alpha} \cdot f_{\alpha}^{[s]} v_{\lambda} & = \frac{1}{s!} \text{ad}(e_{\alpha})(f_{\alpha}^{s}) v_{\lambda} \\
& = \frac{1}{s!} \sum_{i=0}^{s-1}f_{\alpha}^{s-1-i} \text{ad}(e_{\alpha})(f_{\alpha}) f_{\alpha}^{i} v_{\lambda} \\
& =  \frac{1}{s!} \sum_{i=0}^{s-1} f_{\alpha}^{s-1-i} h_{\alpha} f_{\alpha}^{i} v_{\lambda} \\
& = \frac{1}{s!}\sum_{i=0}^{s-1} f_{\alpha}^{s-1-i} (\lambda - i \alpha)(h_{\alpha}) f_{\alpha}^{i} v_{\lambda} \text{ (because } f_{\alpha}^{i} v_{\lambda} \text{ has weight } \lambda - i \alpha \text{)}\\ 
& = \frac{1}{s!}\sum_{i=0}^{s-1} (a - 2i) f_{\alpha}^{s-1} v_{\lambda} \\
& = \frac{1}{s!} (sa - s(s-1)) f_{\alpha}^{s-1} v_{\lambda} \left( \text{ because } \sum_{i=0}^{s-1} i = \frac{1}{2} s(s-1) \right) \\
& =  \frac{1}{(s-1)!}(a+1-s) f_{\alpha}^{s-1} v_{\lambda}\\
& = (a+1-s)f_{\alpha}^{[s-1]} v_{\lambda}.
\end{split}
\end{equation*}

\medskip

As a consequence of this claim, we can see that $e_{\alpha}^{s} f_{\alpha}^{[s]} v_{\lambda} = (a+1-s)(a+2-s) \dots a v_{\lambda}$ is a non-zero multiple of $v_{\lambda}$, which means that the image of $f_{\alpha}^{[s]}$ in $L(\lambda)$ is non-zero.

Now apply Lemma \ref{PBW} and the fact that $\lambda(p^{n} \mathfrak{h}_{R}) \subseteq R$ to see that we can write $x$ as a convergent sum of monomials of the form: 

\begin{equation*}
C_{s,t,u} f_{\alpha}^{s_{\alpha}} e_{\alpha}^{t_{\alpha}} \prod_{\beta \in \Phi^{+} \setminus \{\alpha \}} f_{\beta}^{s_{\beta}} \prod_{\beta \in \Phi^{+} \setminus \{\alpha \}} e_{\beta}^{t_{\beta}} \prod_{\beta \in \Delta} (\lambda(h^{\beta}) - h^{\beta})^{u_{\beta}}
\end{equation*}

\noindent with  $s_{\beta},t_{\beta}, u_{\beta} \in \mathbb{N}_{0}, C_{s,t,u} \in K$, such that $p^{-|s|-|t|-|u|}C_{s,t,u} \rightarrow 0 $ as $|s|+|t|+|u| \rightarrow 0$.

Let $J$ be the set of all such monomials such that $t_{\beta} > 0 $  or $u_{\beta} > 0$ for some $\beta \neq \alpha$. Then we want to show that $f_{\alpha}^{[s]} v_{\lambda}$ and $v_{s_{\alpha} \cdot \lambda}$ are annihilated by the monomials in $J$.

To see this, firstly note that if $\beta \neq \alpha$, then $\lambda(h^{\beta}) - h^{\beta}$ commutes with $f_{\alpha}$ by definition of $h^{\beta}$. Thus $\lambda(h^{\beta}) - h^{\beta}$ annihilates all $f_{\alpha}^{[s]}v_{\lambda}$. Moreover, $(s_{\alpha} \cdot \lambda)(h^{\beta}) = (\lambda - \langle \lambda + \rho, \alpha^{\lor} \rangle \alpha)(h^{\beta}) = \lambda(h^{\beta})$, therefore $\lambda(h^{\beta}) - h^{\beta}$ annihilates $v_{s_{\alpha} \cdot \lambda}$. So this covers the case where $u_{\beta} \neq 0 $ for some $\beta \neq \alpha$.

Now claim that if $\beta \in \Phi^{+} \setminus \alpha$ then $e_{\beta}$ annihilates both all $f_{\alpha}^{[s]} v_{\lambda}$ and $v_{s_{\alpha} \cdot \lambda}$. It is clear that $e_{\beta} \cdot v_{s_{\alpha} \cdot \lambda} = 0$ because $v_{s_{\alpha} \cdot \lambda} $ is a highest-weight vector. We can also see that $e_{\beta} \cdot f_{\alpha}^{[s]} v_{\lambda} = 0$ because this element of $M(\lambda)$
has weight $\lambda - s \alpha + \beta \notin \lambda - \mathbb{N}_{0} \Delta$, which is not the weight of any non-zero element of $M(\lambda)$. So this covers the case where $t_{\beta} \neq 0$ for some $\beta \neq \alpha$.

Using the fact that $x$ has weight $0$, we can write $x = \underset{t \geq 0}{\sum} f_{\alpha}^{t} e_{\alpha}^{t} Q_{t}(\lambda(h^{\alpha}) - h^{\alpha}) + (\text{ monomials in } J)$ where the $Q_{t}$ are power series that converge on $p^{-n}R$. Then $x \cdot f_{\alpha}^{[s]}v_{\lambda} = \underset{t}{\sum} \left( \underset{i=0}{\overset{t-1}{\prod}}(s-i) \underset{i=0}{\overset{t}{\prod}} (a+i-s) \right) Q_{t}(s) f_{\alpha}^{[s]} v_{\lambda} $ for all $s \in \mathbb{N}_{0}$. Since $x \cdot L(\lambda) = 0$ and the images of $f_{\alpha}^{[s]} v_{\lambda}$ in $L(\lambda)$ is non-zero, this means that $\underset{t}{\sum} \left( \underset{i=0}{\overset{t-1}{\prod}}(s-i) \underset{i=0}{\overset{t}{\prod}} (a+i-s) \right)Q_{t}(s)  = 0$ for all $s \in \mathbb{N}_{0}$.

The power series $Q(X) = \underset{t}{\sum} \left( \underset{i=0}{\overset{t-1}{\prod}}(X-i) \underset{i=0}{\overset{t}{\prod}} (a+i-X) \right) Q_{t}(X)$ is convergent on $R$ since $x \in \Ug$, and is zero on all non-negative integers. Thus $Q = 0$ by \cite[Lemma 4.7]{verma}. So $0 = Q(a+1) = Q_{0}(a+1)$. 

Finally, since $s_{\alpha} \cdot \lambda = \lambda - (a+1) \alpha$, we have $x \cdot v_{s_{\alpha} \cdot \lambda} = Q_{0}(a+1) v_{s_{\alpha} \cdot \lambda} = 0$.
\end{proof}

\begin{proof}[Proof of Proposition \ref{reflectionprop}]
First, suppose $x \in \Ann_{\Ug} \widehat{L(\lambda)}$ is a weight vector, say with weight $\mu$. We claim that $x$ annihilates $L(s_{\alpha} \cdot \lambda)$ inside $\widehat{L(s_{\alpha} \cdot \lambda)}$, and hence by Lemma \ref{densitylem} annihilates $\widehat{L(s_{\alpha} \cdot \lambda)}$.

Suppose $v \in L(s_{\alpha} \cdot \lambda)$ is a weight vector such that $x \cdot v \neq 0$. Then say $y \in \ug$ is a weight vector such that $v$ is the image of $y \cdot v_{s_{\alpha} \cdot \lambda}$ in $L(s_{\alpha} \cdot \lambda)$. Then $xy \cdot v_{s_{\alpha} \cdot \lambda}$ is not in the maximal submodule of $M(s_{\alpha} \cdot \lambda)$, so we can choose a weight vector $z \in \ug$ such that $zxy$ has weight $0$ and $zxy \cdot v_{s_{\alpha} \cdot \lambda} = v_{s_{\alpha} \cdot \lambda}$. But then $zxy \in \Ann_{\Ug} \widehat{L(\lambda)}$ since $\Ann_{\Ug} \widehat{L(\lambda)}$ is a two-sided ideal, and this contradicts the previous lemma.

So $x$ annihilates all weight vectors of $ L(s_{\alpha} \cdot \lambda)$, and hence all of $ L(s_{\alpha} \cdot \lambda)$, hence also annihilates $\widehat{ L(s_{\alpha} \cdot \lambda)}$.

Now suppose we take any $x \in \Ann_{\Ug} \widehat{L(\lambda)}$. Then the weight components of $x$ also lie in $\Ann_{\Ug} \widehat{L(\lambda)}$ by Lemma \ref{weightcompslem}, and so also lie in $\Ann_{\Ug} \widehat{ L(s_{\alpha} \cdot \lambda)}$. Thus $x \in \Ann_{\Ug} \widehat{ L(s_{\alpha} \cdot \lambda)}$. 

\end{proof}

\noindent We also use the following irreducibility criterion for generalised Verma modules, which is due to Jantzen. Suppose $I \subseteq \Delta$ and $\lambda : \mathfrak{h}_{K} \rightarrow K$ such that $\lambda(h_{\alpha}) \in \mathbb{N}_{0}$ for all $\alpha \in I$. Define the following sets of roots:

\begin{itemize}
\item $\Psi = \Phi \setminus \Phi_{I}$,
\item $\Psi_{\lambda}^{+} = \{ \beta \in \Psi^{+} \mid \langle \lambda + \rho, \beta^{\lor} \rangle \in \mathbb{N} \}$,
\item $\Phi_{\beta} = (\mathbb{Q} \Phi_{I} + \mathbb{Q}\beta) \cap \Phi$ if $\beta \in \Psi$,
\item $\Phi_{\beta}^{+} = \Phi_{\beta} \cap \Phi^{+}$.

\end{itemize}

We now state a condition on $\lambda$ and $I$.

\noindent \emph{Condition (*)}: for all $\beta \in \Psi_{\lambda}^{+}$, there is a $\gamma \in \Phi_{\beta}$ with $\langle \lambda + \rho, \gamma^{\lor} \rangle = 0$ and $s_{\beta} \gamma \in \Phi_{I}$.

\begin{mythm}
If condition (*) holds, then $M_{I}(\lambda)$ is irreducible.
\end{mythm}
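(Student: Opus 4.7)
The plan is to deduce this from Jantzen's sum formula for generalised Verma modules, following the treatment of \cite[Theorem 9.13]{Humphreys} closely. Since $M_{I}(\lambda)$ is a highest-weight module with unique simple quotient $L(\lambda)$, irreducibility is equivalent to the vanishing of the first term $M_{I}(\lambda)^{1}$ in the Jantzen filtration $M_{I}(\lambda) = M_{I}(\lambda)^{0} \supseteq M_{I}(\lambda)^{1} \supseteq \cdots$. The central input is the sum formula
\[
\sum_{i \geq 1} \mathrm{ch}\, M_{I}(\lambda)^{i} \;=\; \sum_{\beta \in \Psi_{\lambda}^{+}} \mathrm{ch}\, M_{I}(s_{\beta} \cdot \lambda),
\]
with the convention that $M_{I}(\mu)$ is read as zero whenever $\mu$ fails to be $I$-dominant integral. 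Granted this, it suffices to show that condition (*) forces $M_{I}(s_{\beta} \cdot \lambda) = 0$ for every $\beta \in \Psi_{\lambda}^{+}$.

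Fix such a $\beta$ and let $\gamma \in \Phi_{\beta}$ be supplied by (*). After replacing $\gamma$ with $-\gamma$ if necessary, I may arrange that $\alpha := s_{\beta} \gamma$ lies in $\Phi_{I}^{+}$ (the orthogonality $\langle \lambda + \rho, \gamma^{\lor} \rangle = 0$ is preserved under $\gamma \mapsto -\gamma$). Using the dot-action identity $s_{\beta} \cdot \lambda + \rho = s_{\beta}(\lambda + \rho)$, together with the fact that $s_{\beta}$ is an isometry and $(s_{\beta} \gamma)^{\lor} = s_{\beta}(\gamma^{\lor})$, I compute
\[
\langle s_{\beta} \cdot \lambda + \rho,\; \alpha^{\lor} \rangle \;=\; \langle s_{\beta}(\lambda + \rho),\; s_{\beta}(\gamma^{\lor}) \rangle \;=\; \langle \lambda + \rho,\; \gamma^{\lor} \rangle \;=\; 0.
\]
Therefore $\langle s_{\beta} \cdot \lambda, h_{\alpha} \rangle = -\langle \rho, h_{\alpha} \rangle \leq -1$, using that $\langle \rho, \delta^{\lor} \rangle$ is a positive integer for every $\delta \in \Phi^{+}$. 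In particular $\langle s_{\beta} \cdot \lambda, h_{\alpha} \rangle \notin \mathbb{N}_{0}$ with $\alpha \in \Phi_{I}^{+}$, so $s_{\beta} \cdot \lambda$ violates $I$-dominance and hence $M_{I}(s_{\beta} \cdot \lambda) = 0$. Summing over $\beta$ forces $M_{I}(\lambda)^{1} = 0$, which is the desired irreducibility.

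The main obstacle is the sum formula itself, which relies on Jantzen's deformation argument and the contravariant form on $M_{I}(\lambda)$. However, since the statement concerns only the underlying $\ug$-module structure of $M_{I}(\lambda)$ and $K$ has characteristic zero, extending scalars to an algebraic closure of $K$ reduces the problem to the classical setting of \cite[Theorem 9.13]{Humphreys}, so no genuinely new argument is required beyond citing the book.
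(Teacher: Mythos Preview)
The paper does not actually prove this statement: it simply records it as a citation, writing ``Proved in \cite[Corollary 3]{Jantzen}, stated in \cite[Corollary 9.13]{Humphreys}.'' Your proposal correctly unpacks exactly that cited argument---Jantzen's sum formula for the parabolic filtration together with the observation that condition~(*) forces each $s_{\beta}\cdot\lambda$ to fail $I$-dominance (your computation $\langle s_{\beta}\cdot\lambda + \rho, \alpha^{\lor}\rangle = 0$ with $\alpha \in \Phi_{I}^{+}$ indeed gives $\langle s_{\beta}\cdot\lambda, \alpha^{\lor}\rangle \leq -1$, and since positive coroots are non-negative integer combinations of simple coroots this rules out $I$-dominance)---so your approach is both correct and precisely the one the paper defers to.
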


\begin{proof}
Proved in \cite[Corollary 3]{Jantzen}, stated in \cite[Corollary 9.13]{Humphreys}.
\end{proof}

\noindent The following condition, which is stronger than condition (*), will also be useful.

\noindent \emph{Condition (**)}: $\Psi^{+}_{\lambda} = \emptyset$, that is, for any $\beta \in \Phi^{+}$ such that $\langle \lambda + \rho, \beta^{\lor} \rangle \in \mathbb{N}$ we have $\beta \in \Phi_{I}$.

We can now prove the main result of this section.

\begin{mythm}
\label{sl3thm}
Suppose $\mathfrak{g} = \mathfrak{sl}_{3}(\Of)$, and $G$ is an $F$-uniform group with $L_{G} = p^{n+1} \mathfrak{g}$. Suppose $\lambda$ is a weight such that $\lambda(p^{n} \mathfrak{h}_{R}) \subseteq R$ and that $\lambda$ is not dominant integral (i.e. such that $L(\lambda)$ is infinite-dimensional over $K$). Then $\widehat{L(\lambda)}$ is a faithful $KG$-module.
\end{mythm}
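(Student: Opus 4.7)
The plan is to combine Theorem~\ref{mainthm} (faithfulness of affinoid generalised Verma modules) with Proposition~\ref{reflectionprop} through a case analysis driven by Jantzen's irreducibility criterion. I parametrise $\lambda$ by $a_i := \langle \lambda + \rho, \alpha_i^\lor \rangle$ for $i = 1, 2$ and $c := a_1 + a_2 = \langle \lambda + \rho, (\alpha_1 + \alpha_2)^\lor \rangle$, using that $A_2$ is simply-laced; the non-dominant-integral hypothesis then amounts to some $a_i \notin \mathbb{N}$.

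First I handle the direct subcases, in which Jantzen's criterion yields $L(\lambda) = M_I(\lambda)$ for some totally proper $I \subseteq \Delta$, so that Theorem~\ref{mainthm} gives faithfulness at once. This covers: (i) $a_1, a_2, c \notin \mathbb{N}$, so that condition~(**) holds for $I = \emptyset$ and $M(\lambda) = L(\lambda)$ is a Verma module; (ii) some $a_i \in \mathbb{N}$ with both $a_{3-i}$ and $c$ lying outside $\mathbb{N}$, so that condition~(**) holds for $I = \{\alpha_i\}$; and (iii) the boundary case $a_i \in \mathbb{N}$, $a_{3-i} = 0$, $c \in \mathbb{N}$, where condition~(*) applies with $\gamma = \alpha_{3-i}$ (since $s_{\alpha_1+\alpha_2}$ sends $\alpha_{3-i}$ to $-\alpha_i \in \Phi_I$).

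For the remaining $\lambda$ I reduce via Proposition~\ref{reflectionprop}: finding a simple $\alpha \in \Delta$ with $\langle \lambda + \rho, \alpha^\lor \rangle \notin \mathbb{N}_0$ gives $\Ann \widehat{L(\lambda)} \subseteq \Ann \widehat{L(s_\alpha \cdot \lambda)}$, so it is enough that $s_\alpha \cdot \lambda$ fall into a directly handled case. For instance, when both $a_1, a_2 \notin \mathbb{N}$ but $c \in \mathbb{N}$ (which forces both $a_i$ to be non-integers), setting $\mu = s_{\alpha_1} \cdot \lambda$ produces parameters $(a_1^\mu, a_2^\mu, c^\mu) = (-a_1, c, a_2)$, in which $a_2^\mu = c \in \mathbb{N}$ while $a_1^\mu$ and $c^\mu$ lie outside $\mathbb{N}$, so case~(ii) applies to $\mu$.

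The main obstacle I expect is the case $a_1 \in \mathbb{N}$, $a_2 \in \mathbb{Z}_{< 0}$, $c \in \mathbb{N}$ (and its symmetric mirror). Here $M_{\{\alpha_1\}}(\lambda)$ is genuinely reducible: the singular vector of $M(\lambda)$ of weight $s_{\alpha_1+\alpha_2} \cdot \lambda = \lambda - c(\alpha_1+\alpha_2)$ survives in the quotient $M(\lambda)/M(s_{\alpha_1} \cdot \lambda)$, because $a_2 < 0$ prevents $\lambda - c(\alpha_1+\alpha_2)$ from being expressible as $s_{\alpha_1} \cdot \lambda - \nu$ for $\nu \in \mathbb{N}_0 \Delta$. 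At the same time, $\alpha_2$ is the only simple root satisfying the hypothesis of Proposition~\ref{reflectionprop} at $\lambda$, and $s_{\alpha_2} \cdot \lambda$ is dominant integral, whose $L$ is finite-dimensional and therefore not faithful. Resolving this case will require a further idea, such as extending Proposition~\ref{reflectionprop} to the non-simple reflection $s_{\alpha_1+\alpha_2}$ by choosing a basis of $\mathfrak{h}$ dual to $\{\alpha_1+\alpha_2, \gamma\}$ with $\gamma(h_{\alpha_1+\alpha_2}) = 0$ (e.g.\ $\gamma = \alpha_1 - \alpha_2$), or a separate argument specific to this block.
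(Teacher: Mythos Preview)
Your case analysis and use of Proposition~\ref{reflectionprop} match the paper's approach almost exactly, and you have correctly isolated the one genuine gap: the regular integral case $a_1 \in \mathbb{N}$, $a_2 \in \mathbb{Z}_{<0}$, $c \in \mathbb{N}$ (equivalently $\lambda = s_{\alpha_2}\cdot\mu$ with $\mu$ dominant integral). Neither of your suggested fixes works, however. Extending Proposition~\ref{reflectionprop} to $s_{\alpha_1+\alpha_2}$ is of no use here, since its hypothesis requires $\langle \lambda+\rho,(\alpha_1+\alpha_2)^\lor\rangle \notin \mathbb{N}_0$, but $c\in\mathbb{N}$; and the only simple reflection available, $s_{\alpha_2}$, lands you on the dominant integral $\mu$, which is finite-dimensional and hence unfaithful.

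The paper resolves this case by a different mechanism altogether: it exploits the short exact sequence
\[
0 \longrightarrow L(\lambda) \longrightarrow M_{\{\alpha_1\}}(\mu) \longrightarrow L(\mu) \longrightarrow 0
\]
(coming from the known composition series of $M_{\{\alpha_1\}}(\mu)$ for dominant integral $\mu$ in type $A_2$). After completion, $\widehat{M_{\{\alpha_1\}}(\mu)}$ is faithful over $KG$ by Theorem~\ref{mainthm}, while $\widehat{L(\mu)}=L(\mu)$ is finite-dimensional and so admits some $0\neq y\in KG$ annihilating it. Then for any $x\in\Ann_{KG}\widehat{L(\lambda)}$ one has $xy\cdot\widehat{M_{\{\alpha_1\}}(\mu)}=0$, hence $xy=0$, and since $KG$ is a domain $x=0$. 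The remaining regular integral weights are then reached from this one by chains of simple reflections via Proposition~\ref{reflectionprop}. This extension-plus-domain argument is the missing idea in your proposal.
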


\begin{mydef}
We say that a weight $\lambda \in \mathfrak{h}_{K}^{*}$ is \emph{singular} if $\langle \lambda + \rho, \alpha^{\lor} \rangle = 0$ for some $\alpha \in \Phi^{+}$. If $\lambda$ is not singular, we say that $\lambda$ is \emph{regular}.
\end{mydef}

\begin{proof}[Proof of Theorem \ref{sl3thm}]
The proof will proceed case-by-case. In each case, we write $\Delta = \{ \alpha, \beta \}$ and write $\omega_{1}, \omega_{2}$ for the fundamental weights corresponding to $\alpha, \beta$ respectively. We know from Theorem \ref{mainthm} that if $L(\lambda) = M_{I}(\lambda)$ for some $I \subseteq \{ \alpha, \beta \}$ then the result is true for $\lambda$.

\medskip

\noindent \emph{Case 1}: $\lambda$ is a singular weight.

\medskip

\noindent Say $\lambda = a \omega_{1} + b \omega_{2}$, where $a,b \in \pi^{-n}R$. Since $\lambda$ is singular, we know that either $a + 1= 0, b +1 = 0$ or $a+b+2 = 0$.

First, suppose $b=-1$, so $\lambda = a \omega_{1} - \omega_{2}$. If $a \notin \mathbb{N}_{0}$, then $\lambda$ and $I = \emptyset$ satisfy condition (**), meaning $L(\lambda) = M(\lambda)$ and the result holds for $\lambda$. Otherwise, if $a \in \mathbb{N}_{0}$ then we can see that condition (*) holds for $I = \{ \alpha \}$ and $\lambda$. This is because $\Psi_{\lambda}^{+} = \{ \alpha + \beta \}$ and $\beta \in \Phi_{\alpha + \beta}$ satisfies $\langle \lambda + \rho, \beta^{\lor} \rangle = 0$, $s_{\alpha + \beta} \beta = -\alpha \in \Phi_{I}$. Thus $L(\lambda) = M_{I}(\lambda)$ and the result follows.

The case $a = -1$ now follows by symmetry. So suppose $a + b +2 = 0$. We cannot have both $a \in \mathbb{N}_{0}$ and $b \in \mathbb{N}_{0}$, so assume without loss of generality that $a \notin \mathbb{N}_{0}$. Then $\Ann_{\Ug} \widehat{L(\lambda)} \subseteq \Ann_{\Ug} \widehat{L(s_{\alpha} \cdot \lambda)}$ by Proposition \ref{reflectionprop}. Moreover, $s_{\alpha} \cdot \lambda = -(a+2) \omega_{1} + (a+b+1) \omega_{2} = -(a+2) \omega_{1} -\omega_{2}$, so we already know $\Ann_{KG} \widehat{L(s_{\alpha} \cdot \lambda)} = 0$ and hence $\Ann_{KG} \widehat{L(\lambda)} = 0$.

\medskip

\noindent \emph{Case 2}: $\lambda$ is a regular, non-integral weight.

\medskip

\noindent Say $\lambda = a \omega_{1} + b \omega_{2}$. First, suppose $a \in \mathbb{N}_{0}$. Then $b \notin \mathbb{Z}$. So condition (*) is met with $I = \{\alpha \}$, and $L(\lambda) = M_{\{\alpha \}}(\lambda)$. Thus $\Ann_{KG} \widehat{L(\lambda)} = 0$.

Now, suppose $a \in \mathbb{Z}_{<0}$ (so $a \leq -2$ since $\lambda$ is regular). Then $\Ann_{\Ug} \widehat{L(\lambda)} \subseteq \Ann_{\Ug} \widehat{L(s_{\alpha} \cdot \lambda)}$ by Proposition \ref{reflectionprop}, and $s_{\alpha} \cdot \lambda = -(a+2)\omega_{1} + (a+b+1) \omega_{2}$ is of the form covered above.

So the result holds for $\lambda$ if $a \in \mathbb{Z}$, and also holds if $b \in \mathbb{Z}$ by symmetry.

Now suppose $a,b \notin \mathbb{Z}$. If $a + b + 2 \notin \mathbb{N}$, then $L(\lambda) = M(\lambda)$ since condition (**) holds with $I = \emptyset$. So assume $a + b +2 \in \mathbb{N}$. Then $\Ann_{\Ug} \widehat{L(\lambda)} \subseteq \Ann_{\Ug} \widehat{L(s_{\alpha} \cdot \lambda)}$ and $s_{\alpha} \cdot \lambda = -(a+2) \omega_{1} + (a+b+1) \omega_{2}$ with $a+b+1 \in \mathbb{Z}$. So by the cases above, $\Ann_{KG} \widehat{L(s_{\alpha} \cdot \lambda)} = 0$ and the result holds for $\lambda$.

\medskip

\noindent \emph{Case 3}: $\lambda$ is a regular, integral weight.

\medskip

\noindent First, suppose $\lambda = s_{\beta} \cdot \mu$ where $\mu$ is a dominant integral weight. By \cite[Example 9.5]{Humphreys}, $M_{\{\alpha \}}(\mu)$ has composition factors $L(\mu)$ and $L(\lambda)$.  So the maximal submodule of $M_{\{\alpha \}}(\mu)$ must be isomorphic to $L(\lambda)$, and $M_{\{\alpha \}}(\mu) / L(\lambda) \cong L(\mu)$.

Now, $\widehat{M_{\{\alpha \}}(\mu)}$ is  faithful over $KG$ by Theorem \ref{mainthm}. Moreover, $\widehat{L(\lambda)} \subseteq \widehat{M_{\{\alpha \}}(\mu)}$ with $\widehat{M_{\{\alpha \}}(\mu)} / \widehat{L(\lambda)} \cong \widehat{L(\mu)}$. Since $\mu$ is dominant integral, $L(\mu)$ is finite-dimensional. Therefore $\Ann_{KG} \widehat{L(\mu)} \neq 0$, for instance a sufficiently large power of $\exp(p^{n+1}e_{\alpha}) -1$ will annihilate $\widehat{L(\mu)}$.

Fix $0 \neq y \in \Ann_{KG} \widehat{L(\mu)}$ So $y \cdot \widehat{M_{\{\alpha \}}(\mu)} \subseteq \widehat{L(\lambda)}$. Suppose $x \in \Ann_{KG} \widehat{ L(\lambda)}$. Then $x y \cdot \widehat{M_{\{\alpha \}}(\mu)} = 0$. So $xy = 0$, and thus $x = 0$ since $KG$ is a domain. So $\Ann_{KG} \widehat{L(\lambda)} = 0$.

Now, in general $\lambda$ will be linked to a dominant integral weight. So $\lambda$ has one of the forms: $s_{\alpha} \cdot \mu, s_{\beta} \cdot \mu, s_{\alpha} s_{\beta} \cdot \mu, s_{\beta} s_{\alpha} \cdot \mu, s_{\alpha} s_{\beta} s_{\alpha} \cdot \mu$.
Using Proposition \ref{reflectionprop} we have $\Ann_{KG} \widehat{L(s_{\alpha} s_{\beta} s_{\alpha} \cdot \mu)} \subseteq \Ann_{KG} \widehat{L(s_{\beta} s_{\alpha} \cdot \mu)} \subseteq \Ann_{KG} \widehat{L( s_{\alpha} \cdot \mu)} = 0$. By symmetry, also $\Ann_{KG} \widehat{L(s_{\alpha} s_{\beta} \cdot \mu)} = \Ann_{KG} \widehat{L( s_{\beta}  \cdot \mu)} = 0$.

\end{proof}

\begin{proof}[Proof of theorem \ref{theorem2}]
Suppose $M$ is an infinite-dimensional highest-weight module for $\ug$ with highest-weight $\lambda$, where $\lambda(p^{n} \mathfrak{h}_{R}) \subseteq R$. Then $M$ is a quotient of the  Verma module $M(\lambda)$.

The composition factors of $M(\lambda)$ all have the form $L(\mu)$ for some weights $\mu \in \lambda - \mathbb{N}_{0} \Delta$, which also satisfy $\mu(p^{n} \mathfrak{h}_{R}) \subseteq R$. Therefore all composition factors of $M$ also have this form.

Since $M$ is infinite-dimensional, $M$ has a composition factor of the form $L(\mu)$ which is infinite-dimensional. Then by exactness of the completion functor (Proposition \ref{flatnessprop}), $\hat{M}$ has a subquotient of the form $\widehat{L(\mu)}$. By Theorem \ref{sl3thm} this composition factor is faithful over $KG$, therefore $\hat{M}$ is faithful over $KG$.
\end{proof}

\section{Application to prime ideals of Iwasawa algebras}
\label{applicationsection}
For this section we assume $\mathfrak{g}$ is an $\Of$-lattice of a split simple $\mathbb{Z}_{p}$-Lie algebra with root system $\Phi$, and a fixed choice of root basis $\Delta$, such that $\mathfrak{g}$ has a Chevalley basis and a Cartan subalgebra $\mathfrak{h}$. We make the assumption that $R$ has uniformiser $p$ (since this is assumed in \cite[\S 10]{irredreps}). Assume $G$ is a uniform pro-p group with $L_{G} = p^{n+1} \mathfrak{g}$, that $p$ does not divide the determinant of the Cartan matrix of any root subsystem of $\Phi$, and $p$ is a very good prime for $\Phi$ in the sense of \cite[\S 6.8]{irredreps}. (In particular, for any given $\Phi$, all sufficiently large primes meet these conditions, while for type $A_{2}$ these conditions are met precisely when $p>3$).

We also take $\mathbf{G}$ to be a split simple, connected, simply connected affine algebraic group over $\Of$ such that $G \leq \mathbf{G}(\mathbb{Q}_{p})$ is an open subgroup and the Lie algebra associated to $\mathbf{G}$ is $\mathfrak{g}$. For instance if $\mathfrak{g} = \mathfrak{sl}_{3}(\mathbb{Z}_{p})$, we take $\mathbf{G} = \mathbf{SL_{3}}$ and $G = \ker ( SL_{3}(\mathbb{Z}_{p}) \rightarrow SL_{3}(\mathbb{Z}_{p} / p \mathbb{Z}_{p}))$.

\begin{mythm}
\label{application}
Suppose we can give a positive answer to question B in the case of $ \mathfrak{g}$, for any deformation parameter $n$ and for any choice of the field $K$.

Then any prime ideal of $KG$ is the annihilator of an irreducible finite-dimensional module.
\end{mythm}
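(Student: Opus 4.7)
The plan is to connect prime ideals of $KG$ to primitive ideals of the completed enveloping algebra, where Question A and the hypothesis (Question B) can be applied. Let $P$ be a non-zero prime ideal of $KG$. The first step is to construct, from $P$, a primitive ideal $J$ of $\widehat{U(\mathfrak{g})_{n',K'}}$ for some finite field extension $K' \supseteq K$ and some deformation parameter $n' \geq n$ (the latter corresponding to an open subgroup $G_{0} \leq G$), such that $J$ has $K'$-rational central character and the contraction $J \cap K'G_{0}$ is a non-zero proper ideal that tracks $P$ via scalar extension and restriction. Enlarging $K$ to $K'$ is permissible since the hypothesis provides Question B for any field, and shrinking $G$ to $G_{0}$ (i.e.\ increasing the deformation parameter) is permissible since it is assumed for any $n$.

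With such a $J$ in hand, Question A (proved in \cite{Ioan}) yields a simple affinoid highest-weight module $\widehat{M}$ for $\widehat{U(\mathfrak{g})_{n',K'}}$ with $J = \Ann \widehat{M}$. The hypothesis (Question B) now forces $\widehat{M}$ to be finite-dimensional over $K'$, since otherwise $\widehat{M}$ would be faithful as a $K'G_{0}$-module, giving $J \cap K'G_{0} = 0$ and contradicting non-triviality of $P$. A finite-dimensional simple module for $\widehat{U(\mathfrak{g})_{n',K'}}$ restricts, via the embedding $KG \hookrightarrow \widehat{U(\mathfrak{g})_{n',K'}}$, to a finite-dimensional $KG$-module. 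A standard argument exploiting primeness of $P$ and noetherianity of $KG$ then identifies $P$ as the annihilator of a simple $KG$-subquotient of $\widehat{M}$, completing the argument.

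The principal obstacle is the first step: producing a primitive ideal $J$ from a given prime $P$ with the correct contraction property. This is where the hypotheses on $\mathfrak{g}$, $\mathbf{G}$, and $p$ (the very good prime condition, uniformiser $p$, algebraicity of $\mathbf{G}$ with $G$ open in $\mathbf{G}(\mathbb{Q}_{p})$) become essential, enabling the application of the structural results of \cite[\S 10]{irredreps} on the interplay between ideals of Iwasawa algebras and ideals of completed enveloping algebras. Once this correspondence is established, the remaining argument is a formal application of Questions A and B, together with a routine descent from $K'G_{0}$ back to $KG$.
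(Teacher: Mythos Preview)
Your overall strategy is right and matches the paper's: pass to a primitive ideal of some $\widehat{U(\mathfrak{g})_{n',K'}}$ with $K'$-rational central character, invoke Question~A to realise it as $\Ann\widehat{L(\mu)}$, then use Question~B. You also correctly flag that the construction of this primitive ideal is the main technical content, drawn from \cite[\S 9--10]{irredreps}.

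However, your final step does not work as written. The ring $KG$ does \emph{not} embed in $\widehat{U(\mathfrak{g})_{n',K'}}$ once $n'>n$: only the smaller Iwasawa algebra $K'G_{0}$ (with $G_{0}=G^{p^{me}}$) does. So a finite-dimensional $\widehat{U(\mathfrak{g})_{n',K'}}$-module $\widehat{M}$ restricts to a $K'G_{0}$-module, not a $KG$-module, and there is no ``standard argument'' identifying $P$ with the $KG$-annihilator of a simple subquotient of $\widehat{M}$. All you actually know at that stage is $P\cap KG_{0}\subseteq\Ann_{K'G_{0}}\widehat{M}$, which does not descend to a statement about $P$ and $KG$-modules.

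The paper sidesteps this by a case split that your outline suppresses. If $KG/P$ is finite-dimensional, an elementary ring-theory lemma (a prime of finite codimension is the annihilator of some composition factor of the quotient) gives the conclusion directly, without ever passing to completed enveloping algebras. If $KG/P$ is infinite-dimensional, the construction from \cite{irredreps} is set up so that the resulting cyclic module $N$ is infinite-dimensional, and one then shows (via a bound on dimensions of finite-dimensional quotients with a fixed central character) that $N$ has an infinite-dimensional simple subquotient. Question~A then produces an \emph{infinite}-dimensional $\widehat{L(\mu)}$, Question~B makes it faithful over $K'G_{0}$, whence $P\cap KG_{0}=0$ and so $P=0$ by \cite[\S 5.7]{verma}. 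In other words, Questions~A and~B are used to prove that non-zero primes of infinite codimension do not exist; the finite-dimensional simple witnessing $P$ comes from the elementary lemma, not from the affinoid machinery. Your dichotomy ``$\widehat{M}$ infinite-dimensional (contradiction) or finite-dimensional (descend)'' collapses: in the paper's construction only the first branch arises, and the contradiction is the whole point.
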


\noindent Note that this theorem together with theorem \ref{theorem2} will imply theorem \ref{theorem3}.

\noindent We start with the following lemma, which is a general result in ring theory.

\begin{mylem}
Suppose $A$ is a unital $K$-algebra and $P$ is a non-zero prime ideal of $A$ of finite codimension in $A$. Then $P$ is the annihilator of an irreducible finite-dimensional $A$-module.
\end{mylem}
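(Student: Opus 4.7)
The plan is to study the quotient $B = A/P$, which by hypothesis is a finite-dimensional $K$-algebra, and to show that $B$ is isomorphic to a matrix algebra over a division ring finite-dimensional over $K$; the unique simple module for such an algebra will then serve as the required finite-dimensional irreducible $A$-module whose annihilator is $P$.

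First I would observe that $B$ is a prime Artinian $K$-algebra (Artinian because it is finite-dimensional over $K$, and prime because $P$ is a prime ideal of $A$). I claim $B$ is semisimple. Indeed, its Jacobson radical $J(B)$ is a nilpotent ideal (being a nil ideal in an Artinian ring), so $J(B)^{n} = 0$ for some $n$; but a nilpotent two-sided ideal in a prime ring must be zero, so $J(B) = 0$ and $B$ is semisimple Artinian. By the Artin--Wedderburn theorem, $B \cong \prod_{i=1}^{r} M_{n_{i}}(D_{i})$ for some finite-dimensional division $K$-algebras $D_{i}$. Primeness of $B$ forces $r = 1$, since in a product of two non-zero rings one finds a pair of non-zero ideals with zero product. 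Hence $B \cong M_{n}(D)$ for some $n \geq 1$ and some finite-dimensional division $K$-algebra $D$.

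Let $V = D^{n}$ be the unique (up to isomorphism) simple $B$-module, which is finite-dimensional over $K$. Pulling back along the quotient $A \twoheadrightarrow B$, we view $V$ as an $A$-module; it is simple as an $A$-module since it is simple over $B$. The annihilator of $V$ in $B = M_{n}(D)$ is zero (a simple ring acts faithfully on its simple module, or directly: any non-zero two-sided ideal of $M_{n}(D)$ is all of $M_{n}(D)$, and $1 \cdot V \neq 0$). Consequently the annihilator of $V$ in $A$ is precisely the preimage of $0$ under $A \twoheadrightarrow B$, namely $P$. Also note $P$ being non-zero plays no role beyond ensuring $V$ is a genuine module (the argument works equally in the case $P = 0$, but the hypothesis is simply there to fit the intended application).

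The only step that requires any care is the reduction to a single matrix factor, and this is immediate from primeness together with Artin--Wedderburn; there is no real obstacle in the proof.
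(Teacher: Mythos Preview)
Your proof is correct, but it takes a different route from the paper's. The paper argues more directly: it views $M = A/P$ as an $A$-module with $\Ann_A M = P$, takes a composition series $0 = M_0 < \cdots < M_r = M$ with simple factors $L_j$ and annihilators $I_j \supseteq P$, observes that $I_1 \cdots I_r \subseteq \Ann_A M = P$, and then uses primeness of $P$ to conclude that some $I_j \subseteq P$, hence $I_j = P$. Your approach instead invokes structure theory: you identify $A/P$ as a prime Artinian ring, kill the Jacobson radical via primeness, and apply Artin--Wedderburn to exhibit $A/P$ as a single matrix ring $M_n(D)$, whose unique simple module does the job. The paper's argument is more elementary, needing only the definition of a prime ideal and the existence of composition series, while yours imports a bigger theorem but in return yields the stronger conclusion that $A/P$ is simple Artinian (so in particular there is, up to isomorphism, a \emph{unique} such irreducible module). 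Both are perfectly fine for the purpose here.
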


\begin{proof}
Let $M = A / P$, an $A$-module which is finite-dimensional over $K$ and which satisfies $\Ann_{A} M = P$. Then $M$ has a composition series, say $0 = M_{0} < M_{1} < \dots < M_{r} = M$. Let $L_{j} = M_{j} / M_{j-1}$ for $1 \leq j \leq r$, so each $L_{j}$ is an irreducible finite-dimensional $A$-module. Let $I_{j} = \Ann_{A} L_{j}$ for each $j$. So $P \subseteq I_{j}$ for each $j$.

Then $I_{1} \dots I_{r} \subseteq \Ann_{A} M = P$, so since $P$ is prime, we have $I_{j} \subseteq P$ for some $j$, thus $I_{j} =P$, and the result follows.

\end{proof}

\noindent Suppose $P$ is a non-zero prime ideal of $KG$. If $P$ has finite codimension in $KG$, it follows from the lemma above that $P$ is the annihilator of an irreducible finite-dimensional module. So now assume $KG / P$ is infinite-dimensional, we then wish to show that $P = 0$. We write $P_{m} = KG^{p^{m}} \cap P$ for any $m$.

\begin{myprop}
For sufficiently large $m \in \mathbb{N}_{0}$, the $\widehat{U(\mathfrak{g})_{mn,K}}$-module $M = (\widehat{U(\mathfrak{g})_{mn,K}} \ast G/G^{p^{m}}) \otimes_{KG} KG/P$ is infinite-dimensional. Moreover, $P_{m} \subseteq \Ann_{KG^{p^{m}}}M$ and $M$ is a cyclic $\widehat{U(\mathfrak{g})_{mn,K}}$-module.
\end{myprop}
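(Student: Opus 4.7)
The strategy is to exploit the crossed product decomposition $KG \cong KG^{p^{m}} \ast (G/G^{p^{m}})$ together with the embedding $KG^{p^{m}} \hookrightarrow \widehat{U(\mathfrak{g})_{mn,K}}$, under which $A := \widehat{U(\mathfrak{g})_{mn,K}} \ast (G/G^{p^{m}})$ is identified with $\widehat{U(\mathfrak{g})_{mn,K}} \otimes_{KG^{p^{m}}} KG$ as a $(\widehat{U(\mathfrak{g})_{mn,K}}, KG)$-bimodule. Under this identification, $M$ becomes the simpler expression
\[ M \;\cong\; \widehat{U(\mathfrak{g})_{mn,K}} \otimes_{KG^{p^{m}}} KG/P \]
as a left $\widehat{U(\mathfrak{g})_{mn,K}}$-module, and the three claims are deduced from this description.

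For the annihilator statement, $P_{m} = P \cap KG^{p^{m}}$ is a two-sided ideal of $KG^{p^{m}}$ which is stable under the conjugation action of $G$, since $G^{p^{m}}$ is normal in $G$ and $P$ is two-sided in $KG$. Given $x \in P_{m}$ and $u \otimes \bar v \in M$, I would push $x$ across the tensor product using the crossed product relations in $A$, obtaining a sum of expressions of the form $u' \otimes \operatorname{Ad}(\bar g^{-1})(x) \cdot \bar w$, where $\operatorname{Ad}(\bar g^{-1})(x) \in P_{m} \subseteq P$ annihilates the right factor in $KG/P$; this forces $x \cdot M = 0$.

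For cyclicity, $M$ is cyclic as an $A$-module generated by $1 \otimes \bar 1$, since it is a quotient of the cyclic $KG$-module $KG/P$. Consequently $M$ is generated as a $\widehat{U(\mathfrak{g})_{mn,K}}$-module by the finite set $\{1 \otimes \bar g : \bar g \in G/G^{p^{m}}\}$. The reduction to a single generator then uses that any two lifts of a coset $\bar g$ differ by an element of $G^{p^{m}} \subseteq KG^{p^{m}} \subseteq \widehat{U(\mathfrak{g})_{mn,K}}$, allowing each $1 \otimes \bar g$ to be expressed as a $\widehat{U(\mathfrak{g})_{mn,K}}$-multiple of $1 \otimes \bar 1$ modulo the left ideal of $A$ generated by $P$.

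The main obstacle is the infinite-dimensionality, which requires choosing $m$ large enough that the natural map $KG/P \to M$ sending $\bar v$ to $1 \otimes \bar v$ is injective. The key input is faithful flatness of $\widehat{U(\mathfrak{g})_{mn,K}}$ over $KG^{p^{m}}$; this is established via the Lemma \ref{PBW}-style description of the completion together with a standard Noetherian flatness argument on the associated graded rings, the choice of $m$ being controlled by the filtration on $KG/P$ inherited from $KG$. Since $KG/P$ is infinite-dimensional by hypothesis, its embedding into $M$ then makes $M$ infinite-dimensional as well.
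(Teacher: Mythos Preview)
Your identification $M \cong \widehat{U(\mathfrak{g})_{mn,K}} \otimes_{KG^{p^{m}}} KG/P$ matches the paper and is the right starting point. However, two of your three arguments have genuine gaps.

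\textbf{Annihilator.} The step ``push $x$ across the tensor product using the crossed product relations'' does not work. For $x \in P_{m}$ and a general $u \otimes \bar v$ with $u \in \widehat{U(\mathfrak{g})_{mn,K}}$, the left action gives $(xu)\otimes \bar v$, and you must move $x$ past $u$. The crossed product relations in $A$ are of the form $\bar g\, a = \operatorname{Ad}(g)(a)\,\bar g$: they tell you how group elements conjugate ring elements, not how to commute an arbitrary $x \in KG^{p^{m}}$ past an arbitrary $u$ in the completion. There is no algebraic identity producing your expression $u' \otimes \operatorname{Ad}(\bar g^{-1})(x)\cdot \bar w$. The paper instead uses a density argument: one checks directly that $P_{m}$ kills the image of $KG^{p^{m}}$ in $M$ (this is easy, since there $xu \in P_{m}$ moves across the tensor), and then uses that $KG^{p^{m}}$ is dense in $\widehat{U(\mathfrak{g})_{mn,K}}$ with respect to the filtration, together with continuity of the action, to conclude that $P_{m}$ kills all of $M$.

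\textbf{Cyclicity.} Your final reduction is not justified: the observation that two lifts of $\bar g$ differ by an element of $G^{p^{m}} \subseteq \widehat{U(\mathfrak{g})_{mn,K}}$ does not show that $1 \otimes \bar g$ lies in $\widehat{U(\mathfrak{g})_{mn,K}}\cdot(1\otimes \bar 1)$; for that you would need $g$ itself (not an element of $G^{p^{m}}$) to live in $\widehat{U(\mathfrak{g})_{mn,K}}$, and in general it does not. The paper simply asserts the natural map $\widehat{U(\mathfrak{g})_{mn,K}} \to M$, $u \mapsto u\otimes \bar 1$, is surjective, relying on the structural results of \cite[\S 10.6]{irredreps}; this is what you should cite rather than attempt an ad hoc argument.

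\textbf{Infinite-dimensionality.} Here your approach genuinely differs from the paper's. The paper invokes the canonical dimension theory of \cite[\S 10.13]{irredreps} to obtain $d(M)=d(KG/P)>0$ for $m$ sufficiently large, and this is precisely where the ``sufficiently large $m$'' enters. Your faithful flatness idea is a reasonable alternative route to showing $KG/P$ embeds in $M$, but note that faithful flatness of $\widehat{U(\mathfrak{g})_{mn,K}}$ over $KG^{p^{m}}$ holds for all $m$ (it follows from the general microlocalisation framework in \cite{irredreps}), so your argument would not explain why any restriction on $m$ is needed; in fact, for this particular conclusion none is.
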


\begin{proof}

We use \cite[\S 10.13]{irredreps}. From the lemma in \S 10.13, the canonical dimension $d(M)$ of a finitely generated $KG$-module (defined in \cite[\S 2.5]{irredreps}) satisfies $d(M) = 0$ if and only if $M$ is finite-dimensional over $K$. Now applying the proof of \cite[Theorem 10.13]{irredreps} we can see that for sufficiently large $m$, the module $M = (\widehat{U(\mathfrak{g})_{mn,K}}  \ast G/G^{p^{m}}) \otimes_{KG} KG/P$ satisfies $d(M) = d(KG / P)$. In particular, $M$ is infinite-dimensional since $KG / P$ is infinite-dimensional.

Now we want to show that $P_{m}$ annihilates $M$. By \cite[Proposition 10.6(b)]{irredreps}, we can see that $\widehat{U(\mathfrak{g})_{mn,K}}\ast G/G^{p^{m}} \cong \widehat{U(\mathfrak{g})_{mn,K}} \otimes_{RG^{p^{m}}} RG = \widehat{U(\mathfrak{g})_{mn,K}} \otimes_{KG^{p^{m}}} KG$. Therefore $M \cong (\widehat{U(\mathfrak{g})_{mn,K}} \otimes_{KG^{p^{m}}} KG) \otimes_{KG} KG / P \cong \widehat{U(\mathfrak{g})_{mn,K}} \otimes_{KG^{p^{m}}} KG / P$ as $\widehat{U(\mathfrak{g})_{mn,K}}$-modules.

Now consider the natural surjection $\widehat{U(\mathfrak{g})_{mn,K}} \rightarrow M$ of $\widehat{U(\mathfrak{g})_{mn,K}}$-modules. This induces a filtration of $M$ with respect to the filtration $\Gamma_{i} \widehat{U(\mathfrak{g})_{mn,K}}$ on $\widehat{U(\mathfrak{g})_{mn,K}}$ (as defined in \S \ref{completions}). We know $KG^{p^{m}}$ is a dense subring of $\widehat{U(\mathfrak{g})_{mn,K}}$ with respect to the filtration $\Gamma$, and therefore the image of $KG^{p^{m}}$ in $M$ is a dense subset of $M$.

Now $P_{m}$ annihilates the image of $KG^{p^{m}}$ in $M = \widehat{U(\mathfrak{g})_{mn,K}} \otimes_{KG^{p^{m}}} KG/P$, and therefore by the density annihilates $M$.

Finally, since there is a surjection $\widehat{U(\mathfrak{g})_{mn,K}} \rightarrow M$ of $\widehat{U(\mathfrak{g})_{mn,K}}$-modules, $M$ is a cyclic module as required.

\end{proof}

\noindent Now we have this result, fix some $m$ such that the $\widehat{U(\mathfrak{g})_{mn,K}} $-module $M = (\widehat{U(\mathfrak{g})_{mn,K}} \ast G/G^{p^{m}}) \otimes_{KG} KG/P$ is infinite-dimensional. 

\begin{myprop}
There is an infinite-dimensional $\widehat{U(\mathfrak{g})_{mn,K}} $-quotient $M'$ of $M$ such that the action of the centre $Z(\widehat{U(\mathfrak{g})_{mn,K}} )$ on $M'$ is locally finite.
\end{myprop}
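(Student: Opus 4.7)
The plan is to exploit the cyclicity of $M$ together with the known structure of the centre $Z := Z(\widehat{U(\mathfrak{g})_{mn,K}})$. Fix a cyclic generator $\hat{v}$ of $M$. Since $Z$ is central in $\widehat{U(\mathfrak{g})_{mn,K}}$, for any $x \in \widehat{U(\mathfrak{g})_{mn,K}}$ one has $Z \cdot (x\hat{v}) = x \cdot (Z\hat{v})$. Hence $Z$ acts locally finitely on a cyclic quotient of $M$ if and only if the $Z$-orbit of the image of $\hat{v}$ is finite-dimensional over $K$. So the problem reduces to exhibiting a left $\widehat{U(\mathfrak{g})_{mn,K}}$-submodule $N \subsetneq M$ with $M/N$ still infinite-dimensional and such that the $Z$-annihilator of $\hat{v} + N$ has finite codimension in $Z$.

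First I would recall, from \cite{irredreps}, that $Z$ is an affinoid $K$-algebra (isomorphic to a Tate algebra essentially of dimension $\mathrm{rank}(\mathfrak{g})$), hence Noetherian and Jacobson, with residue field at every maximal ideal a finite extension of $K$. I would then split into two cases. If $Z/\Ann_Z(\hat{v})$ is already finite-dimensional over $K$, then $Z\hat{v}$ is finite-dimensional, so the preceding observation gives that $Z$ acts locally finitely on all of $M$, and we may simply take $M' = M$. Otherwise, the plan is to find a maximal ideal $\mathfrak{m} \subset Z$ containing $\Ann_Z(\hat{v})$ such that the cyclic quotient $M' := M/\widehat{U(\mathfrak{g})_{mn,K}}\mathfrak{m}\hat{v}$ is still infinite-dimensional. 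On such an $M'$ the image $\hat{v}'$ of $\hat{v}$ satisfies $\mathfrak{m}\hat{v}' = 0$, so $Z\hat{v}' \cong Z/\mathfrak{m}$ is finite-dimensional, and by the cyclic reduction $Z$ acts locally finitely on all of $M'$.

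The main obstacle is the existence of such an $\mathfrak{m}$ with $M'$ still infinite-dimensional. This is a dimension-theoretic step and is where I would draw on the canonical dimension theory of \cite{irredreps}: by the lemma in \S 10.13 of \emph{loc.~cit.}\ cited in the previous proposition, $M$ infinite-dimensional is equivalent to $d(M) \geq 1$. The strategy is to iteratively quotient $M$ by the submodules generated by central elements, at each stage choosing an element of $Z$ whose image in the appropriate central quotient is not nilpotent on the current module, and invoking the fact that quotienting by such a central element drops the canonical dimension by at most one. After at most $\dim Z$ such steps we arrive at a quotient $M'$ on which the whole image of $Z$ acts through a finite-dimensional (in fact Artinian) quotient of $Z$, and $d(M') \geq 1$ by construction, so $M'$ is infinite-dimensional. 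The most delicate point is ensuring at each step that the chosen central element genuinely decreases canonical dimension, which one handles by replacing it by its image modulo the Jacobson radical of the currently acting quotient of $Z$; this is exactly where the affinoid/Noetherian structure of $Z$ is essential.
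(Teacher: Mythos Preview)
The paper's proof is a two-line citation: apply \cite[Proposition 9.4]{irredreps} to produce a quotient $M'$ with $d(M')>0$ which is locally finite over the $\mathbf{G}$-invariants $\widehat{U(\mathfrak{g})_{nm,K}^{\mathbf{G}}}$, and then invoke \cite[Theorem C]{verma} to identify that invariant subring with the centre. What you are doing is, in effect, attempting to reprove the cited Proposition~9.4 from scratch.

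Your reduction in the first two paragraphs is sound: for a cyclic module, local finiteness of the $Z$-action is equivalent to the $Z$-orbit of a generator being finite-dimensional, and this in turn would follow if one could find a maximal ideal $\mathfrak{m}\subset Z$ with $M/\widehat{U(\mathfrak{g})_{mn,K}}\,\mathfrak{m}\hat v$ still infinite-dimensional. The difficulty is exactly where you locate it.

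The gap is in your third paragraph. You are tracking two different quantities: the Krull dimension of the image of $Z$ acting on the current quotient (which you want to drive to zero so that $Z$ acts through an Artinian, hence finite-dimensional, quotient) and the canonical dimension $d$ of that quotient (which must remain $\geq 1$). Each iterative step, on your account, lowers the former by one and the latter by \emph{at most} one. But the only hypothesis in force is $d(M)\geq 1$, whereas the number of steps needed is $\dim Z=\operatorname{rank}\mathfrak{g}$, which is typically larger than $1$. So already after a single step you could have $d=0$; the assertion ``$d(M')\geq 1$ by construction'' is unjustified. (Your final sentence, about ensuring the chosen element ``genuinely decreases canonical dimension'', suggests some conflation of the two dimensions: decreasing the canonical dimension of the module is precisely what you must \emph{avoid}.) The argument in \cite[\S 9]{irredreps} does not proceed by naive iterated quotienting of this kind; it passes to the associated graded situation and controls the fibre dimension more carefully so as to locate a suitable maximal ideal without losing canonical dimension.
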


\begin{proof}
By applying \cite[Proposition 9.4]{irredreps}, there is a quotient $M'$ of $M$ such that $d(M') > 0$ (which means $M'$ is infinite-dimensional) and such that $M'$ is locally finite over $\widehat{U(\mathfrak{g})_{nm,K}^{\mathbf{G}}}$. Now, by \cite[Theorem C]{verma}, $Z(\widehat{U(\mathfrak{g})_{mn,K}}) = \widehat{U(\mathfrak{g})_{nm,K}^{\mathbf{G}}}$ and the result follows.
\end{proof}

\noindent So now take $M'$ as in the proposition above. Since $P_{m}$ annihilates $M$, we can see that $P_{m} $ also annihilates $M'$. Also $M$ is a cyclic $\widehat{U(\mathfrak{g})_{mn,K}}$-module, so is $M'$.

\begin{myprop}
There is a finite field extension $K'$ of $K$, with ramification index $e$, uniformiser $\pi'$ and ring of integers $R'$, together with a weight $\lambda \in \mathfrak{h}_{K'}$ such that $\lambda(\pi^{nme}\mathfrak{h}_{R'}) \subseteq R'$, and an infinite-dimensional cyclic $\widehat{U(\mathfrak{g})_{nme,K'}}$-module $N$ with central character $\chi_{\lambda}$ such that $P_{me}$ annihilates $N$.
\end{myprop}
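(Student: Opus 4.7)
The plan is to decompose $M'$ by generalised central characters of the centre $Z := Z(\widehat{U(\mathfrak{g})_{mn,K}})$, extract an infinite-dimensional cyclic component with a genuine central character, then base-change to the residue field at the corresponding maximal ideal and translate into a weight via the Harish-Chandra isomorphism.

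Since $M'$ is cyclic with some generator $v$ and $Z$ acts locally finitely, the image of $Z$ in $\End_{\widehat{U(\mathfrak{g})_{mn,K}}}(M')$ is determined by the finite-dimensional subspace $Z \cdot v$, so $A := Z/\Ann_Z(M')$ is a finite-dimensional commutative $K$-algebra. The Chinese remainder decomposition $A \cong \prod_i A_i$ indexed by the maximal ideals $\mathfrak{m}_i$ of $A$ lifts, via the central idempotents $e_i \in A$, to a decomposition $M' = \bigoplus_i e_i M'$ of $\widehat{U(\mathfrak{g})_{mn,K}}$-modules, with each $e_i M' = \widehat{U(\mathfrak{g})_{mn,K}} \cdot (e_i v)$ still cyclic. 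Since $M'$ is infinite-dimensional, at least one summand $M'_i := e_i M'$ is; fix such an $i$. Since $\mathfrak{m}_i$ is nilpotent on $M'_i$, the filtration $M'_i \supseteq \mathfrak{m}_i M'_i \supseteq \mathfrak{m}_i^2 M'_i \supseteq \cdots$ terminates at $0$, and some consecutive quotient $Q := \mathfrak{m}_i^j M'_i / \mathfrak{m}_i^{j+1} M'_i$ is infinite-dimensional. As $Q$ is finitely generated over $\widehat{U(\mathfrak{g})_{mn,K}}$ (by the images of $\mathfrak{m}_i^j \cdot v$), it is a finite sum of cyclic submodules, and one such submodule $N_0$ must be infinite-dimensional. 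Then $N_0$ is cyclic, infinite-dimensional, annihilated by $\mathfrak{m}_i$, and hence has honest central character $\chi \colon Z \to A_i/\mathfrak{m}_i A_i$.

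Set $K' := A_i/\mathfrak{m}_i A_i$, a finite extension of $K$ with ramification index $e$, uniformiser $\pi'$ and ring of integers $R'$, and let $N := N_0 \otimes_K K'$. Because $\pi = u \cdot (\pi')^e$ for some unit $u \in R'^{\times}$, the filtrations $\pi^i U(p^{mn}\mathfrak{g}_{R'})$ and $(\pi')^i U(p^{nme}\mathfrak{g}_{R'})$ on $U(\mathfrak{g}_{K'})$ determine equivalent topologies up to a uniform shift, so the cyclic $(\widehat{U(\mathfrak{g})_{mn,K}} \otimes_K K')$-action on $N$ extends by continuity to an infinite-dimensional cyclic $\widehat{U(\mathfrak{g})_{nme,K'}}$-module with $K'$-linear central character $\chi$. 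The affinoid Harish-Chandra isomorphism, obtained by completing the classical Harish-Chandra map and using \cite[Theorem C]{verma} to identify $Z$ as the $\mathbf{G}$-invariants, identifies $Z(\widehat{U(\mathfrak{g})_{nme,K'}})$ with a Weyl-invariant subalgebra of $\widehat{U(\mathfrak{h})_{nme,K'}}$; since $\mathbf{G}$ is split over $K$, any $K'$-point of this spectrum is represented by a Weyl orbit of $K'$-rational weights, so pulling $\chi$ back yields a weight $\lambda \in \mathfrak{h}_{K'}^*$ with $\chi = \chi_\lambda$. Filtered continuity of $\chi$, with $\chi(\Gamma_0 Z) \subseteq R'$, forces $\lambda(p^{nme}\mathfrak{h}_{R'}) \subseteq R'$.

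Finally, $P_{me} := KG^{p^{me}} \cap P \subseteq KG^{p^m} \cap P = P_m$, which annihilates $M'$ by the previous proposition; hence $P_{me}$ annihilates $N_0$ and its base change $N$. The embedding $K'G^{p^{me}} \hookrightarrow \widehat{U(\mathfrak{g})_{nme,K'}}$ of \S\ref{Iwasawa} realises this as an annihilation inside the completion, as required. The principal technical obstacle is the filtration comparison: one must verify that the $(\widehat{U(\mathfrak{g})_{mn,K}} \otimes_K K')$-action on $N_0 \otimes_K K'$ is continuous in the strictly finer $\pi'$-adic topology defining $\widehat{U(\mathfrak{g})_{nme,K'}}$, so that the action extends without loss of cyclicity or infinite-dimensionality, and that the rescaling of the deformation parameter from $mn$ to $nme$ is compatible with the integrality statement for $\lambda$.
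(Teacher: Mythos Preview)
Your outline---decompose $M'$ over the centre, isolate an infinite-dimensional cyclic piece supported at a single maximal ideal, then base-change to a finite extension---is exactly the shape of the construction the paper imports from \cite[Theorem~9.5]{irredreps}. Two of the steps, however, do not go through as written.

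First, once $\mathfrak{m}_i$ annihilates $N_0$, the $Z$-action already factors through $K' = A/\mathfrak{m}_i$, so $N_0$ is itself a $K'$-vector space on which $\widehat{U(\mathfrak{g})_{mn,K}}$ acts $K'$-linearly. Forming $N = N_0 \otimes_K K'$ then produces a $(K' \otimes_K K')$-module: the action of $z \in Z$ as $\chi(z)$ through the left tensor factor is \emph{not} scalar multiplication by $\chi(z) \in K'$ through the right factor unless $\chi(z) \in K$, so your $N$ does not have a $K'$-linear central character without a further projection onto one component of $K' \otimes_K K'$. Second, your assertion that a $K'$-point of the spectrum of the Weyl-invariants lifts to a $K'$-rational weight is unjustified: the quotient $\mathfrak{h}_{K'}^{*} \to \mathfrak{h}_{K'}^{*}/W$ is not surjective on $K'$-points in general (already for $W$ of order two acting by sign one needs square roots). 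In the cited construction the extension $K'$ is chosen large enough to contain a representative $\lambda$ of the character; fixing $K'$ first as the residue field and then asking $\lambda$ to live there may force a further extension, which changes $e$ and hence the target ring $\widehat{U(\mathfrak{g})_{nme,K'}}$. Finally, you flag but do not resolve the passage from $\widehat{U(\mathfrak{g})_{mn,K}} \otimes_K K'$ to $\widehat{U(\mathfrak{g})_{nme,K'}}$; since the lattices $p^{mn}\mathfrak{g}$ and $p^{nme}\mathfrak{g}$ genuinely differ for $e > 1$, this comparison is the substantive content of the base-change and cannot be dismissed as a continuity formality.
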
 

\begin{proof}
The module $N$ is constructed in the proof of \cite[Theorem 9.5]{irredreps}. $N$ is obtained by first taking a quotient of $M'$, and then applying a base change. This means that since $P_{m}$ annihilates $M'$, $P_{me}$ annihilates $N$. Also since $M'$ is cyclic, so is $N$.
\end{proof}

\noindent Now take $K'$, $N$ and $\lambda$ as in the proposition. Write $n' = nme$. We want to use \cite[Theorem B]{Ioan}, which applies to primitive ideals with central character. So to this end we need to show that $N$ has an infinite-dimensional irreducible subquotient.

\begin{mylem}
$\widehat{U(\mathfrak{g})_{n',K'}}$ is a (both left and right) Noetherian ring.
\end{mylem}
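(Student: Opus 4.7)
The strategy is to work first with the unit ball $\Gamma_{0}\widehat{U(\mathfrak{g})_{n',K'}}$, show it is Noetherian, and then recover the full ring by inverting the uniformiser $\pi'$ of $R'$.

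First, I would invoke the observation already used in the proof of Proposition \ref{flatnessprop}: the subring $\Gamma_{0}U(\mathfrak{g}_{K'}) = U(p^{n'}\mathfrak{g}_{R'})$ is left and right Noetherian, because its associated graded ring with respect to the degree filtration coming from the PBW theorem is a polynomial ring over $R'$, and so is Noetherian by Hilbert's basis theorem; standard filtered-to-graded lifting (e.g.\ as in \cite[3.2.3]{Dmodules}) then gives Noetherianity of the original ring.

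Next, by Lemma \ref{PBW} the ring $\Gamma_{0}\widehat{U(\mathfrak{g})_{n',K'}}$ is identified with the $\pi'$-adic completion of $\Gamma_{0}U(\mathfrak{g}_{K'})$. The element $\pi'$ is central in $\Gamma_{0}U(\mathfrak{g}_{K'})$, so this is a completion at a two-sided ideal generated by a central element, and one can again apply \cite[3.2.3]{Dmodules} to conclude that $\Gamma_{0}\widehat{U(\mathfrak{g})_{n',K'}}$ is Noetherian. Concretely, its associated graded with respect to the $\pi'$-adic filtration is $\bigoplus_{i\geq 0}(\pi')^{i}/(\pi')^{i+1} \otimes_{R'} U(p^{n'}\mathfrak{g}_{R'})/\pi'U(p^{n'}\mathfrak{g}_{R'})$, which is a polynomial ring in one variable over the Noetherian ring $U(p^{n'}\mathfrak{g}_{R'/\pi'R'})$, hence Noetherian.

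Finally, $\widehat{U(\mathfrak{g})_{n',K'}} = K' \otimes_{R'} \Gamma_{0}\widehat{U(\mathfrak{g})_{n',K'}}$ is a localisation of $\Gamma_{0}\widehat{U(\mathfrak{g})_{n',K'}}$ at the central regular element $\pi'$, and so is Noetherian as a localisation of a Noetherian ring at a central element. The only potential obstacle is checking that the cited commutative-looking filtered/completion machinery indeed applies here, but since $\pi'$ is central and the PBW graded ring is commutative Noetherian, the argument is essentially the same as in the commutative case and is precisely what \cite[3.2.3]{Dmodules} provides.
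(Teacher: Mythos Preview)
Your argument is correct. The paper's own proof is a two-line citation: it invokes \cite[5.16]{irredreps} to observe that $\widehat{U(\mathfrak{g})_{n',K'}}$ is an almost commutative affinoid $K'$-algebra, and then \cite[3.8]{irredreps} to conclude that such algebras are Noetherian. Your proof essentially unpacks what lies behind those citations: the ``almost commutative affinoid'' framework is exactly the situation where one has a complete filtration whose associated graded is commutative Noetherian, and the Noetherianity is then lifted via the standard filtered-ring machinery, which is precisely the three-step argument you give.

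Two small remarks. First, your appeal to \cite[3.2.3]{Dmodules} for the Noetherianity of the $\pi'$-adic completion is slightly imprecise: that reference is used in the paper for flatness and for the comparison map, not directly for Noetherianity. The clean statement you actually need is that a ring with a complete, separated filtration whose associated graded is Noetherian is itself Noetherian; this is standard (e.g.\ Li--van Oystaeyen or McConnell--Robson), and your description of the associated graded as a polynomial ring in one \emph{central} variable over the Noetherian ring $U(p^{n'}\mathfrak{g}_{R'/\pi'R'})$ is exactly what makes that go through. Second, in step~3 you should note that localisation at a central element preserves two-sided Noetherianity; this is immediate but worth saying since the ring is noncommutative. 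With those caveats your route is a perfectly good self-contained alternative to the paper's citation-based proof.
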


\begin{proof}
In \cite[5.16]{irredreps} it is observed that $\widehat{U(\mathfrak{g})_{n',K'}}$ is an almost commutative affinoid $K'$-algebra and in \cite[3.8]{irredreps} it is observed that almost commutative affinoid $K'$-algebras are Noetherian.
\end{proof}

\begin{mylem}
\label{cyclicmodulelem}
Suppose $L$ is a non-zero finite-dimensional $U(\mathfrak{g}_{K'})$-module and $d \in \mathbb{N}$ with $d > \dim_{K'} L$. Then $L^{d}$ cannot be a cyclic $U(\mathfrak{g}_{K'})$-module.
\end{mylem}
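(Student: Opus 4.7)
The plan is to exploit the fact that $L$ is finite-dimensional over $K'$, which forces the image of $U(\mathfrak{g}_{K'})$ acting on $L$ to be a finite-dimensional $K'$-subalgebra of $\End_{K'}(L)$. Specifically, I would let $I = \Ann_{U(\mathfrak{g}_{K'})}(L)$ and note that the representation $U(\mathfrak{g}_{K'}) \to \End_{K'}(L)$ induces an injection $U(\mathfrak{g}_{K'})/I \hookrightarrow \End_{K'}(L)$, so that
\[
\dim_{K'}\bigl(U(\mathfrak{g}_{K'})/I\bigr) \leq (\dim_{K'} L)^{2}.
\]

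Next I would observe that since $\mathfrak{g}_{K'}$ acts diagonally on $L^{d}$, the ideal $I$ also annihilates $L^{d}$, so $L^{d}$ is naturally a $U(\mathfrak{g}_{K'})/I$-module. If $L^{d}$ were cyclic over $U(\mathfrak{g}_{K'})$ with generator $v$, then it would be cyclic over $U(\mathfrak{g}_{K'})/I$, and hence a $K'$-linear quotient of $U(\mathfrak{g}_{K'})/I$. Comparing $K'$-dimensions would give
\[
d \cdot \dim_{K'} L \;=\; \dim_{K'} L^{d} \;\leq\; \dim_{K'}\bigl(U(\mathfrak{g}_{K'})/I\bigr) \;\leq\; (\dim_{K'} L)^{2},
\]
and dividing through by $\dim_{K'} L > 0$ would yield $d \leq \dim_{K'} L$, contradicting the hypothesis $d > \dim_{K'} L$.

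There is no substantive obstacle here: the only points that need checking are the elementary facts that the annihilator of $L^{d}$ equals that of $L$ (because the action is coordinatewise) and that a finite-dimensional quotient algebra of $U(\mathfrak{g}_{K'})$ of dimension at most $(\dim_{K'} L)^{2}$ cannot have a cyclic module of larger dimension. Both are immediate, so the argument amounts to the dimension inequality above.
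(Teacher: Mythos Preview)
Your argument is correct, but it differs from the paper's proof. The paper argues directly: given any candidate generator $(x_{1},\dots,x_{d}) \in L^{d}$, the $d$ coordinates are $K'$-linearly dependent since $d > \dim_{K'} L$, so there exist scalars $a_{1},\dots,a_{d}$ not all zero with $\sum a_{i} x_{i} = 0$; then for every $y \in U(\mathfrak{g}_{K'})$ one has $\sum a_{i} y x_{i} = 0$, so the cyclic submodule generated by $(x_{1},\dots,x_{d})$ lies in the proper subspace $\{(z_{1},\dots,z_{d}) : \sum a_{i} z_{i} = 0\}$ and in particular misses the element with a single nonzero entry in a slot where $a_{i}\neq 0$. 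Your route instead passes through the quotient algebra $U(\mathfrak{g}_{K'})/I$ and bounds its dimension by $(\dim_{K'}L)^{2}$ via the embedding into $\End_{K'}(L)$, then compares dimensions. Both arguments are short and elementary; the paper's is slightly more explicit in that it exhibits a concrete element not in the cyclic submodule, while yours has the advantage of making transparent that the obstruction is purely about the size of the image algebra acting on $L$.
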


\begin{proof}
Suppose $(x_{1},\dots,x_{d}) \in L^{d}$. Then $x_{1},\dots,x_{d} \in L$ are $K'$-linearly dependent since $d > \dim_{K'} L$, so choose $a_{1},\dots,a_{d} \in K'$ not all zero such that $\sum a_{i} x_{i} = 0$. Then for any $y \in U(\mathfrak{g}_{K'})$ we have $\sum a_{i} y x_{i} = 0$.

Choose any $0 \neq v \in L$ and any $i$ with $a_{i} \neq 0$. Then the element of $L^{d}$ with $i$-th entry $v$ and all other entries $0$ cannot be contained in $U(\mathfrak{g}_{K'})(x_{1},\dots,x_{d})$.
\end{proof}

\begin{mylem}
\label{boundeddimensionlem}
There exists $D \in \mathbb{N}$ such that all finite-dimensional quotients of $N$ have dimension at most $D$.
\end{mylem}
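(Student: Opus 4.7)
The plan is to show that every finite-dimensional quotient $L$ of $N$ is isomorphic as a $U(\mathfrak{g}_{K'})$-module to $L(\mu)^d$ for some $d$ bounded in terms of $\dim_{K'} L(\mu)$, where $L(\mu)$ is the unique finite-dimensional irreducible $U(\mathfrak{g}_{K'})$-module with central character $\chi_\lambda$ (if any such $\mu$ exists).

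First I would show that every finite-dimensional quotient $L$ of $N$ is cyclic as a $U(\mathfrak{g}_{K'})$-module. Let $\bar{w} \in L$ be the image of a cyclic generator of $N$ as a $\widehat{U(\mathfrak{g})_{n',K'}}$-module, so that $L = \widehat{U(\mathfrak{g})_{n',K'}} \bar{w}$. Since $\widehat{U(\mathfrak{g})_{n',K'}}$ is a Noetherian Banach $K'$-algebra and $L$ is a finitely generated module over it, $L$ carries a canonical Banach module structure on which the action is continuous. The image $U(\mathfrak{g}_{K'}) \bar{w}$ is then a continuous $K'$-linear image of the dense subspace $U(\mathfrak{g}_{K'}) \subseteq \widehat{U(\mathfrak{g})_{n',K'}}$, hence is dense in $L$. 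But $U(\mathfrak{g}_{K'}) \bar{w}$ is finite-dimensional and therefore closed in $L$, so $L = U(\mathfrak{g}_{K'}) \bar{w}$.

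Next, I would use the central character $\chi_\lambda$ to pin down the $U(\mathfrak{g}_{K'})$-structure of $L$. Since $N$ has central character $\chi_\lambda$ and $Z(U(\mathfrak{g}_{K'})) \subseteq Z(\widehat{U(\mathfrak{g})_{n',K'}})$ by a density/continuity argument, $Z(U(\mathfrak{g}_{K'}))$ acts on $L$ by $\chi_\lambda$. Since $\mathfrak{g}_{K'}$ is semisimple, Weyl's theorem on complete reducibility gives $L = \bigoplus_i L(\mu_i)$ with each $\mu_i$ dominant integral and $\chi_{\mu_i} = \chi_\lambda$. By Harish-Chandra's theorem, $\chi_{\mu_i} = \chi_\lambda$ forces $\mu_i$ to lie in the dot-action Weyl orbit of $\lambda$, and each such orbit contains at most one dominant integral weight $\mu$. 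Thus either no such $\mu$ exists, in which case $L = 0$, or there is a unique such $\mu$ and $L \cong L(\mu)^d$ for some $d \geq 0$.

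Finally, since $L \cong L(\mu)^d$ is cyclic as a $U(\mathfrak{g}_{K'})$-module, Lemma \ref{cyclicmodulelem} yields $d \leq \dim_{K'} L(\mu)$, so $\dim_{K'} L \leq (\dim_{K'} L(\mu))^2$. Setting $D = (\dim_{K'} L(\mu))^2$ (or $D = 1$ if no such $\mu$ exists) completes the proof. The main obstacle is the opening step: establishing that cyclicity over $\widehat{U(\mathfrak{g})_{n',K'}}$ descends to cyclicity over the dense subring $U(\mathfrak{g}_{K'})$, which requires the topological Banach-module machinery for finitely generated modules over Noetherian affinoid algebras. The remaining steps are essentially standard applications of Harish-Chandra's theorem and Weyl's complete reducibility.
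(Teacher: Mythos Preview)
Your proposal is correct and follows essentially the same architecture as the paper: show that a finite-dimensional quotient is cyclic over $U(\mathfrak{g}_{K'})$, apply Weyl's complete reducibility, use Harish-Chandra to identify a unique possible dominant integral $\mu$, and then invoke Lemma~\ref{cyclicmodulelem} to bound the multiplicity, yielding $\dim_{K'} N' \leq (\dim_{K'} L(\mu))^2$.

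The only substantive difference is in how you descend cyclicity from $\widehat{U(\mathfrak{g})_{n',K'}}$ to $U(\mathfrak{g}_{K'})$. You invoke Banach-module machinery: density of $U(\mathfrak{g}_{K'})$ in the completion, continuity of the action, and closedness of finite-dimensional subspaces. The paper instead observes that a finite-dimensional quotient is locally finite over $U(\mathfrak{g}_{K'})$, so by Proposition~\ref{locallyfiniteprop} the subspace $U(\mathfrak{g}_{K'})v$ is already a $\widehat{U(\mathfrak{g})_{n',K'}}$-submodule, hence equals the whole quotient. Both arguments are valid; the paper's route avoids the topological overhead by reusing an earlier result, while yours is self-contained but leans on the Noetherian Banach framework you flag as the ``main obstacle.'' In this context that obstacle is mild, since on a finite-dimensional $K'$-space all norms are equivalent and the continuity of the action is automatic.
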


\noindent Write $W$ for the Weyl group of $\Phi$, and $\rho = \frac{1}{2} \sum_{\alpha \in \Phi^{+}} \alpha$. Then given a weight $\mu \in \mathfrak{h}_{K'}^{*}$, recall the dot action of $w \in W$ on $\mu$ is given by $w \cdot \mu = w(\mu + \rho) - \rho$.

We say a weight $\mu \in \mathfrak{h}_{K'}^{*}$ is dominant integral if $\mu(h_{\alpha}) \in \mathbb{N}_{0}$ for all $\alpha \in \Delta$. It is a standard fact in Lie algebra representation theory (see for instance \cite[7.2.6]{Dixmier}) that the irreducible highest-weight module $L(\mu)$ with highest-weight $\mu$ is finite-dimensional if and only if $\mu$ is dominant integral.

\begin{proof}[Proof of Lemma \ref{boundeddimensionlem}]
Suppose $N'$ is a finite-dimensional quotient of $N$. Then since $N$ is a cyclic $\widehat{U(\mathfrak{g})_{n',K'}}$-module and has central character $\chi$, it follows that $N'$ is also a cyclic $\widehat{U(\mathfrak{g})_{n',K'}}$-module and has central character $\chi$.

Note that any generator for $N'$ over $\widehat{U(\mathfrak{g})_{n',K'}}$ also generates $N'$ over $U(\mathfrak{g}_{K'})$: if $v \in N'$ such that $N' = \widehat{U(\mathfrak{g})_{n',K'}}v$, then $U(\mathfrak{g}_{K'})v$ is a $\widehat{U(\mathfrak{g})_{n',K'}}$-submodule of $N'$ by Proposition \ref{locallyfiniteprop} and hence $U(\mathfrak{g}_{K'})v = \widehat{U(\mathfrak{g})_{n',K'}} v = N'$.

Now by Weyl's Theorem (see for instance \cite[Theorem 7.8.11]{Homology}), $N'$ is a direct sum of simple modules. Moreover, by \cite[7.2.2]{Dixmier} any simple finite-dimensional $U(\mathfrak{g}_{K'})$-module has the form $L(\mu)$ for some weight $\mu$. So $N'$ is a direct sum of modules of the form $L(\mu)$ with $\mu \in \mathfrak{h}_{K'}^{*}$, and with $\mu$ dominant integral.

 By \cite[Theorem 1.10(b)]{Humphreys}, for any two weights $\mu_{1}$ and $\mu_{2}$ we have $\chi_{\mu_{1}} = \chi_{\mu_{2}}$ if and only if $\mu_{1} = w \cdot \mu_{2}$ for some $w \in W$.
 
  Moreover, there is at most one weight $\mu$ such that $\mu = w \cdot \lambda$ for some $w \in W$ and $\mu$ is dominant integral. To see this, suppose $\nu$ is a dominant integral weight and $w \in W$ such that $w \cdot \nu$ is also dominant integral. Then for any $\alpha \in \Phi$ we have $\langle w \cdot \nu + \rho, \alpha^{\lor} \rangle = \langle \nu + \rho, w^{-1}(\alpha)^{\lor} \rangle \in \mathbb{N}$ and hence $w^{-1}(\alpha) \in \Phi^{+}$. So $w^{-1}(\Phi^{+}) = \Phi^{+}$, which implies $w^{-1} = 1$.
 
  Thus there is at most one weight $\mu$ such that $\chi_{\mu} = \chi$ and $L(\mu)$ is finite-dimensional. If no such $\mu$ exists than the only finite-dimensional quotient of $N$ is $0$, and we are done. So assume $\mu$ satisfies $\chi_{\mu} = \chi$ and $L(\mu)$ is finite-dimensional, and let $d = \dim_{K'} L(\mu)$.

Then $N'$ is a direct sum of copies of $L(\mu)$. We know $N'$ is a cyclic $U(\mathfrak{g}_{K'})$-module, so by Lemma \ref{cyclicmodulelem} $N'$ is a copy of at most $d$ copies of $L(\mu)$. Thus $\dim_{K'}N' \leq d^{2}$, and we are done.

\end{proof}

\begin{mycor}
$N$ has an infinite-dimensional irreducible subquotient.
\end{mycor}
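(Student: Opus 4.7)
The plan is to exploit the uniform bound from Lemma \ref{boundeddimensionlem} together with the Noetherian nature of $N$ in order to cut down to a subquotient of finite length but still infinite $K'$-dimension, from which an irreducible infinite-dimensional factor can be extracted.

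First I would use Noetherianity to choose a convenient quotient of $N$. Since $N$ is a finitely generated (in fact cyclic) module over the Noetherian ring $\widehat{U(\mathfrak{g})_{n',K'}}$, the submodule lattice of $N$ satisfies the ascending chain condition. The collection of submodules $L \leq N$ with $N/L$ infinite-dimensional over $K'$ is non-empty (it contains $0$), so by ACC it has a maximal element $L^{*}$. Set $\bar{N} = N/L^{*}$. By maximality of $L^{*}$, every non-zero submodule $M \leq \bar{N}$ gives a quotient $\bar{N}/M$ that is finite-dimensional; moreover $\bar{N}/M$ is itself a quotient of $N$, so Lemma \ref{boundeddimensionlem} yields $\dim_{K'}(\bar{N}/M) \leq D$.

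The main step is to deduce that $\bar{N}$ is Artinian. Given any strictly descending chain $M_{1} \supsetneq M_{2} \supsetneq \cdots$ of non-zero submodules, the natural surjections $\bar{N}/M_{i+1} \twoheadrightarrow \bar{N}/M_{i}$ have non-zero kernel $M_{i}/M_{i+1}$ at every step, so the integers $\dim_{K'}(\bar{N}/M_{i})$ form a strictly increasing sequence. On the other hand, each $\dim_{K'}(\bar{N}/M_{i}) \leq D$ by the preceding paragraph, so any such chain has length at most $D+1$. This gives the descending chain condition on non-zero submodules, which immediately upgrades to DCC on all submodules of $\bar{N}$ (any chain hitting $0$ is automatically stationary from that point on).

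Finally, $\bar{N}$ is Noetherian as a quotient of $N$, and now also Artinian, so it admits a finite composition series $0 = \bar{N}_{0} \subset \bar{N}_{1} \subset \cdots \subset \bar{N}_{r} = \bar{N}$ with simple quotients $\bar{N}_{i}/\bar{N}_{i-1}$. Since $\sum_{i} \dim_{K'}(\bar{N}_{i}/\bar{N}_{i-1}) = \dim_{K'} \bar{N} = \infty$ and $r$ is finite, at least one composition factor must be infinite-dimensional, giving the required infinite-dimensional irreducible subquotient of $N$. The main obstacle is the Artinian step: recognising that the uniform dimension bound on finite-dimensional quotients of $N$ forces DCC on $\bar{N}$ is the only non-routine idea in the argument, after which the composition series argument is completely standard.
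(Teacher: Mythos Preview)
Your proof is correct. The paper takes a more direct route: it picks a finite-dimensional quotient $N/N_{1}$ of \emph{maximal} dimension (which exists precisely because of the bound $D$), then uses Noetherianity of $N_{1}$ to choose a maximal proper submodule $N_{2} \lneq N_{1}$; the simple quotient $N_{1}/N_{2}$ must be infinite-dimensional, since otherwise $N/N_{2}$ would be a finite-dimensional quotient of $N$ of strictly larger dimension than $N/N_{1}$. Your argument instead passes to a maximal submodule $L^{*}$ among those with infinite-dimensional quotient, and then establishes that $\bar{N}=N/L^{*}$ is Artinian before extracting an infinite-dimensional factor from a composition series. Both arguments rest on the same two ingredients (the uniform bound $D$ and Noetherianity of $N$), but the paper avoids the Artinian detour by working directly with one maximal submodule, while your approach has the mild advantage of making the finite-length structure of $\bar{N}$ explicit.
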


\begin{proof}
Since $\widehat{U(\mathfrak{g})_{n',K'}}$ is a Noetherian ring and $N$ is a finitely generated $\widehat{U(\mathfrak{g})_{n',K'}}$-module, it follows that $N$ is a Noetherian  $\widehat{U(\mathfrak{g})_{n',K'}}$-module. 

Now by Lemma \ref{boundeddimensionlem}, choose $N'$ to be a finite-dimensional quotient of $N$ of maximal dimension. Say $N' = N / N_{1}$, where $N_{1} \leq N$ is an infinite-dimensional submodule. Then $N_{1}$ is a Noetherian  $\widehat{U(\mathfrak{g})_{n',K'}}$-module since $N$ is Noetherian.

Now since $N_{1}$ is Noetherian we can choose a maximal submodule $N_{2}$ of $N_{1}$. Then $N_{1} / N_{2}$ is infinite-dimensional since otherwise $N / N_{2}$ would be a finite-dimensional quotient of $N$ with larger dimension than $N'$. So $N_{1} / N_{2}$ is an infinite-dimensional irreducible subquotient of $N$.
\end{proof}

\noindent Now let $N_{1}$ be an infinite-dimensional irreducible subquotient of $N$. Then $P_{me} \subseteq KG^{p^{me}} \subseteq K'G^{p^{me}}$ annihilates $N_{1}$, and $\Ann_{\widehat{U(\mathfrak{g})_{n',K'}}}N_{1}$ is a primitive ideal of $\widehat{U(\mathfrak{g})_{n',K'}}$ with central character $\chi_{\lambda}$. 

\begin{myprop}
There exists a weight $\mu \in \mathfrak{h}_{K'}$ with $\mu( p^{n'} \mathfrak{h}_{R'} )\subseteq R'$ such that $L(\mu)$ is infinite-dimensional and $\Ann_{\widehat{U(\mathfrak{g})_{n',K'}}} N_{1} = \Ann_{\widehat{U(\mathfrak{g})_{nme,K'}}} \widehat{L(\mu)}$.
\end{myprop}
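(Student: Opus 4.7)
Since $N_1$ is irreducible as a $\widehat{U(\mathfrak{g})_{n',K'}}$-module, the ideal $P' := \Ann_{\widehat{U(\mathfrak{g})_{n',K'}}} N_1$ is primitive, and the discussion preceding the proposition has already identified its central character as $\chi_\lambda$, which is $K'$-rational because $\lambda \in \mathfrak{h}_{K'}^{*}$. My plan is then to invoke the positive answer to Question A provided by \cite{Ioan}: this yields a weight $\mu$ with $\mu(p^{n'}\mathfrak{h}_{R'}) \subseteq R'$ and a simple affinoid highest-weight module $\widehat{L(\mu)}$ of central character $\chi_\lambda$ such that
\[
P' \;=\; \Ann_{\widehat{U(\mathfrak{g})_{n',K'}}} \widehat{L(\mu)}.
\]
This immediately gives the required equality of annihilators, since $n' = nme$.

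It then remains to verify that $L(\mu)$ is infinite-dimensional over $K'$, which I would establish by contradiction. Suppose $\dim_{K'} L(\mu) < \infty$. By the earlier lemma identifying any finite-dimensional $U(\mathfrak{g}_{K'})$-module with its completion, we have $\widehat{L(\mu)} = L(\mu)$ as $\widehat{U(\mathfrak{g})_{n',K'}}$-modules. Consequently $\widehat{U(\mathfrak{g})_{n',K'}}/P'$ acts faithfully on this finite-dimensional space, so it embeds into $\End_{K'}(L(\mu))$ and is therefore itself finite-dimensional over $K'$. But $N_1$ is cyclic (any nonzero element generates it by simplicity), so for any nonzero $v \in N_1$ the surjection $\widehat{U(\mathfrak{g})_{n',K'}} \twoheadrightarrow N_1$ sending $r \mapsto r \cdot v$ factors through $\widehat{U(\mathfrak{g})_{n',K'}}/P'$, forcing $N_1$ to be finite-dimensional over $K'$ and contradicting the choice of $N_1$.

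The main obstacle is really only to verify the hypotheses of \cite{Ioan}, and this reduces to the primitivity of $P'$ (immediate from the irreducibility of $N_1$) together with the $K'$-rationality of its central character (immediate from $\lambda \in \mathfrak{h}_{K'}^{*}$). The infinite-dimensionality check is then purely formal, via the cyclic-quotient argument sketched above, and does not require any further structural input.
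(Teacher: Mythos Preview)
Your proposal is correct and follows essentially the same route as the paper: invoke \cite[Theorem B]{Ioan} to obtain $\mu$ with $P' = \Ann_{\widehat{U(\mathfrak{g})_{n',K'}}}\widehat{L(\mu)}$, and then deduce infinite-dimensionality of $L(\mu)$ from that of $N_1$ via the codimension of $P'$. The paper phrases the second step as ``$N_1$ infinite-dimensional $\Rightarrow$ $P'$ has infinite codimension $\Rightarrow$ $\widehat{L(\mu)}$ infinite-dimensional,'' which is precisely the contrapositive of your cyclic-quotient argument through $\End_{K'}(L(\mu))$.
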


\begin{proof}
It follows from \cite[Theorem B]{Ioan} (applied to the group scheme $\mathbf{G}(R')$) that there is a weight $\mu$ such that $\Ann_{\widehat{U(\mathfrak{g})_{n',K'}}} N_{1} = \Ann_{\widehat{U(\mathfrak{g})_{n',K'}}} \widehat{L(\mu)}$. Moreover, since $N_{1}$ is infinite-dimensional, it follows that $\Ann_{\widehat{U(\mathfrak{g})_{n',K'}}} N_{1}$ must have infinite codimension in $\widehat{U(\mathfrak{g})_{n',K'}}$. Therefore $\Ann_{\widehat{U(\mathfrak{g})_{n',K'}}} \widehat{L(\mu)}$ also has infinite codimension, and so $\widehat{L(\mu)}$ must be infinite-dimensional. Therefore $L(\mu)$ is infinite-dimensional.
\end{proof}

\noindent We can now conclude the proof of Theorem \ref{application}: given a positive answer to question B, $\widehat{L(\mu)}$ is faithful as a $K'G^{p^{me}}$-module. We can then see that $P_{me} = 0$ since $P_{me} \subseteq K'G^{p^{me}}$ annihilates $N_{1}$ and hence annihilates $\widehat{L(\mu)}$. It then follows from the argument in \cite[\S 5.7]{verma} that $P = 0$.

\end{document}